\documentclass[11pt]{amsart}

\usepackage{subfiles}
\usepackage{lipsum}
\usepackage[font=itshape]{quoting}
\usepackage{tabu, fullpage,diagbox, booktabs}
\usepackage{placeins}
\usepackage[margin=1in]{geometry}
\usepackage[dvipsnames]{xcolor}   		             		
\usepackage{graphicx}			
\usepackage{amssymb,csquotes}
\usepackage{dsfont}
\usepackage{mathrsfs}
\usepackage{amsthm}
\usepackage{amsmath}
\usepackage{stmaryrd}
\usepackage{tikz,framed}
\usepackage{tikz-cd}
\usepackage{accents}
\usepackage{upgreek}
\usepackage{enumitem}
\usepackage{bm}
\usepackage{mathtools}
\usepackage[all]{xy}
\usepackage{caption}

\usepackage{url}
\usepackage{float}
\usepackage{todonotes}
\usepackage{bbm}
\usepackage{easyeqn}

\usepackage[full]{textcomp}
\usepackage[cal=cm]{mathalfa}
\usepackage{xparse}
\usepackage{comment}
\usepackage{cite}

\tikzset{
	commutative diagrams/.cd, 
	arrow style=tikz, 
	diagrams={>=stealth}
}

\theoremstyle{definition}

\makeatletter
\newcommand{\colim@}[2]{%
	\vtop{\m@th\ialign{##\cr
			\hfil$#1\operator@font colim$\hfil\cr
			\noalign{\nointerlineskip\kern1.5\ex@}#2\cr
			\noalign{\nointerlineskip\kern-\ex@}\cr}}%
}
\newcommand{\colim}{%
	\mathop{\mathpalette\colim@{\rightarrowfill@\textstyle}}\nmlimits@
}
\makeatother

\makeatletter
\def\@tocline#1#2#3#4#5#6#7{\relax
	\ifnum #1>\c@tocdepth % then omit
	\else
	\par \addpenalty\@secpenalty\addvspace{#2}%
	\begingroup \hyphenpenalty\@M
	\@ifempty{#4}{%
		\@tempdima\csname r@tocindent\number#1\endcsname\relax
	}{%
		\@tempdima#4\relax
	}%
	\parindent\z@ \leftskip#3\relax \advance\leftskip\@tempdima\relax
	\rightskip\@pnumwidth plus4em \parfillskip-\@pnumwidth
	#5\leavevmode\hskip-\@tempdima
	\ifcase #1
	\or\or \hskip 1em \or \hskip 2em \else \hskip 3em \fi%
	#6\nobreak\relax
	\dotfill\hbox to\@pnumwidth{\@tocpagenum{#7}}\par
	\nobreak
	\endgroup
	\fi}
\makeatother

\usetikzlibrary{calc}
\usetikzlibrary{fadings}
\usetikzlibrary{decorations.pathmorphing}
\usetikzlibrary{decorations.pathreplacing}

\newcounter{marginnote}
\DeclareMathAlphabet{\mathpzc}{OT1}{pzc}{m}{it}

\usepackage[backref=page]{hyperref}
\hypersetup{
	colorlinks   = true,          %Colors links instead of ugly boxes
	urlcolor     = blue,          %Color for external hyperlinks
	linkcolor    = purple,          %Color of internal links
	citecolor   = blue             %Color of citations
}

\theoremstyle{definition}
\newtheorem{maintheorem}{Theorem}

\newtheorem{theorem}{Theorem}[subsection]
\newtheorem{situation}[theorem]{Situation}

\newtheorem{mainconjecture}[maintheorem]{Conjecture}
\newtheorem{corollary}[theorem]{Corollary}
\newtheorem{lemma}[theorem]{Lemma}
\newtheorem{lemdefn}[theorem]{Lemma/Definition}
\newtheorem{proposition}[theorem]{Proposition}

\newtheorem{remark}[theorem]{Remark}

\newtheorem*{runningexample*}{Running example}

\newtheorem{constr}[theorem]{Construction}

\newtheorem{definition}[theorem]{Definition}

\newtheorem{example}[theorem]{Example}

\newtheorem{proposition-definition}[theorem]{Proposition-Definition}

{%
	\pushQED{\qed}\begin{constr}}
	{\popQED\end{constr}}

\newcommand{\bcd}{\begin{center}\begin{tikzcd}}
	\newcommand{\ecd}{\end{tikzcd}\end{center}}

\newcommand{\pt}{\text{pt}}

\usepackage{accents}

\DeclareMathAlphabet{\mathpzc}{OT1}{pzc}{m}{it}

\NewDocumentCommand{\compatibilitydatum}{m m m m m m O{} O{} O{}}{
	\begin{equation*} \begin{tikzcd}[ampersand replacement=\&]
	\: \arrow{r} \& {#1} \arrow{r} \arrow{d}{#7} \& {#2} \arrow{r} \arrow{d}{#8} \& {#3} \arrow{r}{[1]} \arrow{d}{#9} \& \: \\
	\: \arrow{r} \& {#4} \arrow{r} \& {#5} \arrow{r} \& {#6} \arrow{r} \& \:
	\end{tikzcd} \end{equation*}}

\NewDocumentCommand{\commutingsquare}{m m m m o O{} O{} O{} O{}}{
	\begin{equation}\begin{tikzcd}[ampersand replacement=\&] \label{#5}
	#1 \arrow{r}{#6} \arrow{d}{#7} \& #2 \arrow{d}{#8} \\
	#3 \arrow{r}{#9} \& #4
	\end{tikzcd}\IfValueTF{#5}{\label{#5}}{} \end{equation}}

\NewDocumentCommand{\cartesiansquarelabel}{m m m m m O{} O{} O{} O{}}{
	\begin{tikzcd}[ampersand replacement=\&]
	#1 \arrow{r}{#6} \arrow{d}{#7} \arrow[dr, phantom, "\square"] \& #2 \arrow{d}{#8} \\
	#3 \arrow{r}{#9} \& #4
	\end{tikzcd}\IfValueTF{#5}{\label{#5}}{}
}

\NewDocumentCommand{\triangleofspaces}{m m m O{} O{} O{}}{
	\begin{tikzcd} [ampersand replacement=\&]
	#1 \arrow{r}{#4} \arrow[bend right]{rr}{#5} \& #2 \arrow{r}{#6} \& #3
	\end{tikzcd}}

\newcommand{\stacks}[1]{\cite[Tag~\href{https://stacks.math.columbia.edu/tag/#1}{#1}]{stacks}}

\title{The logarithmic generalised Kummer locus\\ and its tropicalisation}
\author{Patrick Kennedy-Hunt}

\begin{document}

\begin{abstract}
    We construct a separated logarithmic compactification of a divisor in the moduli of polarised irreducible holomorphic symplectic (IHS) varieties of generalised Kummer type in dimension at least four. This compactification carries a universal family of logarithmic IHS varieties, and its tropicalisation carries a universal family of tropical generalised Kummer varieties. 

    The compactification is built from a more general construction of explicit, projective, and logarithmically smooth models for type~II and type~III degenerations of generalised Kummer varieties of arbitrary dimension and over base logarithmic schemes of any dimension. Over a base of any logarithmic rank, these degenerations come with a tropicalisation map to a family of integral affine manifolds with singularities, and with boundary in the type II case. This tropical family is constructed through a tropical generalised Kummer construction. 

    Our approach uses logarithmic Hilbert schemes to extend the conventional generalised Kummer construction to families of logarithmic abelian varieties. We describe fibres of our degenerations, and thus every point in the boundary of our compactification. Proper models correspond to polyhedral structures on the associated tropical generalised Kummer variety, with the resulting polyhedral complex serving as the dual intersection complex; each irreducible component is constructed by applying the generalised Kummer construction to a product of logarithmic group compactifications of semi-abelian varieties.
\end{abstract}
\maketitle
\tableofcontents

\section*{Introduction}

Beauville's \textit{generalised Kummer varieties} are, alongside Hilbert schemes of points on K3 surfaces, the central examples of irreducible holomorphic symplectic (IHS) varieties \cite{Beauville1983,Huybrechts1999}. 
Outside the K3 situation, where it was achieved in \cite{AlexeevEngel2023}, a modular compactification for the space of polarised IHS varieties remains largely open. This paper equips a compactification of a divisor in the moduli of polarised IHS varieties of generalised Kummer type with a universal family of \textit{logarithmic IHS varieties} (Definition~\ref{defn:LogIHS}), and offers an explicit description of the boundary points.

The prerequisite task of naming explicit strict normal crossing, or more generally logarithmically smooth, degenerations over a one-parameter base was not achieved outside of the $\text{K3}^{[n]}$ case \cite{GulbrandsenHalleHulek2019,GulbrandsenHalleHulekZhang2021,DegenHilbnK3,Tschanz2023,Tschanz2024} until recent work \cite{halle2026degenerationsgeneralizedkummervarieties}. \textit{Loc. cit.} achieve their strongest results for one-parameter type~II degenerations of generalised Kummer varieties of dimension at most four. The type~III generalised Kummer case has remained unexplored. Moreover, degenerations of IHS varieties over a base scheme of dimension greater than one have not been studied outside the K3 situation \cite{engel2026polyncvarietiesmultiparameterkulikov}. 

This paper uses \textit{logarithmic Hilbert schemes} \cite{LogQuot,MR20} and \textit{logarithmic abelian varieties} \cite{KajiwaraKatoNakayama2008} to extend the generalised Kummer construction to a natural class of logarithmically smooth degenerations of abelian varieties. Fixing $n$ a positive integer, our central object of study is the \emph{generalised Kummer locus}
$\mathcal{K}_{2n}^\circ$: a divisor in the moduli of polarised IHS varieties whose
closed points specify \textit{principal generalised Kummer varieties of dimension $2n$},
$$\operatorname{Kum}_{2n}(A) := \Sigma^{-1}(0) \subset A^{[n+1]}$$ equipped with
the polarisation of Definition~\ref{defn:polarisation}, where
$\Sigma \colon A^{[n+1]} \to A$ is the summation morphism induced by
the group law on a principally polarised abelian surface~$A$. Lemma \ref{lem:resolveComment} shows that the generalised Kummer locus is a connected component in a certain Noether--Lefschetz locus. 

We extend Beauville's arguments for Hodge-theoretic, topological, and smoothness properties of generalised Kummer varieties to the logarithmic setting, and thus establish that logarithmic generalised Kummer varieties satisfy a logarithmic version of the definition of IHS variety. Theorem~\ref{mainthm:Niceness properties} records a canonical and separated compactification of the generalised Kummer locus. It is obtained by combining Corollary~\ref{corr:familyIHS}, Proposition \ref{prop:NLlocus}, and Proposition~\ref{prop:KnBehaved}.

\begin{maintheorem}\label{mainthm:Niceness properties}
    Let $n \geq 2$ be an integer. The logarithmic generalised Kummer construction equips a compactification $\mathcal{K}_{2n}^\circ\hookrightarrow \mathcal{K}_{2n}$ of the generalised Kummer locus with a universal family 
     $$\pi_{2n} \colon \mathcal U_{2n} \longrightarrow \mathcal K_{2n}$$
     of logarithmic IHS varieties. Moreover, both $\mathcal{K}_{2n}$ and $\mathcal{U}_{2n}$ are proper \textit{logarithmic Deligne--Mumford (DM) stacks}, and thus admit a logarithmic \'etale cover by schemes with logarithmic structure.
\end{maintheorem}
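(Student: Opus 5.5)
The plan is to assemble the theorem from the three ingredients already named: Corollary~\ref{corr:familyIHS}, Proposition~\ref{prop:NLlocus} and Proposition~\ref{prop:KnBehaved}. The approach is to build $\mathcal{K}_{2n}$ as a compactification of the moduli of principally polarised abelian surfaces. Concretely, I would take a toroidal (logarithmic) compactification $\overline{\mathcal{A}}_2$, or rather a suitable logarithmic modification of it, over which the universal logarithmic abelian surface $\mathcal{A}\to\overline{\mathcal{A}}_2$ in the sense of Kajiwara--Kato--Nakayama exists. I would then apply the logarithmic generalised Kummer construction fibrewise in families. That means forming the logarithmic Hilbert scheme $\mathcal{A}^{[n+1]}$ relative to the base, forming the logarithmic summation map $\Sigma$ to $\mathcal{A}$, and taking the fibre over the zero section. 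The result is $\mathcal{U}_{2n}\to\mathcal{K}_{2n}$.

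The steps are as follows.

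\begin{enumerate}
\item Identify the interior. I would invoke Proposition~\ref{prop:NLlocus} to show that the open part of the base, with the polarisation of Definition~\ref{defn:polarisation}, maps isomorphically onto the generalised Kummer locus $\mathcal{K}_{2n}^\circ$. This requires $n\geq 2$: the assignment $A\mapsto \operatorname{Kum}_{2n}(A)$ must be injective on isomorphism classes and must respect automorphisms, which fails for $n=1$ where Kummer K3s behave differently. The interior is then dense in $\mathcal{K}_{2n}$, giving the open embedding $\mathcal{K}_{2n}^\circ\hookrightarrow\mathcal{K}_{2n}$.
\item Show the fibres are logarithmic IHS. I would apply Corollary~\ref{corr:familyIHS} to see that every fibre of $\pi_{2n}$ is a logarithmic IHS variety in the sense of Definition~\ref{defn:LogIHS}. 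This is the logarithmic extension of Beauville's arguments on smoothness, simple connectivity and the symplectic form, and I take it as given.
\item Establish properness and stackiness. I would use Proposition~\ref{prop:KnBehaved} to deduce that $\mathcal{K}_{2n}$ and $\mathcal{U}_{2n}$ are proper logarithmic DM stacks. Properness of the base comes from properness of the toroidal compactification. Properness of $\pi_{2n}$ comes from properness of the logarithmic Hilbert scheme over a proper logarithmic abelian family, together with the fact that the preimage of a closed section is closed. The DM property comes from finite automorphism groups: a polarised logarithmic abelian surface has finite automorphisms, and so does a logarithmic Kummer fibre.
\item Produce the cover. Logarithmic DM stacks admit logarithmic \'etale covers by logarithmic schemes essentially by definition. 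I would take an \'etale atlas of the underlying DM stack, pull back the log structure, and compose with a log modification where a subdivision is needed.
\end{enumerate}

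I expect the main obstacle to be separatedness and properness, specifically making the logarithmic Hilbert scheme and $\Sigma$ well defined uniformly over a base of arbitrary logarithmic rank. The logarithmic Hilbert scheme depends on a choice of tropical subdivision. That choice must be made compatibly over all cones of the base fan and equivariantly for monodromy, or for the arithmetic group action, so that everything descends to a stack. To handle this, I would first try to use the canonical minimal model: work with the logarithmic Hilbert space as a stack over logarithmic schemes, which is canonical and proper. I would then show that $\Sigma^{-1}(0)$ is representable by a logarithmic algebraic space after a canonical subdivision determined by the tropical generalised Kummer construction. Separatedness would then follow from the valuative criterion via uniqueness of logarithmic limits in the logarithmic Hilbert space.
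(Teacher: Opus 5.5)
Your overall architecture matches the paper: compactify a moduli space of abelian surfaces, run the logarithmic Kummer construction in families, then cite Corollary~\ref{corr:familyIHS} and Proposition~\ref{prop:KnBehaved}. However, step (1) fails as stated, and it is the step that makes the base a compactification of $\mathcal{K}_{2n}^\circ$.

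For a principally polarised abelian surface $A$ and any $x\in A[n+1]$, translation by $x$ preserves $\Sigma^{-1}(0)\subset A^{[n+1]}$, since $\Sigma(Z+x)=\Sigma(Z)+(n+1)x$. It therefore acts on $\operatorname{Kum}_{2n}(A)$. It acts trivially on $H^2(A,\mathbb{Z})\oplus\langle\delta\rangle$, so it fixes the polarisation $NH-\delta$. Consequently the stabiliser of $(\operatorname{Kum}_{2n}(A),NH-\delta)$ in the moduli of polarised IHS varieties contains $A[n+1]\rtimes\operatorname{Aut}(A)$, and by Proposition~\ref{prop:AMGpAnyField} and Remark~\ref{rem:ClassicalCase} it equals this group. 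The stabiliser of $A$ in any compactification of $\mathcal{A}_2$ is only $\operatorname{Aut}(A)$.

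Your classifying map from the interior is therefore representable, \'etale and bijective on points, but it is a finite \'etale cover of degree $(n+1)^4$, not an isomorphism. Step III of the proof of Proposition~\ref{prop:NLlocus} is exactly where matching stabilisers is required. So $\mathcal{K}_{2n}^\circ$ is not an open substack of your base, and your family is only the pullback of the universal one along this cover. Your justification of $n\geq 2$ (``must respect automorphisms'') is where these translations are overlooked: $\operatorname{Aut}(A)\to\operatorname{Aut}(\operatorname{Kum}_{2n}(A),NH-\delta)$ fails to be surjective for every $n$, not only for $n=1$.

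The missing idea is to build the $(n+1)$-torsion translations into the stack structure of the base. The paper does this with level $n+1$ structure. Over $\mathcal{A}_2(n+1)$ the $(n+1)$-torsion of the universal logarithmic abelian surface becomes the constant group $(\mathbb{Z}/(n+1)\mathbb{Z})^4$. Then $G_{n+1}=(\mathbb{Z}/(n+1)\mathbb{Z})^4\rtimes\mathsf{Sp}_4(\mathbb{Z}/(n+1)\mathbb{Z})$ acts on $Y_2(n+1)=\operatorname{Kum}_{2n}(\mathcal{Y}_2(n+1)/\mathcal{A}_2(n+1))$, with the translation factor acting trivially on the base. One sets $\mathcal{K}_{2n}=[\mathcal{A}_2(n+1)/G_{n+1}]$ and $\mathcal{U}_{2n}=[Y_2(n+1)/G_{n+1}]$. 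Lemma~\ref{lem:GroupActionGerbe} then gives stabilisers $(\mathbb{Z}/(n+1)\mathbb{Z})^4\rtimes\operatorname{Aut}(A_p)$. Equivalently, in your setup you could replace the base by the classifying stack of the torsion group scheme $\mathcal{A}[n+1]$ over it, and the family by the quotient of $\operatorname{Kum}_{2n}(\mathcal{A})$ by translation.

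Two smaller points:
\begin{itemize}
\item The paper's base is the canonical Kajiwara--Kato--Nakayama logarithmic moduli space rather than a chosen toroidal compactification. Toroidal compactifications reappear only as logarithmic modifications, in Theorem~\ref{thm:GettingModels}.
\item Finite automorphism groups do not by themselves give a logarithmic DM stack in the sense of Definition~\ref{defn:logalgspaces}. The paper obtains the required logarithmic \'etale cover from the finite-quotient presentation by $\mathcal{A}_2(n+1)$.
\end{itemize}
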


%Subject to Conjecture \ref{conj:annoying}, we further compute the logarithmic variations of Hodge structure associated to points in the boundary of $\mathcal{K}_{2n}$, and thus establish that fibres of $\pi_{2n}$ over standard logarithmic closed points are non-isomorphic for distinct points. 
The reader is referred to Section \ref{sec:IntroLogAlgspaces} for a discussion of \textit{logarithmic spaces} (resp. \textit{logarithmic DM stacks}); these are a mild generalisation of conventional algebraic spaces (resp. DM stacks).\footnote{The functor of points of a logarithmic IHS variety on the category of logarithmic schemes is setoid valued - i.e. not stacky. Despite this fact, our logarithmic IHS varieties are sometimes constructed by equipping certain DM stacks (with non-trivial stabilisers) with logarithmic structure. The statement of Theorem \ref{mainthm:Niceness properties} therefore does not imply existence of logarithmically smooth Kulikov models in the category of schemes. Instead we will establish existence of good models in the category of DM stacks.}
After Kummer logarithmic \'etale sheafifying the functor of points of both spaces, on the level of coarse spaces\footnote{I.e. on the level of sheaves of isomorphism classes.}, the moduli space $\mathcal{K}_{2n}$ coincides with the moduli of principally polarised logarithmic abelian varieties. However, the automorphism groups differ: in $\mathcal{K}_{2n}$ the group of automorphisms is enlarged from the group $\operatorname{Aut}(A)$ of polarised group automorphisms of $A$ to $$(\mathbb{Z}/(n+1)\mathbb{Z})^4\rtimes \operatorname{Aut}(A).$$ Note that away from the logarithmic boundary, and choosing a polarisation so that all polarised automorphisms fix the exceptional divisor, the results of \cite{BOISSIERE2011553} identify the group displayed above with the automorphism group of $\operatorname{Kum}_{2n}(A)$. We expect this result extends to the logarithmic boundary (provided automorphism groups are taken in the logarithmic category).

The paper proceeds by developing three parallel versions of the generalised Kummer construction: tropical, logarithmic, and moduli theoretic. Together, these approaches establish the following outcomes:

\begin{enumerate}[leftmargin=*]
    \item \textbf{Compactifying a slice in the moduli space of IHS varieties.} Theorem~\ref{mainthm:ReadOffModel} explains that, after making a combinatorial choice, Theorem~\ref{mainthm:Niceness properties} specifies a DM stack (with logarithmic structure) which compactifies the generalised Kummer locus. This conventional (i.e. not logarithmic) DM stack is equipped with a universal family of singular generalised Kummer varieties.
    \item \textbf{Describing points in the boundary via explicit one-parameter degenerations.} We construct and describe the special fibres of one-parameter logarithmically smooth type~II and type~III degenerations of generalised Kummer varieties in every dimension. We recover each point in the boundary of $\mathcal{K}_{2n}$ as the special fibre of such a family. See Theorem~\ref{mainthm:1param}, Theorem~\ref{mainthm:Explicit}, and Section~\ref{sec:examples}. %These degenerations are logarithmic IHS varieties, even where the original abelian variety is not principal. 
    \item \textbf{Explicit presentations for integral affine manifolds with singularities.} Theorem~\ref{Thm:maintrop} and Theorem~\ref{mainthm:1param} describe the \textit{integral affine manifold with singularities} which controls the dual intersection complex of a one-parameter degeneration as described in (2). These data are described through a \textit{tropical generalised Kummer construction}.  
\end{enumerate}

In this paper, the phrase \textit{family of integral affine manifolds with singularities} includes families of structures which arise from type II degeneration, and are thus best considered integral affine manifolds with both singularities and boundary. For type III degenerations, the integral affine manifolds have no boundary: such integral affine manifolds are familiar as central objects used to compute the degeneration and gluing formulae exploited in enumerative geometry \cite{abramovich2021punctured,decompdegenGWinvar,gross2025remarksgluingpuncturedlogarithmic,MR23,Ran19}, as well as in the SYZ approach to mirror symmetry \cite{GrossSiebert2006,GrossSiebert2010,GSintrinsic,GSwall,KontsevichSoibelman2001,KontsevichSoibelman2006,SYZ1996}. A logarithmically smooth degeneration whose central fibre can be understood is a key step towards applying these degeneration and gluing techniques. Enumerative invariants are not fully understood for spaces in the generalised Kummer deformation class \cite{Oberdieck2022GWNL,Oberdieck2024HAE}. 

Whilst minimal model program (MMP) techniques ensure that any IHS variety $\mathcal{X}_\Delta\rightarrow \Delta$ over a trait admits a minimal model with mild singularities \cite{RmkDegenHK},
these \textit{Kulikov} models are far from unique, and naming examples requires running the MMP. In the K3 case, (2) was first achieved in the seminal work \cite{Kulikov1977,PerssonPinkham1981}, which has since been extended outside the complex setting in the Kummer K3 case \cite{Overkamp2021}. For Hilbert schemes of points on a K3 surface, the same outcome was obtained in the type~II case without logarithmic techniques\footnote{The author finds it helpful to imagine logarithmic geometry as a language for studying expansions. As such, the approach in works by subsets of Gulbrandsen, Halle, Hulek, and Zhang exploits the same underlying geometry as the logarithmic approach, although they do not use the logarithmic language. See Section \ref{sec:relatedlit}.} \cite{GulbrandsenHalleHulek2019,GulbrandsenHalleHulekZhang2021,Nagai2008,Nagai2018,Nagai2021}, and in the type~III case \cite{DegenHilbnK3,Tschanz2023,Tschanz2024} via logarithmic Hilbert schemes of points \cite{LogQuot,MR20}. Degenerations of generalised Kummer type are constructed in \cite{halle2026degenerationsgeneralizedkummervarieties} which we discuss further in Section~\ref{sec:relatedlit}.

\subsection{The role of logarithmic spaces}\label{sec:IntroLogAlgspaces} A central theme of this paper is that \textit{logarithmic spaces} and \textit{logarithmic DM stacks}, together with associated tropical objects, give a functorial way to organise non-canonical choices in the category of schemes. Such non-canonical choices would otherwise obstruct each of the outcomes described above, and always arise in constructing a strict normal crossing degeneration, see \cite{logcoh,FeusiMolchoOrbifoldCompactifiedJacobians,KajiwaraKatoNakayama2008,KajiwaraKatoNakayamaPartIV,KajiwaraKatoNakayamaPartVII,LogQuot,Quot2,MolchoWise} for other applications. We will now explain how, after making a combinatorial choice, a theorem about schemes can be read off from each of our theorems describing logarithmic spaces.

Studying degenerations of generalised Kummer varieties arising from a degenerating abelian variety is in some ways more subtle than the parallel $\text{K3}^{[n]}$ situation. This is because the generalised Kummer construction relies on the group structure of an abelian variety. (Non-trivial) strict normal crossing degenerations of abelian surfaces within the category of schemes never preserve the group structure: group objects over the complex numbers are smooth \stacks{047N}. 

An idea is required to extend the construction as the surface degenerates and the group law is lost.
Working with {logarithmic spaces}, it is possible to degenerate abelian varieties to proper objects, called \textit{logarithmic abelian varieties} (Section \ref{sec:log-abelian-variety}), in a way which preserves the group structure \cite{KajiwaraKatoNakayama2008,KajiwaraKatoNakayamaPartIV}. 

\textit{Logarithmic DM stacks} play a further essential role: Specifying a logarithmically smooth degeneration of an IHS variety over a trait within the category of schemes requires a combinatorial choice. This fact introduces a challenge when compactifying the generalised Kummer locus: choices must be made in a compatible way. 

We now explain the most troublesome source of combinatorial choice for this paper. The special fibres of both type~II and type~III degenerations of IHS varieties are reducible. 
Given any scheme flat over a trait with reducible and connected central fibre, one can produce a new model by blowing up the locus where two or more components meet.\footnote{Performing this move to the scheme with logarithmic structure associated to the left diagram of Figure \ref{fig:firstTropAv} corresponds to a different combinatorial choice of subdivision. In particular, blowing up the intersection of several components associated to dots $v_1,...,v_k$ in Figure \ref{fig:firstTropAv} corresponds to adding an extra vertex and associated edges into the cone spanned by $v_1,...,v_k$.} When the blowup is performed along an ideal which is locally monomial in equations of the irreducible components of the central fibre, it is the prototypical example of a \textit{logarithmic alteration}. For the type~III degenerations of generalised Kummer varieties we consider, the resulting system of models has no terminal element in the category of schemes, but there is a terminal choice in the category of logarithmic spaces. 

\begin{remark}\label{rem:promise}
    Every time we introduce a logarithmic space or logarithmic DM stack $\mathcal{X}$ which is not a DM stack with logarithmic structure, we will also specify an associated tropical object $\mathpzc{X}$.\footnote{We reserve the latter font for stacks on the tropical site (i.e. the face site of \cite{ModStckTropCurve}).} A logarithmic space specifies an equivalence class of schemes with logarithmic structure. Elements of the equivalence class are called logarithmic alterations of $\mathcal{X}$. The dual intersection complex of each logarithmic alteration of $\mathcal{X}$ admits a canonical map to the link of a cone complex subdividing $\mathpzc{X}$. This map is often an isomorphism. Figure \ref{fig:firstTropAv} illustrates the discussion of Section \ref{sec:logspaces}, which explains how logarithmic alterations can be read--off from a combinatorial choice: a cone complex structure on the attendant tropical gadget. 
\end{remark}
\begin{figure}
\begin{tikzpicture}[scale=0.85, line cap=round, line join=round]

  %======================
  % Left: subdivision Σ
  %======================
  \begin{scope}
    \coordinate (O) at (0,0);
    \coordinate (A) at (5,0.3);
    \coordinate (B) at (1.6,3.4);
    \coordinate (C) at ($(A)+(B)$);

    % boundary
    \draw[thick] (O) -- (A) -- (C) -- (B) -- cycle;

    % subdivision vertices
    \coordinate (P1) at ($(O)!0.4!(A)$);
    \coordinate (P2) at ($(O)!0.7!(B)$);
    \coordinate (P3) at ($(A)!0.6!(C)$);
    \coordinate (P4) at ($(B)!0.5!(C)$);

    % subdivision edges
    \draw[very thick, blue!70!black] (P1) -- (P2);
    \draw[very thick, blue!70!black] (P1) -- (P3);
    \draw[very thick, blue!70!black] (P2) -- (P3);
    \draw[very thick, blue!70!black] (P2) -- (P4);
    \draw[very thick, blue!70!black] (P3) -- (P4);

    % dots
    \foreach \pt in {P1, P2, P3, P4} {
        \fill (\pt) circle (2pt);
    }

    % edge identifications
%    \draw[->, very thick, blue] ($(B)+(-0.15,0.28)$) -- ($(C)+(-0.15,0.28)$);
%    \draw[->, very thick, blue] ($(A)+(0.15,-0.28)$) -- ($(O)+(0.15,-0.28)$);

    \draw[->, very thick, red] ($(O)+(-0.35,0.15)$) -- ($(B)+(-0.35,0.15)$);
    \draw[->, very thick, red] ($(A)+(0.35,0.15)$) -- ($(C)+(0.35,0.15)$);

    \node[blue] at ($(B)!0.5!(C)+(0,0.55)$) {\small identify};
    \node[blue] at ($(O)!0.5!(A)+(0,-0.55)$) {\small identify};
    \node[red, rotate=65] at ($(O)!0.5!(B)+(-0.65,0)$) {\small identify};
    \node[red, rotate=65] at ($(A)!0.5!(C)+(0.65,0)$) {\small identify};

  \end{scope}

  % arrow
  \draw[->, thick] (7.2,2.0) -- (9.0,2.0);

  %======================
  % Right: tropical abelian variety
  %======================
  \begin{scope}[xshift=10.5cm]
    \coordinate (O2) at (0,0);
    \coordinate (A2) at (5,0.3);
    \coordinate (B2) at (1.6,3.4);
    \coordinate (C2) at ($(A2)+(B2)$);

    % boundary
    \draw[thick] (O2) -- (A2) -- (C2) -- (B2) -- cycle;

    % edge identifications
    \draw[->, very thick, blue] ($(B)+(-0.15,0.28)$) -- ($(C)+(-0.15,0.28)$);
    \draw[->, very thick, blue] ($(O)+(0.15,-0.28)$) -- ($(A)+(0.15,-0.28)$);
    % horizontal edge identifications: torus orientation
    \draw[->, very thick, blue] ($(B2)+(-0.15,0.28)$) -- ($(C2)+(-0.15,0.28)$);
    \draw[->, very thick, blue] ($(O2)+(0.15,-0.28)$) -- ($(A2)+(0.15,-0.28)$);
    \draw[->, very thick, red] ($(O2)+(-0.35,0.15)$) -- ($(B2)+(-0.35,0.15)$);
    \draw[->, very thick, red] ($(A2)+(0.35,0.15)$) -- ($(C2)+(0.35,0.15)$);

    \node[blue] at ($(B2)!0.5!(C2)+(0,0.55)$) {\small identify};
    \node[blue] at ($(O2)!0.5!(A2)+(0,-0.55)$) {\small identify};
    \node[red, rotate=65] at ($(O2)!0.5!(B2)+(-0.65,0)$) {\small identify};
    \node[red, rotate=65] at ($(A2)!0.5!(C2)+(0.65,0)$) {\small identify};

  \end{scope}

\end{tikzpicture}
\caption{The reader can imagine that a family of tropical abelian surfaces is a family over a base cone of real two-tori equipped with an integral affine atlas. One such two-torus is depicted above right. Above left is the polyhedral subdivision induced by a choice of cone complex structure on the total space of such a family of tropical abelian varieties. The logarithmic alteration associated to this subdivision has two components (corresponding to the two vertices in our subdivision).}
\label{fig:firstTropAv}
\end{figure}

\subsection{The tropical generalised Kummer construction}\label{sec:TropGenKummerint} Section \ref{sec:TropGenKummer} carries out the first step in describing points of $\mathcal{K}_{2n}$ by naming the tropical gadget, which we call a \textit{family of integral affine manifolds with singularities} (IAMS), associated to a family of generalised Kummer varieties over a base $S$. These \textit{families of tropical generalised Kummer varieties} are obtained by applying the generalised Kummer construction in the tropical category. The role of an abelian surface is played by a \textit{tropical abelian variety}, whose underlying geometry is the integral affine manifold obtained as a quotient of $\mathbb{R}^s$ by a lattice (cf. Figure \ref{fig:firstTropAv}). Points associated to the special fibre of a type~II degeneration are characterised by $s=1$, while those from type~III occur precisely when $s=2$. The role of a Hilbert scheme of points is replaced by a tropical version of a Hilbert scheme, called the \textit{stack of tropical supports}, introduced in \cite{LogQuot}. 

Section \ref{sec:TropGenKummerlocus} mirrors the construction behind Theorem~\ref{mainthm:Niceness properties} in the tropical setting to obtain a stack on the face site $\mathpzc{K}_{2n}$ equipped with a universal family of tropical generalised Kummer varieties. Our next theorem is obtained by combining Proposition~\ref{prop:TypeIHomeotype}, Proposition~\ref{prop:TypeIIHomeotype}, and Lemma~\ref{lem:squarefromMainThm}. To parse the statement, recall that $a^\ast$ is the functor which associates to a stack on the face site a category fibred in groupoids (CFG) over the category of logarithmic schemes, see Section \ref{sec:logschemesandtrop}.

\begin{maintheorem}\label{Thm:maintrop}
Let $n\geq 2$. The tropical generalised Kummer construction  equips $\mathpzc{K}_{2n}$ with a universal family 
$$\mathpzc{U}_{2n}\rightarrow \mathpzc{K}_{2n}$$ of integral affine manifolds with singularities. The topological realisation of each fibre is homeomorphic to either $\mathbb{CP}^n$, an $n$-simplex, or a point. There is a commutative square (in the category of CFG over the category of logarithmic schemes) with strict horizontal maps $$\begin{tikzcd}
\mathcal{U}_{2n} \arrow[r,"\hat{\varpi}"] \arrow[d]
  & a^{\ast}  \mathpzc{U}_{2n} \arrow[d] \\
\mathcal K_{2n} \arrow[r, "\varpi"]
  & a^{\ast} \mathpzc{K}_{2n}.
\end{tikzcd}
$$  
\end{maintheorem}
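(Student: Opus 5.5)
The plan is to assemble the statement from the three ingredients the paper names: Proposition~\ref{prop:TypeIHomeotype}, Proposition~\ref{prop:TypeIIHomeotype} and Lemma~\ref{lem:squarefromMainThm}. The first step builds $\mathpzc{U}_{2n}\to\mathpzc{K}_{2n}$ by running the tropical generalised Kummer construction of Section~\ref{sec:TropGenKummer} over the universal family of tropical abelian surfaces on $\mathpzc{K}_{2n}$. Over a cone $\sigma$ of the base, this family is $\mathbb{R}^s\times\sigma$ modulo a lattice varying linearly in $\sigma$. One forms the stack of tropical supports of length-$(n+1)$ configurations and takes the fibre of the tropical summation map over $0$. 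Because the construction is compatible with face maps, these local pieces glue on the face site. The integral affine structure with singularities is inherited from the symmetric product of the tropical abelian surface. The singular locus is the image of the diagonals; away from the diagonals and the quotient by the symmetric group, the summation fibre is an affine submanifold.

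The second step identifies the topological type of each fibre, by cases on $s\in\{0,1,2\}$. For $s=0$ the tropical abelian surface is a point, and so is the fibre. For $s=1$ (type~II) the tropical abelian variety is a circle $\mathbb{R}/\ell\mathbb{Z}$. The summation-zero locus in $\operatorname{Sym}^{n+1}(S^1)$ is then the quotient of $\{x\in\mathbb{R}^{n+1}:\sum x_i\in\ell\mathbb{Z}\}/\ell\mathbb{Z}^{n+1}$ by $S_{n+1}$. Translating by the diagonal $(n+1)$-torsion reduces this to a fundamental alcove of the affine Weyl group of type $A_n$, which is an $n$-simplex. This is Proposition~\ref{prop:TypeIHomeotype}. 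For $s=2$ (type~III) the fibre is $\Sigma^{-1}(0)\subset\operatorname{Sym}^{n+1}(T^2)$. By the Looijenga/Bernstein--Schwarzman description, this is the quotient of a compact torus $T^2\otimes\Lambda_{A_n}$ by the Weyl group $S_{n+1}$, i.e.\ the coarse moduli of degree-$(n+1)$ $\mathrm{SL}$-type data on an elliptic curve, which is homeomorphic to $\mathbb{CP}^n$. This is Proposition~\ref{prop:TypeIIHomeotype}. One must check that the relevant support-stack fibre has the symmetric product as its topological realisation, rather than something with extra cells from expansions. The plan is to invoke the description of tropical supports from \cite{LogQuot}: for points, supports are determined by their images.

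The third step produces the square. $\mathcal{K}_{2n}$ is built from logarithmic abelian varieties, and each logarithmic abelian variety carries a tropicalisation map to $a^\ast$ of its tropical abelian variety that is strict. Logarithmic Hilbert schemes of points tropicalise to the stack of tropical supports (again via \cite{LogQuot}), and the summation morphism tropicalises to tropical summation. Taking fibres over $0$ therefore gives $\hat\varpi$ compatibly with $\varpi$, and strictness is preserved under these strict base changes. This is Lemma~\ref{lem:squarefromMainThm}.

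I expect the main obstacle to be the type~III homeomorphism with $\mathbb{CP}^n$. The quotient of $(T^2)^n$ by $S_{n+1}$ acting through the $A_n$ root lattice must be shown to be $\mathbb{CP}^n$ while tracking that the integral affine structure is only singular along the diagonals. I would first try to reduce to Looijenga's theorem identifying $(E\otimes Q^\vee)/W$ with a weighted projective space, all of whose weights equal $1$ in type $A$. I would then use that topologically $T^2\simeq E$ for an elliptic curve $E$.
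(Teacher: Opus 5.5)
Your decomposition matches the paper's: the theorem is obtained there by combining Proposition~\ref{prop:TypeIHomeotype}, Proposition~\ref{prop:TypeIIHomeotype} and Lemma~\ref{lem:squarefromMainThm}. The link is identified with the sum-zero locus in $\operatorname{Sym}^{n+1}$ exactly as in Proposition~\ref{prop:description link}.

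The real difference is in how the two homeomorphism types are proved.
\begin{itemize}
\item \textbf{Type~II.} The paper just cites \cite{Morton_1967}. Your alcove argument is a self-contained proof of the same fact. It uses the same combinatorics the paper uses in families in Lemma~\ref{lem:IAmRep}: order the points cyclically and record the gaps $g_i$.
\item \textbf{Type~III.} You go through Looijenga's theorem on $(E\otimes Q^\vee)/W$. The paper needs only Abel's theorem. Once $(S^1)^2$ is identified with $E(\mathbb{C})$ as a topological group, an effective divisor of degree $n+1$ sums to $0$ exactly when it lies in $|(n+1)\cdot 0_E|$. By Riemann--Roch this linear system is $\mathbb{P}(H^0(E,\mathcal{O}_E((n+1)0_E)))\cong\mathbb{CP}^n$.
\end{itemize}
The paper's type~III argument is the type $A_n$ case of Looijenga's result, proved directly. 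So the step you flag as the main obstacle takes three lines. Also, a homeomorphism statement does not require tracking where the affine structure is singular.

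One point needs tightening. By definition $\mathpzc{K}_{2n}=[\mathfrak{A}_2(n+1)/G_{n+1}]$ and $\mathpzc{U}_{2n}=[\operatorname{Kum}_{2n}(\mathpzc{Y}_2(n+1))/G_{n+1}]$. The subgroup $(\mathbb{Z}/(n+1)\mathbb{Z})^4$ acts trivially on the base but by translation on the fibres. So over $\mathpzc{K}_{2n}$ there is no universal tropical abelian surface with a zero section on which to run the construction, as your first step proposes.

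You must instead run the construction over $\mathfrak{A}_2(n+1)$ and descend. The descent works because translation by $(n+1)$-torsion preserves the sum-zero locus.

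Your third step has the same issue: it builds the square before taking quotients. The actual content of Lemma~\ref{lem:squarefromMainThm} is the $G_{n+1}$-equivariance of that square and the descent of strictness to the quotient. Strictness descends because it is a strict \'etale local property. With that addition your argument is complete.
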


A parallel to the square in Theorem~\ref{Thm:maintrop} is understood explicitly for moduli of logarithmically smooth curves \cite{ModStckTropCurve}, and implicitly for moduli of logarithmic abelian varieties \cite{KajiwaraKatoNakayama2008,KajiwaraKatoNakayamaPartVII}. Theorem~\ref{Thm:maintrop} is the first to extend this strategy to the IHS setting. 
Theorem~\ref{mainthm:ReadOffModel} demonstrates the relevance of such a tropical diagram when working with logarithmic DM stacks.

\begin{maintheorem}[{Theorem~\ref{thm:GettingModels}}]\label{mainthm:ReadOffModel}
    Let $n\geq 2$. Consider any of the (infinitely many) choices, indexed by $\Lambda$, of diagrams of stacks upon the tropical site, 
    $$
    \begin{tikzcd} 
    \mathpzc{U}_{2n}^\Lambda \arrow[r] \arrow[d,"p"]
  & \mathpzc{U}_{2n} \arrow[d] \\
\mathpzc K_{2n}^\Lambda \arrow[r]
  & \mathpzc{K}_{2n}.
\end{tikzcd}$$
    where the horizontal maps are tropical alterations (i.e. compositions of subdivisions and changes of lattice), and $p$ is a saturated morphism of cone stacks. The base change of the diagram of Theorem~\ref{Thm:maintrop} yields a vertical, saturated, and logarithmically smooth morphism 
    $$\mathcal{U}_{2n}^\Lambda = \mathcal{U}_{2n}\times_{a^\ast \mathpzc{U}_{2n}}a^\ast \mathpzc{U}^\Lambda_{2n}\rightarrow \mathcal{K}_{2n}^\Lambda = \mathcal{K}_{2n}\times_{a^\ast \mathpzc{K}_{2n}} a^\ast \mathpzc{K}^\Lambda_{2n}$$
    in the category of DM stacks with logarithmic structure. Moreover, the open immersion $\mathcal{K}_{2n}^\circ \hookrightarrow \mathcal{K}_{2n}^\Lambda$ is a toroidal compactification.
\end{maintheorem}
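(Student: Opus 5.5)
We prove Theorem~\ref{mainthm:ReadOffModel} by base change along the square of Theorem~\ref{Thm:maintrop}, using the dictionary between tropical alterations and logarithmic modifications recalled in Section~\ref{sec:logspaces}.

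\textbf{Step 1: the base $\mathcal{K}_{2n}^\Lambda$.} The map $\mathpzc{K}^\Lambda_{2n}\to\mathpzc{K}_{2n}$ is a composite of subdivisions and changes of lattice. Applying $a^\ast$ turns it into a composite of a logarithmic modification (a logarithmic blowup, representable and proper) and a root stack along the boundary. The map $\varpi$ is strict. Hence $\mathcal{K}_{2n}^\Lambda$ is locally on $\mathcal{K}_{2n}$ a toric blowup followed by a root construction, computed from the chart given by $\varpi$. Because $\mathpzc{K}^\Lambda_{2n}$ is a genuine cone stack, the fibre product is representable by a DM stack with fine saturated logarithmic structure, not merely a logarithmic DM stack. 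Properness follows from Theorem~\ref{mainthm:Niceness properties} together with properness of logarithmic modifications and root stacks.

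\textbf{Step 2: the total space $\mathcal{U}_{2n}^\Lambda$.} By the same argument applied to $\hat\varpi$ and $\mathpzc{U}^\Lambda_{2n}\to\mathpzc{U}_{2n}$, the space $\mathcal{U}_{2n}^\Lambda$ is a DM stack with logarithmic structure. Here I would use Remark~\ref{rem:promise}: the only non-representability of $\mathcal{U}_{2n}$ is recorded by $\mathpzc{U}_{2n}$, so subdividing to a cone stack removes it.

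The induced map $\mathcal{U}^\Lambda_{2n}\to\mathcal{K}^\Lambda_{2n}$ factors as
\[
\mathcal{U}_{2n}^\Lambda \longrightarrow \mathcal{U}_{2n}\times_{\mathcal{K}_{2n}}\mathcal{K}^\Lambda_{2n} \longrightarrow \mathcal{K}^\Lambda_{2n}.
\]
The second map is logarithmically smooth by base change from $\pi_{2n}$. The first map is a logarithmic modification, hence logarithmically étale. So the composite is logarithmically smooth.

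\textbf{Step 3: verticality and saturation (main obstacle).} These are the delicate points, and I would argue tropically. Since $\hat\varpi$ and $\varpi$ are strict, the relevant logarithmic properties of the map $\mathcal{U}^\Lambda_{2n}\to\mathcal{K}^\Lambda_{2n}$ are detected on its tropicalisation, which is $p$.

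\emph{Saturation.} Logarithmic smoothness already holds, so by Kato's criterion it suffices to check integrality and saturation of the monoid maps. These are exactly saturatedness of $p$ on each cone, which is assumed.

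\emph{Verticality.} This amounts to every cone of $\mathpzc{U}^\Lambda_{2n}$ not lying in the preimage of a proper face mapping to a smaller cone than expected. I would deduce it from the description in Propositions~\ref{prop:TypeIHomeotype} and~\ref{prop:TypeIIHomeotype}: fibres of $\mathpzc{U}_{2n}\to\mathpzc{K}_{2n}$ are compact. These fibres are quotients of tori or their boundary versions, giving $\mathbb{CP}^n$, simplices, or points. The claim is that the only cones mapping into the interior of a base cone are those whose relative interior maps onto it. The worry is the type~II boundary, where the integral affine manifolds have boundary. I would check that the boundary faces still surject onto the base cone, since the boundary arises from the compact interval factor of the tropical abelian variety.

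\textbf{Step 4: toroidality.} Over $\mathcal{K}_{2n}^\circ$ the logarithmic structure is trivial, so the modification is an isomorphism there and $\mathcal{K}_{2n}^\circ\hookrightarrow\mathcal{K}_{2n}^\Lambda$ is an open immersion. The space $\mathcal{K}_{2n}^\Lambda$ is logarithmically smooth over the base field, because $\mathcal{K}_{2n}$ is (coarsely the moduli of principally polarised logarithmic abelian varieties) and logarithmic modifications preserve this. It is also a DM stack with fs logarithmic structure whose trivial locus is exactly $\mathcal{K}^\circ_{2n}$. Hence the complement is a toroidal boundary, and the immersion is a toroidal compactification. Denseness holds because the logarithmic structure is a divisorial one with dense trivial locus.
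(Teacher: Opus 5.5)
Your Step~2 has a genuine gap, and it carries the main claim of the theorem. You obtain logarithmic smoothness of $\mathcal{U}_{2n}^\Lambda\to\mathcal{K}_{2n}^\Lambda$ by composing a logarithmic modification with a base change of $\pi_{2n}$. But $\pi_{2n}$ is a morphism of logarithmic DM stacks, not of DM stacks with logarithmic structure. The only smoothness available for it is local logarithmic smoothness (Definition~\ref{defn:loclogsmooth}), which says that \emph{some} proper models are logarithmically smooth. Base change and composition with log \'etale modifications preserve only this weak property. To conclude that the \emph{given} model indexed by $\Lambda$ is logarithmically smooth in the sense of Definition~\ref{defn:logsmoothold}, you would need logarithmic smoothness to descend along logarithmic modifications. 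That fails in general, as the remark after Definition~\ref{defn:loclogsmooth} says.

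The missing idea is the argument of Proposition~\ref{prop:ProperLogSmooth}, which the paper invokes after reducing \'etale locally on $\mathcal{K}_{2n}^\Lambda$ to an atomic base carrying a logarithmic abelian surface $A$.
\begin{itemize}
\item Because the tropicalisation maps are strict, logarithmic smoothness of the chosen model is equivalent to ordinary smoothness of the strict map to $\mathcal{K}^\Lambda_{2n}\times_{a^\ast\mathpzc{K}^\Lambda_{2n}}a^\ast\mathpzc{U}^\Lambda_{2n}$.
\item That map is a base change of the summation map from $A^{[n+1]}\times_{a^\ast\mathpzc{A}^{[n+1]}}a^\ast\operatorname{Kum}^\Lambda_{2n}(\mathpzc{A})$ to $A\times_{a^\ast\mathpzc{A}}a^\ast\operatorname{Kum}^\Lambda_{2n}(\mathpzc{A})$.
\item The summation map is flat by the fibrewise criterion and miracle flatness, with regularity coming from Corollary~\ref{corr:logsmooth} (Fogarty).
\item Its fibres are the loci in $\prod_i(A^\circ)^{[n_i]}$ summing to the identity. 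These are smooth by Beauville's argument run for the semi-abelian variety $A^\circ$.
\end{itemize}
None of these inputs depends on $\Lambda$, which is why the conclusion holds for every model. Similarly, that $\mathcal{U}^\Lambda_{2n}$ is a DM stack with logarithmic structure comes from Theorem~\ref{thm:LogHilbProper}: it is an open in a conventional relative Hilbert scheme, with finite stabilisers. It does not follow from the heuristic Remark~\ref{rem:promise}.

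Two smaller points. First, your verticality argument addresses the wrong property. Compactness of the links and surjectivity of boundary faces onto base cones concern properness and exactness, not verticality. Verticality means that no nonzero vector of a cone of $\mathpzc{U}^\Lambda_{2n}$ maps to the apex of the base. This holds because over the apex the tropical abelian variety is trivial: bounded monodromy forces every section to vanish. So the tropical Kummer fibre there is a point, and $\mathpzc{U}^\Lambda_{2n}\to\mathpzc{U}_{2n}$ is bijective on realisations.

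Second, in Step~4, the statement that $\mathcal{K}_{2n}$ is ``coarsely'' the moduli of logarithmic abelian varieties does not give logarithmic smoothness of $\mathcal{K}^\Lambda_{2n}$, since that identification only holds after Kummer sheafification. The paper instead uses that $\mathcal{A}_2(n+1)\times_{\mathcal{K}_{2n}}\mathcal{K}^\Lambda_{2n}\to\mathcal{K}^\Lambda_{2n}$ is an \'etale $G_{n+1}$-cover. Its source is the base change of $\mathcal{A}_2(n+1)$ along a tropical alteration of $\mathfrak{A}_2(n+1)$, hence a toroidal compactification by Kajiwara--Kato--Nakayama (Theorem~\ref{thm:commuting square}), and toroidality is \'etale local. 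The identification of the trivial locus with $\mathcal{K}^\circ_{2n}$ is Proposition~\ref{prop:NLlocus}, which you should cite rather than assert.
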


\subsection{The geometry of one-parameter degenerations: dual intersection complexes}\label{sec:explicitDescription}
We will now describe the central fibre of a one-parameter degeneration of logarithmic generalised Kummer varieties (in both the type~II and type~III situations). Applying these results to the principally polarised case offers a description of points in the boundary of $\mathcal{K}_{2n}$. 

In the sequel $\Delta$ will denote a curve smooth over $\mathbb{C}$ with divisorial logarithmic structure from a closed point $0$. Let $A\rightarrow \Delta$ be a family of logarithmic abelian surfaces. Write $\mathpzc{A}\rightarrow \mathbb{N}$ for the associated family of tropical abelian varieties. We choose tropical alterations, indexed by $\Lambda$, of the resulting family of tropical generalised Kummer varieties. There is a corresponding logarithmic alteration of the $\Delta$-family of logarithmic Kummer varieties:
$$\operatorname{Kum}_{2n}^\Lambda(A) \rightarrow \operatorname{Kum}_{2n}(A)\rightarrow \Delta, \quad \quad \Lambda \rightarrow \operatorname{Kum}_{2n}(\mathpzc{A}) \xrightarrow{p} \mathbb{N}.$$
Section~\ref{sec:TropGenKummerIAMSing} interprets $p$ as the data of an integral affine manifold with singularities $B_K^{2n}(\mathpzc{A})$ (which has boundary in the type II situation). See Section \ref{sec:examples} for examples. The choice of subdivision $\Lambda$ specifies a polyhedral complex $B_\Lambda$ equipped with a ``subdivision" morphism to $B_K^{2n}(\mathpzc{A})$.
\begin{maintheorem}[Corollary \ref{corr:DualIntCx}]\label{mainthm:1param} Let $n\geq 1$ be any positive integer.
    The dual intersection complex of central fibre to the one-parameter degeneration $$\pi \colon \operatorname{Kum}_{2n}^\Lambda(A) \rightarrow \operatorname{Kum}_{2n}(A) \rightarrow \Delta$$
    is canonically the polyhedral complex $B_\Lambda$. %Moreover, assume the degeneration is type~III and satisfies the following hypotheses,

\end{maintheorem}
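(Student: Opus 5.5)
The plan is to reduce the statement to the general dictionary between logarithmic alterations and tropical subdivisions recalled in Remark~\ref{rem:promise} and Section~\ref{sec:logspaces}, and then to identify the relevant cone complex with $B_\Lambda$.

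\emph{Step 1: the alteration is simple normal crossings over $\Delta$.} By construction (cf.\ Theorem~\ref{mainthm:ReadOffModel}, specialised to the one-parameter base $\Delta$ with tropicalisation $\mathbb{N}$), the choice $\Lambda \to \operatorname{Kum}_{2n}(\mathpzc{A})\xrightarrow{p}\mathbb{N}$ with $p$ saturated produces $\operatorname{Kum}^\Lambda_{2n}(A)$, a DM stack with logarithmic structure. Its structure map to $\Delta$ is vertical, saturated and logarithmically smooth. Saturatedness over the rank-one base $\mathbb{N}$ forces the fibre over $0$ to be reduced. Log smoothness then makes the total space toroidal, and the central fibre is the reduced boundary divisor.

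\emph{Step 2: strata correspond to cones.} For a toroidal (log smooth, fine saturated) DM stack $X$ with tropicalisation $\Sigma_X$, the logarithmic strata of $X$ are in inclusion-reversing bijection with the cones of $\Sigma_X$. Codimension-$k$ strata correspond to $k$-dimensional cones. The tropicalisation of $\operatorname{Kum}^\Lambda_{2n}(A)$ is $\Lambda$, by the base change defining $\mathcal{U}^\Lambda$ and the strictness of $\hat\varpi$ in Theorem~\ref{Thm:maintrop}.
\begin{itemize}
\item Irreducible components of the central fibre correspond to rays of $\Lambda$ mapping onto $\mathbb{N}$, which are the vertices of the height-one slice $B_\Lambda=p^{-1}(1)$.
\item More generally, strata of the central fibre correspond to cones of $\Lambda$ not contained in $p^{-1}(0)$, which are the cells of $B_\Lambda$.
\end{itemize}
The dual intersection complex is therefore the link of $\Lambda$ over the generic ray, that is, $p^{-1}(1)=B_\Lambda$.

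\emph{Step 3: cone faces match intersections.} One must check that face relations in $\Lambda$ match intersections of components, including self-intersections. This is where canonicity enters. Because $\operatorname{Kum}_{2n}(A)$ is only a logarithmic space, it is not a scheme with monodromy-free boundary, and $\Lambda$ is a cone \emph{stack} rather than a cone complex. Accordingly, the dual complex is defined as a generalised (possibly non-simplicial) complex via the monodromy-free stratification. I would argue étale locally on $\operatorname{Kum}^\Lambda_{2n}(A)$: over a chart the logarithmic structure is given by a toric monoid $P$. Intersections of components through a point then correspond to faces of the dual cone of $P$. These local identifications glue along the face-site structure of $\Lambda$.

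\emph{Main obstacle.} The delicate point is that the construction uses DM stacks with nontrivial stabilisers, coming from the $(\mathbb{Z}/(n+1))^4$ symmetry and from the generalised Kummer construction itself. One must therefore verify two things:
\begin{itemize}
\item the strata and their incidences are governed by $\Lambda$ itself and not by a finite cover of it;
\item no extra identifications arise from stack automorphisms acting on components.
\end{itemize}
I would handle this by showing that the tropicalisation map from the stratification of $\operatorname{Kum}^\Lambda_{2n}(A)$ to $\Lambda$ is an isomorphism on posets, with connected fibres. Concretely, each stratum is irreducible: by the description of components as generalised Kummer constructions on products of logarithmic group compactifications, each stratum is the fibre of a sum map on a logarithmic Hilbert scheme stratum, and such fibres are connected. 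This reduces to connectedness of fibres of $\Sigma$ on Hilbert-scheme strata, following Beauville's argument adapted to the logarithmic setting.
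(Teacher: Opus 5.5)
Your proposal is correct and is essentially the paper's argument. The dual complex is read off from the strict map of the central fibre to $a^\ast\operatorname{Kum}^\Lambda_{2n}(\mathpzc{A})\times_{a^\ast\rho}\Delta_0$, whose points biject with cells of $B_\Lambda$. The load-bearing step is exactly the irreducibility of each stratum, which the paper gets from Theorem~\ref{prop:ExplicitDescription} together with Proposition~\ref{prop:KummerStrataIrreducible}(3).

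Two side remarks in your Step~3 and ``main obstacle'' are inaccurate, though neither is load-bearing. First, $\Lambda$ is a tropical model and hence a genuine (free) cone complex, not a cone stack. Second, for a fixed family over $\Delta$ no $(\mathbb{Z}/(n+1)\mathbb{Z})^4$ quotient appears; the stabilisers come from the logarithmic Hilbert scheme construction.
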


%We expect, but do not prove, that (up to subdivision and saturation) $\pi$ is always dlt and minimal on coarse spaces, and thus Theorem~\ref{mainthm:1param} holds unconditionally. The difficulty is proving that the coarse space associated to a certain DM simple normal crossing pair, equipped with its boundary, is dlt. 
Since this paper works with logarithmically smooth DM stacks and not dlt degenerations, we can not apply the result of \cite{NicaiseXu2016} to reprove the fact that the essential skeleton of our degeneration coincides with $B_\Lambda$.
%Note that Theorem~\ref{mainthm:1param}, together with the homeomorphism type of fibres described in Theorem~\ref{Thm:maintrop}, is consistent with 
The essential skeleton of the one-parameter degenerations we construct is already computed in \cite[Theorem~1.7.2]{BrownMazzon}, and the essential skeleton of \textit{loc. cit.} is PL homeomorphic to $B_\Lambda$. 

We emphasise that the integral affine manifold $B_K^{2n}(\mathpzc{A})$ is strictly more data than the dual polyhedral complex $B_\Lambda$. In particular, it is more data than the PL homeomorphism type of the essential skeleton of our degeneration in the sense of \cite{NicaiseXu2016}. The latter tracks only the PL homeomorphism type, while the former retains an integral affine atlas (with singularities). For type~III degenerations this integral affine manifold is expected to coincide with the base of the SYZ fibration in the sense of \cite[Section 5 and Section 6]{NicaiseXuYu2019NonArchimedeanSYZ}. 

\subsection{Irreducible components of the special fibre}\label{sec:IrredCompSpecialFib} It remains to describe the isomorphism type of each irreducible component in the special fibre of a type $s+1\in \{2 = \text{II},3= \text{III}\}$ degeneration $\operatorname{Kum}_{2n}^\Lambda(A)/\Delta$. Following Theorem~\ref{mainthm:1param}, points in the dual intersection complex to the central fibre of $\operatorname{Kum}_{2n}^\Lambda(A)$ specify a point of $B_K^{2n}(\mathpzc{A})$, which is homeomorphic to a subset of $\operatorname{Sym}^{n+1}(\mathbb{R}^s/\mathbb{Z}^s)$. To each vertex of $B_\Lambda$, and thus to each irreducible component of $\operatorname{Kum}_{2n}^\Lambda(A)$, we have associated a tuple of positive integers which sum to $n+1$ called the \textit{collision type}. The collision type tracks how the $n+1$ points parameterised by $\operatorname{Sym}^{n+1}(\mathbb{R}^s/\mathbb{Z}^s)$ meet. 

The logarithmic abelian variety $A_0 = A\times_\Delta 0$ contains a canonical open semi-abelian variety which we denote $A^\circ$, defined as the kernel of the map from $A$ to its tropicalisation.
In fact, $A_0$ is an $A^\circ$ bundle over $a^\ast \mathpzc{A}$ where $\mathpzc{A}$ is a tropical abelian variety. 
We construct in Definition~\ref{defn:Alog} a proper group object $A^{\operatorname{log}}$ which contains $A^\circ$ as an open subgroup.
The space $A^{\operatorname{log}}$ is obtained by compactifying the torus directions of $A$ using the \textit{logarithmic multiplicative group $\mathbb{G}_m^{\log}$.}
The associated tropical object is thus $\mathbb{R}^s$, and its logarithmic alterations are compactifications of $A^\circ$ by toric (stack) bundles.

In Section \ref{sec:LogHilbCompactSAV} we make sense of the logarithmic Hilbert scheme $(A^{\log})^{[m]}$ of length $m$ subschemes in $A^{\log}$. Given positive integers $n_1,...,n_k$ we denote the unordered tuple $(n_1,...,n_k) = \textbf{n}$ and write $$\Sigma\colon (A^{\log})^{[n_1]}\times(A^{\log})^{[n_2]}\times ...\times (A^{\log})^{[n_k]}\rightarrow A^{\log}\quad \quad \operatorname{Kum}_{\mathbf{n}}(A^{\operatorname{log}}) = \Sigma^{-1}(0)$$ where $\Sigma$ is the summation morphism induced by the group law of $A^{\log}$. Applying the same construction with $A^\circ$ in place of $A^{\log}$ yields a dense open subset $\operatorname{Kum}_{\mathbf{n}}(A^{\circ})$. A logarithmically smooth morphism is said to be semistable if it is saturated, vertical, and both source and target have locally free logarithmic structure.

\begin{maintheorem}[Theorem~\ref{prop:ExplicitDescription}]\label{mainthm:Explicit}
    Let $n\geq 1$ be any positive integer. Choosing $\Lambda$ such that $$\operatorname{Kum}_{2n}^\Lambda(A)\rightarrow \Delta$$ is semistable (and in particular saturated), the irreducible components of the central fibre admit the following explicit description: 
    \begin{enumerate}[leftmargin = *]
        \item \textbf{(Scheme-theoretic, interior only.)} The underlying DM stack of the interior of each irreducible component of the central fibre is isomorphic to $\operatorname{Kum}_{\mathbf{n}}(A^{\circ})$ where the tuple $\textbf{n}=(n_1,...,n_k)$ is the collision type associated to this irreducible component.
        \item \textbf{(Logarithmic category, whole irreducible component.)} The underlying DM stack of each irreducible component of the central fibre is a logarithmic alteration of $\operatorname{Kum}_{\mathbf{n}}(A^{\operatorname{log}})$ where the tuple $\textbf{n}= (n_1,...,n_k)$ is the associated collision type. 
    \end{enumerate}
\end{maintheorem}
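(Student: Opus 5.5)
We work étale locally on $\operatorname{Kum}_{2n}^\Lambda(A)$ around a point of the central fibre. Everything is compatible with the tropicalisation square, so an irreducible component of the central fibre corresponds to a vertex $v$ of $B_\Lambda$ (Theorem~\ref{mainthm:1param}). Its interior corresponds to the open star of $v$. The vertex $v$ records a configuration of $n+1$ points in $\mathbb{R}^s/\mathbb{Z}^s$ summing to zero. Grouping coincident points gives the collision type $\mathbf{n}=(n_1,\dots,n_k)$ and $k$ distinct positions $p_1,\dots,p_k$.

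\textbf{Step 1: local factorisation of the logarithmic Hilbert scheme.} The first step is a factorisation statement for $A^{[n+1]}$ (the logarithmic Hilbert scheme of the family $A/\Delta$) over the locus whose tropical support is near $v$. A length $n+1$ logarithmic subscheme whose tropical support clusters at $p_1,\dots,p_k$ splits uniquely as a union of subschemes of lengths $n_i$, each supported tropically near $p_i$. Because the $p_i$ are distinct, this is étale-locally a product. I would prove it using the description of the logarithmic Hilbert scheme via tropical supports from \cite{LogQuot}, together with the standard disjoint-support factorisation of Hilbert schemes.

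\textbf{Step 2: identifying each factor.} Next I identify each factor. Near a single point $p_i$ of the tropical abelian variety, the family $A$ restricted over the central fibre looks like the $A^\circ$-torsor over $p_i$. Its logarithmic compactification in the torus directions transverse to $p_i$ is $A^{\log}$, because the tropicalisation of $A^{\log}$ is $\mathbb{R}^s$, the tangent cone at $p_i$. Semistability of the chosen $\Lambda$ ensures that the fibre over $0$ of the star of $v$ is reduced. Translation by a lift of $p_i$, using the group law on the logarithmic abelian variety, then identifies the $i$th factor with $(A^{\log})^{[n_i]}$, up to logarithmic alteration. Over the interior, where the tropical support of each factor sits exactly at $p_i$, the factor becomes $(A^\circ)^{[n_i]}$.

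\textbf{Step 3: the summation condition.} The summation morphism on $A^{[n+1]}$ restricts to the sum of the summation morphisms of the factors, composed with the group law. The condition $\sum_i n_i p_i = 0$ holds tropically because $v$ lies in the Kummer locus. So $\Sigma^{-1}(0)$ cuts out $\operatorname{Kum}_{\mathbf{n}}(A^\circ)$ on the interior, giving part (1). On the whole component it cuts out a logarithmic alteration of $\operatorname{Kum}_{\mathbf{n}}(A^{\log})$, giving part (2). The alteration is the one induced by the subdivision $\Lambda$ restricted to the star of $v$.

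\textbf{Main obstacle.} I expect the main difficulty in Step 2 to be the compatibility of the translation with the component group of $A_0$. The lifts of the $p_i$ are only defined up to torsion and the lattice, and the quotient by the lattice must not introduce monodromy identifications. This is the source of the DM-stack structure, with stabilisers of the form $\mathbb{Z}/(n+1)$. I would handle it by passing to the universal cover $\mathbb{R}^s \to \mathbb{R}^s/\mathbb{Z}^s$, where the configuration is rigid, and then checking that the stabiliser of $v$ acts trivially on the component.
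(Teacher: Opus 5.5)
Your architecture matches the paper's. You translate each cluster, labelled by its distinct position $p_i$, to the origin. The star of $\rho_v$ then becomes a tropical model of $\operatorname{Kum}_{\mathbf{n}}(\mathbb{Z}^s)$ (Lemma~\ref{lem:TropicalByComponent}). Saturation identifies the component $Y_v$ of the relative Artin fan $E$ with the Artin fan of that star (Lemma~\ref{lem:StarAsComponent}), and you then restrict the summation map.

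However, Step~3 has a genuine gap, and your ``main obstacle'' paragraph misdiagnoses it. Let $m_i\in A_0$ be the lift of $p_i$ you translate by; in the paper this is the image of the identity under the $i$th open immersion $A^\circ\hookrightarrow A_0$. After your identification the summation map becomes $x\mapsto m(x)+\sum_i n_i m_i$, not $m$. The tropical identity $\sum_i n_ip_i=0$ only says that $\sum_i n_im_i$ maps to zero in $A_0/A^\circ$, i.e.\ lies in $A^\circ$. It need not be the identity, so what you have cut out is $m^{-1}(-\sum_i n_im_i)$, not $\operatorname{Kum}_{\mathbf{n}}(A^\circ)$.

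The missing step uses the fact that the lifts are ambiguous up to all of $A^\circ$, not ``up to torsion and the lattice''. Since $A^\circ$ is divisible, you may replace $m_1$ by $m_1+h$ with $n_1h=-\sum_i n_im_i$. Only then does the pulled-back summation equal $m$. The paper checks this equality on the dense open $\prod_i(A^\circ)^{[n_i]}$ and extends it using irreducibility and separatedness of $A^\circ$.

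Your proposed fix does not touch this issue, for three reasons.
\begin{itemize}
\item The universal cover $\mathbb{R}^s$ is already built into the tropical chart, and it cannot see an $A^\circ$-valued discrepancy.
\item The group $(\mathbb{Z}/(n+1)\mathbb{Z})^4$ enters only as the gerbe of $\mathcal{K}_{2n}$ (Lemma~\ref{lem:GroupActionGerbe}). The DM structure of the components comes from the stacky models of the logarithmic Hilbert scheme (Theorems~\ref{thm:LogHilbProper} and~\ref{thm:ComponentlogHilb}).
\item The natural candidates for ``the stabiliser of $v$'', such as translations by nonzero points of $A^\circ[n+1]$, fix $v$ yet act nontrivially on the component. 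The triviality check you propose would therefore fail.
\end{itemize}

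Second, you match components with vertices by citing Theorem~\ref{mainthm:1param}. That theorem is Corollary~\ref{corr:DualIntCx}, which the paper deduces from the statement you are proving together with Proposition~\ref{prop:KummerStrataIrreducible}, so as written your argument is circular. Reverse the logic as the paper does. Define $\mathcal{X}_v^0$ as the preimage of $Y_v$ and prove $\underline{\mathcal{X}_v^0}\cong\underline{\operatorname{Kum}^{\Gamma}_{\mathbf{n}}(A^{\log})}$ there. Only then conclude that the $\mathcal{X}_v^0$ are closed, cover $\mathcal{X}_0$, and are the irreducible components. This last step needs facts your proposal never establishes: $\operatorname{Kum}_{\mathbf{n}}(A^\circ)$ is smooth and connected, hence irreducible of dimension $2n$, and it is dense in $\operatorname{Kum}^{\Gamma}_{\mathbf{n}}(A^{\log})$.

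Relatedly, work globally rather than \'etale locally. The theorem asserts an isomorphism of whole components, and \'etale-local product decompositions do not glue to one by themselves. Your cluster decomposition is in fact global on the component, since the clusters are labelled by the distinct $p_i$. The paper makes this precise by pulling back the universal expansion along the closed immersion $\underline{a^\ast\operatorname{Kum}^{\Gamma}_{\mathbf{n}}(\mathbb{Z}^s)}\cong\underline{Y_v}\hookrightarrow\underline{E}$. Functoriality of relative Hilbert schemes then produces a global morphism $t$.
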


\subsection{Outlook: towards logarithmic IHS geometry}\label{sec:outlook}
The purpose of this section is to explain how the results of this paper fit into a broader strategy for studying moduli and geometry of singular IHS varieties. We begin by explaining how we expect our results to generalise outside the generalised Kummer locus.

The phrase \textit{descending logarithmic IHS variety} refers to a logarithmic IHS variety whose functor of points satisfies descent in the full logarithmic \'etale topology. These are examples of the structure captured by \textit{descending logarithmic spaces}, as introduced in \cite[Section 4]{logcoh}. A theme of this paper is the expectation that moduli of descending logarithmic spaces offers a useful paradigm for understanding modular compactifications to the space of polarised IHS varieties. 

Given $X$ any algebraic space with logarithmic structure, there is a morphism from $X$ to the descending space $\operatorname{Sh}(X)$ obtained by sheafifying the functor of points of $X$ in the full log \'etale topology. This assignment is far from injective, and in particular identifies any two algebraic spaces with logarithmic structure which admit a common logarithmic alteration. Whilst the spaces (e.g. the universal family) studied in this paper are not descending logarithmic spaces, we expect that points in the boundary are in natural bijection with a suitable class of descending logarithmic spaces. Similarly, we expect that there is a natural bijective correspondence between the logarithmic Quot spaces of \cite{LogQuot} (resp. the logarithmic Picard groups of \cite{MolchoWise}, the logarithmic abelian varieties of \cite{KajiwaraKatoNakayama2008}) and a natural class of descending logarithmic spaces. The correspondence between descending logarithmic spaces and the models studied in the respective \textit{loc. cit.} will be developed elsewhere. 

The motivation for studying descending logarithmic spaces is two-fold: first, moduli of logarithmic coherent sheaves, including logarithmic Hilbert and Quot schemes, are native to the category of descending logarithmic spaces i.e. the Hilbert space associated to a logarithmic algebraic space is a logarithmic algebraic space: see for example \cite[Lemma 4.1.3]{logcoh}. For this reason the logarithmic Hilbert space of a scheme with logarithmic structure need not be a scheme with logarithmic structure, see \cite[Section 9.4]{LogQuot}; by contrast the logarithmic Hilbert scheme of a descending logarithmic space is (expected but not yet proved to be) a descending logarithmic space. More generally, \cite{logcoh} assigns a theory of coherent sheaves to any descending logarithmic space from which one can construct associated moduli spaces and name a derived category. This paper exploits the following consequences: in working with moduli of descending logarithmic spaces, constructions involving coherent sheaves on IHS varieties can be extended to the boundary in a functorial way. This strategy is at the heart of Theorem~\ref{mainthm:Niceness properties}.

Second, Theorem~\ref{mainthm:Niceness properties} offers evidence that working with moduli of descending logarithmic IHS varieties simplifies compactification problems by removing one of the sources of choice in naming the special fibre in a one-parameter degeneration. Under the correspondence sketched in the third paragraph of this subsection, this observation offers an alternative perspective on the moduli of logarithmic abelian varieties \cite{KajiwaraKatoNakayamaPartVII}. 

Olsson studied moduli of logarithmic K3 surfaces \cite{Olsson2004}, but he did not work with descending logarithmic spaces. This is one of the reasons why the moduli space of \textit{loc. cit.} is non-separated. Another reason is that in constructing a Kulikov model for a K3 surface one makes choices in addition to a choice of logarithmic alteration. One such choice is related to the possibility of a floppable curve in the central fibre. In conversations with Robert Crumplin, we came to expect that two degenerations related by the simplest examples of floppable curves are $\ell$-equivalent in the sense of \cite[Section 4]{logcoh}. This observation leads to the following conjecture, which admits an obvious generalisation to the higher dimensional IHS situation, and can be compared to \cite[Theorem G]{logcoh}.

\begin{mainconjecture}\label{Conj}
    Let $\operatorname{M}_{\operatorname{K3}}^{\log}$ denote the moduli stack of polarised descending logarithmic IHS surfaces, and let $\operatorname{M}_{\operatorname{tor}}$ denote the space obtained by log \'etale sheafifying the functor of points of any toroidal compactification of the moduli of polarised K3 surfaces. There is a commutative square 
    $$% https://tikzcd.yichuanshen.de/#N4Igdg9gJgpgziAXAbVABwnAlgFyxMJZABgBpiBdUkANwEMAbAVxiRAB12I0YAnOnBF5g6AWxjAAsgF8A+sADSAZmkA9YJwYQA5tJDTS6TLnyEUARnJVajFm07c+AoSPFS5Grj36DhYib7SegZG2HgERGTm1vTMrIgcXk6+rhIy8spqno4+Lv7AOLzcQfqGIBhhpkSW0dSxdgmcAPLi2nTqDt7Ofm6FxcHWMFDa8ESgAGZFokhkIIJIljZx9uxoWKUTUzPU84gATCEgkxDTiIu7KmXHp3s7EEgqFNJAA
\begin{tikzcd}
\operatorname{M}_{K3}^{\log} \arrow[r, "\pi"] \arrow[d] & \operatorname{M}_{\operatorname{tor}} \arrow[d] \\
a^\ast \operatorname{M}_{K3}^{\operatorname{trop}} \arrow[r,"\pi^{\operatorname{trop}}"]   & a^\ast \Omega^{\operatorname{trop}}                   
\end{tikzcd}$$
where $\pi$ is a categorical moduli space for the category of descending logarithmic spaces; $\operatorname{M}_{K3}^{\operatorname{trop}}$ is an (as yet undefined) moduli space of integral affine manifolds with singularities; and $\Omega^{\operatorname{trop}}$ is an (as of yet undefined) appropriate tropical period domain parallel to the constructions of \cite{ChanEtAlTropSymm}.
\end{mainconjecture}

The morphism $\pi$ (resp. $\pi^{\operatorname{trop}}$) can be thought of as a logarithmic (resp. tropical) period map, and (despite being of relative dimension zero) is very far from being injective. Indeed, the analytifications of toroidal models are log alterations of the Kato--Usui compactified period domain \cite{KatoUsui2009}.
It is possible that compactifications of the moduli of IHS varieties equipped with a universal family of descending logarithmic IHS varieties can be obtained by naming any section of $\pi^{\operatorname{trop}}$ together with a fan structure upon the tropical period domain, but further work is required to verify this.

Conjecture \ref{Conj} proposes that (up to logarithmic \'etale sheafification) the logarithmic approach does not offer a new compactification of the moduli of IHS varieties. 
We explain how Theorem~\ref{mainthm:Niceness properties} fits with Conjecture \ref{Conj}. Let $\mathsf{K}$ be any toroidal compactification of the connected component containing $\mathcal{K}_{2n}^\circ$ in the moduli of IHS varieties such that the closure of $\mathcal{K}_{2n}^\circ$ intersects the boundary in a transverse fashion (formally, we ask that the closure of $\mathcal{K}_{2n}^\circ$ equipped with its strict logarithmic structure is logarithmic flat). We expect, but do not prove here, that $\mathsf{K}$ admits a canonical logarithmic \'etale morphism to the closure of $\mathcal{K}_{2n}^\circ$ in $\mathcal{K}_{2n}$, and thus Theorem~\ref{mainthm:Niceness properties} equips (a divisor in) certain toroidal compactifications with a universal family of logarithmic IHS varieties.

\subsection{Outline}\label{sec:outline}
The technical heart of this paper involves describing and establishing geometric properties of several logarithmic spaces. The following table summarises the logarithmic spaces we study, and the associated tropical gadgets promised by Remark \ref{rem:promise}.
\begin{center}
\begin{tabular}{p{7cm}|c|p{6cm}|c}
\textbf{Logarithmic space} & \textbf{Section} 
& \textbf{Tropical object} & \textbf{Section} \\
\hline

Log abelian variety $A/S$
& \ref{sec:log-abelian-variety}
& Family of tropical abelian varieties $\mathpzc{A}/\sigma$
& \ref{sec:Tropical AV} \\

Moduli of log abelian varieties with level structure $\mathcal{A}_g(n)$ 
& \ref{sec:ModulilogAVLevelStr}
& Moduli of tropical abelian varieties with level structure $\mathfrak{A}_g(n)$
& \ref{sec:modTropAVwithLevel} \\

Universal log abelian variety $\mathcal{Y}_g(n) \rightarrow \mathcal{A}_g(n)$ 
& \ref{sec:ModulilogAVLevelStr}
& Universal tropical abelian variety $\mathpzc{Y}_g(n)\rightarrow \mathfrak{A}_g(n)$
& \ref{sec:modTropAVwithLevel} \\

Log Hilbert scheme of points $A^{[n+1]}/S$
& \ref{sec:LogHilbPts}
& Stack of tropical supports $\mathpzc{A}^{[n+1]}/\sigma$
& \ref{sec:TropHilbSch} \\

Log generalised Kummer variety $\operatorname{Kum}_{2n}(A)/S$
& \ref{sec:LogKummer}
& Tropical generalised Kummer variety $\operatorname{Kum}_{2n}(\mathpzc{A})/\sigma$
& \ref{sec:TropGenKummer} \\

Log generalised Kummer locus $\mathcal{K}_{2n}$
& \ref{sec:LogGenKummerLocus}
& Tropical generalised Kummer locus $\mathpzc{K}_{2n}$
& \ref{sec:TropGenKummerlocus} \\

Universal log generalised Kummer family $\mathcal{U}_{2n} \to \mathcal{K}_{2n}$
& \ref{sec:LogGenKummerLocus}
& Universal tropical generalised Kummer family $\mathpzc{U}_{2n} \to \mathpzc{K}_{2n}$
& \ref{sec:TropGenKummerlocus}
\end{tabular}
\end{center}

\subsection{Related literature}\label{sec:relatedlit}
In \cite{halle2026degenerationsgeneralizedkummervarieties}, the authors adopt a different approach to constructing degenerations of generalised Kummer varieties. Over the next three paragraphs we summarise how the approach of \textit{op. cit.} relates to the logarithmic perspective. Starting with a type~II degeneration of abelian surfaces $A\rightarrow C$ over a curve $C$ which is smooth away from $0\in C$, they pass to a good degeneration $I^n_{A/C}/C$ of its relative Hilbert scheme of points as constructed in \cite{GulbrandsenHalleHulek2019}. The central object of study in \cite{halle2026degenerationsgeneralizedkummervarieties} is the closure of the generalised Kummer variety associated to the family of conventional abelian varieties $A\times_{C}(C\backslash 0)$ inside $I^n_{A/C}/C$. The degeneration $I^n_{A/C}/C$ is constructed as the GIT quotient of a certain relative Hilbert scheme of points $\operatorname{Hilb}^n(A[n]/C[n])\rightarrow C[n]$ by the action of a torus. The space $I^n_{A/C}/C$ is roughly the moduli space of points on expansions where points are not permitted to meet the singular locus. The GIT quotient is required because $C[n]$ is not simply the moduli space of expansions: instead it also has torus factors.

On a heuristic level, the logarithmic approach adopted by Maulik and Ranganathan \cite{MR20} is to take the stack quotient of $A[n]/C[n]$ by the above extraneous torus actions \textbf{before} passing to the relative Hilbert scheme of points. The authors then form the relative Hilbert scheme of (logarithmically flat, i.e. disjoint from the singular locus) points of the resulting stack quotient - this is an open in the usual relative Hilbert scheme. Finally, a closed subset is removed to leave a separated DM stack. This approach has been developed for moduli of subschemes of dimension at most one.

The approach of \cite{LogQuot}, which we adopt and which works in greater generality, is a shift in perspective on the approach of Maulik and Ranganathan. Instead of starting with a space of expansions, one first names a moduli functor parameterising equivalence classes of subschemes on expansions. The geometry of this functor is controlled by building a \textit{tropical support} map to the \textit{stack of tropical supports}. The stack of tropical supports is simply the moduli space of (minimal) expansions (i.e. the map forgets the embedded subscheme, and any parts of the expansion which were not needed to house that subscheme), and this moduli space of expansions can be entirely understood in terms of tropical geometry: the stack of tropical supports is obtained by applying $a^\ast$ to a tropical version of the Hilbert scheme. More recently, the functor of points of the logarithmic Quot scheme as described in \cite{LogQuot} has been reinterpreted (up to subtleties concerning logarithmic \'etale descent) as a moduli space of logarithmic flat quotients of the structure sheaf in the category of logarithmic coherent sheaves \cite{logcoh}.

We now summarise the differences between our results and those of \cite{halle2026degenerationsgeneralizedkummervarieties}. At a high level our results extend those of \cite{halle2026degenerationsgeneralizedkummervarieties} on three axes: we work over a base of any dimension, not just a one-parameter degeneration, allowing us to compactify a moduli space; we handle the type~II and type~III situations, whereas op.~cit.\ treats only type~II; and our degenerations have good properties (including logarithmic smoothness) in every dimension. In op.~cit. the relevant smoothness property (being dlt) is established only for $n=1$ (surfaces), conjectured for $n=2$ (fourfolds), and known to fail for $n= 4$ (see \cite[Example 6.21]{halle2026degenerationsgeneralizedkummervarieties}). 

Moreover, we obtain a complete description of the dual intersection complex for all $n$. At $n=2$ the dual intersection complex presented here is a simpler polyhedral structure on the same topological space: we recover the standard $2$-simplex without subdivision (see Figure~\ref{fig:DualIntCx}), whereas op.~cit. obtains a non-trivial triangulation of it. We remark also that our degenerations are canonical in a slightly stronger sense than \textit{op. cit.}, whose constructions depended on a parameter $N$ which tracked a chosen type~II degeneration of the underlying abelian surface. The translation to a scheme with logarithmic structure is only required in the type III case, which is outside of the scope of \textit{op. cit.}
We thus address several of the open problems posed in \cite[Section 1.6]{halle2026degenerationsgeneralizedkummervarieties}. In particular, the tropical generalised Kummer construction clarifies the relationship between the dual intersection complexes of the logarithmic Hilbert scheme and the logarithmic generalised Kummer construction.

\subsection*{Acknowledgements} The origin of this paper is a conversation between the author and Calla Tschanz which took place at the Oberwolfach workshop \textit{Real and Logarithmic Enumerative Geometry}. The author wishes to thank Calla for teaching him the (conventional) generalised Kummer construction, and the workshop organisers for the invitation. 

The project began whilst the author was supported as a research fellow by Peterhouse, Cambridge, and completed while a guest supported by the Max Planck Institute for Mathematics in Bonn. For a long time the author was confused by the homeomorphism type of the tropical generalised Kummer variety associated to a tropical abelian variety. This confusion was resolved by an elegant proof suggested to him by his summer project student Yuvraj Singh. Yuvraj's entertaining proof is recorded as Proposition~\ref{prop:TypeIIHomeotype}.

The author thanks Jeremy Feusi for private correspondence, including communicating the definition of level structure on a tropical abelian variety. This correspondence was of further help in preparing Section \ref{sec:Tropical AV}. The author thanks Chikara Nakayama for private correspondence which aided the author's understanding of logarithmic Hodge theory, and thanks Nakayama, Dhruv Ranganathan, and Terry Song for comments on drafts. Thanks are due to Feusi and Aitor Iribar L\'opez for sharing a draft of their forthcoming work \cite{FeusiLopezTropAV}. The author similarly thanks Thibault Poiret and Yoav Len for sharing their work in progress \cite{poiret_len_logarithmic_tropical_prym} which was helpful whilst revising Section \ref{sec:log-abelian-variety}. The author thanks Dan Abramovich, Dori Bejleri, Robert Crumplin, Mark Gross, Daniel Huybrechts, Samuel Johnston, Davesh Maulik, Thibault Poiret, Dhruv Ranganathan, Qaasim Shafi, Terry Song, Pim Spelier, Michael Temkin, Martin Ulirsch, Ajith Urundolil Kumaran, and Jonathan Wise for interest, and many conversations from which the author learned a great deal. 

\subsection*{Notation} We consider all stacks to be over the spectrum of $\mathbb{C}$. All logarithmic structures are fine and saturated. For $X$ a (logarithmic) scheme and $G$ a group acting on $X$ we write the stack quotient $[X/G]$. We reserve the notation $\Delta_0$ for $\text{Spec}(\mathbb{C})$ equipped with rank one logarithmic structure, and $\Delta$ for a curve smooth over $\mathbb{C}$ with divisorial logarithmic structure from a point $\Delta_0\in \Delta$. 

\subsection*{Use of generative AI} Whilst preparing this manuscript, the author freely used generative AI to identify and understand references (especially where no English version was available). Some ideas in the proof of Proposition \ref{prop:TangentSpaceANDUniquePointwise}(2) were developed over an extended exchange with GPT-5.4 (OpenAI). The final displayed equality in case (4) in the proof of Lemma \ref{lem:tidyup} was computed by GPT-6 (OpenAI) before being verified, and the same model extended the $\alpha=2$ version of Figure~\ref{fig:hhz} to the $\alpha = 8$ case included in this manuscript. Generative AI was also used to produce tikz code for some diagrams and to identify references for statements the author knew to be true. References identified by AI were consulted before being cited. Both Opus 5 (Anthropic) and GPT-6 (OpenAI) were used to review the manuscript. During this review the proof of Proposition~\ref{prop:TwoFormExist}(4) was dramatically shortened and now uses a strategy suggested by GPT-6. The manuscript text was written by the author, although isolated clauses suggested by generative AI have been incorporated during the review process.

\section{Preliminaries: logarithmic geometry via Artin fans}
In this section we summarise the logarithmic geometry required later. Readers familiar with logarithmic geometry may safely skim up to Section \ref{sec:logspaces}. Starting in Section \ref{Sec:LogIHS} we make sense of logarithmic Hodge theory and logarithmic fundamental groups of logarithmic spaces.

Logarithmic geometry extends the toric dictionary to connect tropical geometry with the geometry of degenerations. Standard references for logarithmic geometry include \cite{Kato1989,OgusBook2018, temkin2022introductionlogarithmicgeometry}, but we follow the tropical and stack-theoretic perspective developed in \cite{AWbirational,ModStckTropCurve,OlssonLog}. 

\subsection{Tropical functors of points} 
The \textit{toric dictionary}\cite{Fulton+2016} gives an equivalence between two categories. 
First a category arising in algebraic geometry: \textit{affine toric varieties} with torus-equivariant morphisms. 
Second, the combinatorial/tropical category $\mathsf{RPC}$ of \textit{torsion-free, fine, saturated, and sharp} monoids.

A monoid $P$ is said to be torsion-free, fine, and saturated if it can be realised as the intersection of finitely many rational half-spaces within a finitely generated free abelian group. It is sharp if $P$ contains no non-zero units. In particular, $\mathsf{RPC}$ coincides with the category of strictly convex rational polyhedral cones.

\subsubsection{The face topology} The Zariski topology on toric varieties restricts to define the \textit{face} topology on $\mathsf{RPC}$. We denote the resulting site \textbf{RPC}. Open morphisms in \textbf{RPC} are exactly the inclusions of faces of a rational polyhedral cone, and a collection of opens forms a cover if it surjects set-theoretically. Every presheaf on the site \textbf{RPC} is a sheaf \cite[Proof of Proposition 2.3]{ModStckTropCurve}.

We write $\textbf{RPC}_\sigma$ for the comma site over a cone $\sigma$. Thus objects are cones equipped with a morphism to $\sigma$, and morphisms are morphisms over $\sigma$ (i.e. commuting triangles).

Just as objects of algebraic geometry can be understood through their functor of points on the category of affine schemes, objects of tropical geometry can often be understood as functors on \textbf{RPC}. We use this language to set up moduli problems within tropical geometry.

The category \textsf{RPCC} of rational polyhedral cone complexes is the full subcategory of sheaves on \textbf{RPC} whose objects can be expressed as a colimit over a finite poset of rational polyhedral cones in which all morphisms are face inclusions, and
such that the intersection of any two cones is a single common face.

\subsubsection{Subdivisions and alterations} 
 We identify a rational polyhedral cone $\sigma$ with its functor of points, considered as a sheaf on \textbf{RPC}. More generally, a fan in the sense of toric geometry gives rise to a sheaf on \textbf{RPC}.\footnote{We warn the reader that this assignment is not injective: two fans give rise to the same sheaf on \textbf{RPC} if and only if they have the same underlying rational polyhedral cone complex.} 

\begin{definition}
    The \textit{underlying topological space} $|\sigma|$ of a cone $\sigma\subset \mathbb{Z}^n$ is the subset $\mathbb{R}_{\geq 0}\otimes \sigma$ of $\mathbb{R}^n$ equipped with the subspace topology from $\mathbb{R}^n$. The underlying topological space of a cone complex $\Sigma$, where we express $\Sigma$ as the colimit over a diagram $$J\rightarrow \mathsf{RPCC}$$ of rational polyhedral cones, is the colimit over the induced diagram in the category of topological spaces. 
\end{definition}

\begin{definition}
    A \textit{subdivision} of a rational polyhedral cone $\sigma$ is a morphism of sheaves which arises from a subdivision of $\sigma$ in the category of fans \cite{Fulton+2016}. A subdivision of a sheaf $\mathpzc{F}$ on \textbf{RPC} is a morphism of sheaves $$\mathcal{F}_\Sigma \rightarrow \mathpzc{F}$$ represented by subdivisions of rational polyhedral cones. A \textit{tropical alteration} of a rational polyhedral cone $\sigma$ is a monomorphism $\Sigma \rightarrow \sigma$ in the category RPCC which induces an isomorphism on underlying topological spaces. A morphism of sheaves on \textbf{RPC} is said to be a tropical alteration if it is represented by tropical alterations of rational polyhedral cones. A tropical model of a sheaf $\mathpzc{F}$ is a tropical alteration with domain a cone complex. The \textit{underlying topological space} $|\mathpzc{F}|$ of a sheaf $\mathpzc{F}$ of sets on \textbf{RPC} which admits a tropical model is the underlying topological space of any tropical model.
\end{definition}

Note that a morphism $f$ of cone complexes induces a morphism of topological realisations which we denote $|f|$. In our next definition we set $\rho$ to be the monoid $\mathbb{N}$, and write $1\in \rho \otimes \mathbb{R}_{\geq 0}$ for the image of $1$ in $\mathbb{N}\subset \mathbb{R}_{\geq 0}$.
See \cite[Corollary~2.7]{battistella2026vectorbundlesolssonfans} for a tropical characterisation of exact morphisms of (fine and saturated) monoids.

\begin{definition}\label{defn:Link}
    Let $\pi \colon \Lambda \rightarrow \rho$ be an exact morphism of cone complexes, inducing a morphism of topological realisations $$\pi_\mathbb{R}\colon |\Lambda| \rightarrow |\rho|.$$ The \textit{link} of $\pi$ is the topological space $\pi_\mathbb{R}^{-1}(1)$. Let $\mathpzc{F}$ be a sheaf on $\textbf{RPC}_\rho$ which admits a tropical model $\Lambda\rightarrow \mathpzc{F}$ exact over $\rho$. The \textit{link of $\mathpzc{F}$} is the link of any such tropical model.\footnote{This is well defined because any two tropical models admit a common alteration (their fibre product over $\mathpzc{F}$), and tropical alterations induce identifications of topological realisations.}
\end{definition}

The link, which is a topological space and no extra data (such as an integral affine structure), is invariant under tropical alterations of $\Lambda$. A \textit{point} of a rational polyhedral cone $\sigma$ is a morphism of rational polyhedral cones $\rho \rightarrow \sigma$ where $\rho$ is the monoid $\mathbb{N}$. Thus for any morphism of cone complexes $\Sigma \rightarrow \sigma$ Definition~\ref{defn:Link} associates a topological space to every point $\rho \rightarrow \sigma$ of $\sigma$.

\subsection{Stacks with logarithmic structure}
We now recall the definition of logarithmic scheme, and more generally algebraic stack with logarithmic structure. Logarithmic schemes offer a category in which we can describe morphisms from objects of algebraic geometry to objects built from tropical geometry. In particular we can discuss maps from a degeneration to its associated integral affine structure.

\subsubsection{Logarithmic schemes}\label{sec:logschemesandtrop} An algebraic stack with logarithmic structure is an algebraic stack $\underline{X}$, equipped with a sheaf of monoids $M_X$ and a morphism of sheaves (of multiplicative monoids) $\alpha\colon M_X\rightarrow \mathcal{O}_X$ such that $$\alpha\colon \alpha^{-1}(\mathcal{O}_X^\times) \rightarrow \mathcal{O}_X^\times$$ is an isomorphism. A morphism $X\rightarrow Y$ of stacks with logarithmic structure is said to be \textit{strict} if the logarithmic structure of $Y$ pulls back to the logarithmic structure of $X$. See \cite[Section~1.4]{Kato1989} for pullbacks of logarithmic structures. 

The connection between logarithmic and tropical geometry is via toric varieties, which carry a canonical logarithmic structure. The canonical logarithmic structure upon a toric variety $X$ is defined by declaring $M_X$ to be the subsheaf of $\mathcal{O}_X$ consisting of functions whose zero set is supported on the toric boundary.

All the algebraic stacks with logarithmic structure, often denoted $X$, in this paper are \textit{fine and saturated}, which means that $X$ admits a strict \'etale cover, and for each $U_i\rightarrow X$ in this cover there exists a toric variety $Y_i$ and a strict morphism $U_i\rightarrow Y_i$. In particular, the stalk of $M_X/\mathcal{O}_X^\times$ is torsion free, sharp, fine, and saturated. 

An algebraic stack with logarithmic structure is said to be a logarithmic scheme if the underlying stack $\underline{X}$ is a scheme. A logarithmic structure is said to be \textit{locally free} if for each scheme theoretic point $p$ there is an integer $k$ such that the stalk of $M_X/\mathcal{O}_X^\times$ at $p$ is isomorphic to $\mathbb{N}^k$. 

The site obtained by equipping the category of (fine and saturated) logarithmic schemes with the strict \'etale topology is denoted \textbf{LogSch}. 
\subsubsection{The $a^\ast$ functor - from tropical to logarithmic geometry.}
Our next goal is to recall the $a^\ast$ functor defined in \cite{ModStckTropCurve}. This functor sends a stack defined on \textbf{RPC} to a stack defined on \textbf{LogSch} - thus objects of tropical geometry are promoted to objects of logarithmic geometry.

\begin{definition}[{\cite[Section 2.2]{AWbirational}}]
    An \textit{Artin cone} is a stack with logarithmic structure obtained by taking the global quotient stack of an affine toric variety with toric logarithmic structure by its dense torus. An alteration of an Artin cone $a^\ast \sigma$ is a morphism $a^\ast \Sigma \rightarrow a^\ast \sigma$ induced by a tropical alteration $\Sigma\rightarrow \sigma$ of $\sigma$.
\end{definition} 

\begin{definition}[{\cite[Definition~2.2.4]{AWbirational}}]
    A Noetherian logarithmic scheme is said to be atomic if, when stratified by the isomorphism type of the stalk $(M_X/\mathcal{O}_X^\times)_p$, it has a unique stratum which is closed and connected, and the restriction of the sheaf $M_X/\mathcal{O}_X^\times$ to this stratum is the constant sheaf.
\end{definition}

For $X$ an atomic logarithmic scheme, the \textit{Artin cone of} $X$ is the stack quotient of the toric variety associated to the dual of the sharp monoid $M_X/\mathcal{O}_X^\times$ by its dense torus.
\'Etale locally, every Noetherian logarithmic scheme is globally charted \cite[Lemma 2.2.5]{AWbirational}. 

For \textbf{C} a site we write $\mathsf{CFG}(\textbf{C})$ for the category of small categories fibred in groupoids over $\textbf{C}$. Following \cite[Section 6]{ModStckTropCurve}, one can define a functor
$$a^\ast \colon \mathsf{CFG}(\textbf{RPC})\rightarrow \mathsf{CFG}(\textbf{LogSch}).
$$
This functor is the bridge connecting logarithmic and tropical geometry. In particular, given a sheaf (resp. stack) $\mathpzc{F}$ on \textbf{RPC}, the restriction of $a^\ast \mathpzc{F}$ to the category of Noetherian schemes with logarithmic structure is the strict \'etale sheafification (resp. stackification) of the presheaf (resp. prestack) sending a logarithmic scheme $X$ to $\mathpzc{F}(\sigma)$ whenever $X$ is atomic and has Artin cone $a^\ast \sigma$, and the empty set otherwise.

\begin{remark} Through its functor of points, a cone complex \cite[Section 2.6]{abramovich2015skeletons}, cone stack \cite[Section 2]{ModStckTropCurve}, tropical abelian variety in the sense of Definition~\ref{defn:TropicalAV}, or piecewise linear space \cite[Section 1]{LogQuot} can be upgraded to a stack on the category of logarithmic schemes. Only the first two examples give rise to sheaves which are Artin stacks equipped with logarithmic structure.
\end{remark}

\subsubsection{The Artin fan construction}
Following \cite[Section 2.3]{AWbirational}, a stack $a^\ast \mathpzc{F}$ on the category of logarithmic schemes is said to be an Artin fan if it admits a strict, representable, and \'etale cover by Artin cones $$\left\{a^\ast \sigma \rightarrow a^\ast \mathpzc{F}\right\}.$$ 
For $X$ in a broad class of schemes\footnote{Recall the data of a scheme includes a morphism to $\operatorname{Spec}(\mathbb{C})$ in this paper.} with logarithmic structure, including the case $X$ is locally Noetherian, there is a canonical way to associate an Artin fan to $X$; thus the Artin fan construction associates to each Noetherian logarithmic scheme an object of tropical geometry.
%strict morphism $$X\rightarrow \mathcal{A}_X$$ which is initial among morphisms to Artin fans which have faithful monodromy over Olsson's stack of logarithmic structures \cite[Section 2]{AWbirational}.
%
%\begin{example}\label{example:NonUniqueAfan}
%    Let $X$ be the union of two copies of $\mathsf{Spec}(\mathbb{C})$. Let $f_1\colon X\rightarrow [\mathbb{A}^1/\mathbb{G}_m]$ be the map sending both points to $0$, and let $f_2\colon X\rightarrow [\mathbb{A}^1/\mathbb{G}_m]\cup [\mathbb{A}^1/\mathbb{G}_m]$ which surjects to closed points. The logarithmic schemes induced by $f_1$ and $f_2$ are identical.
%\end{example}
%Example \ref{example:NonUniqueAfan} shows that many maps give rise to the same logarithmic scheme. 
\begin{proposition}[{\cite[Theorem~5.7]{abramovich2015skeletons},\cite{AWbirational}}]
    For $X$ a Noetherian logarithmic scheme (over the complex numbers) there is a cone stack $\mathsf{Trop}(X)$ (i.e. a stack on \textbf{RPC} satisfying representability properties described in \cite{ModStckTropCurve}) and a strict morphism
    $${X}\rightarrow \mathcal{A}_X := a^\ast \mathsf{Trop}(X)$$ which is initial among strict maps from $X$ to an Artin fan with faithful monodromy. We call $\mathcal{A}_X$ \textit{the} Artin fan of $X$. Consider any morphism of Noetherian logarithmic schemes $f\colon X\rightarrow Y$. There is an initial Artin fan $\mathcal{A}_{X/Y}$ which fits into a commutative square with strict vertical arrows of the following shape:
    $$\begin{tikzcd}
X \arrow[r] \arrow[d] & Y \arrow[d] \\
\mathcal{A}_{X/Y} \arrow[r] & \mathcal{A}_Y
\end{tikzcd}.$$ 
\end{proposition}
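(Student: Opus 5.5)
The plan is to build $\mathsf{Trop}(X)$ by gluing local tropical data and then obtain the relative statement by a factorisation. For the absolute statement, I would first use the fact recalled above that $X$ is étale locally atomic \cite[Lemma 2.2.5]{AWbirational}. For each atomic strict étale chart $U_i \to X$, the stalk of $M_X/\mathcal{O}_X^\times$ along the closed stratum gives a toric monoid $P_i$. Its dual cone $\sigma_i$ yields a strict morphism $U_i \to a^\ast\sigma_i$, namely the Artin cone of $U_i$; strictness follows because the chart $U_i \to Y_i$ to a toric variety composes with the quotient $Y_i \to [Y_i/T]$. Cospecialisation maps between strata induce face maps among the $\sigma_i$. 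I would then define $\mathsf{Trop}(X)$ as the colimit, in stacks on \textbf{RPC}, of the diagram of cones $\sigma_i$ and face morphisms indexed by the strata and their étale monodromy. This colimit is a cone stack in the sense of \cite{ModStckTropCurve}. Since $a^\ast$ preserves these colimits, the local maps $U_i\to a^\ast\sigma_i$ descend to a strict morphism $X\to a^\ast\mathsf{Trop}(X)$.

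Next I would verify the universal property. Given a strict map $X\to \mathcal{B}$ to an Artin fan with faithful monodromy, I work étale locally on an atomic $U_i$. The composite lands in a single Artin cone $a^\ast\tau$ of $\mathcal{B}$, and strictness forces the map to factor through the face $a^\ast\sigma_i\subset a^\ast\tau$, because the pulled-back characteristic monoid is $P_i$. These local factorisations are unique: morphisms of Artin cones correspond to morphisms of cones, and faithful monodromy rules out extra automorphisms. Uniqueness lets the local factorisations glue to $a^\ast\mathsf{Trop}(X)\to \mathcal{B}$. Taking $\mathcal{B}=\mathcal{A}_X$ itself shows that $\mathcal{A}_X$ has faithful monodromy, which should follow from the construction taking the colimit only over the monodromy actually realised on $X$. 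The main obstacle is this step: making precise that the colimit is a cone stack with faithful monodromy and that the gluing is well defined requires the representability machinery of \cite{ModStckTropCurve,abramovich2015skeletons}. I would import that argument, following \cite[Theorem 5.7]{abramovich2015skeletons}.

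For the relative statement, given $f\colon X\to Y$, I compose to get $X\to Y\to\mathcal{A}_Y$. I then consider the fibre product $X\times_{\mathcal A_Y}$ with the category of Artin fans over $\mathcal{A}_Y$. Concretely, I would apply the absolute construction relative to $\mathcal{A}_Y$: on each atomic chart, take the cone $\sigma_i$ together with its induced map to the cone of $\mathcal{A}_Y$, and glue as before. This produces $\mathcal{A}_{X/Y}\to\mathcal{A}_Y$ and a strict $X\to\mathcal{A}_{X/Y}$ making the square commute. Initiality among such squares with faithful monodromy over $\mathcal{A}_Y$ follows from the same local-uniqueness-and-glue argument, now carried out in the slice category over $\mathcal{A}_Y$.
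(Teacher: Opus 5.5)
The paper gives no argument of its own here. The statement is quoted from \cite[Theorem~5.7]{abramovich2015skeletons} and \cite{AWbirational}, so your outline, which also imports its hardest step from the first of these, is consistent with what the paper does. Your route does differ from the original one, however.

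\textbf{Comparison of routes.} In \cite{AWbirational}, following Abramovich, Chen, Marcus and Wise, the Artin fan is obtained as the initial factorisation of the tautological map from $X$ to Olsson's stack of logarithmic structures $\mathcal{L}og$ through an \'etale representable morphism. The relative fan $\mathcal{A}_{X/Y}$ comes from the same construction carried out over $\mathcal{A}_Y$. In that approach faithful monodromy is essentially representability over $\mathcal{L}og$, so existence and initiality come almost for free. The cone stack is then recovered afterwards through the equivalence $a^\ast$ of \cite[Theorem~6.11]{ModStckTropCurve}. Your colimit-of-cones route makes $\mathsf{Trop}(X)$ explicit from the start, which is the form the paper actually uses. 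The cost is that all the content sits in the two steps you defer.
\begin{itemize}
\item \emph{Faithful monodromy of the colimit.} You are right to index by strata and keep only the image of monodromy in $\operatorname{Aut}(\sigma_i)$. A naive $2$-colimit over charts and overlaps would pick up isotropy from loops in a stratum even when they act trivially on the cone. Your sentence deducing faithfulness by taking $\mathcal{B}=\mathcal{A}_X$ is circular, since it presupposes the property it is meant to prove.
\item \emph{Gluing of local factorisations.} This needs that a map from $a^\ast\sigma_U$ to an Artin fan with faithful monodromy is determined, up to unique $2$-isomorphism, by its restriction along $U\to a^\ast\sigma_U$. That is exactly where the hypothesis enters: it fails for $BG$, where non-trivial $G$-torsors on $X$ do not factor.
\end{itemize}

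\textbf{The relative case.} Make explicit that monodromy must be measured in automorphisms of the pair $(\sigma_i,\ \sigma_i\to\mathsf{Trop}(Y))$, $2$-cells included, not in $\operatorname{Aut}(\sigma_i)$. If you literally ``glue as before'' you recover $\mathcal{A}_X$, which in general admits no map to $\mathcal{A}_Y$ making the square $2$-commute.

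For example, let $Y=(\mathbb{A}^2\times\mathbb{G}_m)/\iota$ with $\iota(y_1,y_2,t)=(y_2,y_1,-t)$, with divisorial logarithmic structure from $y_1y_2=0$. Let $X\to Y$ be the blowup of the double curve.
\begin{itemize}
\item $\mathsf{Trop}(X)$ is a single smooth $2$-cone, so $\mathcal{A}_X$ is an Artin cone. All $\mathbb{Z}/2$-torsors on it are trivial.
\item The composite $X\to Y\to\mathcal{A}_Y=[a^\ast\mathbb{N}^2/(\mathbb{Z}/2)]$ carries the non-trivial double cover pulled back from $\mathbb{A}^2\times\mathbb{G}_m\to Y$.
\item Hence this composite cannot factor through $\mathcal{A}_X$.
\end{itemize}
The relative fan must remember this. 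It does so as the diagonal ray of the $\mathbb{Z}/2$-equivariant subdivision of $[\mathbb{N}^2/(\mathbb{Z}/2)]$, which keeps a $\mathbb{Z}/2$ stabiliser acting trivially on the ray.
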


We call $\mathcal{A}_{X/Y}$ the relative Artin fan of $X$ over $Y$.
In general $\mathcal{A}_X\ne \mathcal{A}_{X/Y}$, and thus the Artin fan construction is not always functorial. See \cite[Section 2.10]{abramovich2015skeletons} for an example.

\begin{example}
    An effective Cartier divisor on a smooth scheme $\underline{X}$ specifies a map from $\underline{X}$ to $[\mathbb{A}^1/\mathbb{G}_m]$. Thus, a strict normal crossing pair $(X,D=\sum_iD_i)$ such that every intersect of $D_i$ is connected, together with an ordering on the $D_i$ is the same data as a morphism $$\varpi\colon X\rightarrow [\mathbb{A}^n/\mathbb{G}_m^n] = a^\ast \Sigma.$$ Note $\mathsf{Trop}(X)\subset \Sigma$ is a sub-cone complex; potentially a proper sub-cone complex as $\varpi$ need not hit the closed point of $a^\ast \Sigma$.
\end{example}
\begin{example}
    Consider a strict normal crossing degeneration: a map $\pi\colon X \rightarrow \mathbb{A}^1$ such that the central fibre is a strict normal crossing divisor $\pi^{-1}(0) = \sum_iD_i$. Since the Artin fan of $\mathbb{A}^1$ is a cone, \cite[Lemma A.21]{herr2026loggeometryliftingrational} ensures that the Artin fan construction is functorial in this case. We thus obtain a commutative square,
    $$
        % https://tikzcd.yichuanshen.de/#N4Igdg9gJgpgziAXAbVABwnAlgFyxMJZABgBpiBdUkANwEMAbAVxiRAA0QBfU9TXfIRQBGclVqMWbADrSAtnRwALAEYrgAQS4A9Yd14gM2PASKjh4+s1aIQdbbLg46AJwAEsl0oj6+xwURkFtRWUrb2js7usgrKcABmwAAqLhBoXAAU7ACU3OIwUADm8ESg8alySGQgOD6IAMw8ZRVI9dS1SABMTSDlEJWI1R2Iwj19A6I1dd0UXEA
\begin{tikzcd}
X \arrow[d] \arrow[r]              & \mathbb{A}^1 \arrow[d] \\
a^\ast \mathsf{Trop}(X) \arrow[r] & a^\ast \rho = a^\ast \mathsf{Trop}(\mathbb{A}^1).          
\end{tikzcd}
    $$
    Assuming each logarithmic stratum is connected, the underlying topological space of the dual intersection complex of $\pi^{-1}(0)$ can be obtained from the map $\mathsf{Trop}(X)\rightarrow \mathbb{N}$ as the preimage of $1$ under the map $\mathsf{Trop}(X)\otimes\mathbb{R}_{\geq 0} \rightarrow \mathbb{R}_{\geq 0}$. Thus the dual intersection complex is a choice of polyhedral structure upon the link.
\end{example}

\begin{remark} 
    In the sequel we often consider logarithmic schemes over some atomic base $S$ with tropicalisation a cone $\kappa$ (i.e. with Artin fan $S\rightarrow a^\ast \kappa$). We are often led to study sheaves on the comma site $\textbf{RPC}_\kappa$ whose objects are morphisms of cone complexes $\sigma\rightarrow \kappa$.
\end{remark}

\subsection{Toroidal compactifications: from scheme discs to tropical rays}
Let $(X,\sum_iX_i)$ be a strict normal crossing pair which is proper over $\mathbb{C}$, and let $\Delta$ be a trait as in the notation section with generic point $\Delta^\circ$. Fix a morphism of (not logarithmic) schemes $f\colon \Delta^\circ\rightarrow X$ with image disjoint from the divisor $\sum_iX_i$. Write $\Sigma$ for the cone stack such that $a^\ast \Sigma$ is the Artin fan $\mathcal{A}_X$ of the logarithmic scheme obtained by equipping $X$ with divisorial logarithmic structure from the divisor $\sum_i X_i$.

Since $X$ is proper, on the level of underlying schemes $f$ extends to a unique morphism $\overline{f}$ from $\Delta$ to $X$. Since the preimage of the boundary of $X$ under $\overline{f}$ is contained within $\Delta_0$, the morphism $\overline{f}$ can be upgraded (in a unique way) to a morphism of logarithmic schemes from $\Delta$ to $X$, where $X$ and $\Delta$ are understood to be logarithmic schemes with the obvious divisorial logarithmic structures. Since the tropicalisation of $\Delta$ is a cone, \cite[Lemma A.21]{herr2026loggeometryliftingrational} shows that tropicalisation in this situation is functorial, and we obtain a morphism
$$a^\ast f^\text{trop}\colon \mathcal{A}_{\Delta}\cong a^\ast \mathbb{N}\rightarrow \mathcal{A}_X\cong a^\ast \Sigma.$$
\begin{definition}\label{defn:RayFromDVR}
    The \textit{tropicalisation of $f$} is the unique morphism $$f^\text{trop}\colon \mathbb{N}\rightarrow \Sigma$$ such that $a^\ast f^\text{trop}$ induces the map of the previous paragraph.
\end{definition}
Note $f^\text{trop}$ is unique since $a^\ast$ induces an equivalence between the category of cone stacks and the category of Artin fans, see \cite[Theorem~6.11]{ModStckTropCurve}.
\subsection{Local properties for morphisms of logarithmic schemes}
The following definitions record the standard translation of \cite[Definition~4.1]{OlssonLog} to the language of Artin fans. They are consistent with Kato's original definitions, as recorded in \cite{Kato1989}.
\begin{definition}\label{defn:logsmoothold}
    A morphism $f\colon X\rightarrow Y$ of Noetherian logarithmic schemes is said to be \textit{logarithmic \'etale} (resp. \textit{smooth, flat}) if the morphism $$X\rightarrow {\mathcal{A}_{X/Y}}\times_{\mathcal{A}_Y} Y$$ is \'etale (resp. smooth, flat) as a morphism of stacks.
\end{definition}

Key examples of logarithmic \'etale maps include logarithmic alterations. 

\begin{definition}[{\cite{MolchoWise,MW23}}]\label{defn:logAlt}
    A \textit{logarithmic alteration} of a Noetherian DM stack with logarithmic structure $X$ is a morphism of DM stacks with logarithmic structure $X'\rightarrow X$ which, strict \'etale locally upon $X$, is the base change of an alteration of Artin cones.
\end{definition}

\begin{definition}[{\cite{hu2025grothendiecktopologieslogarithmicmodifications,MolchoWise,MW23,nak17}}]
    The \textit{logarithmic \'etale site} is the site structure on the category of logarithmic schemes whose covers are generated by strict \'etale covers (inherited from conventional algebraic geometry) and logarithmic alterations. A morphism of sheaves on \textbf{LogSch} is called a logarithmic \'etale cover if it is represented by logarithmic \'etale covers on the category of logarithmic schemes.
\end{definition}

\subsection{Logarithmic spaces}\label{sec:logspaces} Neither logarithmic abelian varieties \cite{KajiwaraKatoNakayama2008} nor logarithmic Hilbert schemes of points (when the logarithmic structure has rank at least two, see \cite[Section 9.4]{LogQuot}) are represented by schemes with logarithmic structure. This motivates the definition of logarithmic space (in the second sense) \cite[Section 10]{MR3962244}.

\begin{definition}\label{defn:logalgspaces}
    \textit{Logarithmic spaces} are sheaves of sets on \textbf{LogSch} which admit a logarithmic \'etale cover by a scheme with logarithmic structure. A \textit{logarithmic DM stack} is a CFG over \textbf{LogSch} with diagonal representable by logarithmic spaces, and which admits a logarithmic \'etale cover by a scheme with logarithmic structure. 
\end{definition}

The object which \cite{MR3962244} refers to as a \textit{logarithmic space in the first sense} will be called an algebraic space with logarithmic structure in the sequel.

\begin{example}
    The stack quotient of $\mathbb{A}^1$ (equipped with its toric logarithmic structure) by $\mu_2$ acting via the dense torus of $\mathbb{A}^1$ is a logarithmic space, even though it is obtained by equipping a Deligne--Mumford stack with non-trivial stabiliser with a logarithmic structure. A logarithmic alteration of a logarithmic space $X$ is a morphism in the category of logarithmic spaces $X'\rightarrow X$ represented by logarithmic alterations on the category of logarithmic schemes.
\end{example}

\begin{remark}
    We expect that, in order to generalise the results of this paper to compactify other moduli of IHS varieties, it will be convenient to restrict attention to only those logarithmic spaces whose functor of points satisfies logarithmic \'etale descent. These structures appear as descending logarithmic spaces studied in \cite[Section 4]{logcoh}.
\end{remark}

The logarithmic Hilbert schemes we consider do not admit a logarithmic alteration where the domain is a scheme with logarithmic structure. We are thus led to make the following definition.

\begin{definition}
    A \textit{sheafy DM stack with logarithmic structure} is a DM stack with logarithmic structure having the property that, when considered as a CFG over the logarithmic \'etale site, it is a category fibred in setoids (i.e. stabilisers are trivial).
\end{definition}

All of the logarithmic spaces we consider will admit logarithmic alterations which are sheafy DM stacks with logarithmic structure.

\subsubsection{Tropicalisation maps} For every logarithmic DM stack $Y$ we construct, we will also construct a morphism represented by algebraic stacks with logarithmic structure $$\varpi_Y\colon Y\rightarrow a^\ast \mathpzc{F},$$ where $\mathpzc{F}$ will have an explicit description. In the special case that $Y$ is a Noetherian logarithmic scheme, the role of $a^\ast \mathpzc{F}$ is played by the Artin fan of $Y$: this perspective on tropicalisation generalises the Artin fan construction.

Let $$\Sigma\rightarrow \mathpzc{F}$$ be a tropical alteration, where $\Sigma$ is a cone complex. When the domain of the map $$Y_\Sigma = a^\ast \Sigma\times _{a^\ast \mathpzc{F}} Y\rightarrow Y$$
is an algebraic stack with logarithmic structure and $Y$ is Noetherian, this map is a logarithmic alteration, and is called a \textit{proper model} of $Y$. 

\section{Logarithmic irreducible holomorphic symplectic varieties}\label{Sec:LogIHS}
Recall that an irreducible holomorphic symplectic variety $X\rightarrow S$ over a scheme $S$ of finite type is a smooth projective $S$-scheme such that for any $\mathbb{C}$-valued point $s$ of $S$, the base change $X_s = X\times_S s$ has simply connected analytic space, and $H^{2,0}(X_s,\mathbb{C})$ is generated by a nowhere-degenerate two form. 
In this section, motivated by properties of logarithmic generalised Kummer varieties and logarithmic Hilbert schemes of points of K3 surfaces, we generalise the definition of IHS variety to the logarithmic setting. 
%See https://arxiv.org/pdf/2203.10391v2 opening of section 2.3 for a reference, although this seems hard to find.

%For $X$ a scheme with logarithmic structure the literature provides a logarithmic version of being simply connected, of logarithmic Hodge theory, and of smoothness. However, this paper must work with logarithmic spaces. Worse, the logarithmic spaces we study need not admit logarithmic alterations which are schemes with logarithmic structure. On the other hand, we will always have access to a logarithmic alteration which is a sheafy DM stack with logarithmic structure.  
\subsection{Smoothness} Even the definition of logarithmic smoothness is subtle for logarithmic spaces.

\begin{definition}\label{defn:loclogsmooth}
    We say that a morphism of logarithmic spaces $X\rightarrow S$ is \textit{locally logarithmically smooth} if, possibly after replacing $S$ by a strict \'etale cover, there exist proper models $S'\rightarrow S$ and $X'\rightarrow X\times_S S'$ such that the induced morphism $X'\rightarrow S'$ is logarithmically smooth in the sense of Definition~\ref{defn:logsmoothold}.
\end{definition}

\begin{remark}
    Any morphism of schemes with log structure which is logarithmically smooth is locally logarithmically smooth. The converse is not true, see \cite[Example IV.4.3.4]{OgusBook2018} (or \cite[Example 2.6]{hu2025grothendiecktopologieslogarithmicmodifications} for an exposition). Local logarithmic smoothness is a natural generalisation of logarithmic smoothness to morphisms of descending logarithmic spaces (motivated in Section \ref{sec:outlook}). 
\end{remark}
\begin{remark}
    The author is not aware of an example of a locally logarithmically smooth morphism between static schemes (in the sense of \cite[Definition 2.2.3]{logcoh}) with logarithmic structure where both source and target are static, and which is not logarithmically smooth.
\end{remark}

Recall the conventions for $\Delta$ and $\Delta_0$ from the notation section. In proofs we will often deduce information about the central fibre of a degeneration from its general fibre. Such arguments provide information about families which satisfy the following property. 
\begin{definition}
    We say that a (polarised) saturated, vertical, and logarithmically smooth morphism $Y\rightarrow \Delta_0$ is \textit{(polarised) smoothable} if, possibly after replacing $Y$ and $\Delta_0$ by logarithmic alterations, it coincides (as a polarised logarithmic scheme) with the central fibre to a projective, vertical, logarithmically smooth, and saturated morphism $X\rightarrow \Delta$.
\end{definition}

\subsection{Homotopy groups of logarithmic spaces} 
Throughout this section all schemes (resp. DM stacks) will be of finite type over $\mathbb{C}$. Associated to such a scheme (resp. DM stack) $Y$ is a \textit{logarithmic analytic space} $Y^{an}$ (resp. logarithmic analytic orbifold). See \cite[Paragraph (1.1)]{KatoNakayama1999} for the case that $Y$ is a scheme with logarithmic structure. For $Y$ a DM stack with logarithmic structure we define $Y^{{an}}$ as a stack on the category of analytic spaces with logarithmic structure as follows. Choose a strict \'etale open cover $U\rightarrow Y$, form the groupoid $$U\times_YU\rightrightarrows U,$$ and declare $Y^{{an}}$ to be the quotient $[(U\times_{Y}U)^{an}\rightrightarrows U^{an}]$ as an analytic stack.

As a scheme degenerates, the topology of its associated analytic space changes, but topological behaviour of the \textit{Kato--Nakayama space} of the associated logarithmic analytic space is preserved in logarithmically smooth families \cite[Theorem 5.1]{NakayamaOgus2010RelativeRounding}. In this section we will make sense of the homotopy type of the Kato--Nakayama space of a logarithmic space of finite type over $\mathbb{C}$.

\begin{definition}[{\cite{KatoNakayama1999}}]\label{defn:KNSpace}
    Let $Y$ be a scheme with logarithmic structure of finite type over $\mathbb{C}$, and write $Y^{an}$ for the associated logarithmic analytic space. The \textit{topological Kato--Nakayama} space $|\operatorname{KN}(Y)|$ is obtained by equipping the set
$$
|\operatorname{KN}(Y)| := \bigl\{\,(x,h) \;\big|\; x \in Y^{an},\ h \colon M_{Y,x}^{\mathrm{gp}} \to S^{1} \text{ group hom with } h(u) = u(x)/|u(x)| \text{ for all } u \in \mathcal{O}_{Y,x}^{*} \bigr\},
$$
with the coarsest topology satisfying the following hypotheses.
\begin{enumerate}[leftmargin =*]
    \item The projection $\tau \colon |\operatorname{KN}(Y)| \to Y^{an}$, $(x,h) \mapsto x$, is continuous.
    \item For every open $U \subset Y^{an}$ and every section $m \in \Gamma(U, M_Y^{\mathrm{gp}})$, the evaluation map
    $$
    \tau^{-1}(U) \longrightarrow S^{1}, \qquad (x,h) \longmapsto h(m_x),
    $$
    is continuous.
\end{enumerate}
\end{definition}

For $Y$ a scheme with logarithmic structure of finite type over $\mathbb{C}$, we write $Y_{\text{k\'et}}^{an}$ for the Kummer logarithmic \'etale site of the analytic space $Y^{an}$, and refer to \cite[2.5]{Illusie2002} for the definition.

\begin{definition}
    The (kummer log \'etale) Kato--Nakayama space $\mathsf{KN}(Y)$ associated to $Y$ is a ringed space obtained by equipping $|\operatorname{KN}(Y)|$ with the sheaf of rings $\mathcal{O}_{Y_{\text{k\'et}}}^\text{KN}$ described in \cite[3.2]{IllusieKatoNakayama2005}.
    Following \cite[3.1]{IllusieKatoNakayama2005}, there is a morphism of topoi,
    $$\tau \colon \operatorname{KN}(Y)\rightarrow Y^{an}_{\text{k\'et}}.$$
\end{definition}

Recall from \cite[Paragraph 1.2.5]{KatoNakayama1999} that a morphism $f\colon X\rightarrow Y$ of logarithmic schemes of finite type over $\mathbb{C}$ induces a morphism $f^\text{KN}$ of Kato--Nakayama spaces. 

\begin{remark}\label{rem:Kummeret}
    The original definition of the Kato--Nakayama space \cite[Section 1]{KatoNakayama1999}, which is consistent with the use throughout \cite{KatoUsui2009}, is equipped with a different (not kummer \'etale) sheaf of rings. The underlying topological space is the same in both versions, and they differ only in their sheaf of functions. In this paper all Kato--Nakayama spaces are the Kummer logarithmic \'etale version.
\end{remark}

\begin{remark}
    Even the topological Kato--Nakayama space of $Y$ is not invariant under non-kummer logarithmic alterations of $Y$. Instead Proposition~\ref{prop:HomotopyGroups} will show that the homotopy type of the Kato--Nakayama space is well defined.
\end{remark}

Since the logarithmic generalised Kummer varieties we consider do not admit logarithmic alterations which are schemes with logarithmic structure, we will extend Definition~\ref{defn:KNSpace} to define the Kato--Nakayama space of a sheafy DM stack with logarithmic structure. Following \cite[Section 4]{noohi2005foundationstopologicalstacksi}, it will be convenient to discuss topological stacks, i.e. stacks on the site \textbf{Top} whose underlying category consists of compactly generated topological spaces and whose open covers are open covers of topological spaces.

\begin{lemdefn}\label{lemdefn:KNSpace}
    Let $X$ be a separated sheafy DM stack with locally free logarithmic structure of finite type over $\mathbb{C}$, and choose a groupoid presentation
    $$
        s,t\colon R = U\times_X U \rightrightarrows U
    $$
    for $X$, where $U$ and $R$ are schemes with logarithmic structure, and $s,t$ are strict \'etale.\footnote{Note this is the same data as a strict \'etale cover $U\rightarrow X$ by schemes with logarithmic structure, which exists by hypothesis.}
    \begin{enumerate}[leftmargin = *]
        \item The induced morphism in the category of topological spaces $$(s^{\operatorname{KN}},t^{\operatorname{KN}})\colon |\operatorname{KN}(R)|\rightarrow |\operatorname{KN}(U)| \times |{\operatorname{KN}}(U)|$$ is an embedding. Moreover the natural map $q\colon \operatorname{KN}(U) \rightarrow \operatorname{KN}(U)/\operatorname{KN}(R)$ in the category of topological spaces admits local sections. Thus by \cite[Proposition 7.9]{noohi2005foundationstopologicalstacksi} the topological stack quotient $[|\operatorname{KN}(U)|/|\operatorname{KN}(R)|]$ is representable by a topological space which we denote $|{\operatorname{KN}}(X)|$. 
        \item The homeomorphism type of $|{\operatorname{KN}}(X)|$ does not depend on the choice of groupoid presentation.
    \end{enumerate}
    The \textit{Kato--Nakayama} space of $X$, denoted ${\operatorname{KN}}(X)$, is the ringed space obtained by equipping $|{\operatorname{KN}}(X)|$ with the sheaf of rings descended from $\mathcal{O}_{U_{k\acute{e}t}}^{\operatorname{KN}}$.
    \begin{enumerate}[leftmargin = *, start = 3]
        \item The isomorphism type of ${\operatorname{KN}}(X)$ is independent of the choice of groupoid presentation, and there is a canonical morphism of ringed spaces $$\tau\colon \operatorname{KN}(X)\rightarrow X^{an}.$$
    \end{enumerate}
\end{lemdefn}

\begin{remark}
    In the logarithmic category, sheafy DM stacks with logarithmic structure behave like non-stacky objects. Lemma/Definition~\ref{lemdefn:KNSpace} captures this fact because their Kato--Nakayama space is naturally a topological space not a topological stack. We will discover that extending the analysis of Kato--Nakayama spaces to sheafy DM stacks with logarithmic structure consequently does not introduce substantial subtlety.
\end{remark}

\begin{proof}[Proof of Lemma \ref{lemdefn:KNSpace}]
    %This paragraph collects observations required for this proof. First there is an isomorphism of topological spaces $$|\operatorname{KN}(U)|\times |\operatorname{KN}(U)|\cong |\operatorname{KN}(U)\times \operatorname{KN}(U)|$$ because the Kato--Nakayama functor preserves fibre products over integral maps \cite[Proposition~3.1]{cailotto2004katonakayama}. 
    Since $X$ is a sheafy DM stack with logarithmic structure, its functor of points on the category of logarithmic schemes is set-valued. Equivalently the diagonal morphism $(s,t) \colon R \to U \times U$ is a monomorphism. The equivalence between injectivity of relations into $U\times U$ and trivial stack structure hold for presentations of Kato--Nakayama spaces in our statement. We will use this equivalence repeatedly. 
    
    We first prove that the displayed map in (1) is injective. Suppose for contradiction that there exist two points $r_1,r_2\in |\operatorname{KN}(R)|$ which are sent to the same point $p$ of $|\operatorname{KN}(U)|\times |\operatorname{KN}(U)|$. The image of $\tau(p)\in U\times U$ in the underlying Deligne--Mumford stack $\underline{X}$ of $X$ is a closed point $q$. The gerbe structure of $\underline{X}$ at $q$ is a finite group $G_q$. According to paragraph 1 we will reach a contradiction if we show that $G_q$ acts freely on points of the Kato--Nakayama space lying over $q$.

    Since the functor of points of $X$ on the category of logarithmic schemes is setoid valued, the group $G_q$ acts freely upon standard logarithmic points lying over $q$. Note that $G_q$ acts on the pullback of $M_X$ to the closed subscheme $q$. The monoid of global sections splits $$\varphi \colon M_{q}\xrightarrow{\cong} \overline{M}_q\oplus \mathbb{C}^\times$$ because $q$ is a $\mathbb{C}$ (which is algebraically closed) valued point, and thus admits a global chart. The splitting is not canonical. Given a choice of splitting and $h\in M_q$, we write $|h|$ for the modulus of the image of $h$ under projection in $\mathbb{C}^\times$. Choosing $f\in G_q$, we will say the splitting $\varphi$ is $f$-modulus split if $|f(h)| = |h|$ for all $h$ in $M_q$. Note that in any case $f$ necessarily fixes $\mathbb{C}^\times$ because it comes from a $\mathbb{C}$ morphism of schemes. 

We claim that, after fixing a choice of $f$, it is possible to choose a splitting with respect to which $M_q$ is $f$-modulus split. Indeed, since $f$ is an automorphism and thus acts as an automorphism of $\overline{M}_q\oplus \mathbb{C}^\times/\mathbb{C}^\times$, the image of any ray in $\overline{M}_q\oplus \{1\}$ is an element of the form $(\rho,\gamma)\in \overline{M}_q\oplus\mathbb{C}^\times $ where $\rho$ is a ray. Pick an ordering on the rays $\{\rho_1,...,\rho_k\}$ of $\overline{M}_q\oplus \{1\}$ such that, up to adding an element of $\mathbb{C}^\times$, we find $f(\rho_j)$ in the set $\{\rho_1,...,\rho_{j+1}\}$. This ensures (and can be achieved by ensuring) that either $f(\rho_j,1) =(\rho_{j+1},\gamma_{j+1})$ for some $\gamma_{j+1}$ in $\mathbb{C}^\times$, or $f(\rho_j)$ is (up to the $\mathbb{C}^\times$ factor) already in the set $\{\rho_1,...,\rho_{j}\}$. We further assume that $|f(\rho_s)|=1$ for $s< r$ for some integer $r> 0$. We will inductively extend this situation to replace $r$ by $r+1$. 
    
%    First note it is harmless to ensure that $|\rho_r|=1$, since the log structure is free so $\rho_r$ is not in the $\mathbb{Q}$ span of $\{\rho_i\}_{i<r}$, and thus we can freely alter our splitting to rescale $\rho_r$. 
To establish the inductive step and ensure $|f(\rho_r)|=1$ we consider cases. 

    \begin{enumerate}[leftmargin = *]
        \item If under our current splitting $f(\rho_r) = (v,\gamma)$ where $v\in \overline{M}_q$ lies in the $\mathbb{Q}$-span of $\rho_1,...,\rho_{r}$, then since also $f$ permutes rays we know that $v=\rho_j$ for some $j\leq r$. Because $G_q$ is a finite group there exists a minimal integer $k$ such that $$f^k(\rho_r) = f^{k-1}(\rho_j)+\gamma= \rho_r+ \alpha$$ for some $\alpha$ in $\mathbb{C}^\times$. By the choice of ray ordering we fixed, we know that $j\leq r$. By the inductive hypothesis we have $|f^{k-1}(\rho_j)|=1$ and so $|\alpha| = |f(\rho_r)| = |\gamma|$. Since $f$ has finite order, there is some integer $w$ such that $f^{kw} = \operatorname{id}$ and so $|f^{kw}(\rho_r)| = |\rho_r| = 1$. It follows that $|\alpha|^w=1$, but $\mathbb{R}_{> 0}$ is torsion-free and so $|\alpha|=|\gamma| = 1$.\footnote{This argument does not show that we can achieve a splitting such that the argument of $f(\rho_j)=1$. This is because the multiplicative group of $\mathbb{C}$ has torsion.}
\item The only other case is that $v$ does not lie in the $\mathbb{Q}$-span of $\rho_1,...,\rho_r$. In this case we are free to alter the splitting through the automorphism characterised by sending $\rho_j\in \overline{M}_q\oplus \mathbb{C}^\times $ to $\rho_j$, unless $\rho_j=v$ where we send $v \mapsto v/\gamma$.
%        \item The remaining situation is that $f(\rho_r)$ does not lie in the $\mathbb{Q}$-span of $\rho_1,...,\rho_{r-1}$ and $\mathbb{C}^\times$. Indeed, if it lies in the $\mathbb{Q}$ span then coefficients of each $\rho_i$ can be taken to lie in $\mathbb{Q}_{\geq 0}$ by sharpness of $\overline{M}_q$. The $\mathbb{Q}$ independence assumption allows us to change splitting so that $|f(\rho_r)|=1$ without rescaling $\rho_i$ for any $i\leq r$ or $f(\rho_i)$ for any $i< r$.
    \end{enumerate}
    
    The existence of distinct $r_1,r_2$ (fixed for contradiction in second paragraph of this proof) is equivalent to the existence of a morphism $k\colon M_q\rightarrow S^1$ extending the argument function on $\mathbb{C}^\times$, and an element $f$ of $G_q$ which fixes $k$ under precomposition. We will use these data to construct a standard log point valued point of $X$ which is invariant under $f$, thus reaching a contradiction.
    
    Choose an $f$-modulus splitting $M_q\cong \overline{M}_q\oplus \mathbb{C}^\times$ and label rays $\rho_1,...\rho_k$ of $\overline{M}_q\oplus \{1\}$. Consider a morphism $$M_q\cong \overline{M}_q\oplus \mathbb{C}^\times \rightarrow \mathbb{N}\oplus \mathbb{C}^\times$$
$$\left(\sum_i a_i \rho_i,\lambda\right) \mapsto \left(\sum_i a_i,|\lambda|k\left(\left(\sum_i a_i\rho_i,\lambda \right)\right)\right)$$
    This map is invariant under $f$ because $f$ induces a bijection from the set of rays to the set of rays, and thus the map to $\mathbb{N}$ is $f$ invariant; the argument part of the map to $\mathbb{C}^\times$ is $f$ invariant by hypothesis; the modulus part is invariant because we chose an $f$-modulus split splitting.
    We have thus specified a standard log point valued point of $X$ which is fixed by $f$. Contradiction. The displayed map in (1) is injective.

    The displayed map in (1) is proper because $|X|$ is separated. Any universally closed injection into a Hausdorff topological space is an immersion, so our map is an immersion. 
    
    We will show that the map $q\colon |\operatorname{KN}(U)|\rightarrow |\operatorname{KN}(U)|/|\operatorname{KN}(R)|$ is a local homeomorphism. The idea is to find for each point $p$ in $\operatorname{KN}(U)$ an open set $W$ containing $p$ which is mapped homeomorphically to its image - i.e. no two points distinct points in $W$ have a relation between them. Indeed, first note that the image $e$ of $U$ in $R$ is open because $U\rightarrow X$ is unramified \stacks{02GE}. Moreover, since $\operatorname{KN}(R)$ embeds in $\operatorname{KN}(U)\times \operatorname{KN}(U)$ it has the subspace topology and so there exists an open subset $T$ of $\operatorname{KN}(U)\times \operatorname{KN}(U)$ whose intersect with $\operatorname{KN}(R)$ is $e$. We can choose $W$ to be an open of $|\operatorname{KN}(U)|$ such that $W\times W$ is a subset of $T$. We claim that the restriction of $q$ to $W$ is a homeomorphism. Injectivity is immediate because we just arranged that any relation with both ends in $W$ is the identity. The map is continuous by definition of the quotient topology on $|\operatorname{KN}(U)|/|\operatorname{KN}(R)|$. 
    It remains to check that $q$ is an open map, but for $V\subset |\operatorname{KN}(U)|$ open $q^{-1}(q(V)) = t(s^{-1}(V))$, and $t$ is open because it comes from a strict \'etale map.
    
    For claim (2), it suffices to show that replacing a presentation by a refinement induces a Morita equivalent groupoid presentation, since any two presentations admit a common refinement. Concretely, let $U' \rightarrow U$ be a refined cover, and write
$$
R = U\times_X U,
\qquad
R' = U'\times_X U'.
$$
This induces a morphism of groupoid presentations
$$
\begin{tikzcd}
R' \arrow[r] \arrow[d,shift left=.5ex] \arrow[d,shift right=.5ex] & R \arrow[d,shift left=.5ex] \arrow[d,shift right=.5ex] \\
U' \arrow[r] & U.
\end{tikzcd}
$$

To check that the induced morphism
$$\left(
|\operatorname{KN}(R')| \rightrightarrows |\operatorname{KN}(U')|\right)
\longrightarrow\left(
|\operatorname{KN}(R)| \rightrightarrows |\operatorname{KN}(U)|\right)
$$
is a Morita equivalence we first verify full faithfulness: we have
$
R'
\cong
U'\times_U R\times_U U',
$
so preservation of fibre products by the Kato--Nakayama functor over strict morphisms \cite[Proposition~3.1]{cailotto2004katonakayama} gives
$$
|\operatorname{KN}(R')|
\cong
|\operatorname{KN}(U')|
\times_{|\operatorname{KN}(U)|}
|\operatorname{KN}(R)|
\times_{|\operatorname{KN}(U)|}
|\operatorname{KN}(U')|,
$$
but this is what it means to be fully faithful.

We now argue that the induced morphism is essentially surjective. Indeed the map
$
|\operatorname{KN}(U')|\to |\operatorname{KN}(U)|
$
is surjective and open because $U'\to U$ is surjective and strict étale. 
%(see \cite[Section 4.3]{MR4286366} openness of a map obtained by applying the Kato--Nakayama functor to a strict \'etale map is standard, see standard fact, see for example \cite[Section 4.3]{MR4286366}). 
Surjectivity ensures that every point $u\in |\operatorname{KN}(U)|$ lifts to $u'\in |\operatorname{KN}(U')|$ and then $(u',\mathrm{id}_u)$ is a point of
$
|\operatorname{KN}(U')|
\times_{|\operatorname{KN}(U)|}
|\operatorname{KN}(R)|
$
with image $u$.
Claim (3) is descent of sheaves and morphisms along a strict \'etale open cover. 
\end{proof}

Remark \ref{rem:NeedRelative} might aid the reader in parsing our next definition.
    
\begin{definition}\label{defn:logFG}
 Let $X$ be a separated complex logarithmic space which admits a choice $q\colon X'\rightarrow X$ of logarithmic alteration by a sheafy DM stack with locally free logarithmic structure which is of finite type over $\mathbb{C}$. The $k^\mathsf{th}$ homotopy group of $X$ is the $k^{\mathsf{th}}$ homotopy group of the Kato--Nakayama space $\mathsf{KN}(X')$.
Let $S$ be the standard logarithmic point.
The relative homotopy groups of a morphism of logarithmic spaces $X\rightarrow S$ are defined as the kernel $$\pi_k(\mathsf{KN}_0(X)) := \mathsf{Ker}(\pi_k(\mathsf{KN}(X))\rightarrow \pi_k(\mathsf{KN}(S))).$$ 
\end{definition}

The profinite completion of the logarithmic fundamental group of $X$ is studied, through an alternative perspective, in \cite{CarchediScherotzkeSibillaTalpo2025}. We now check that the above definition did not depend on choices.

\begin{proposition}\label{prop:HomotopyGroups}
    The logarithmic homotopy groups in Definition~\ref{defn:logFG} of a logarithmic space $X$ of finite type over $\mathbb{C}$ are well defined independent of the choice of logarithmic alteration $X'\rightarrow X$.
\end{proposition}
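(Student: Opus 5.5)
Any two choices $q_1\colon X_1'\to X$ and $q_2\colon X_2'\to X$ of logarithmic alteration by separated sheafy DM stacks with locally free logarithmic structure will be compared through a common refinement, and it then suffices to show that a single logarithmic alteration $p\colon X''\to X'$ between two such stacks induces a weak homotopy equivalence $\mathsf{KN}(X'')\to\mathsf{KN}(X')$. To obtain the refinement, I would form $X_1'\times_X X_2'$. This is a logarithmic alteration of each $X_i'$, since logarithmic alterations are stable under base change. It need not have locally free logarithmic structure, so I would then pass to a further alteration of it, namely a subdivision of the tropicalisation into unimodular (smooth) cones, which makes the logarithmic structure locally free. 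The result is a sheafy DM stack with locally free logarithmic structure, because sheafiness is a property of the functor of points on \textbf{LogSch}, which is preserved by monomorphisms into sheafy objects.

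To show that $\mathsf{KN}(p)$ is a weak equivalence, I would first reduce to a local statement. By Lemma/Definition~\ref{lemdefn:KNSpace}, $|\mathsf{KN}(X')|$ is the quotient of $|\mathsf{KN}(U)|$ by an étale equivalence relation with local sections, and $X''\times_{X'}U\to U$ is again an alteration. Since $q$ is a local homeomorphism and Kato--Nakayama spaces commute with strict base change (the fibre product result of Cailotto used in that proof), the problem becomes local on $X'$ in the strict étale topology. A map that is a weak equivalence over each member of an open cover, and over all finite intersections, is a weak equivalence; I would cite May/Dugger--Isaksen for this.

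We may therefore assume $X'$ has a strict chart to a smooth Artin cone $a^\ast\sigma$, with $\sigma\cong\mathbb{N}^k$, and that $p$ is the base change of $a^\ast\Sigma\to a^\ast\sigma$ for a tropical alteration $\Sigma\to\sigma$, with $\Sigma$ smooth. The key claim is that $\mathsf{KN}(p)$ is then a proper surjection whose fibres are contractible. Because the chart is strict, the fibre of $\mathsf{KN}(X'')\to\mathsf{KN}(X')$ over a point $(x,h)$ depends only on the toric local model. In that model, the fibre over a point of the stratum of a face $\tau\subseteq\sigma$ is the real blowup of the link: the Kato--Nakayama space of the toric variety of $\Sigma|_\tau$ over the torus-fixed point of $\sigma$, with the argument fixed. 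Concretely, over the torus fibre $(S^1)^k\times\mathbb{R}^k_{\ge0}\cong\mathsf{KN}(\mathbb{A}^k)$, the alteration acts on $\mathbb{R}_{\geq0}^k$ through a subdivision and leaves the angular data untouched. So the fibres are compact polyhedra homeomorphic to the cone $|\tau|\cap\{\text{unit simplex}\}$ (the link), which is a ball and hence contractible. This is the classical fact that the Kato--Nakayama space of a toric variety is the product of $\mathsf{Hom}(M^{gp},S^1)$ with the $\mathbb{R}_{\ge0}$-valued points, and that a subdivision induces a homeomorphism on $\mathsf{Hom}(P,\mathbb{R}_{\ge0})$ up to the collapse of the link. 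A change of lattice (Kummer part) is the exception: it induces a finite covering on the angular factor, which is an isomorphism on the relevant homotopy exactly because $X'$ is sheafy. The $\mu$-gerbe cancels the covering, by the same freeness argument as in the proof of Lemma/Definition~\ref{lemdefn:KNSpace}. Given contractible fibres, a proper map between locally compact, locally contractible spaces with contractible fibres is a cell-like map, and hence a weak homotopy equivalence by Vietoris--Begle/Smale; this is sufficient for homotopy groups.

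I expect the main obstacle to be this last local step, in two respects. First, one must identify the fibres precisely and handle the Kummer (lattice-change) part of an alteration, where the angular factor genuinely changes; here I would first try writing the alteration as a subdivision followed by a root stack and treat the root stack as a stack quotient that sheafiness renders free. Second, the relative groups $\pi_k(\mathsf{KN}_0(X))$ then follow immediately, since the comparison maps commute with the structure maps to $\mathsf{KN}(S)$, so the kernels agree.
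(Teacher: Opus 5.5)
Your argument is correct, but it is organised differently from the paper's. The paper first factors an alteration as a change of lattice followed by a log modification, and then replaces the modification by a projective one so that it may be taken to be a log blowup. For schemes with log structure it then cites \cite[Proposition 9.3.5]{binda2025logarithmicmotivichomotopytheory} for blowup invariance, and the divisibility of $S^1$ for root stacks. Sheafy DM stacks are handled by base changing a strict \'etale presentation, applying the scheme case at every level of the \v{C}ech nerve, and invoking Segal's theorem on good simplicial spaces (Lemma~\ref{lem:NerveStuff} with \cite[Proposition A.1(ii)]{SEGAL1974293}).

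You instead prove the topological statement directly. In a strict toric chart, $\mathsf{KN}$ of a toric variety splits as its $\mathbb{R}_{\geq 0}$-points times a compact torus. Hence $\mathsf{KN}(p)$ is a proper surjection whose fibres are cell complexes dual to the induced subdivisions of links, so they are contractible, and Smale's Vietoris theorem gives a weak equivalence.

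Your route gains in three ways. It is self-contained, it treats non-projective modifications without the factorisation and projective-refinement detour, and it makes visible why links appear. The cost is point-set input: local compactness, metrisability and local contractibility of $|\mathsf{KN}|$ of a sheafy DM stack, which Smale's theorem needs. You also only obtain a weak equivalence, which suffices for the statement. Since properness of $\mathsf{KN}(p)$ and its fibres are local, you can in fact apply Smale globally, so the open-cover principle is only needed to compute fibres.

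One step should be stated more precisely. For the Kummer part, $|\mathsf{KN}(X'')|\to|\mathsf{KN}(X')|$ is a homeomorphism. It is not a finite covering that happens to induce isomorphisms on homotopy groups: a genuine finite covering would change $\pi_1$. Also, a root stack is not a gerbe.

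In a chart, the fibre over $(x,h)$ is the set of extensions of $h$ to the group $M'^{\mathrm{gp}}_x$ of the root log structure. This set is non-empty because $S^1$ is divisible. It is a torsor under $\operatorname{Hom}(M'^{\mathrm{gp}}_x/M^{\mathrm{gp}}_x,S^1)$, and this group is exactly the stabiliser at $x$. So you need the stabiliser to act simply transitively on the fibre, not merely freely as in the proof of Lemma/Definition~\ref{lemdefn:KNSpace}. With that, the Kummer part has point fibres and fits into the same argument.
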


    We require a preparatory lemma, which itself uses the following notation. Let $Y$ be a separated (complex) DM stack with logarithmic structure, and let $U\rightarrow Y$ be a strict \'etale cover by a separated scheme with logarithmic structure $U$. We adopt the notation $$R_s = \underbrace{U \times_Y U \times_Y \cdots \times_Y U}_{s\ \text{factors}}.$$ Note that the nerve of the presentation $R_2\rightarrow R_1$ of $Y$ is obtained by sending $[n]\mapsto R_{n+1}$ and setting degeneracy maps to be diagonals, and projection maps face maps.

\begin{lemma}\label{lem:NerveStuff}
    Applying the Kato--Nakayama functor to the nerve of a sheafy DM stack with locally free logarithmic structure $Y$ satisfying the hypotheses above yields our simplicial space with the following properties.
    \begin{enumerate}
        \item The natural map between the two topological realisations of a simplicial space described in \cite[Appendix A]{SEGAL1974293} is a homotopy equivalence.
        \item This common homotopy type of (1) coincides with the homotopy type of $\operatorname{KN}(Y)$.
    \end{enumerate}
\end{lemma}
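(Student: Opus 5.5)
We plan to treat the simplicial space $[n]\mapsto |\operatorname{KN}(R_{n+1})|$ as the Čech nerve of the map $q\colon |\operatorname{KN}(U)|\to |\operatorname{KN}(Y)|$, and then invoke Segal's comparison results for realisations of simplicial spaces. Since the Kato--Nakayama functor preserves fibre products over strict morphisms \cite[Proposition~3.1]{cailotto2004katonakayama}, we obtain
$$|\operatorname{KN}(R_{s})|\cong |\operatorname{KN}(U)|\times_{|\operatorname{KN}(Y)|}\cdots\times_{|\operatorname{KN}(Y)|}|\operatorname{KN}(U)|.$$
The proof of Lemma/Definition~\ref{lemdefn:KNSpace} shows that $(s^{\operatorname{KN}},t^{\operatorname{KN}})$ is an embedding and that $q$ is a local homeomorphism, and in particular admits local sections. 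Together these identify $|\operatorname{KN}(R_2)|$ with $|\operatorname{KN}(U)|\times_{|\operatorname{KN}(Y)|}|\operatorname{KN}(U)|$ as topological spaces. Iterating this identification (or arguing directly from the strict \'etale fibre products) yields the Čech nerve identification in all degrees.

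For (1), we will verify that the simplicial space is \emph{good} in Segal's sense: each degeneracy map should be a closed cofibration. The degeneracies are diagonals of Kato--Nakayama spaces of separated schemes relative to a strict \'etale, hence unramified, map. So each degeneracy is an open and closed embedding, namely the inclusion of a union of connected components, as in the proof of Lemma/Definition~\ref{lemdefn:KNSpace}, where the image of $U$ in $R$ was shown to be open. An inclusion of a clopen subspace is trivially a closed cofibration. Segal's \cite[Proposition~A.1]{SEGAL1974293} then says that the map from the thick realisation $\|\cdot\|$ to the thin realisation $|\cdot|$ is a homotopy equivalence.

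For (2), we compare the thick realisation with $|\operatorname{KN}(Y)|$ through the augmentation. The map $q$ is an open surjection with local sections, that is, an \'etale-type cover of the base. The standard result (Segal, \cite[Proposition~4.1]{SEGAL1974293} for numerable covers, or its generalisation by Dugger--Isaksen to open covers) is that for a map admitting local sections over a paracompact base, the thick realisation of the Čech nerve maps by a weak (indeed homotopy) equivalence to the base.

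To apply this we need paracompactness of $|\operatorname{KN}(Y)|$. This holds because $|\operatorname{KN}(Y)|$ is proper over $Y^{an}$ with compact fibres (real tori), and $Y^{an}$ is a finite-type, and hence paracompact Hausdorff, analytic space. Hausdorffness follows from separatedness, via the closedness of the embedding in Lemma/Definition~\ref{lemdefn:KNSpace}. Concretely, we pull back the open cover $\{W_i\}$ of $|\operatorname{KN}(Y)|$ on which $q$ has sections. The Čech nerve then restricts over each $W_i$ to a simplicial space with an extra degeneracy, and is therefore contractible over $W_i$. A partition of unity glues these local contractions, following Segal's argument.

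The main obstacle is this last step: ensuring the cover by sets admitting local sections is numerable, and checking that the local extra degeneracies are compatible enough for the partition-of-unity argument. The first thing to try is reducing to the Dugger--Isaksen hypercover theorem, which only needs an open cover and yields a weak equivalence. Since all spaces involved have the homotopy type of CW complexes (they are semi-analytic, being Kato--Nakayama spaces of finite-type log schemes), this weak equivalence upgrades to a homotopy equivalence.
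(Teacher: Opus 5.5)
Your proposal is correct and follows the paper's proof. For (1), both arguments show the nerve is good because each degeneracy is a clopen inclusion: open by unramifiedness, closed by separatedness. Segal's Proposition A.1(iv) then compares the two realisations. For (2), both use the Cailotto fibre-product result to identify the Kato--Nakayama nerve with the \v{C}ech nerve of the local homeomorphism $|\operatorname{KN}(U)|\to|\operatorname{KN}(Y)|$, and then apply Dugger--Isaksen. Your extra step upgrading the Dugger--Isaksen weak equivalence to a homotopy equivalence via CW homotopy type is a point the paper leaves implicit; it is harmless, though only weak equivalence is needed for the later application to homotopy groups.
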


\begin{proof}
    By \cite[Proposition A.1(iv)]{SEGAL1974293}, (1) will follow once we show that our nerve is good in the sense of \cite[Definition A.4]{SEGAL1974293}. To show that a simplicial space is good we check that for each $s$ the inclusion of the image of $\operatorname{KN}(R_{s-1})$ in $\operatorname{KN}(R_{s})$ under each degeneracy map is a closed cofibration. We will show more: this inclusion is both closed and open, thus is the inclusion of a union of connected components.
    
    Since the map $\tau\colon \operatorname{KN}(R_{s})\rightarrow R_s^{an}$ is continuous, it will suffice to check that the image of $R_{s-1}^{an}$ in $R_{s}^{an}$ is both closed and open. Moreover, it suffices to check that the image of $R_{s-1}$ in $R_{s}$ is closed and open because the Zariski topology has fewer open sets than the analytic topology.

    Openness follows by \stacks{02GE} because the map $U\rightarrow Y$ is (strict) \'etale and thus unramified. The image is closed because $U\rightarrow Y$ is separated which is by definition the same as the map $U\rightarrow U\times_YU$ being a closed immersion, see \stacks{03HL} and \stacks{04XB}.

    It remains to prove (2). Indeed the map $U\rightarrow Y$ is strict and so the fibre products in the definitions of $R_i$ commute with the Kato--Nakayama functor by \cite[Proposition~3.1]{cailotto2004katonakayama}. Thus the nerve of $R_1\rightarrow R_2$ is sent to the nerve of $\operatorname{KN}(R_1)\rightarrow \operatorname{KN}(R_2)$. Claim (2) follows because the Cech nerve of a local homeomorphism realises the homotopy type of a space by \cite[Corollary 1.5]{MR2045835}
\end{proof}

\begin{proof}[Proof of Proposition \ref{prop:HomotopyGroups}.]
    Any two logarithmic alterations $X',X''\rightarrow X$ admit a common refinement $X'\times_XX''$. Subdividing further we may assume this logarithmic alteration has locally free logarithmic structure. In the rest of this proof all logarithmic schemes are assumed to have locally free logarithmic structure. It will suffice to verify that for any sheafy DM stack with logarithmic structure $Y$, the isomorphism classes of the logarithmic homotopy groups are invariant under logarithmic alteration $Y'\rightarrow Y$. 
    
    Note that any logarithmic alteration $Y'\rightarrow Y$ can be factored $$Y'\rightarrow \overline{Y}\rightarrow Y$$ where the first map is a change of lattice and the second is a (possibly not projective) logarithmic modification. We now argue that it will suffice to handle the case that this second logarithmic modification is projective. Indeed after replacing $\overline{Y}$ by a log modification $\overline{Y}'$ which is projective over $Y$, and $Y'$ by $\widetilde{Y} = \overline{Y}'\times_{\overline{Y}}Y'$, both of the maps $\widetilde{Y}\rightarrow Y$ and $\widetilde{Y}\rightarrow Y'$ have this projectivity property.\footnote{See for example \cite[Prop 5.5.5 (v)]{EGA2}.} We can thus deduce the general case after we establish both the case that $Y'\rightarrow Y$ is a log blowup, and the case that $Y'\rightarrow Y$ is a Kummer root stack.
    
    We first handle the case that $Y$ is a scheme with logarithmic structure.
    Following the previous paragraph, we may assume that $Y'\rightarrow Y$ is either a log blowup or a root stack. The isomorphism class of the logarithmic homotopy groups to a fine and saturated logarithmic scheme $
    Y$ is invariant under logarithmic blowups 
    \cite[Proposition 9.3.5]{binda2025logarithmicmotivichomotopytheory}. 
    It is immediate from the definition of root stack presented in \cite[Definition~2.2.6.2]{MolchoWise} that passing to the root stack does not change the isomorphism type of the topological Kato--Nakayama space (because the group $S^1$ is divisible).

    In the case that $Y$ is a DM stack with logarithmic structure which is not a scheme with logarithmic structure, choose a strict \'etale groupoid presentation $s,t\colon R_2 = U\times_{Y} U \rightrightarrows R_1 = U$ of $Y$ where $U$ and $R_2$ are schemes as in the paragraph preceding Lemma \ref{lem:NerveStuff}. Base changing this presentation along the map $Y'\rightarrow Y$ yields a strict \'etale groupoid presentation of $Y'$, say $R_2'=U'\times_{Y'} U' \rightrightarrows U'$ where $R_1'=U'$ and $R_s'$ are DM stacks with logarithmic structure. The morphisms $R_i'\rightarrow R_i$ are the base change of a logarithmic alteration, and since for all $i$ we know that $R_i$ is a scheme with logarithmic structure it follows from the previous paragraph that these maps induce a logarithmic homotopy equivalence i.e. a homotopy equivalence on the level of Kato--Nakayama spaces. By Lemma \ref{lem:NerveStuff}(2) we have a homotopy equivalence at every level of the nerve of the Kato--Nakayama spaces of $Y$ and $Y'$, and the claim follows from \cite[Proposition A.1(ii)]{SEGAL1974293} together with Lemma \ref{lem:NerveStuff}(2).
\end{proof}

Finally we record the property which ensures that the logarithmic homotopy type behaves well in families. 

\begin{lemma}\label{lem:Serre Fibration}
    Let $X\rightarrow Y$ be a saturated, proper,\footnote{We follow the algebro--geometric convention that any proper map is separated. Separatedness is stated as a separate condition in \cite[Theorem~5.1]{NakayamaOgus2010RelativeRounding} where the authors often appeal to results in the topological category where conventions differ.} and logarithmically smooth morphism of sheafy DM stacks with locally free logarithmic structure. Assume that $Y$ is separated and of finite type over $\mathbb{C}$. Then the map $$|\mathsf{KN}(X)|\rightarrow |\mathsf{KN}(Y)|$$ is a Serre fibration.
\end{lemma}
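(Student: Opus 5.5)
The plan is to reduce to the scheme case, which is \cite[Theorem~5.1]{NakayamaOgus2010RelativeRounding}, by working strict étale locally. That theorem says the following. For $f\colon X\rightarrow Y$ a proper (separated), exact and relatively logarithmically smooth morphism of fine saturated logarithmic analytic spaces with $X$ relatively log smooth, the map $|\mathsf{KN}(X)|\rightarrow|\mathsf{KN}(Y)|$ is a topological submersion, indeed a fibre bundle, and in particular a Serre fibration. Saturated morphisms are integral and hence exact, so the hypotheses match once we are on schemes.

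First, I handle the base. Choose a strict étale cover $V\rightarrow Y$ by a scheme with logarithmic structure, and set $X_V = X\times_Y V$. Since $V\rightarrow Y$ is strict, the map $X_V\rightarrow V$ is still saturated, proper and logarithmically smooth. By \cite[Proposition~3.1]{cailotto2004katonakayama},
$$|\mathsf{KN}(X_V)|\cong |\mathsf{KN}(X)|\times_{|\mathsf{KN}(Y)|}|\mathsf{KN}(V)|.$$
The proof of Lemma/Definition~\ref{lemdefn:KNSpace} shows that $|\mathsf{KN}(V)|\rightarrow|\mathsf{KN}(Y)|$ is a local homeomorphism: the quotient map $q$ restricts to homeomorphisms on small opens $W$. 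Being a Serre fibration is local on the target for numerable open covers, by Dold's theorem that a map which is a Hurewicz fibration over each member of a numerable cover is a Hurewicz fibration. We could also use the weaker local criterion for Serre fibrations via lifting cubes subordinate to a cover. Since $|\mathsf{KN}(Y)|$ is paracompact Hausdorff, being locally compact and separated and a proper fibration over a finite type stack, the images $q(W)$ form a numerable cover. So it suffices to treat $X_V\rightarrow V$ over each such $W$, and we may assume $Y$ is a scheme.

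Next, I handle the source. Now $X$ is a proper sheafy DM stack over a scheme $Y$, and there is no strict étale cover of $X$ that remains proper over $Y$. Here I would argue that the conclusion of Nakayama--Ogus is local on the source as a submersion statement. Their proof shows that, étale locally on $X$, the map $\mathsf{KN}(X)\rightarrow\mathsf{KN}(Y)$ is a topological submersion with local sections through every point; this uses only log smoothness and exactness. That property descends along the local homeomorphism $|\mathsf{KN}(U)|\rightarrow|\mathsf{KN}(X)|$, where $U\rightarrow X$ is a strict étale atlas by schemes. Properness enters only globally: the map $|\mathsf{KN}(X)|\rightarrow|\mathsf{KN}(Y)|$ is proper, since $|\mathsf{KN}(X)|\rightarrow X^{an}$ is proper with torus fibres and $X^{an}\rightarrow Y^{an}$ is proper. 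Ehresmann's theorem in the form used by Nakayama--Ogus then gives that a proper topological submersion between locally compact Hausdorff spaces, with manifold-with-boundary local models, is a locally trivial fibre bundle. Fibre bundles over paracompact bases are Hurewicz, hence Serre, fibrations.

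The main obstacle is the source-local step. I have to check that the Nakayama--Ogus local models, namely products of $\mathbb{R}_{\geq0}$'s, tori and Euclidean factors relative to the base, really give a submersion in the topological category that is strict étale local on the source. I also have to check that their Ehresmann-type argument applies verbatim once properness of $|\mathsf{KN}(X)|\rightarrow|\mathsf{KN}(Y)|$ is known. If extracting that local statement from their proof proves awkward, the fallback uses the sheafy hypothesis. Lemma/Definition~\ref{lemdefn:KNSpace} shows stabilisers act freely on Kato--Nakayama points. So I would pass to a finite Kummer cover on which $X$ becomes étale locally a scheme, apply the theorem there, and descend the fibre-bundle property along the resulting free finite group action.
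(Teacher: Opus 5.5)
Your proposal is correct and follows essentially the same route as the paper. The paper also uses that the proof of \cite[Theorem~5.1]{NakayamaOgus2010RelativeRounding} yields a proper topological submersion. Being a submersion is local, so it transfers along the local homeomorphism $|\operatorname{KN}(U)|\rightarrow|\operatorname{KN}(U)|/|\operatorname{KN}(R)|$ from the proof of Lemma/Definition~\ref{lemdefn:KNSpace}, properness is treated globally, and the argument ends with paracompactness of the target and the local-to-global fibration theorem of \cite[Section~7.4]{May1999Concise}. Your separate reduction of the base to a scheme and your Kummer-cover fallback are not needed, since the submersion property is already local on both source and target.
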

\begin{proof}
    If $X$ and $Y$ were schemes with logarithmic structure we could deduce from \cite[Theorem~5.1]{NakayamaOgus2010RelativeRounding} that the morphism $\mathsf{KN}(X)\rightarrow \mathsf{KN}(Y)$ is a fibre bundle. To extend this claim to the situation that $X,Y$ are sheafy DM stacks with logarithmic structure recall that the proof of \textit{loc. cit.} proceeds by checking that the morphism of Kato--Nakayama spaces is a proper (separated) topological submersion. Being a topological submersion can be checked locally, and the local situation is unchanged when studying the Kato--Nakayama space of a sheafy DM stack with logarithmic structure because we  saw in the proof of Lemma \ref{lemdefn:KNSpace} that the map $|\operatorname{KN}(U)|\rightarrow |\operatorname{KN}(U)|/|\operatorname{KN}(R)|$ is a local homeomorphism of topological spaces. Properness descends along finite proper maps by the argument of \stacks{03GN}.
    It remains to note that the target is paracompact and Hausdorff (follows directly from construction of \cite[Section~1]{KatoNakayama1999}) and so the result follows by \cite[Theorem, Section~7.4]{May1999Concise}.
\end{proof}

\begin{remark}\label{rem:NeedRelative}
    The topological Kato--Nakayama space of a standard logarithmic point is $ S^1$, and in particular has non-trivial fundamental group. It thus follows from Lemma \ref{lem:Serre Fibration} that any scheme which is exact, proper, and logarithmically smooth over a standard logarithmic point has non-trivial logarithmic fundamental group.
\end{remark}

\subsection{Definition of logarithmic IHS varieties}
%For a logarithmically smooth morphism of Noetherian schemes of finite type $X\rightarrow S$, the \textit{sheaf of logarithmic differentials} is the sheaf of differentials $\Omega_{X/\mathcal{A}_{X/S}\times_{\mathcal{A}_S} S}$. This sheaf descends to define the sheaf of relative differentials for a morphism of Deligne--Mumford stacks with logarithmic structure. 

\begin{definition}
    Let $X\rightarrow S$ be a morphism of Deligne--Mumford stacks with logarithmic structure of finite type over $\mathbb{C}$. 
We denote the sheaf of kummer \'etale logarithmic differential 1-forms of \cite[3.8]{IllusieKatoNakayama2005} by $\omega_{X/S}^1$ and the associated logarithmic de Rham complex by $\omega_{X/S}^\bullet$ (also defined in \cite[3.8]{IllusieKatoNakayama2005}). 
\end{definition}

If $X\rightarrow S$ is logarithmically smooth of finite type then this sheaf is locally free of finite rank.  

\begin{definition}\label{defn:cotangent}
    Let $X\rightarrow S$ be a logarithmically smooth morphism of Noetherian Deligne--Mumford stacks with logarithmic structure. The \textit{relative logarithmic tangent bundle} $T_{X/S}^{\operatorname{log}}$ is the vector bundle dual to $\omega^1_{X/S}$.
\end{definition}

\begin{definition}\label{defn:Perfectpair}
    Let $X/\Delta_0$ be a smooth morphism of Noetherian DM stacks with logarithmic structure. An element $\eta$ of $H^0(X,\omega_{X/\Delta_0}^2)$ is nowhere degenerate if it induces an isomorphism from the logarithmic tangent bundle to the logarithmic cotangent bundle $$T_{X/\Delta_0}^{\operatorname{log}}\rightarrow \omega_{X/\Delta_0}^1.$$
\end{definition}

Logarithmic IHS varieties are families of logarithmic spaces, and are typically not represented by DM stacks with logarithmic structure. We extend Definition~\ref{defn:cotangent} and Definition~\ref{defn:Perfectpair} to the setting of logarithmic spaces.

\begin{definition}\label{defn:PerfectPair}
    Let $\Delta_0$ be the standard logarithmic point.\footnote{It suffices that $\Delta_0$ is static in the sense of \cite[Section 2]{logcoh}.} Let $X\rightarrow \Delta_0$ be a morphism of logarithmic spaces which is locally logarithmically smooth, and assume that $X$ admits a logarithmic alteration which is a Noetherian DM stack with logarithmic structure. We define a \textit{logarithmic k form}, or equivalently a \textit{global section of $\omega_{X'/\Delta_0}^k$}, to be an equivalence class of pairs $(q\colon X'\rightarrow X, \eta')$ where $X'\rightarrow X$ is any logarithmic alteration with domain a Noetherian DM stack with a logarithmically smooth logarithmic structure, and $\eta' \in H^0(X',\omega^k_{X/\Delta_0})$. The equivalence relation is pullback along refinements of logarithmic alterations. We say that a logarithmic 2-form $[(q\colon X'\rightarrow X, \eta')]$ is \textit{nowhere degenerate} if for some choice of $q$ the form $\eta'$ is nowhere degenerate in the sense of Definition \ref{defn:Perfectpair}.
\end{definition}

\begin{remark}
    The logarithmic cotangent bundle is same data as a locally free coherent sheaf in the logarithmic \'etale topology. A logarithmic two form is a global section of this sheaf, and nowhere degenerate means that the induced morphism from the logarithmic tangent bundle to the logarithmic cotangent bundle is an isomorphism of logarithmic coherent sheaves. More precisely, one defines the logarithmic cotangent bundle by applying the up functor of \cite[Section 1]{logcoh} to $\omega^2_{X'}$ for any choice of $X'$. 
\end{remark}

\begin{lemma}
    Definition \ref{defn:PerfectPair} is well defined independent of the logarithmic alteration ${X}\rightarrow X$.
\end{lemma}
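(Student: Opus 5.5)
The plan is to reduce the statement to a compatibility property of the sheaf $\omega^\bullet$ under a single logarithmic alteration. Two points need checking.

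\emph{First, the equivalence relation is transitive.} Any two logarithmic alterations $q_1\colon X_1\rightarrow X$ and $q_2\colon X_2\rightarrow X$ by Noetherian DM stacks with logarithmically smooth logarithmic structure admit a common refinement. Take $X_1\times_X X_2$, followed by a further logarithmic alteration, which exists since alterations of Artin cones can be chosen strict \'etale locally and glued. This refinement dominates both $X_1$ and $X_2$. So the relation ``pull back to a common refinement and compare'' is an equivalence relation once we know that pullback along a logarithmic alteration $p\colon X''\rightarrow X'$ is injective on $H^0(\omega^k)$. We also need that whether a form is defined is insensitive to refinement.

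\emph{Second, pullback $p^\ast$ is injective and preserves nondegeneracy.} The key input is that for a logarithmic alteration $p\colon X''\rightarrow X'$ over $\Delta_0$, both locally logarithmically smooth, the natural map
$$p^\ast\omega^1_{X'/\Delta_0}\longrightarrow \omega^1_{X''/\Delta_0}$$
is an isomorphism. This is because $p$ is logarithmically \'etale (Definition~\ref{defn:logsmoothold}): strict \'etale locally it is a base change of an alteration of Artin cones, and the relative logarithmic differentials of such a map vanish. For the Kummer \'etale differentials of \cite{IllusieKatoNakayama2005} this is the standard fact that logarithmic \'etale maps induce isomorphisms on $\omega^1$. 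Taking exterior powers gives $p^\ast\omega^k_{X'}\cong\omega^k_{X''}$, and dually $p^\ast T^{\log}_{X'}\cong T^{\log}_{X''}$.

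Injectivity of $p^\ast$ on global sections then follows because $\omega^k_{X'}$ is locally free and $p$ is surjective and dominant. Concretely, $p$ is an isomorphism over the dense open locus of trivial logarithmic structure, so a section vanishing after pullback vanishes on a dense open, hence everywhere.

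For nondegeneracy, suppose $\eta'$ induces an isomorphism $T^{\log}_{X'}\rightarrow\omega^1_{X'}$. Its pullback $p^\ast\eta'$ induces the pullback of this isomorphism under the identifications above, so it is again an isomorphism. Conversely, if $p^\ast\eta'$ is nondegenerate, then the determinant of the map $T^{\log}_{X'}\rightarrow\omega^1_{X'}$ is a section of a line bundle whose pullback is nowhere zero. Since $p$ is surjective, the determinant itself is nowhere zero. Thus ``for some choice of $q$'' is equivalent to ``for every choice of $q$'' on which the form is defined, and the definition of nowhere degenerate is independent of the alteration.

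\emph{Main obstacle.} The step I expect to be hardest is justifying $p^\ast\omega^1\cong\omega^1$ for non-Kummer alterations, meaning log blowups, in the Kummer \'etale version of the differentials, and on DM stacks rather than schemes. My first attempt would be to reduce strict \'etale locally to the toric model, factor $p$ as a root stack followed by a log blowup as in the proof of Proposition~\ref{prop:HomotopyGroups}, and check each case. In the root stack case, differentials agree because $d\log(m^{1/k})=\tfrac1k\,d\log m$. In the log blowup case, $\omega^1$ of a toric variety is free on $d\log$ of the character lattice, and the lattice is unchanged under subdivision. Descent along strict \'etale covers then handles the DM stack case.
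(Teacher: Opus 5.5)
Your sheaf-level step is fine: since $p$ is logarithmically \'etale, $p^\ast\omega^k_{X'/\Delta_0}\cong\omega^k_{X''/\Delta_0}$. Your argument that nondegeneracy is both preserved and reflected by pullback is also correct.

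\textbf{The gap.} You only prove that $p^\ast\colon H^0(X',\omega^k_{X'/\Delta_0})\to H^0(X'',\omega^k_{X''/\Delta_0})$ is injective. You never prove it is surjective, i.e.\ that a form living on a finer model descends to $X'$. That is the real content of the lemma. The paper reads it as logarithmic \'etale descent for the logarithmic cotangent bundle, and this is what justifies three things:
\begin{enumerate}
\item the phrase ``or equivalently a global section of $\omega^k_{X'/\Delta_0}$'' in Definition~\ref{defn:PerfectPair};
\item computing $H^{2,0}_{\log}$ in Definition~\ref{defn:LogIHS} on a single model;
\item the count $h^0=1$ in Proposition~\ref{prop:TwoFormExist}(4), which is carried out on one particular model.
\end{enumerate}
With injectivity alone, the colimit over refinements could be strictly larger than $H^0$ on any fixed model. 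Separately, transitivity of the equivalence relation does not need injectivity at all; it follows from the existence of common refinements.

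\textbf{What is needed.} Given your isomorphism, surjectivity reduces via the projection formula to $p_\ast\mathcal{O}_{X''}=\mathcal{O}_{X'}$. For root stacks this is just taking $\mu_k$-invariants. For logarithmic blowups it is not formal. $X'$ is logarithmically smooth over the standard log point, so it is typically a non-normal union of components. A log blowup is generally not an isomorphism over the codimension-one double locus; subdividing a two-dimensional cone inserts a new component over it. So neither Zariski's main theorem nor the depth-two extension of Lemma~\ref{lem:ExtendSections} applies. The paper supplies exactly this missing input: $X'$ is static because it is logarithmically smooth over the static base $\Delta_0$, and locally free sheaves on static logarithmic schemes satisfy descent in the full logarithmic \'etale topology \cite[Section 2]{logcoh}.

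\textbf{A smaller point.} Over $\Delta_0$ there is no locus of trivial logarithmic structure, since every point of $X'$ carries the pulled-back log structure of the base. Your density argument for injectivity should therefore instead use surjectivity of $p$, local freeness of $\omega^k$, and reducedness of $X'$ (available when $X'\to\Delta_0$ is saturated).
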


The lemma amounts to checking that the logarithmic (co)-tangent bundle satisfies logarithmic \'etale descent whenever $X'\rightarrow S$ is a logarithmically smooth morphism of DM stacks with logarithmic structure. A variant of this result was originally proved in \cite[Section 5.1, especially Proposition 5.2]{FujisawaNakayama2020}, although \textit{loc.cit.} only consider descent under logarithmic blowups.

\begin{proof}
    Since each element in the equivalence class $X'\rightarrow S$ is logarithmically smooth, we know each possible representative of $\omega_{X/S}$ is locally free. Moreover, $S$ is static and thus for every representative $X'\rightarrow S$ which is logarithmically smooth, we know that $X'$ is static. Locally free sheaves on static logarithmic schemes satisfy descent in the full logarithmic \'etale topology by \cite[Section 2]{logcoh} and the result is proved. 
\end{proof}
Finally we record the definition of logarithmic IHS variety. 
\begin{definition}\label{defn:LogIHS}
    We say that a proper and locally logarithmically smooth morphism  $X\rightarrow S$ of logarithmic spaces is a family of logarithmic IHS varieties if the following hypotheses are satisfied whenever $\mathcal{X}_p$ is the base change of $X\rightarrow S$ along any standard logarithmic point $p$ of $S$.
     \begin{enumerate}[leftmargin = *]
         \item \textbf{Logarithmic simply connected.} The morphism of logarithmic fundamental groups $$\pi_1^{\operatorname{log}}(\mathcal{X}_p)\rightarrow \pi_1^{\operatorname{log}}(\Delta_0)$$ has trivial kernel and the morphism $X\rightarrow S$ has geometrically connected fibres.
         \item \textbf{Logarithmic symplectic structure.} The cohomology of the space $\mathcal{X}_p$ has the property $$H^{2,0}_{\log}(\mathcal X_p,\mathbb{C}) := H^0(\mathcal X_p,\omega_{\mathcal X_p/p}^2) = \mathbb{C}\eta$$ where $\eta$ is nowhere degenerate.
         \item \textbf{Projective up to logarithmic alteration.} There exists a logarithmic alteration of $\mathcal{X}_p$ which admits an ample polarisation (and is thus a DM stack with projective coarse moduli space).
     \end{enumerate}
\end{definition}

\section{The tropical generalised Kummer construction}\label{Bigsec:TropGenKummer}
This section constructs the tropical gadgets which will later allow us to work with logarithmic spaces. In this way, we will understand the right hand column of the table in Section \ref{sec:outline}.

We proceed by performing a tropical version of the generalised Kummer construction: Given a family of tropical abelian varieties over a rational polyhedral cone $\sigma$ we construct a family of tropical generalised Kummer varieties over $\sigma$. 

\subsection{Families of integral affine manifolds with singularities}\label{sec:TropAVTrait}
We recall the definition of integral affine manifold. Let $M_m \cong \mathbb{Z}^m$ be a finite rank free abelian group, and write $$
\operatorname{Aff}(M_m) = M_m \rtimes \mathrm{GL}_m(\mathbb{Z}).
$$
The following definitions coincide with those of Gross and Siebert \cite[Section 1]{GrossSiebert2006}.
\begin{definition}
    An integral affine manifold $B$ of dimension $m$ is an $m$-dimensional topological manifold $\underline{B}$ together with an equivalence class of atlases $$U_i\rightarrow \underline{B} \text{ and charts } \psi_i\colon U_i \xrightarrow{\cong} V_i$$
    where $U_i\rightarrow \underline{B}$ forms an open cover, each $V_i\subset M_m\otimes \mathbb{R}$ is a connected open subset, $\psi_i$ is a homeomorphism, and $$\psi_j \circ \psi_i^{-1}\colon \psi_i(U_i \cap U_j) \rightarrow V_j \text{ is the restriction of some } h \in \operatorname{Aff}(M_m).$$ The equivalence relation identifies atlases whose union is also an atlas. A morphism of integral affine manifolds $B\rightarrow B'$ is a morphism of topological manifolds $\underline{f}\colon \underline{B}\rightarrow \underline{B}'$ such that the induced map $$\psi_i'\circ \underline{f} \circ\psi_j^{-1}\colon V_i\rightarrow V_j'$$ is locally the restriction of an element in $M_{\operatorname{dim}(B')} \rtimes \operatorname{Hom}(M_{\operatorname{dim}(B)},M_{\operatorname{dim}(B')})$.
\end{definition}

The \textit{sheaf of integral affine functions} on $B$, denoted $\mathcal{O}_B$, is the sheaf of maps from $B$ to the integral affine manifold $\mathbb{R}$. We write $C\subset \mathcal{O}_B$ for the subsheaf of locally constant functions - note $C$ is isomorphic to the constant sheaf $\mathbb{Z}$. Whilst tropical abelian varieties can be thought of as families of integral affine manifolds, type III tropical generalised Kummer varieties fit the following, more general, definition which is also due to Gross and Siebert.

\begin{definition}[{\cite[Definition~1.15]{GrossSiebert2006}}]\label{defn:IASsing}
    An \textit{integral affine manifold with singularities (IAMS) of dimension $m$} $(B,D)$ is an $m$-dimensional topological manifold $B$, a closed subset $D\subset B$ called the \textit{singular locus} which locally can be written as a finite union of manifolds of codimension at least two, and an integral affine manifold $B_0$ with underlying topological manifold $\underline{B}_0 = B\backslash D.$ An isomorphism of IAMS is an isomorphism of topological manifolds $B\rightarrow B'$ which identifies singular loci, and which induces an isomorphism of integral affine manifolds $B_0\rightarrow B_0'$.
\end{definition}

A single IAMS captures the geometry of a one-parameter degeneration. In this paper we must work on the level of moduli spaces, and are thus led to study families of integral affine manifolds with singularities over a cone $\sigma$. 
To orient the reader we make the following definition now, before the examples listed are defined.

\begin{definition}\label{defn:IAMWithSing}
    The phrase \textit{a family of integral affine manifolds with singularities over the rational polyhedral cone $\sigma$} refers to any of the following constructions, which we introduce in the following sections: 
    \begin{enumerate}
        \item Families of tropical abelian varieties over $\sigma$ (Definition~\ref{defn:TropicalAV}).
        \item The (relative) tropical Hilbert scheme of points on a tropical abelian variety over $\sigma$ (Definition~\ref{defn:TropHilb}).
        \item Families of tropical generalised Kummer varieties over $\sigma$ (Definition~\ref{defn:TropGenKummer}).
    \end{enumerate}
\end{definition}

\begin{remark}\label{rem:IAMSB}
    With two exceptions, we will explain how points in the base of a family of integral affine manifolds with singularities specify an IAMS in the sense of Definition \ref{defn:IASsing}. The exceptions are the structures which type II degenerations of Hilbert schemes of points and generalised Kummer varieties give rise to. This structure is not an integral affine manifold with singularities - in the generalised Kummer case it is homeomorphic to a simplex, and satisfies the obvious definition of an \textit{integral affine manifold with boundary}. We do not suggest such a definition because it is not required to prove any of our theorems.
\end{remark}

\begin{remark}
    We do not attempt to formulate general axioms for our perspective on families of integral affine manifolds. Although we anticipate such language will eventually become desirable, it is not required for the arguments of this paper. We note that this speculative definition would generalise the category of piecewise linear spaces introduced in \cite[Section 1]{LogQuot}: subdivisions of a cone complex with domain a piecewise linear space biject with those monomorphisms with codomain a cone complex which induce a bijection on points.
\end{remark}

\begin{remark}
    Whilst moduli of tropical abelian varieties have been extensively studied, a significant section of the moduli of tropical abelian variety literature does not characterise a family of tropical abelian varieties as a family of integral affine manifolds. Exceptions include an implicit definition in the work of Kajiwara, Kato, and Nakayama \cite{KajiwaraKatoNakayama2008}, which is made explicit in recent work, see \cite[Definition~3.4]{botero2026pureextensionthetadivisor}, \cite{FeusiLopezTropAV}, and \cite[Definition~3.5]{FeusiMolchoOrbifoldCompactifiedJacobians}.
\end{remark}

\begin{remark}
    In this paper we are led to study families of integral affine manifolds with singularities because, in constructing $\mathcal{K}_{2n}$ and its universal family, we consider generalised Kummer varieties over a base fine and saturated logarithmic scheme $S$ of any dimension, and with logarithmic structure of any rank. In the case that $S=\Delta$, and thus the Artin fan of $S$ is $a^\ast \sigma = a^\ast \mathbb{N}$, it is not necessary to work with families of integral affine manifolds.
\end{remark}

Lemma \ref{lem:DataIAMS} will be used to establish the descriptions promised in Remark~\ref{rem:IAMSB}.
We will write $C^0(\underline{B})$ for the sheaf of continuous real valued functions on an integral affine manifold.
\begin{lemma}\label{lem:DataIAMS}
    An IAMS $(B,D)$ of dimension $n$ is determined by its underlying topological manifold $B$, the singular locus $D\subset B$, and the sheaf $\mathcal{O}_{B_0}\subset C^0(\underline{B}_0)$ of integral affine functions on $\underline{B}_0 = B \backslash D$. 
\end{lemma}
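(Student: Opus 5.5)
The plan is to reconstruct the integral affine structure on $B_0$ from the sheaf $\mathcal{O}_{B_0}$ and show that the reconstruction returns the original equivalence class of atlases. The data $B$ and $D$ already determine the topological manifold $\underline{B}_0 = B\setminus D$, so only the atlas has to be recovered. Given the sheaf $\mathcal{O}_{B_0}\subset C^0(\underline{B}_0)$, declare a pair $(U,\psi)$, with $U\subset \underline{B}_0$ open and connected and $\psi\colon U\to V\subset M_m\otimes\mathbb{R}$ a homeomorphism onto an open set, to be an \emph{admissible chart} if each coordinate of $\psi$ lies in $\mathcal{O}_{B_0}(U)$. Here the coordinates are taken with respect to a basis of the dual lattice $M_m^\vee$. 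Being admissible does not depend on the choice of basis, since changing the basis acts by $\mathrm{GL}_m(\mathbb{Z})$ and $\mathcal{O}_{B_0}$ is closed under $\mathbb{Z}$-linear combinations. The proof then has three steps.

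\textbf{Step 1.} Every chart of the original atlas is admissible. The coordinate functions on $V_i$ are integral affine maps to $\mathbb{R}$, so they pull back to sections of $\mathcal{O}_{B_0}$ over $U_i$, by the definition of $\mathcal{O}_B$ as maps to the integral affine manifold $\mathbb{R}$.

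\textbf{Step 2.} Any two admissible charts are affinely compatible. Let $\psi,\psi'$ be admissible on $U\cap U'$. Each coordinate of $\psi'\circ\psi^{-1}$ is a function on an open subset of $\mathbb{R}^m$ that is an integral affine map to $\mathbb{R}$ relative to the original atlas. Working locally in an original chart, such a function has the form $x\mapsto \langle a,x\rangle + c$ with $a\in M_m^\vee$ and $c\in\mathbb{R}$. Hence on each connected component $\psi'\circ\psi^{-1}$ is $x\mapsto Ax+c$ with $A$ an integer matrix. Applying the same argument to $\psi\circ\psi'^{-1}$ shows that $A$ is invertible over $\mathbb{Z}$, so $A\in\mathrm{GL}_m(\mathbb{Z})$.

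\textbf{Step 3.} The original atlas lies in the maximal atlas of admissible charts, and by Step 2 this maximal atlas is compatible with it. Their union is therefore an atlas, so the two define the same equivalence class. Consequently, two IAMS with the same triple $(B,D,\mathcal{O}_{B_0})$ have the same integral affine manifold $B_0$ and are therefore equal. More generally, a homeomorphism that identifies the singular loci and pulls back $\mathcal{O}_{B_0'}$ to $\mathcal{O}_{B_0}$ is an isomorphism of IAMS.

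The main obstacle is the translation part $c$, which lies in $\mathbb{R}$ rather than in $M_m$. The paper's definition allows $\psi_j\circ\psi_i^{-1}$ to be the restriction of some $h\in M_m\rtimes\mathrm{GL}_m(\mathbb{Z})$, which has an integral translation, while maps to $\mathbb{R}$ only give real constants. The approach I would take is to read the definition as the Gross--Siebert one, with real translations $(M_m\otimes\mathbb{R})\rtimes\mathrm{GL}_m(\mathbb{Z})$. In that reading the argument above goes through without change. If integrality of the translation is genuinely required, I would instead replace $\mathcal{O}_{B_0}$ by the subsheaf of functions whose translation parts are integral relative to the atlas. I would also check that these functions form a sheaf, since the integrality condition is preserved under compatible changes of chart. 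Step 2 then yields integral translations in the same way.
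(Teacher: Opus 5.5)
Your Step 2 is false as stated, and Step 3 rests on it. Your notion of admissibility only asks that the coordinates of $\psi$ lie in $\mathcal{O}_{B_0}(U)$. So if $\phi$ is a chart of the original atlas, then $\psi=2\phi$ is admissible: its coordinates are $2\phi_k\in\mathcal{O}_{B_0}(U)$, and it is a homeomorphism onto the open set $2\phi(U)$. Yet $\psi\circ\phi^{-1}(x)=2x$ is not the restriction of an element of $\operatorname{Aff}(M_m)$. Hence the admissible charts do not form an atlas, and their union with the original atlas is not an atlas.

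The faulty sentence is the claim that each coordinate of $\psi'\circ\psi^{-1}$ is an integral affine function on $\psi(U\cap U')\subset M_m\otimes\mathbb{R}$. That would require $\psi^{-1}$ to be an integral affine map, which is what you are trying to prove. Working in an original chart $\phi$, you only get, locally, $\psi=A\phi+c$ and $\psi'=A'\phi+c'$ with $A,A'$ integer matrices. So $\psi'\circ\psi^{-1}$ has linear part $A'A^{-1}$, which is merely rational. The symmetry argument at the end of Step 2 therefore cannot produce $A\in\mathrm{GL}_m(\mathbb{Z})$: for $\psi'=\phi$ and $\psi=2\phi$ it would assert that $x\mapsto x/2$ has integral linear part. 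Your argument is valid exactly when $\psi$ and $\psi'$ are charts of atlases that both induce $\mathcal{O}_{B_0}$.

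The missing ingredient is a basis condition. Call $\psi$ admissible only if the images of its coordinates in $\mathcal{O}_{B_0}/C$ form a $\mathbb{Z}$-basis of every stalk; equivalently, $\psi^{*}$ identifies the integral affine functions on $\psi(U)$ with $\mathcal{O}_{B_0}|_U$. Then $A\in\mathrm{GL}_m(\mathbb{Z})$ and your Steps 1--3 go through. This is precisely how the paper builds its charts:
\begin{itemize}
\item it covers $\underline{B}_0$ by simply connected opens $U_i$ with $\Gamma(U_i,\mathcal{O}_{B_0}/C)\cong\mathbb{Z}^n$;
\item it lifts a basis of this group to sections in $\mathcal{O}_{B_0}(U_i)$, using $H^1(U_i,\mathbb{Z})=0$;
\item it takes the resulting map to $M_n\otimes\mathbb{R}$ as a chart.
\end{itemize}

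Alternatively, you can skip maximal atlases and compare two atlases inducing the same sheaf directly. There your Step 2 argument is legitimate: each chart's coordinates are integral affine for the other atlas, which gives integral $A$ and $A'$ with $AA'=I$.

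Finally, the translation issue you flag is not an obstacle. $\mathcal{O}_B$ consists of morphisms to the integral affine manifold $\mathbb{R}$, and by the paper's definition of morphism their constant parts lie in $M_1=\mathbb{Z}$; this is why the paper notes $C\cong\mathbb{Z}$. So integral translations come for free, and neither of your workarounds is needed.
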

Note that the subsheaf $C\subset \mathcal{O}_{B_0}$ of constant functions is determined by the inclusion $\mathcal{O}_{B_0}\subset C^0(\underline{B}_0)$.
\begin{proof}
    Choose an open cover of $\underline{B}_0$ by simply connected sets $U_i\rightarrow \underline{B}_0$. Refining the cover if necessary, we may further assume that sections of the quotient sheaf $\mathcal{O}_{B_0}/C$ on each $U_i$ are isomorphic to $\mathbb{Z}^{n}$. 
    Since each $U_i$ is simply connected $H^1(U_i,\mathbb{Z})=0$, and thus any basis of $\Gamma(U_i,\mathcal{O}_{B_0}/C)$ admits a lift to a tuple of sections of $\mathcal{O}_{B_0}(U_i)$. This lift determines a map from $U_i$ to $\mathbb{R}^n$, and we choose an identification $\mathbb{R}^n \cong M_n\otimes \mathbb{R}$. Working chart locally on $U_i$ (with charts on our favourite atlas) we can apply the inverse function theorem to deduce that this map is a local homeomorphism. After replacing $U_i$ with a smaller open subset we have defined an immersion of integral affine manifolds $\psi_i\colon U_i\rightarrow M_n\otimes \mathbb{R}$. In particular, we have constructed an atlas, and thus the IAM structure on $B_0$ is determined.
\end{proof}

\subsection{Tropical abelian varieties}\label{sec:Tropical AV} Tropical abelian varieties \cite{MikhalkinZharkov08} are integral affine manifolds obtained as quotients of $\mathbb{Z}^n\otimes \mathbb{R}$ by a lattice. These structures have been studied in enumerative geometry \cite{Blomme2022TropicalCI,Blo23,Blo25,HalleRose2017,nishinou2020realizationtropicalcurvesabelian}, and also because they, and their moduli \cite{ChanEtAlTropSymm,BrannettiMeloViviani2011, Chan13}, exhibit many of the same structural properties as classical abelian varieties. 
\subsubsection{Core construction for families of tropical abelian varieties}\label{sec:TropAVDefn}
Following any of \cite{botero2026pureextensionthetadivisor}, \cite{FeusiLopezTropAV}, or \cite{FeusiMolchoOrbifoldCompactifiedJacobians}, we now record the definition of tropical abelian variety. The perspective presented here is implicit in work of Kajiwara, Kato, and Nakayama \cite[Definition~2.8]{KajiwaraKatoNakayama2008}.

Fix $\sigma$ a rational polyhedral cone, and let $\textbf{X},\textbf{Y}$ be finitely generated free abelian groups of rank $g$. We write $\varphi$ for a group homomorphism $$\varphi \colon \textbf{Y}\rightarrow \mathsf{Hom}(\textbf{X},\mathsf{Hom}(\sigma,\mathbb{Z})).$$ 
Note $\varphi$ is the same data as a bilinear map $$Q_{\varphi}\colon \textbf{X} \times \textbf{Y} \rightarrow \mathsf{Hom}(\sigma,\mathbb{Z}).$$

\begin{definition}
    An element $\phi$ of $\mathsf{Hom}(\textbf{X},\mathsf{Hom}(\sigma,\mathbb{Z}))$ is said to have \textit{bounded monodromy} (with respect to $\varphi$) if for each $x$ in $\textbf{X}$ there exist $y,y'$ in $\textbf{Y}$ such that, $$Q_{\varphi}(x,y) \leq \phi(x) \leq Q_{\varphi}(x,y').$$ We write $\mathsf{Hom}(\textbf{X},\mathsf{Hom}(\sigma,\mathbb{Z}))^{(\textbf{Y})}$ for the subgroup of bounded monodromy elements.
\end{definition}

\begin{definition}\label{defn:TropicalAV}
    Let $\mathpzc{A}(\varphi)$ be the group sheaf on $\textbf{RPC}_\sigma$ whose $\kappa/\sigma$ valued points are elements of $$\mathsf{Hom}(\textbf{X},\mathsf{Hom}(\kappa,\mathbb{Z}))^{(\textbf{Y})}/\textbf{Y},$$ and where pullback along $\kappa\rightarrow \kappa'$ is induced in the obvious way. A \textit{tropical abelian variety over} $\sigma$ is any sheaf on $\textbf{RPC}_\sigma$ isomorphic to $\mathpzc{A}(\varphi)/\sigma$ where $\varphi$ satisfies the following \textit{polarisability hypothesis}: There exists a morphism $\Phi\colon \textbf{Y} \rightarrow \textbf{X}$ with finite cokernel such that the induced bilinear form $Q_{\varphi}(\Phi(-),-)$ is symmetric and positive semi-definite (i.e. the image function is non-negative on $\sigma$).
\end{definition}

\begin{lemma}\label{lem:FunctorPointsTropAV}

There exists a tropical model $$j\colon \Lambda \rightarrow \mathpzc{A}(\varphi).$$
\end{lemma}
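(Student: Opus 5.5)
Existence of a tropical model for $\mathpzc{A}(\varphi)$ means a cone complex $\Lambda$ with a tropical alteration $\Lambda\to\mathpzc{A}(\varphi)$ over $\sigma$. The plan is to build $\Lambda$ as the quotient by $\textbf{Y}$ of a $\textbf{Y}$-periodic fan, which is the tropical shadow of Mumford's construction.

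First I would rewrite the functor of points. A $\kappa$-valued point of $\mathsf{Hom}(\textbf{X},\mathsf{Hom}(\kappa,\mathbb{Z}))^{(\textbf{Y})}$ is a morphism of cones $\kappa\to\sigma$ together with a lattice map $\kappa^{gp}\to \textbf{X}^\vee$. This pair is the same as a morphism $\kappa \to \widetilde{\mathpzc{A}}$, where
$$\widetilde{\mathpzc{A}} = \{(s,\phi)\in \sigma\times \textbf{X}^\vee_\mathbb{R} : Q_\varphi(x,y)(s)\le \phi(x) \le Q_\varphi(x,y')(s)\}$$
is the (non-finite, not-a-cone) ``bounded monodromy'' region, with the obvious integral structure $\sigma^{gp}\oplus\textbf{X}^\vee$. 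Concretely, the bounded monodromy condition says that over a point $s\in\sigma$, $\phi$ lies in the region swept out by the $\textbf{Y}$-translates $y\mapsto Q_\varphi(-,y)(s)$. So $\widetilde{\mathpzc{A}}$ is a union of polyhedral cones in $\sigma\times\textbf{X}^\vee_\mathbb{R}$ on which $\textbf{Y}$ acts by $\phi\mapsto \phi+Q_\varphi(-,y)$, and $\mathpzc{A}(\varphi)=\widetilde{\mathpzc{A}}/\textbf{Y}$ as sheaves on $\textbf{RPC}_\sigma$.

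Second, I would produce a $\textbf{Y}$-invariant rational polyhedral fan $\widetilde\Lambda$ supported on $\widetilde{\mathpzc{A}}$. The tool is the polarisability hypothesis. Let $\Phi$ be as in Definition~\ref{defn:TropicalAV}. For each generic $s$ in the interior of $\sigma$, the symmetric form $Q_\varphi(\Phi(-),-)(s)$ defines a $\textbf{Y}$-periodic Delaunay/Voronoi decomposition of $\textbf{X}^\vee_\mathbb{R}$. I would use the $\textbf{Y}$-quasi-periodic piecewise linear function $y\mapsto \tfrac12 Q(\Phi y,y)$ on $\sigma\times \textbf{Y}$ and take the cones over faces of the lower convex hull of the points $(s, Q_\varphi(-,y)(s), \tfrac12 Q(\Phi y,y)(s))$, in the style of Mumford. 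Since these data vary linearly in $s$, the resulting decomposition varies with $s$ only through finitely many combinatorial types up to $\textbf{Y}$. After subdividing $\sigma$ compatibly (pulling back along a common refinement of the finitely many chambers in $\sigma$, which is allowed since we only need a tropical alteration), this gives a $\textbf{Y}$-invariant fan $\widetilde\Lambda$ with finitely many $\textbf{Y}$-orbits of cones. If the quotient fails to be a cone complex because two faces of a cone are identified by $\textbf{Y}$, I would replace $\textbf{Y}$-invariance by invariance under a finite index sublattice to obtain a finer periodic refinement (for instance a barycentric-type subdivision). The quotient $\Lambda=\widetilde\Lambda/\textbf{Y}$ is then a cone complex in the sense of \textsf{RPCC}.

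Third, I would check that $j\colon\Lambda\to\mathpzc{A}(\varphi)$ is a tropical alteration: it should be represented by alterations of cones and be a monomorphism inducing a bijection on underlying topological spaces. Locally over a cone $\kappa\to\mathpzc{A}(\varphi)$, the point lifts to $\widetilde{\mathpzc{A}}$ because cones are contractible, so $H^1(\kappa,\textbf{Y})=0$. The fibre product with $\Lambda$ is then the restriction of $\widetilde\Lambda$ to $\kappa$, which is a subdivision of $\kappa$. The main obstacle is the degenerate locus: on faces of $\sigma$ where $Q_\varphi$ drops rank, the periods become non-discrete, and the region and decomposition must restrict correctly to faces. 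I would address this by building the lower hull construction over all of $\sigma$ simultaneously, so that face restrictions are automatic. Bounded monodromy exactly guarantees that the supports match face by face.
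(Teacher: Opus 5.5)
There is a genuine gap in your second step, and that step carries all the content. The paper gives no argument of its own here; it cites \cite[Theorem~5.2.1]{KajiwaraKatoNakayama2008}.

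The flaw is in reading the fibres as Delaunay decompositions of $Q_\varphi(\Phi(-),-)(s)$ and then subdividing $\sigma$ into chambers of constant type. This fails in general once $\dim\sigma\ge 2$; over a ray it is fine. The cells have vertices at the periods $Q_\varphi(-,y)(s)$, and these move with $s$. So over a chamber $\sigma_0$, a cell with vertices $y_1,\dots,y_k$ sweeps out $\{(s,\sum_i\lambda_iQ_\varphi(-,y_i)(s))\}$. This is the image of $\sigma_0$ times a simplex under a \emph{bilinear} map, and it is generally not convex.

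Already for an edge $[y_1,y_2]$, take the midpoint of $(s,Q_\varphi(-,y_1)(s))$ and $(s',Q_\varphi(-,y_2)(s'))$. It lies in the swept set only if $(\lambda-1)Q_\varphi(-,y_1-y_2)(s)+\lambda Q_\varphi(-,y_1-y_2)(s')=0$ for some $\lambda$. That is impossible when the two vectors are independent. A concrete instance: $\textbf{X}=\textbf{Y}=\mathbb{Z}^2$, $\Phi=\mathrm{id}$, form $s_1\bigl(\begin{smallmatrix}2&-1\\-1&2\end{smallmatrix}\bigr)+s_2\bigl(\begin{smallmatrix}2&1\\1&2\end{smallmatrix}\bigr)$, chamber $s_1>s_2$, Delaunay edge $[0,e_1]$, $s=(1,0)$, $s'=(2,1)$. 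So ``cones over Delaunay cells, chamber by chamber'' is not a fan: linear dependence of the vertices on $s$ does not make the swept cells linear. Separately, your finiteness of chambers near degenerate faces is asserted, not proved; it is a reduction-theory statement, not a consequence of linearity.

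Your phrase ``lower convex hull of the points $(s,\dots)$'' points at what does work, provided the hull is taken in the total space.
\begin{itemize}
\item \textbf{Construction.} After a simplicial subdivision of $\sigma$ with rays $r_j$, let $\mathcal{C}$ be the cone generated by $w_{j,y}=(r_j,\,Q_\varphi(-,y)(r_j),\,\tfrac12Q_\varphi(\Phi y,y)(r_j))$ for $y\in\textbf{Y}$.
\item \textbf{Invariance.} The map $(s,\phi,h)\mapsto(s,\,\phi+Q_\varphi(-,y_0)(s),\,h+\phi(\Phi y_0)+\tfrac12Q_\varphi(\Phi y_0,y_0)(s))$ is linear, fixes the vertical direction, and sends $w_{j,y}$ to $w_{j,y+y_0}$. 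So $\textbf{Y}$ permutes the lower faces.
\item \textbf{Lower faces.} Each lower face is exposed by some $(\alpha,\beta,-1)$ and equals $\sum_{j\in J}\operatorname{cone}\{w_{j,y}:y\in D_j(\beta)\}$. Here $D_j(\beta)$ is a cell of the one-parameter Mumford decomposition for the semidefinite form $Q_\varphi(\Phi(-),-)(r_j)$.
\item \textbf{Fibres.} The fibre over $s=\sum_j s_jr_j$ is therefore the mixed subdivision of $\sum_j s_j\operatorname{Del}(r_j)$, not the Delaunay decomposition of $Q_\varphi(s)$. It varies linearly, so no chambers or reduction theory are needed, and faces of $\sigma$ are handled by using only their rays.
\item \textbf{Finiteness.} Finitely many orbits follows from $\Phi(\textbf{Y})$-periodicity of $\beta\mapsto(D_j(\beta))_j$ on $\textbf{X}_\mathbb{R}$, since $\Phi(\textbf{Y})$ has finite index in $\textbf{X}$.
\item \textbf{Support.} By semidefiniteness, the kernel of $x\mapsto Q_\varphi(x,-)(s)$ is the intersection of the kernels at the $r_j$ with $s_j>0$. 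Hence the support is exactly the bounded-monodromy region.
\end{itemize}
That kernel is rational, so, contrary to your remark, the periods stay discrete on degenerate faces. The real issue there is the infinite stabiliser of the fibre.

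Finally, to make the quotient a cone complex you need a finer $\textbf{Y}$-invariant refinement, for instance the same construction with $\tfrac1m\textbf{Y}$ in place of $\textbf{Y}$. A fan invariant only under a sublattice $\textbf{Y}'\subset\textbf{Y}$ descends to $\widetilde{\mathpzc{A}}/\textbf{Y}'$, a finite cover, not to $\mathpzc{A}(\varphi)$.
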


\begin{proof}
    This is \cite[Theorem~5.2.1]{KajiwaraKatoNakayama2008}.
\end{proof}
\subsubsection{Principal polarisations on tropical abelian varieties} 
Feusi and Iribar L\'opez define the dual $\mathpzc{A}^\vee$ to a tropical abelian variety $\mathpzc{A}$ as the bounded monodromy subsheaf of $\mathcal{E}xt(\mathpzc{A},\operatorname{Hom}(-,\mathbb{Z}))$, and note that the dual to $\mathpzc{A}(\varphi)$ is isomorphic to $\mathpzc{A}(\varphi^\vee)$ where $\varphi^\vee \colon \textbf{X}\rightarrow \mathsf{Hom}(\textbf{Y},\mathsf{Hom}(\sigma,\mathbb{Z}))$. 
See \cite[Section 3]{FeusiLopezTropAV} for details. We recall their definition of principally polarised tropical abelian varieties.

\begin{definition}\label{defn:TropAVPolarised}
    Let $\sigma$ be a rational polyhedral cone. A principally polarised tropical abelian variety over $\sigma$ is the data of a tropical abelian variety equipped with an isomorphism $\lambda\colon \mathpzc{A}\rightarrow \mathpzc{A}^\vee$ which is symmetric (i.e. equal to its dual under the identification $(\mathpzc{A}^\vee)^\vee \cong \mathpzc{A}$) and satisfies a positivity condition (see \cite[Definition 4.3]{FeusiLopezTropAV} for details). 
\end{definition}

The details of the definition of principal polarisation will not play an essential role in the sequel; it is defined only so that we may discuss moduli of principally polarised tropical abelian varieties with level structure.

\subsubsection{Integral affine manifolds from families of integral affine manifolds - the tropical abelian variety case} Let $\sigma$ be a rational polyhedral cone, set $\rho\rightarrow \sigma$ a point of $\sigma$, and let $\mathpzc{A}/\sigma$ be a family of tropical abelian varieties.

\begin{definition}\label{defn:ModuliIAMAbVar}
    Let $$\varphi \colon \textbf{Y}\rightarrow \mathsf{Hom}(\textbf{X},\mathsf{Hom}(\rho,\mathbb{Z}))$$ be a group homomorphism such that there is an isomorphism of group sheaves over $\rho$ $\mathpzc{A}(\varphi) \cong \mathpzc{A}\times_\sigma \rho$. The \textit{integral affine manifold $B(\mathpzc{A}\times_\sigma \rho/\rho)$ associated to the pair} $(\rho\rightarrow \sigma, \mathpzc{A}/\sigma)$ is defined to be $\mathsf{Hom}(\textbf{X},\mathsf{Hom}(\rho,\mathbb{Z}))^{(\textbf{Y})}\otimes \mathbb{R}/\varphi(\textbf{Y})$. Whenever we have fixed a presentation $\mathpzc{A}(\varphi) = \mathpzc{A}$, we can abbreviate $B(\mathpzc{A}\times_\sigma \rho) = B(\varphi)$.
\end{definition}

\begin{lemma}
     The integral affine manifold $B(\mathpzc{A}\times_\sigma \rho/\rho)$ is independent of the choice of $\varphi$. 
\end{lemma}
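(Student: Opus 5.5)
The plan is to compare two presentations directly. Suppose $\varphi\colon \textbf{Y}\rightarrow \mathsf{Hom}(\textbf{X},\mathsf{Hom}(\rho,\mathbb{Z}))$ and $\varphi'\colon \textbf{Y}'\rightarrow \mathsf{Hom}(\textbf{X}',\mathsf{Hom}(\rho,\mathbb{Z}))$ both present $\mathpzc{A}\times_\sigma\rho$. Then there is an isomorphism of group sheaves $\alpha\colon \mathpzc{A}(\varphi)\cong\mathpzc{A}(\varphi')$ on $\textbf{RPC}_\rho$, and we must produce from it an isomorphism of integral affine manifolds $B(\varphi)\cong B(\varphi')$. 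Since $\mathsf{Hom}(\rho,\mathbb{Z})=\mathbb{Z}$, the group $\mathsf{Hom}(\textbf{X},\mathsf{Hom}(\rho,\mathbb{Z}))^{(\textbf{Y})}$ is a lattice $N$, and $B(\varphi)=N_\mathbb{R}/\varphi(\textbf{Y})$ is a real torus with the integral affine structure descended from $N_\mathbb{R}$. This makes sense because translations by $\varphi(\textbf{Y})\subset N$ lie in $\operatorname{Aff}(N)$, and polarisability forces $\varphi(\textbf{Y})$ to be a full rank lattice after restricting to $\rho$. The same holds for $\varphi'$ with lattice $N'$.

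\textbf{Step 1: lift $\alpha$.} I would evaluate on the cones $\rho_k\rightarrow\rho$ given by multiplication by $k$, i.e. the inclusion $\mathbb{N}\xrightarrow{k}\mathbb{N}$. The $\rho_k$-points of $\mathpzc{A}(\varphi)$ are $\frac1k N/\varphi(\textbf{Y})$ inside $N_\mathbb{Q}/\varphi(\textbf{Y})$, compatibly in $k$. Taking the colimit over divisibility therefore identifies $N_\mathbb{Q}/\varphi(\textbf{Y})$, together with its filtration by the subgroups of $\rho_k$-points, with the corresponding data for $\varphi'$. It follows that $\alpha$ induces a group isomorphism $\bar\alpha\colon N_\mathbb{Q}/\varphi(\textbf{Y})\rightarrow N'_\mathbb{Q}/\varphi'(\textbf{Y}')$ carrying $\frac1kN/\varphi(\textbf{Y})$ onto $\frac1kN'/\varphi'(\textbf{Y}')$ for every $k$.

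\textbf{Step 2: make $\bar\alpha$ linear.} The goal is to show that $\bar\alpha$ is induced by a linear isomorphism $L\colon N\rightarrow N'$ with $L(\varphi(\textbf{Y}))=\varphi'(\textbf{Y}')$. I expect this to be the main obstacle, since an abstract group isomorphism of $\mathbb{Q}$-tori need not lift. The extra structure of a sheaf on $\textbf{RPC}_\rho$ should supply the lift. Consider pullback along morphisms between higher-dimensional cones $\kappa$ over $\rho$, in particular $\kappa=\mathbb{N}^2\rightarrow\rho$ given by summation. Naturality of $\alpha$ on $\kappa$-points forces $\bar\alpha$ to commute with the maps $N_\mathbb{Q}/\varphi(\textbf{Y})\times N_\mathbb{Q}/\varphi(\textbf{Y})\rightarrow \mathpzc{A}(\kappa)$. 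This in turn forces $\bar\alpha$ to be continuous, i.e. to extend to $N_\mathbb{R}/\varphi(\textbf{Y})$.

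If that route is awkward, I would instead compare the torsion-free parts directly. The short exact sequence $0\rightarrow\varphi(\textbf{Y})\rightarrow N\rightarrow \mathpzc{A}(\varphi)(\rho)\rightarrow0$ together with the divisible hull shows that $\bar\alpha$ restricted to torsion determines an isomorphism of Tate-type modules $N\otimes\hat{\mathbb{Z}}\cong N'\otimes\hat{\mathbb{Z}}$. Compatibility with the inclusion of $\rho$-points then pins down an integral lift $L$. Continuity of a group isomorphism between real tori then lets it lift to a linear map $L_\mathbb{R}$ on universal covers. Preservation of the $\frac1k$-lattices forces $L_\mathbb{R}(N)=N'$, so $L_\mathbb{R}\in\mathrm{GL}(N)$ up to identification.

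\textbf{Step 3: conclude.} A linear isomorphism $L\colon N\rightarrow N'$ carrying $\varphi(\textbf{Y})$ to $\varphi'(\textbf{Y}')$ descends to a homeomorphism $N_\mathbb{R}/\varphi(\textbf{Y})\rightarrow N'_\mathbb{R}/\varphi'(\textbf{Y}')$. In the charts induced from $N_\mathbb{R}$ and $N'_\mathbb{R}$ it is given by elements of $\operatorname{Aff}$, so it is an isomorphism of integral affine manifolds. Equivalently, by Lemma~\ref{lem:DataIAMS} it suffices to check that it identifies the sheaves of integral affine functions, which holds because $L$ is integral linear.
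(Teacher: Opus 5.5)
\textbf{Gap: Step~2.} This step is the entire content of the lemma, and neither of your routes establishes it.

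Your second route fails outright. The data you extract in Step~1 are the $\rho_k$-points, the pullbacks between them, and hence torsion or Tate-module data. These cannot detect integrality. Take $g=1$, $N=\mathbb{Z}$, $\varphi(\textbf{Y})=m\mathbb{Z}$. The subgroup $\frac1k\mathbb{Z}/m\mathbb{Z}$ is exactly the $km$-torsion of $\mathbb{Q}/m\mathbb{Z}\cong\mathbb{Q}/\mathbb{Z}$. So every $c\in\hat{\mathbb{Z}}^\times$ gives an automorphism that preserves your whole filtration and the $\rho$-points. Only $c=\pm1$ is induced by an integral linear map. An identification $N\otimes\hat{\mathbb{Z}}\cong N'\otimes\hat{\mathbb{Z}}$ is only $\hat{\mathbb{Z}}$-linear, and compatibility with $\rho$-points does not single out one coming from $\operatorname{Hom}(N,N')$. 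Continuity of $\bar\alpha$ on the real tori is then invoked at the end without ever having been proved.

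Your first route names the right test object $\kappa=\mathbb{N}^2$, but the stated mechanism is not an argument. There is no natural map $N_\mathbb{Q}/\varphi(\textbf{Y})\times N_\mathbb{Q}/\varphi(\textbf{Y})\to\mathpzc{A}(\kappa)$. Restriction to the two faces goes the other way, $\mathpzc{A}(\kappa)=(N\oplus N)/\Delta\varphi(\textbf{Y})\to\mathpzc{A}(\rho)^2$, with kernel $\cong\varphi(\textbf{Y})$ recording the chosen lift of the segment. The claim that naturality ``forces $\bar\alpha$ to be continuous'' is asserted without reason.

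\textbf{How to repair it along your lines.} Since $\mathpzc{A}(\kappa)$ is a quotient of the free group $N\oplus N$, the map $\alpha_\kappa$ lifts to an integral map $(u,v)\mapsto(P_1u+P_2v,\,Q_1u+Q_2v)$.
\begin{itemize}
\item Compatibility with restriction to the face $\{0\}\times\mathbb{N}$ forces $Q_1(N)\subseteq\varphi'(\textbf{Y}')$.
\item Naturality along $\rho_k\to\kappa$, $1\mapsto(1,k-1)$, applied to $(u,0)$ gives $\alpha_{\rho_k}(u)\equiv(P_1-Q_1)u \bmod k\varphi'(\textbf{Y}')$ for every $k$ and every $u\in N$.
\end{itemize}
Hence $\bar\alpha$ is induced at every level by the single integral map $L=P_1-Q_1$. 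Running the same argument for $\alpha^{-1}$ gives $L(N)=N'$ and $L(\varphi(\textbf{Y}))=\varphi'(\textbf{Y}')$, after which your Step~3 goes through.

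\textbf{Comparison with the paper.} The paper never faces this rigidity problem. It identifies the underlying space of $B$ with the link of $\mathpzc{A}\times_\sigma\rho\to\rho$. It declares the integral affine functions on the opens $U_\iota$ to be $\operatorname{Hom}(\tau,\mathbb{Z})$ for Kummer-free injections $\iota\colon\tau\hookrightarrow\mathpzc{A}\times_\sigma\rho$. Everything is thus read off the sheaf itself, and Lemma~\ref{lem:DataIAMS} yields independence of $\varphi$ without comparing two presentations or using the group structure. Your route, once repaired, gives something extra: an explicit integral linear identification $N\cong N'$.
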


Note that $\mathbb{N}\cong \operatorname{Hom}(\mathbb{N},\mathbb{N})$ acts on the set of maps from $\mathbb{N}$ to a sheaf $\mathpzc{F}$ on \textbf{RPC} by pre-composition.
We say that a morphism $\mathpzc{G}\rightarrow \mathpzc{F}$ of sheaves on \textbf{RPC} is \textit{Kummer free} if it satisfies the property that whenever there is a positive integer $k$ and map $g$ from $\mathbb{N}$ to $\mathpzc{F}$ such that $k(g)$  factors through $\mathpzc{G}$, then also $g$ factors through $\mathpzc{G}$.
\begin{proof}
    The underlying topological space of $B(\mathpzc{A}\times_\sigma \rho/\rho)$ is the link to the morphism $\mathpzc{A}\times_\sigma\rho \rightarrow \rho$. By Lemma \ref{lem:DataIAMS} it remains to identify its sheaf of integral affine functions. A base $\{\iota\colon U_\iota\hookrightarrow B(\varphi)\}$ for the topology of the link is indexed by injective kummer-free maps over $\rho$, $$\iota \colon \tau \hookrightarrow \mathpzc{A}(\varphi)$$ from cones $\tau/\rho$ of dimension $\mathsf{dim}(|\mathpzc{A}\times_\sigma \rho|)\leq g+1$.
    The open set $U_\iota$ is the intersection of the preimage of $1\in \rho$ with the image of the interior of $|\tau \otimes \mathbb{R}_{\geq 0}|$ under $|\iota\otimes \mathbb{R}_{\geq 0}|$.
    The sheaf of integral affine functions is defined by declaring its sections on the open set $U_\iota$ to be $\operatorname{Hom}(\tau,\mathbb{Z})$. There is a natural inclusion of $\operatorname{Hom}(\tau,\mathbb{Z})$ into the set of continuous functions on $U_\iota$: indeed $U_\iota$ is a subset of the preimage of $1\in \rho$ and thus pulls back to a subset of $\tau$.
\end{proof}

Continuing notation from the proof, the dual to the morphism $\tau\rightarrow \rho$, which we will denote $f$, is a map $$f^\vee\colon \operatorname{Hom}(\rho,\mathbb{Z})\hookrightarrow \operatorname{Hom}(\tau,\mathbb{Z}).$$ The subsheaf of constant functions is then $\operatorname{Hom}(\rho,\mathbb{Z})$.

\begin{example}
    Suppose $\textbf{X},\textbf{Y} \cong \mathbb{Z}$ have rank $1$ and $\sigma=\mathbb{N}^2$. We specify a family of tropical abelian varieties through the map $$\begin{aligned}
\varphi\colon \textbf{Y} = \mathbb{Z} 
&\longrightarrow \mathbb{Z}^2 
  = \mathsf{Hom}\bigl(\mathbb{Z}, \mathsf{Hom}(\mathbb{N}^2,\mathbb{Z})\bigr) \\
u 
&\longmapsto (3u, 0).
\end{aligned}$$ 
Given a ray $\rho = (\alpha, \beta)$ in $\sigma$ write $\varphi_\rho$ for the base changed tropical abelian variety. Then for $\alpha \ne 0$, $B(\varphi_\rho)$ is the quotient of $\mathbb{R}$ by $3 \alpha\mathbb{Z}$, but $B(\varphi_\rho)$ is zero in the special case that $\alpha=0$.
\end{example}

\subsubsection{Moduli of tropical abelian varieties with level structure}\label{sec:modTropAVwithLevel} Let $V$ be the $\mathbb{Z}$ module $(\mathbb{Z}/m\mathbb{Z})^{2g}$ equipped with its standard symplectic form and set $\textbf{V}$ the constant sheaf associated to $V$. For a tropical abelian variety $\mathpzc{A}$ we write $$\mathpzc{A}[m] = \operatorname{Ker}(\mathpzc{A}\xrightarrow{\times m} \mathpzc{A})$$ for its $m$-torsion.

For a cone $\tau$ in \textbf{RPC} and positive integer $n$, we will write $[n]\colon \tau \rightarrow \tau$ for the multiplication by $n$ map. The following definition originates in \cite{FeusiLopezTropAV}.

\begin{definition}\label{defn:TropAVLevel}(Feusi--L\'opez \cite{FeusiLopezTropAV})
    A level $m$ structure on a tropical abelian variety $\mathpzc{A}\rightarrow \sigma$ is a surjection of group objects on $\textbf{RPC}_\sigma$ $$\pi_m\colon  [m]^\ast \textbf{V}\twoheadrightarrow [m]^\ast \mathpzc{A}[m].$$ For every test object $\tau/\sigma \in \textbf{RPC}_\sigma$ we require that $\ker(\pi_m)(\tau) \subseteq V$ is a coisotropic subgroup isomorphic as an abstract group to $(\mathbb{Z}/m\mathbb{Z})^{r_\tau}$ for some $g \leq r_\tau \leq 2g$.
\end{definition}

\begin{definition}
    The moduli space of principally polarised tropical abelian varieties with level $m$ structure $\mathfrak{A}_g(m)$ is the stack on \textbf{RPC} whose $\sigma$ valued points are principally polarised tropical abelian varieties over $\sigma$ with level $m$ structure. We write $\mathpzc{Y}(m)\rightarrow \mathfrak{A}_g(m)$ for its universal tropical abelian variety.
\end{definition}

After identifying the moduli space of polarised tropical abelian varieties with level structure with a space of quadratic forms as defined in \cite[Definition 3.1]{assaf2026levelstructurestropicalabelian}, see \cite{FeusiLopezTropAV}, our next result follows as in \cite[Remark 1.1.1]{KajiwaraKatoNakayamaPartVII}.

\begin{proposition}\label{prop:TropAVModel}
    For each integer $m\geq 3$ there are (infinitely many) choices of diagram with horizontal arrows tropical models 
    $$ 
\begin{tikzcd}
\mathpzc{Y}_{\Lambda}(m) \arrow[r] \arrow[d] 
  & \mathpzc{Y}(m) \arrow[d] \\
\mathfrak{A}_{g,\Lambda}(m) \arrow[r] 
  & \mathfrak{A}_g(m),
\end{tikzcd}
    $$
    and the left hand vertical map is an integral morphism of cone complexes.
\end{proposition}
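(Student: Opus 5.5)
The plan is to follow the strategy of \cite[Remark 1.1.1]{KajiwaraKatoNakayamaPartVII}, transported to the tropical setting through the identification (via \cite{FeusiLopezTropAV}) of $\mathfrak{A}_g(m)$ with a stack of positive semi-definite quadratic forms with level data in the sense of \cite[Definition 3.1]{assaf2026levelstructurestropicalabelian}. First, I will use this identification to present $\mathfrak{A}_g(m)$ as a quotient of the rational polyhedral cone $\Omega^{\mathrm{rt}}$ of positive semi-definite quadratic forms on $\mathbb{Z}^g$ with rational kernel by the level subgroup $\Gamma(m)\subset \operatorname{GL}_g(\mathbb{Z})$. Because $m\geq 3$, the group $\Gamma(m)$ is torsion free (Serre's lemma), so every $\Gamma(m)$-stabiliser of a cone in an admissible decomposition is trivial. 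Choosing any $\Gamma(m)$-admissible polyhedral decomposition $\Sigma$ of $\Omega^{\mathrm{rt}}$ (second Voronoi, perfect cone, or any refinement of these; there are infinitely many such choices) and passing to a subdivision that is fine enough that distinct cones in the same $\Gamma(m)$-orbit meet trivially, the quotient $\Sigma/\Gamma(m)$ is a genuine cone complex. Since $\Sigma$ subdivides the support of $\Omega^{\mathrm{rt}}$, the induced map $\mathfrak{A}_{g,\Lambda}(m):=\Sigma/\Gamma(m)\to\mathfrak{A}_g(m)$ is a tropical alteration, hence a tropical model.

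Next, I will construct $\mathpzc{Y}_\Lambda(m)$. Over each cone $\tau$ of $\Sigma$, the universal family is $\mathpzc{A}(\varphi_\tau)$, with $\varphi_\tau$ given by the universal quadratic form. By Lemma \ref{lem:FunctorPointsTropAV}, each of these admits a tropical model. To make these models compatible, I will use a Mumford-type construction: choose a $\mathbf{Y}\rtimes\Gamma(m)$-invariant fan $\widetilde{\Sigma}$ in $\Omega^{\mathrm{rt}}\times(\mathbf{X}\otimes\mathbb{R})$, for instance cones over Delaunay/Voronoi cells of the quadratic form, which lies over $\Sigma$ and maps cones into cones. Quotienting by $\mathbf{Y}\rtimes\Gamma(m)$ and using the bounded monodromy condition of Definition \ref{defn:TropicalAV} yields $\mathpzc{Y}_\Lambda(m)\to\mathpzc{Y}(m)$. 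This map is a tropical model over $\mathfrak{A}_{g,\Lambda}(m)$, and the square commutes by construction. I may need to refine $\Sigma$ further so that $\widetilde{\Sigma}$ maps cones onto cones.

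Finally, I will arrange integrality of the left vertical arrow $\mathpzc{Y}_\Lambda(m)\to\mathfrak{A}_{g,\Lambda}(m)$. Rather than verify this directly, I intend to invoke the standard fact that any morphism of cone complexes becomes integral after a suitable subdivision of the base and a compatible change of lattice/subdivision upstairs. Weak semistable reduction of Abramovich--Karu, in its combinatorial form, even gives the stronger property of being flat. Because this operation is $\Gamma(m)$-equivariant in the Kajiwara--Kato--Nakayama setting, the quotient remains a valid model.

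The step I expect to be the main obstacle is this last one: performing the integral/semistable reduction equivariantly and compatibly with the level structure, while keeping the horizontal maps tropical alterations. My first approach will be to work entirely upstairs on $\widetilde{\Sigma}\to\Sigma$ with $\Gamma(m)$-equivariant choices. Since $\Gamma(m)$ acts freely on cones, I expect this to reduce to choosing the subdivision on a finite set of orbit representatives and then transporting it around each orbit.
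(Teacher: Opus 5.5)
Your route is the paper's route: the paper's proof is only the citation \cite{FeusiLopezTropAV}, after remarking that one identifies $\mathfrak{A}_g(m)$ with a space of quadratic forms with level data \cite{assaf2026levelstructurestropicalabelian} and argues as in \cite[Remark 1.1.1]{KajiwaraKatoNakayamaPartVII}. The gap is in your first step, where you make that identification explicit: $\mathfrak{A}_g(m)$ is not the single quotient $\Omega^{\mathrm{rt}}/\Gamma(m)$. A level structure in the sense of Definition~\ref{defn:TropAVLevel} remembers the kernel of $[m]^\ast\mathbf{V}\to[m]^\ast\mathpzc{A}[m]$ and the induced identification. Over maximally degenerate cones these data fall into several $\operatorname{GL}_g(\mathbb{Z})$-orbits, one for each cusp.

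Already for $g=1$ and $m=3$, a tropical elliptic curve over a ray carries eight level structures (the surjections $(\mathbb{Z}/3\mathbb{Z})^2\to\mathbb{Z}/3\mathbb{Z}$), permuted only by $\pm 1$. So $\mathfrak{A}_1(3)$ has more than one ray. On the other hand, the level-$3$ congruence subgroup of $\operatorname{GL}_1(\mathbb{Z})$ is trivial, so your $\Sigma/\Gamma(3)$ is a single ray, and your bottom arrow is not a tropical alteration.

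In general $\mathfrak{A}_g(m)$ is a union of charts $\Omega^{\mathrm{rt}}/\Gamma(m)$, one per orbit of maximal-degeneration level data. These charts are glued along boundary faces of semi-definite forms, where the level data degenerate and different charts meet. The missing ingredient is choosing the decompositions on all charts so that they agree on these shared faces. This is exactly the admissibility condition of \cite{FaltingsChai1990} and \cite{KajiwaraKatoNakayamaPartVII}. One way to arrange it is to use one and the same $\operatorname{GL}_g(\mathbb{Z})$-invariant decomposition (e.g.\ second Voronoi) on every chart, and afterwards only canonical refinements such as barycentric subdivision.

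There are also some smaller inaccuracies. First, stabilisers of cones are not trivial in general. For $g=2$ the substitutions $x_2\mapsto x_2+cx_1$ with $c\in m\mathbb{Z}$ lie in $\Gamma(m)$ and fix the boundary ray spanned by $x_1^2$. What $m\geq 3$ actually gives you is that each stabiliser acts trivially on its cone, and that is what the quotient needs.

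Second, for the universal family it is the fibrewise Voronoi cells whose defining inequalities are jointly linear in $(Q,x)$. This holds only over cones refining the second Voronoi decomposition, not the perfect cone one in general. Moreover, the $\mathbf{Y}$-quotient of that fan is not yet a cone complex: for a principal polarisation all top cells lie in one $\mathbf{Y}$-orbit and have faces glued to one another. A further subdivision is therefore needed before $\mathpzc{Y}_\Lambda(m)\to\mathpzc{Y}(m)$ is a tropical model.

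Finally, the step you flag as the main obstacle is not one. Once both quotients are finite cone complexes, apply toric semistable reduction \cite{molcho2019universal} directly to the morphism between them, as the paper does in Lemma~\ref{lem:HilbPtsOverA2}. Subdividing the base and pulling back keeps both horizontal arrows tropical models, so no equivariant bookkeeping upstairs is needed.
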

\begin{proof}
    See \cite{FeusiLopezTropAV}.
\end{proof}

\subsection{Tropical Hilbert schemes via the stack of tropical supports}\label{sec:TropHilbSch}
Fix a family of tropical abelian varieties $\mathpzc{A}$ over a cone $\sigma$. 

\begin{definition}
    Let $\mathpzc{T}$ be a sheaf on \textbf{RPC} which admits a tropical model. A locally closed stratification $\mathcal{P}$ of $|\mathscr{T}|$ is \textit{conical} if $\mathscr{T}$ admits a tropical model such that each stratum of $\mathcal{P}$ is a union of interiors of cones.
\end{definition}

\begin{definition}\label{defn:weightedPLSUBDiv}
An $m$-\textit{weighted zero-dimensional piecewise linear subdivision} of a family $\mathpzc{A}\rightarrow \sigma$ of tropical abelian varieties (resp. of the projection $\mathbb{R}^k\times \sigma\rightarrow \sigma$) is a pair $$(\mathcal{P},w\colon \mathcal{P}\rightarrow \mathbb{N})$$ where,
\begin{enumerate}[leftmargin = *]
    \item The first element of our tuple $\mathcal{P}$ is a conical stratification of $|\mathpzc{A}|$ (resp of $|\mathbb{R}^k \times \sigma|$) such that the image of each stratum of $\mathcal{P}$ in $|\sigma|$ is the entire interior of a face.
    \item There exist $m$ sections $(s_1,..,s_m)$ of $\mathpzc{A}\rightarrow \sigma$ (resp. $\mathbb{R}^k\times \sigma \rightarrow \sigma$) such that:
    \begin{enumerate}[label=(\alph*)]
      \item The stratification $\mathcal{P}$ is the common refinement of the stratification pulled back from $\sigma$, and the stratification defined by the image of the $s_i$.
      \item The function $w$ is defined by $$w(\alpha)=\operatorname{card}\left(\{i:\alpha\subseteq |s_i(\sigma)|\}\right).$$ 
    \end{enumerate}
\end{enumerate}
\end{definition}

Note that the family of weighted zero-dimensional piecewise linear subdivisions determines the sections $s_i$ up to permutation, but not every collection of sections gives rise to such a family (as the images of sections may intersect over the interior of $\sigma$). Tracking weights is necessary if we are to define a summation map (Section \ref{sec:TropGenKummer}).

\begin{definition}\label{defn:TropHilb}
    Let $\mathpzc{A}/\sigma$ be a tropical abelian variety. The \textit{tropical Hilbert scheme} $\mathpzc{A}^{[n+1]}/\sigma$ of $\mathpzc{A}/\sigma$ is the sheaf on \textbf{RPC}$/\sigma$ whose $\kappa\rightarrow \sigma$ valued points are families of $(n+1)$-weighted zero-dimensional piecewise linear subdivisions of $\mathpzc{A}\times_\sigma \kappa$ over $\kappa$. The \textit{universal tropical subscheme} $\mathpzc{Z}_{n+1}(\mathpzc{A})$ is defined through its functor of points, whose value on a test object $\tau$ is the subset of $\operatorname{Hom}(\tau,\mathpzc{A}\times_\sigma \mathpzc{A}^{[n+1]})$ with the following property. First note that a map to $\mathpzc{A}^{[n+1]}$ definitionally determines a conical locally closed stratification of $\tau\times_\sigma \mathpzc{A}$. Our condition is that the image of $\tau$ in $\tau\times_\sigma \mathpzc{A}$ is contained within the union of strata of positive weight. 
\end{definition}
In particular, by construction there is a canonical monomorphism $$\mathpzc{Z}_{n+1}(\mathpzc{A})\rightarrow \mathpzc{A}\times \mathpzc{A}^{[n+1]}.$$
\begin{remark}
    One can define the stack of tropical supports of $\mathpzc{A}\rightarrow \sigma$ as the CFG over $\textbf{RPC}_\sigma$ whose $\tau\rightarrow \sigma$ valued points are monomorphisms in the category of sheaves on \textbf{RPC} $$\iota \colon \mathscr{T}\rightarrow \tau \times_{\sigma} \mathpzc{A}$$ satisfying:
    \begin{enumerate}[leftmargin=*]
        \item A flatness condition (parallel to \cite[Definition~1.3.3]{LogQuot})
        \item The morphism $\iota$ is represented in the category of piecewise linear spaces, see \cite[Section~1]{LogQuot}. 
    \end{enumerate}
    There is a canonical map from $\mathpzc{A}^{[n+1]}$ as defined here to the above stack of tropical supports. The current perspective differs in two, essentially cosmetic, ways. First, since this paper works with Hilbert schemes of points, it suffices to concern ourselves with $\mathcal{P}$ arising from a collection of sections. Second, strata track a weight $w$ which is closely related to K-tropicalisation as discussed in \cite{Quot2}.
\end{remark}

\subsubsection{Subdividing tropical Hilbert schemes} Note that the tropical moduli space $\mathpzc{A}^{[n+1]}\rightarrow \sigma$ carries a universal diagram (over $\sigma$)
\begin{equation}\label{eq:An-universal-square}
\begin{tikzcd}
\mathpzc{Z}_{n+1}(\mathpzc{A})\arrow[r]&\mathpzc{A}^{[n+1]} \times \mathpzc{A}
  \arrow[r] 
  \arrow[d, "\pi"'] 
& 
\mathpzc{A} \\
&\mathpzc{A}^{[n+1]} &
\end{tikzcd}
\end{equation}

\begin{proposition}\label{prop:ModelOfAn}
    There exist (infinitely many) choices of tropical model $$\mathpzc{A}^{[n+1]}(\Lambda)\rightarrow \mathpzc{A}^{[n+1]}$$ and for each such choice the fibre product $$\mathpzc{Z}_{n+1}^\Lambda(\mathpzc{A}):=\mathpzc{A}^{[n+1]}(\Lambda)\times_{\mathpzc{A}^{[n+1]}}\mathpzc{Z}_{n+1}(\mathpzc{A}) \rightarrow  \mathpzc{Z}_{n+1}(\mathpzc{A})$$ is a tropical model which is exact over $\mathpzc{A}^{[n+1]}$.
\end{proposition}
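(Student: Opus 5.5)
The plan is to build the tropical model of $\mathpzc{A}^{[n+1]}$ by descending an explicit cone structure on an ordered version of the problem. Choose a tropical model $j\colon \Lambda_0\rightarrow \mathpzc{A}$ from Lemma~\ref{lem:FunctorPointsTropAV}. A $\kappa$-point of $\mathpzc{A}^{[n+1]}$ is determined, up to permutation, by $n+1$ sections $s_1,\dots,s_{n+1}$ of $\mathpzc{A}\times_\sigma\kappa\rightarrow\kappa$. The common refinement condition of Definition~\ref{defn:weightedPLSUBDiv} requires that over the interior of $\kappa$ the pattern of coincidences $s_i=s_j$ is constant.

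First I will treat the ordered analogue. Let $\mathpzc{A}^{n+1}_\sigma$ be the $(n+1)$-fold fibre product over $\sigma$, which is again a tropical abelian variety (take $\textbf{X}^{n+1},\textbf{Y}^{n+1}$ with block diagonal $\varphi$). By Lemma~\ref{lem:FunctorPointsTropAV} it has a tropical model $\Lambda_1$. For each pair $i<j$ the diagonal $\{s_i=s_j\}$ is a sub-tropical abelian variety, cut out by the kernel of the homomorphism $(s_i,s_j)\mapsto s_i-s_j$ to $\mathpzc{A}$. I will refine $\Lambda_1$ so that every such diagonal is a union of cones. The way to do this is to pull back a model of $\mathpzc{A}$ along each difference map and take the common refinement; a common refinement of finitely many tropical models exists by forming fibre products over the target, as in the footnote to Definition~\ref{defn:Link}. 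Next I make the result $S_{n+1}$-invariant by taking the common refinement over all permutations. On the open locus where the coincidence pattern is constant on cone interiors, the points of $\mathpzc{A}^{[n+1]}$ are exactly the cones of this invariant model, taken up to the $S_{n+1}$-action.

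The main obstacle is that the naive quotient by $S_{n+1}$ may fail to be a cone complex, because a permutation can fix a cone without fixing it pointwise. I would first try to pass to a barycentric-type (or sufficiently fine, e.g. second) subdivision, so that any permutation stabilising a cone fixes it pointwise. The quotient is then a genuine cone complex $\mathpzc{A}^{[n+1]}(\Lambda)$, and its map to $\mathpzc{A}^{[n+1]}$ is a monomorphism inducing a bijection on underlying spaces, i.e. a tropical alteration. Infinitely many such choices arise from further invariant subdivisions.

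For the second claim, the universal family $\mathpzc{Z}_{n+1}$ is, over each cone of $\mathpzc{A}^{[n+1]}(\Lambda)$, the union of the images of the sections $s_i$ modulo symmetries. Each such image is isomorphic to the base cone via $s_i$, so the fibre product is a finite union of cones mapping isomorphically onto cones of the base. This makes it a cone complex, tropical model condition included, and makes the projection exact; an isomorphism of monoids is exact, and exactness is checked cone by cone. The only care needed is that cones where sections coincide are glued once, which holds because the coincidence pattern is constant on interiors.
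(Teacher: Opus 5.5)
Your proposal is correct and follows essentially the paper's route: subdivide the $(n+1)$-fold fibre product so that the coincidence pattern of the $s_i$ is constant on cone interiors, make this $\operatorname{Sym}(n+1)$-invariant, descend to the quotient, and read off the universal family over each cone as copies of that cone given by the sections, which gives exactness. The differences are only in implementation. You get the diagonal refinement by pulling back a model of $\mathpzc{A}$ along the difference maps, where the paper cites the conical-stratification results of \cite{LogQuot}; for this you should choose that model so that the zero section is a union of cones, otherwise the diagonals need not become subcomplexes. Your (second) barycentric subdivision makes explicit the paper's passage to the coarse quotient, and it also ensures that two section-images over a cone meet in a single face, as the cone complex axiom requires.
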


\begin{proof}
    We will first argue that $\mathpzc{A}^{[n+1]}$ admits a tropical model. Let $\overline{\mathpzc{A}}\rightarrow \mathpzc{A}$ be a tropical model as in Lemma~\ref{lem:FunctorPointsTropAV}.
    The fibre product $\overline{\mathpzc{A}}^{n+1}$ of $n+1$ copies of $\overline{\mathpzc{A}}/\sigma$ taken over $\sigma$ remains a cone complex. The $\kappa\rightarrow \sigma$ valued points are ordered tuples of $n+1$ sections $\{(s_1,...,s_{n+1})\}$ of $\overline{\mathpzc{A}}\times_\sigma \kappa\rightarrow \kappa$. Let $\Sigma'$ be a subdivision of $\mathpzc{A}^{n+1}$ such that on the interior of each cone the set of pairs $(i,j)$ such that $s_i = s_j$ is constant - such a subdivision exists because the stratification by locus where $s_i= s_j$ is conical in the sense of \cite[Section~1.2.1]{LogQuot}. By \cite[Proposition~1.2.5, Lemma~1.5.4]{LogQuot} and \cite{MR2248429}, $\Sigma'$ admits a $\operatorname{Sym}(n+1)$ equivariant subdivision. This subdivision descends to a subdivision $\mathpzc{A}^{[n+1]}(\Lambda)$ of the coarse space to the stack quotient $[\mathpzc{A}^{n+1}/\operatorname{Sym}(n+1)]$. To see that $\mathpzc{Z}_{n+1}^\Lambda(\mathpzc{A})$ is a cone complex note that the cone complex structure is induced by the sections used to define the locally closed stratification $\mathcal{P}^{[n+1]}$. Exactness requires showing that the image of each cone is an entire cone, but the projection of the image of a section is certainly the entire cone. 
\end{proof}

\begin{figure}
\begin{tikzpicture}[scale=1.0, line cap=round, line join=round]

  % Fundamental parallelogram
  \coordinate (O) at (0,0);
  \coordinate (A) at (5,0);
  \coordinate (B) at (1.8,3.2);
  \coordinate (C) at ($(A)+(B)$);

  \draw[thick] (O) -- (A) -- (C) -- (B) -- cycle;

  % grid
  \foreach \t in {0.2,0.4,0.6,0.8} {
    \draw[gray!45] ($(O)!\t!(A)$) -- ($(B)!\t!(C)$);
    \draw[gray!45] ($(O)!\t!(B)$) -- ($(A)!\t!(C)$);
  }

  % points
  \coordinate (P1) at ($(O)!1.20!(A)!0.90!(B)$);
  \coordinate (P2) at ($(O)!0.58!(A)!0.18!(B)$);
  \coordinate (P3) at ($(O)!0.30!(A)!0.72!(B)$);

  \fill[red] (P1) circle (2.2pt)
      node[below left=3pt]{};

  \fill[red] (P2) circle (2.2pt)
      node[below right=3pt] {};

  \fill[red] (P3) circle (2.2pt)
      node[above left=3pt] {};

  % projections showing coordinates
  \draw[dashed] (P1) -- (P1 |- O);
  \draw[dashed] (P1) -- (P1 -| O);

  \draw[dashed] (P2) -- (P2 |- O);
  \draw[dashed] (P2) -- (P2 -| O);

  \draw[dashed] (P3) -- (P3 |- O);
  \draw[dashed] (P3) -- (P3 -| O);

  % coordinate labels
  \node[below] at (P1 |- O) {$x_1$};
  \node[left]  at (P1 -| O) {$h_1$};

  \node[below] at (P2 |- O) {$x_2$};
  \node[left]  at (P2 -| O) {$h_2$};

  \node[below] at (P3 |- O) {$x_3$};
  \node[left]  at (P3 -| O) {$h_3$};

  \node at ($(O)!0.5!(C)+(0,0.15)$) {$\mathpzc{A}=\mathbb{R}^2/\Lambda$};
\end{tikzpicture}
\caption{Points of the stack of tropical supports $\mathpzc{A}^{[3]}$ correspond to tuples of three (unordered) points on a tropical abelian variety as shown. For $\sigma$ an arbitrary cone, $\sigma$ valued points correspond to subdivisions where the distances $x_1,x_2,x_3$ and $h_1,h_2,h_3$ adopt values in $\operatorname{Hom}(\sigma,\mathbb{N})$ instead of in $\mathbb{R}_{\geq 0}$. The flatness hypothesis ensures that two points cannot coincide on a proper subset of the interior of $\sigma$.}\label{fig:TropSupp}
\end{figure}

\subsubsection{Subdivisions over the moduli of tropical abelian varieties}
Passing to the stack of tropical supports is functorial in $\sigma$, and thus specifies a morphism of sheaves $$\mathpzc{Y}^{[n+1]}(n+1)\rightarrow \mathfrak{A}_g(n+1).$$

\begin{lemma}\label{lem:HilbPtsOverA2}
    Let $n\geq 2$. There is a diagram with horizontal arrows tropical models 
    $$ 
\begin{tikzcd}
\mathpzc{Y}_{\Lambda}^{[n+1]}(n+1) \arrow[r] \arrow[d] 
  & \mathpzc{Y}^{[n+1]}(n+1) \arrow[d] \\
\mathfrak{A}_{g,\Lambda}(n+1) \arrow[r] 
  & \mathfrak{A}_g(n+1).
\end{tikzcd}
    $$
    and the left hand vertical map is an integral morphism of cone complexes.
\end{lemma}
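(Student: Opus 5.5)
The plan is to combine Proposition~\ref{prop:TropAVModel} with a relative version of the argument proving Proposition~\ref{prop:ModelOfAn}. The integer $n+1\geq 3$ meets the hypothesis of Proposition~\ref{prop:TropAVModel}. That proposition supplies a tropical model $\mathfrak{A}_{g,\Lambda_0}(n+1)\rightarrow \mathfrak{A}_g(n+1)$ together with a compatible tropical model $\mathpzc{Y}_{\Lambda_0}(n+1)\rightarrow \mathpzc{Y}(n+1)$, such that $p_0\colon \mathpzc{Y}_{\Lambda_0}(n+1)\rightarrow\mathfrak{A}_{g,\Lambda_0}(n+1)$ is an integral morphism of cone complexes.

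First, I base change $\mathpzc{Y}^{[n+1]}(n+1)$ along $\mathfrak{A}_{g,\Lambda_0}(n+1)\rightarrow\mathfrak{A}_g(n+1)$. Passing to tropical supports is functorial in $\sigma$, so the result is the tropical Hilbert scheme of the family $\mathpzc{Y}(n+1)\times_{\mathfrak{A}_g}\mathfrak{A}_{g,\Lambda_0}$. Next, I form the $(n+1)$-fold fibre product $\mathpzc{Y}_{\Lambda_0}^{\,n+1}$ over $\mathfrak{A}_{g,\Lambda_0}(n+1)$; this is a cone complex. On it I consider the conical stratification recording the set of pairs $(i,j)$ with $s_i=s_j$. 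The same citations as in Proposition~\ref{prop:ModelOfAn} (\cite[Proposition~1.2.5, Lemma~1.5.4]{LogQuot} and \cite{MR2248429}) then give a $\operatorname{Sym}(n+1)$-equivariant subdivision on which this set is constant on cone interiors. The coarse quotient by $\operatorname{Sym}(n+1)$ is a cone complex $\Sigma$ mapping to $\mathpzc{Y}^{[n+1]}(n+1)$ by a tropical alteration. This is exactly the argument of Proposition~\ref{prop:ModelOfAn}, now carried out cone-by-cone over the base rather than over a single cone $\sigma$.

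The main obstacle is integrality of the vertical map, since subdividing the source can destroy it. Integrality (and more weakly exactness) of a map of cone complexes fails precisely when images of cones are not unions of cones and the lattice map is not saturated. My first remedy is semistable reduction: by the Abramovich--Karu weak semistable reduction theorem (which I would cite), after a further subdivision and change of lattice of the base $\mathfrak{A}_{g,\Lambda_0}(n+1)$, I replace $\Sigma$ by its base change, subdivided further so that every cone maps onto a cone. That makes the map weakly semistable and in particular integral. I must also check that these subdivisions of the base remain tropical models of $\mathfrak{A}_g(n+1)$; they do, since subdivisions and lattice changes compose.

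Finally, I set $\mathfrak{A}_{g,\Lambda}(n+1)$ to be the altered base and $\mathpzc{Y}^{[n+1]}_\Lambda(n+1)$ to be the resulting complex. The square commutes by construction. Both horizontal maps are tropical alterations with cone complex domains, hence tropical models. I also expect the equivariance needed in the semistable reduction step to be manageable: I would perform that reduction on the ordered product $\mathpzc{Y}_{\Lambda_0}^{\,n+1}$ equivariantly before taking the quotient, then check that integrality descends to the $\operatorname{Sym}(n+1)$ quotient.
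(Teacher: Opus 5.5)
Your proposal is correct and is essentially the paper's argument. You take a base model from Proposition~\ref{prop:TropAVModel}, run the construction of Proposition~\ref{prop:ModelOfAn} over it, and then apply toric semistable reduction to get integrality; the only difference is that you work globally on the fibre power of $\mathpzc{Y}_{\Lambda_0}(n+1)$, whereas the paper works cone-by-cone over maximal cones and glues via a common refinement of conical stratifications and \cite[Proposition~1.2.5]{LogQuot}.

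Your closing worry about equivariance is unnecessary. As the paper does, apply semistable reduction directly to the quotient cone complex $\Sigma\to\mathfrak{A}_{g,\Lambda_0}(n+1)$, so no equivariant reduction upstairs and no descent of integrality to the quotient is needed.
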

\begin{proof}
    To begin, choose any tropical model $\Lambda \rightarrow \mathfrak{A}_g(n+1)$ which exists by Proposition~\ref{prop:TropAVModel}. For each maximal cone $\sigma$ of $\Lambda$ choose a subdivision of $\mathpzc{Y}^{[n+1]}(n+1)\times_{\mathfrak{A}_g(n+1)}\sigma$, which exists by Proposition~\ref{prop:ModelOfAn}. We obtain a piecewise linear space $\mathscr{T}$ subdividing $\mathpzc{Y}^{[n+1]}(n+1)\times_{\mathfrak{A}_g(n+1)}\Lambda$ as follows. It will suffice to specify compatible piecewise linear subdivisions of $$\mathpzc{Y}^{[n+1]}(n+1)\times_{\mathfrak{A}_g(n+1)}\sigma$$ induced by pullback along the map $\sigma \rightarrow \Lambda$. Such a piecewise linear subdivision is induced by a conical locally closed stratification: we take the common refinement over all choices of maximal cone $\kappa \rightarrow \Lambda$ of the locally closed stratification of $$\mathpzc{Y}^{[n+1]}(n+1)\times_{\mathfrak{A}_g(n+1)}\sigma\times_{\mathfrak{A}_g(n+1)} \kappa$$ induced by the choice made in the second sentence of this paragraph.

    The resulting piecewise linear space $\mathscr{T}$ admits a tropical model $\Lambda'$ by \cite[Proposition 1.2.5]{LogQuot}. We thus have a diagram with horizontal maps tropical models,$$ 
\begin{tikzcd}
\mathpzc{Y}_{\Lambda}^{[n+1]}(n+1) \arrow[r] \arrow[d] 
  & \mathpzc{Y}^{[n+1]}(n+1) \arrow[d] \\
\mathfrak{A}_{g,\Lambda}(n+1) \arrow[r] 
  & \mathfrak{A}_g(n+1).
\end{tikzcd}
    $$
    Applying toric semistable reduction \cite{molcho2019universal} to the morphism on the left yields the required diagram.
\end{proof}
\subsection{The tropical generalised Kummer construction}\label{sec:TropGenKummer}
\subsubsection{Central definition} Since $\mathpzc{A}/\sigma$ is a group object in the category of sheaves on $\textbf{RPC}_\sigma$, there is a summing morphism $$\Sigma\colon \mathpzc{A}^{[n+1]}\rightarrow \mathpzc{A},$$
defined by taking a weighted piecewise linear polyhedral subdivision with associated sections $(s_1,...,s_{n+1})$ to $\sum_i s_i\in \mathsf{Hom}(\sigma, \mathpzc{A}).$

\begin{definition}\label{defn:TropGenKummer} The \textit{tropical generalised Kummer variety} $\operatorname{Kum}_{2n}(\mathpzc{A})$ associated to a tropical abelian variety $\mathpzc{A}$ over a cone $\sigma$ is the fibre product in the category of sheaves on $\textbf{RPC}_\sigma$,
$$\begin{tikzcd}
\operatorname{Kum}_{2n}(\mathpzc{A}) \arrow[d] \arrow[r] & \sigma \arrow[d, "0", hook] \\
{\mathpzc{A}^{[n+1]}} \arrow[r]                           & \mathpzc{A}
\end{tikzcd}.$$
\end{definition}
The definition captures, in the tropical category, the collection of $n+1$ points which sum to zero. By base changing the universal data associated to the tropical Hilbert scheme, there is a universal diagram,
\begin{equation}\label{eq:Kum-universal-square}
\begin{tikzcd}
\mathpzc{Z}_{K}(\mathpzc{A}) := Z_{n+1}(\mathpzc{A})\times_{\mathpzc{A}^{[n+1]}}\operatorname{Kum}_{2n}(\mathpzc{A})\arrow[r]\arrow[rd]&\mathpzc{X}(\mathpzc{A}):=\operatorname{Kum}_{2n}(\mathpzc{A}) \times \mathpzc{A}
  \arrow[r] 
  \arrow[d, "\pi"'] 
& 
\mathpzc{A} \\
&\operatorname{Kum}_{2n}(\mathpzc{A}) &
\end{tikzcd}
\end{equation}
Note that $\mathpzc{X}(\mathpzc{A})$ inherits from $\mathpzc{A}^{[n+1]}\times \mathpzc{A}$ a weighted subdivision $(\mathcal{P}^{[n+1]},w)$ of $|\mathpzc{X}(\mathpzc{A})|$.

\begin{proposition}\label{prop:KummerSubdivision}
    There are (infinitely many) choices of tropical model
    $$\operatorname{Kum}_{2n}^\Lambda(\mathpzc{A})\rightarrow \operatorname{Kum}_{2n}(\mathpzc{A})$$ and for each such choice the fibre product $$\mathpzc{Z}_{K}^\Lambda(\mathpzc{A}):=\operatorname{Kum}_{2n}^\Lambda(\mathpzc{A})\times_{\operatorname{Kum}_{2n}(\mathpzc{A})}\mathpzc{Z}_{K}(\mathpzc{A}) \rightarrow  \mathpzc{Z}_{K}(\mathpzc{A})$$
    is a tropical model.
\end{proposition}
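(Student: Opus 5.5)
The plan is to deduce Proposition~\ref{prop:KummerSubdivision} from Proposition~\ref{prop:ModelOfAn} by base change along the zero section. The strategy is to produce a tropical model of $\mathpzc{A}^{[n+1]}$ that is compatible with the summation map, and then restrict it to the fibre over $0$.

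\textbf{Step 1: make $\Sigma$ a map of cone complexes.} Choose a tropical model $\overline{\mathpzc{A}}\rightarrow\mathpzc{A}$ as in Lemma~\ref{lem:FunctorPointsTropAV}. Refine it if necessary so that the zero section $0\colon\sigma\hookrightarrow\mathpzc{A}$ is a sub-cone complex, that is, its image is a union of cones. This is possible because the image of a section is a conical locally closed subset, and \cite[Proposition~1.2.5]{LogQuot} lets us refine to make it a union of cones. Work on the ordered product $\overline{\mathpzc{A}}^{n+1}$ over $\sigma$. The summation map $(s_1,\dots,s_{n+1})\mapsto\sum_i s_i$ is linear on each cone, since the group law on $\mathpzc{A}(\varphi)$ is induced by addition in $\mathsf{Hom}(\mathbf{X},\mathsf{Hom}(-,\mathbb{Z}))$. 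Consequently the preimage of each cone of $\overline{\mathpzc{A}}$ under summation is conical. Take the common refinement of three stratifications: the diagonal stratification by the loci $s_i=s_j$ used in the proof of Proposition~\ref{prop:ModelOfAn}, the pullback of the cone structure of $\overline{\mathpzc{A}}$ under summation, and the preimage of the zero section. This common refinement is again conical and is $\operatorname{Sym}(n+1)$-invariant. Applying \cite[Proposition~1.2.5, Lemma~1.5.4]{LogQuot} and \cite{MR2248429} as before gives an equivariant subdivision, which descends to a tropical model $\mathpzc{A}^{[n+1]}(\Lambda)$ on which $\Sigma$ is represented by a morphism of cone complexes to $\overline{\mathpzc{A}}$.

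\textbf{Step 2: restrict to the zero fibre.} Set $\operatorname{Kum}_{2n}^\Lambda(\mathpzc{A}):=\mathpzc{A}^{[n+1]}(\Lambda)\times_{\mathpzc{A}}\sigma$. Because $\Sigma^{-1}(0)$ is a union of cones in $\mathpzc{A}^{[n+1]}(\Lambda)$, this fibre product is a sub-cone complex. It maps monomorphically to $\operatorname{Kum}_{2n}(\mathpzc{A})$, and it induces a bijection on points, hence a homeomorphism on topological realisations. Tropical alterations are stable under base change, so this map is a tropical model. Infinitely many choices arise by further subdividing, for example by iterated barycentric subdivisions or changes of lattice.

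\textbf{Step 3: the universal subscheme.} Write
\[
\mathpzc{Z}_{K}^\Lambda(\mathpzc{A})=\mathpzc{Z}^\Lambda_{n+1}(\mathpzc{A})\times_{\mathpzc{A}^{[n+1]}(\Lambda)}\operatorname{Kum}_{2n}^\Lambda(\mathpzc{A}).
\]
By Proposition~\ref{prop:ModelOfAn}, $\mathpzc{Z}^\Lambda_{n+1}(\mathpzc{A})$ is a cone complex that is exact over $\mathpzc{A}^{[n+1]}(\Lambda)$. The map to $\mathpzc{A}^{[n+1]}(\Lambda)$ therefore sends cones onto cones, and the preimage of the sub-complex $\operatorname{Kum}_{2n}^\Lambda(\mathpzc{A})$ is a union of cones. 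Hence the fibre product is a cone complex, and it is a tropical model of $\mathpzc{Z}_K(\mathpzc{A})$ by base change of the alteration $\mathpzc{Z}^\Lambda_{n+1}(\mathpzc{A})\to\mathpzc{Z}_{n+1}(\mathpzc{A})$.

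\textbf{Expected obstacle.} The main difficulty is Step~1: the summation map is only well defined modulo $\mathbf{Y}$. One must check that the preimage of a cone of $\overline{\mathpzc{A}}$ is conical in a way that is $\mathbf{Y}$-periodic and stable under $\operatorname{Sym}(n+1)$. The plan is to handle this on the universal cover $\mathsf{Hom}(\mathbf{X},\mathsf{Hom}(\sigma,\mathbb{Z}))^{(\mathbf{Y})}$, where summation is honestly linear, using the periodic, bounded-monodromy fan underlying \cite[Theorem~5.2.1]{KajiwaraKatoNakayama2008}. Since the relevant stratifications are $\mathbf{Y}^{n+1}$-periodic, the resulting refinement should descend to the quotient.
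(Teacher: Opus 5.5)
Your proposal is correct and follows essentially the same route as the paper: refine a model $\mathpzc{A}^{[n+1]}(\Lambda)$ of the kind produced in Proposition~\ref{prop:ModelOfAn} so that the zero fibre of the summation map becomes a sub-cone complex, define $\operatorname{Kum}_{2n}^\Lambda(\mathpzc{A})$ as the resulting fibre product, and obtain the statement about $\mathpzc{Z}_K^\Lambda(\mathpzc{A})$ by base change exactly as in Proposition~\ref{prop:ModelOfAn}. The differences are minor. You impose the summation stratification on the ordered product $\overline{\mathpzc{A}}^{n+1}$ before taking the $\operatorname{Sym}(n+1)$ quotient, which makes $\Sigma$ an honest morphism of cone complexes. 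The paper instead pulls the stratification $\{\Sigma_{\operatorname{trop}}^{-1}(|e|), |\mathpzc{A}^{[n+1]}|\setminus\Sigma_{\operatorname{trop}}^{-1}(|e|)\}$ back to $\mathpzc{A}^{[n+1]}(\Lambda)\times\overline{\mathpzc{A}}$. It then uses toric semistable reduction of the projection to push the refinement down to $\mathpzc{A}^{[n+1]}(\Lambda)$.
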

\begin{proof}
    We will show existence of such a tropical model. The second clause follows identically to the proof of Proposition~\ref{prop:ModelOfAn}. 
    
    Fix a choice of tropical model $\mathpzc{A}^{[n+1]}(\Lambda)\rightarrow \mathpzc{A}^{[n+1]}$ whose existence is guaranteed by Proposition~\ref{prop:ModelOfAn}, and choose also a tropical model $\overline{\mathpzc{A}}\rightarrow\mathpzc{A}$.
    Note that the summation morphism on the sections defining $\mathcal{P}^{[n+1]}$ defines a map $\Sigma_{\operatorname{trop}}\colon \mathpzc{A}^{[n+1]}\rightarrow \mathpzc{A}$.
    Let $|e|$ be the image of the identity section of $\mathpzc{A}/\sigma$ in $|\mathpzc{A}|$, and note that $\{\Sigma_{\operatorname{trop}}^{-1}(|e|), |\mathpzc{A}^{[n+1]}| \backslash \Sigma_{\operatorname{trop}}^{-1}(|e|)\}$ pulls back along the first projection to a conical locally closed stratification of $|\mathpzc{A}^{[n+1]}\times\mathpzc{A}|$, and thus of $|\mathpzc{A}^{[n+1]}(\Lambda)\times\overline{\mathpzc{A}}|$.
    By \cite[Proposition 1.2.5]{LogQuot} such a conical locally closed subdivision of a cone complex defines a subdivision of piecewise linear spaces $\mathpzc{T}\rightarrow {\mathpzc{A}}^{[n+1]}(\Lambda)\times\overline{\mathpzc{A}}$. By \cite[Lemma~1.5.4]{LogQuot} there exists a tropical model $\mathpzc{A}^{[n+1]}\times\mathpzc{A}(\Lambda)\rightarrow \mathpzc{T}$. After replacing $\mathpzc{A}^{[n+1]}(\Lambda)$ by a subdivision, we may assume that the morphism $\mathpzc{A}^{[n+1]}\times\mathpzc{A}(\Lambda) \rightarrow \mathpzc{A}^{[n+1]}(\Lambda)$ is integral. Indeed the new subdivision is obtained by replacing our non-integral map with its stacky toric semistable reduction \cite{molcho2019universal}. Having updated $\mathpzc{A}^{[n+1]}(\Lambda)$ in this way we obtain the required tropical model $$\operatorname{Kum}_{2n}^\Lambda(\mathpzc{A}):= \operatorname{Kum}_{2n}(\mathpzc{A})\times_{\mathpzc{A}^{[n+1]}}\mathpzc{A}^{[n+1]}(\Lambda) \rightarrow \operatorname{Kum}_{2n}(\mathpzc{A})$$ 
    because the canonical morphism $\operatorname{Kum}_{2n}^\Lambda(\mathpzc{A})\hookrightarrow \mathpzc{A}^{[n+1]}(\Lambda)$ is the inclusion of a sub-cone complex.
\end{proof}

\begin{lemma}
    Let $n\geq 2$. There is a diagram with horizontal arrows tropical models 
    $$ 
\begin{tikzcd}
\operatorname{Kum}_{2n}^\Lambda(\mathpzc{Y}(n+1)) \arrow[r] \arrow[d] 
  & \operatorname{Kum}_{2n}(\mathpzc{Y}(n+1)) \arrow[d] \\
\mathfrak{A}_{g,\Lambda}(n+1) \arrow[r] 
  & \mathfrak{A}_g(n+1).
\end{tikzcd}
    $$
    and the left hand vertical map is an integral morphism of cone complexes.
\end{lemma}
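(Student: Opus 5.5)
The plan is to combine the proof of Lemma~\ref{lem:HilbPtsOverA2} with that of Proposition~\ref{prop:KummerSubdivision}, carried out over the universal family rather than over a single cone $\sigma$.

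\textbf{Base model.} Start from Lemma~\ref{lem:HilbPtsOverA2}, which supplies tropical models $\mathpzc{Y}_{\Lambda}^{[n+1]}(n+1)\rightarrow \mathpzc{Y}^{[n+1]}(n+1)$ and $\mathfrak{A}_{g,\Lambda}(n+1)\rightarrow \mathfrak{A}_g(n+1)$ whose induced vertical map is integral. The summation morphism $\Sigma\colon \mathpzc{Y}^{[n+1]}(n+1)\rightarrow \mathpzc{Y}(n+1)$ is defined fibrewise over $\mathfrak{A}_g(n+1)$, as in Section~\ref{sec:TropGenKummer}, because $\mathpzc{Y}(n+1)$ is a group object over $\mathfrak{A}_g(n+1)$. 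Consequently $\operatorname{Kum}_{2n}(\mathpzc{Y}(n+1))$ is the fibre product of $\Sigma$ with the zero section.

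\textbf{Refinement.} Pull back the identity section of $\mathpzc{Y}(n+1)$, chosen compatibly with a tropical model of $\mathpzc{Y}(n+1)$ from Proposition~\ref{prop:TropAVModel}, along $\Sigma$. Over each cone of $\mathpzc{Y}_{\Lambda}^{[n+1]}(n+1)$ this gives the two-piece stratification $\{\Sigma^{-1}(|e|),\ \text{complement}\}$. This stratification is conical for the same reason as in Proposition~\ref{prop:KummerSubdivision}: on each cone, $\Sigma$ is linear in the sections $s_i$.

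The next step is to check that these local stratifications agree on overlaps. They do, because they are pulled back from a globally defined morphism of sheaves. They therefore glue to a conical stratification of the whole tropical model, and by \cite[Proposition 1.2.5, Lemma 1.5.4]{LogQuot} this gives a piecewise linear subdivision, which in turn admits a cone complex model refining $\mathpzc{Y}_{\Lambda}^{[n+1]}(n+1)$. In this refinement, $\operatorname{Kum}_{2n}^\Lambda(\mathpzc{Y}(n+1)):=\operatorname{Kum}_{2n}(\mathpzc{Y}(n+1))\times_{\mathpzc{Y}^{[n+1]}(n+1)}(\text{refinement})$ is a sub-cone complex. Hence it is a tropical model of $\operatorname{Kum}_{2n}(\mathpzc{Y}(n+1))$ mapping to $\mathfrak{A}_{g,\Lambda}(n+1)$.

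\textbf{Integrality.} The refinement may destroy integrality of the map to $\mathfrak{A}_{g,\Lambda}(n+1)$. To restore it, apply toric semistable reduction \cite{molcho2019universal} to the composite $\operatorname{Kum}_{2n}^\Lambda(\mathpzc{Y}(n+1))\rightarrow\mathfrak{A}_{g,\Lambda}(n+1)$. This replaces the base by a further alteration $\mathfrak{A}'_{g,\Lambda}(n+1)\rightarrow \mathfrak{A}_{g,\Lambda}(n+1)$, which is still a tropical model of $\mathfrak{A}_g(n+1)$ since it is a composite of subdivisions and lattice changes. The source is replaced by the corresponding alteration of its base change, which is again a tropical model of $\operatorname{Kum}_{2n}(\mathpzc{Y}(n+1))$. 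The resulting square commutes by construction.

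\textbf{Main obstacle.} The delicate point is ensuring that the output of semistable reduction is a cone complex rather than a cone stack, since the base $\mathfrak{A}_g(n+1)$ is itself only a stack. The hypothesis $n\geq 2$ gives level $n+1\geq 3$, which is what Proposition~\ref{prop:TropAVModel} requires to produce cone complex models with no automorphisms. I would argue that after semistable reduction the base remains a cone complex, possibly after a further barycentric subdivision, and that integral maps are stable under base change along alterations of the base, so integrality is preserved.
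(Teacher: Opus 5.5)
Your proposal is correct and follows essentially the paper's route. The paper applies Proposition~\ref{prop:KummerSubdivision} over each maximal cone of a model of $\mathfrak{A}_g(n+1)$, glues by common refinement exactly as in Lemma~\ref{lem:HilbPtsOverA2}, and finishes with toric semistable reduction; your only deviation is to start from the already-glued Hilbert model and impose the stratification $\{\Sigma^{-1}(|e|),\ \text{complement}\}$ globally, which is legitimate because this stratification is intrinsic, so compatibility on overlaps is automatic.
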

\begin{proof}
    Follows from Proposition~\ref{prop:KummerSubdivision} by the argument used to deduce Lemma~\ref{lem:HilbPtsOverA2} from Proposition~\ref{prop:ModelOfAn}
\end{proof}

\subsubsection{The tropical generalised Kummer variety as a family of integral affine manifolds with singularities}\label{sec:TropGenKummerIAMSing}
Let $\rho$ be a ray and let $\mathpzc{A}\rightarrow \rho$ be a family of tropical abelian surfaces.
\begin{proposition}\label{prop:description link}
    There is a homeomorphism between the link of the map $$\operatorname{Kum}_{2n}(\mathpzc{A})\rightarrow \rho$$ and the closed subspace of $\operatorname{Sym}^{n+1}(B(\mathpzc{A}/\rho))$ specifying unordered tuples of $n+1$ points which sum to zero in the group law of $B(\mathpzc{A}/\rho)$.
\end{proposition}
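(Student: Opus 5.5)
We compute the link directly from the functor of points. Choose a presentation $\mathpzc{A}\cong\mathpzc{A}(\varphi)$ over $\rho$, so that by Definition~\ref{defn:ModuliIAMAbVar} the link of $\mathpzc{A}\rightarrow\rho$ is $B=B(\mathpzc{A}/\rho)$, a real torus $\mathsf{Hom}(\textbf{X},\mathsf{Hom}(\rho,\mathbb{Z}))^{(\textbf{Y})}\otimes\mathbb{R}/\varphi(\textbf{Y})$. The proof then has three steps. First identify the link of $\mathpzc{A}^{[n+1]}\rightarrow\rho$ with $\operatorname{Sym}^{n+1}(B)$. Second show that the summation morphism $\Sigma$ induces, on links, the map $\operatorname{Sym}^{n+1}(B)\rightarrow B$ given by the group law of $B$. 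Third observe that the link of a fibre product along the closed inclusion $0\colon\rho\hookrightarrow\mathpzc{A}$ is the preimage of the identity.

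For the first step, use the tropical model constructed in the proof of Proposition~\ref{prop:ModelOfAn}. Start from a model $\overline{\mathpzc{A}}\rightarrow\mathpzc{A}$, exact over $\rho$ after subdivision. The $(n+1)$-fold fibre product $\overline{\mathpzc{A}}^{n+1}$ over $\rho$ has link $B^{n+1}$, since over a ray the link of a fibre product of exact maps is the product of links (the fibre over $1\in|\rho|$ of $|\overline{\mathpzc{A}}|^{n+1}_{|\rho|}$). We pass to a $\operatorname{Sym}(n+1)$-equivariant subdivision and then to the coarse quotient. Topological realisation commutes with this finite quotient, because realisation is a colimit and the group acts by permuting cones. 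This gives $B^{n+1}/\operatorname{Sym}(n+1)=\operatorname{Sym}^{n+1}(B)$.

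It remains to check that the map from the coarse quotient of $\overline{\mathpzc{A}}^{n+1}$ to $\mathpzc{A}^{[n+1]}$ induces a bijection on points over $1$. A $\rho$-point of $\mathpzc{A}^{[n+1]}$ is a weighted subdivision, which by Definition~\ref{defn:weightedPLSUBDiv} is determined by its sections $s_1,\dots,s_{n+1}$ up to permutation. Every unordered tuple of sections arises this way, because over a ray the flatness condition (two sections may not meet on a proper subset of the interior) is vacuous: the interior of $\rho$ is an open half-line, and linear sections either agree everywhere on it or nowhere. Real points are handled by the same argument after rescaling lattices, i.e.\ using rational points and density. I expect this identification to be the main obstacle. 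The delicate points are that the weights recover the multiset correctly at collisions, and that the topology on $|\mathpzc{A}^{[n+1]}|$ agrees with the quotient topology. The second should follow because both spaces are compact Hausdorff and the map is a continuous bijection.

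For the second step, the summation $\Sigma$ sends the unordered tuple $(s_i)$ to $\sum_i s_i$. On the model it is induced by the group law on $\overline{\mathpzc{A}}$, which on links is addition on the torus $B$; this is linear on the cover $\mathsf{Hom}(\textbf{X},\mathsf{Hom}(\rho,\mathbb{Z}))^{(\textbf{Y})}\otimes\mathbb{R}$ and descends. For the third step, the fibre product of Definition~\ref{defn:TropGenKummer} has as its points those tuples whose sum is the identity section. Using Proposition~\ref{prop:KummerSubdivision}, the tropical model of $\operatorname{Kum}_{2n}(\mathpzc{A})$ is a sub-cone complex of the model of $\mathpzc{A}^{[n+1]}$, so its link is the corresponding closed subspace of $\operatorname{Sym}^{n+1}(B)$. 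That subspace is $\{\sum x_i=0\}$, which proves the statement.
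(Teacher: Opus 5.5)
Your proposal is correct and is essentially the paper's argument. The paper just applies topological realisation to the cartesian square of Definition~\ref{defn:TropGenKummer} and notes that it stays cartesian; it takes for granted that the links of $\mathpzc{A}^{[n+1]}$ and $\mathpzc{A}$ are $\operatorname{Sym}^{n+1}(B)$ and $B$, and that the realised summation is the group law, whereas your three steps spell these out (using the sub-cone complex of Proposition~\ref{prop:KummerSubdivision} to justify cartesianness, and with the bijection in your first step already contained in Proposition~\ref{prop:ModelOfAn}, so you may simply cite it).
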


\begin{proof}
    Apply the topological realisation functor to the square of Definition~\ref{defn:TropGenKummer} (setting $\sigma =\rho$), and note it remains cartesian as a diagram in the category of topological spaces.
\end{proof}

\begin{definition}\label{defn:IAMSGenKum}
    Let $\mathpzc{A}\rightarrow \rho$ be a tropical abelian variety of relative dimension two over a ray. There is an IAMS (possibly with boundary), denoted $B_K^{2n}(\mathpzc{A})$, associated to $$\pi \colon  \operatorname{Kum}_{2n}(\mathpzc{A})\rightarrow \rho$$ which is defined as follows.
\begin{enumerate}[leftmargin = *]
    \item The underlying topological space is the link of $\pi$.

    \item The codimension two singular locus $D\subset B_K^{2n}(\mathpzc{A})$ consists of points where at least two of $(s_1,...,s_{n+1})$ coincide. 

    \item The integral affine structure on $B_K^{2n}(\mathpzc{A})\backslash D$ is specified by the affine functions on a base for the topology. A base is indexed by kummer-free injective maps with domain a cone of dimension $2n+1$, $$\iota \colon \tau \hookrightarrow \operatorname{Kum}_{2n}(\mathpzc{A})$$ and the open set $U_\iota$ associated to this map is the intersection of the image of the interior of $\tau$ with the link of $\pi$. The sheaf of integral affine functions has sections $\operatorname{Hom}(\tau,\mathbb{Z})$ on this open set.
\end{enumerate}
\end{definition}

\begin{lemma}
    Definition \ref{defn:IAMSGenKum} specifies a (unique by Lemma~\ref{lem:DataIAMS}) integral affine manifold with singularities $B_K^{2n}(\mathpzc{A})$.
\end{lemma}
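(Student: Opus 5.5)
Following the template of the proof that $B(\mathpzc{A}\times_\sigma\rho/\rho)$ is well defined, we check the three inputs of Lemma~\ref{lem:DataIAMS}. First, the link of $\pi$ is a topological manifold (possibly with boundary). Second, $D$ is closed and locally a finite union of codimension $\geq 2$ submanifolds. Third, the presheaf $U_\iota\mapsto \operatorname{Hom}(\tau,\mathbb{Z})$ defines a subsheaf of $C^0(\underline{B}_0)$ which is locally isomorphic to the sheaf of integral affine functions on an open of $\mathbb{R}^{2n}$.

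For the underlying space we use Proposition~\ref{prop:description link}. This identifies the link with $\Sigma^{-1}(0)\subset \operatorname{Sym}^{n+1}(B(\mathpzc{A}/\rho))$, where $B(\mathpzc{A}/\rho)$ is a real $s$-torus with $s\in\{1,2\}$. The key tool is the finite cover
\[
\{(x_1,\dots,x_{n+1}) : \textstyle\sum_i x_i=0\}\longrightarrow \Sigma^{-1}(0),
\]
whose source is a torus $T^{sn}$, or, if $B(\mathpzc{A}/\rho)$ has lower dimension, the corresponding lower-dimensional torus. The symmetric group $\operatorname{Sym}(n+1)$ acts on this source with quotient $\Sigma^{-1}(0)$. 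Away from the diagonals $\{x_i=x_j\}$ the action is free, so the complement of $D$ is a topological manifold covered by an open subset of this torus. Inside the torus, $D$ is the image of the union of the subtori $\{x_i=x_j\}\cap\{\sum x_k=0\}$, each of codimension $s$. We therefore get codimension two when $s=2$. When $s=1$, the quotient of $T^n$ by $\operatorname{Sym}(n+1)$ is a simplex whose faces are the diagonals, so $D$ is the boundary, in accordance with Remark~\ref{rem:IAMSB}. Manifoldness at points of $D$ is only needed topologically. For $s=2$ it follows locally from $\operatorname{Sym}^k(\mathbb{R}^2)\cong\operatorname{Sym}^k(\mathbb{C})\cong\mathbb{C}^k$, by the fundamental theorem of algebra, applied to each cluster of coinciding points. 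Intersecting with the smooth transverse constraint $\sum x_i=0$ then yields a topological manifold.

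For the affine structure, fix a point of $B_0:=B\setminus D$. Its preimage in the torus above has a neighbourhood on which the $n+1$ sections stay pairwise distinct. Hence a small neighbourhood of the point, coned over $\rho$, maps isomorphically into $\operatorname{Kum}_{2n}(\mathpzc{A})$ via a cone $\tau$ of dimension $2n+1$. The cone $\tau$ is cut out of $\mathpzc{A}^{n+1}\times_\rho\rho$ by the linear condition $\sum s_i=0$, using the tropical model of $\mathpzc{A}$ from Lemma~\ref{lem:FunctorPointsTropAV} and the construction in Proposition~\ref{prop:ModelOfAn}. Since the sections are pairwise distinct, the quotient by $\operatorname{Sym}(n+1)$ is a local isomorphism there, and the map is Kummer free. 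So the sets $U_\iota$ cover $B_0$. On $U_\iota$, the functions $\operatorname{Hom}(\tau,\mathbb{Z})$ restricted to the slice over $1\in\rho$ are affine functions. Their linear parts form the lattice $\operatorname{Hom}(\tau,\mathbb{Z})/\operatorname{Hom}(\rho,\mathbb{Z})$ of rank $2n$. Choosing a basis and a lift gives a homeomorphism of $U_\iota$ onto an open subset of $\mathbb{R}^{2n}$. This is exactly the argument in the proof of Lemma~\ref{lem:DataIAMS}.

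The main obstacle is compatibility. We must show that the assignments for two overlapping $\iota,\iota'$ restrict to the same lattice of functions on $U_\iota\cap U_{\iota'}$, so that we obtain a sheaf rather than just a presheaf on the base. The plan is to note that near a point of the overlap both $\tau$ and $\tau'$ are Kummer-free local isomorphisms onto the same open of $\operatorname{Kum}_{2n}(\mathpzc{A})$. Consequently a common subcone $\tau''$ maps isomorphically, hence saturatedly, into both. Restriction then identifies $\operatorname{Hom}(\tau,\mathbb{Z})$, $\operatorname{Hom}(\tau'',\mathbb{Z})$ and $\operatorname{Hom}(\tau',\mathbb{Z})$, with Kummer freeness ensuring equality of lattices rather than mere finite-index inclusion. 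Transition maps are then automatically in $\operatorname{Aff}(M_{2n})$, since they come from lattice isomorphisms. Uniqueness is supplied by Lemma~\ref{lem:DataIAMS}.
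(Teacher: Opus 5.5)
Your proposal is correct. Its skeleton matches the paper's proof: identify the link with $\Sigma^{-1}(0)\subset\operatorname{Sym}^{n+1}(B(\mathpzc{A}/\rho))$, take $D$ to be the collision locus, and get the affine structure on $B_0$ from the linear sum-zero condition. You carry out the two substantive steps differently, however.

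\emph{Manifoldness.} The paper argues globally. It simply cites the homeomorphism with $\mathbb{CP}^n$ from Proposition~\ref{prop:TypeIIHomeotype}, which comes from identifying sum-zero configurations on $(S^1)^2\cong E$ with a degree $n+1$ linear system, or the simplex of Proposition~\ref{prop:TypeIHomeotype} in type~II. You argue locally, via the branched cover by $\ker(\Sigma)\cong T^{sn}$ and $\operatorname{Sym}^k(\mathbb{C})\cong\mathbb{C}^k$. This works because, in elementary symmetric coordinates, the sum of a cluster is the linear coordinate $e_1$, so the constraint is a linear submersion. Say this explicitly rather than writing ``smooth transverse''.

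\emph{Affine structure.} The paper only remarks that the sum-zero locus in the space of distinct configurations inherits an integral affine structure because the differential of the summation map is surjective. It leaves implicit that this structure agrees with the sheaf $U_\iota\mapsto\operatorname{Hom}(\tau,\mathbb{Z})$ of Definition~\ref{defn:IAMSGenKum}. Your direct construction of the cones $\tau$, together with the overlap check through a common Kummer-free subcone, supplies exactly that verification.

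In short, the paper reduces the lemma in one line to a global result it needs anyway for Theorem~\ref{Thm:maintrop}. Your argument is self-contained and checks the data the definition actually specifies.

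Two small clean-ups:
\begin{enumerate}
\item Kummer-freeness of $\iota$ comes from giving $\tau$ the full local lattice, namely the saturated sublattice of $\mathbb{Z}\times(\mathbb{Z}^s)^{n+1}$ cut out by the lifted sum-zero condition. It does not come from freeness of the $\operatorname{Sym}(n+1)$-action, which only ensures that passing to unordered tuples introduces no index. You also do not need a tropical model of $\mathpzc{A}$ to build $\tau$.
\item The image of a diagonal subtorus is not itself a manifold at deeper collisions. Near a quadruple collision it is locally a complex swallowtail, which self-intersects along the locus of two double points. To meet Definition~\ref{defn:IASsing}, decompose $D$ into collision-type strata, each a locally closed submanifold of codimension at least $s$.
\end{enumerate}
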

\begin{proof}
    The underlying topological space of $B_K^{2n}(\mathpzc{A})$ is canonically the moduli space of $n+1$ points on $B(\mathpzc{A})$ which sum to zero in the group law of $B(\mathpzc{A})$. 
    The singular locus consists of tuples where two or more points meet.
    The underlying topological space of $B^{2n}_K(\mathpzc{A})$ is a manifold (with boundary) by Proposition~\ref{prop:TypeIIHomeotype}. 
    The complement of the singular locus is the closed submanifold of the space of $n+1$ distinct points cut out by the closed condition that points sum to zero. This space inherits an integral affine manifold structure because the differential of the summation map is surjective.
\end{proof}
 Note that Definition~\ref{defn:IAMSGenKum} and Definition~\ref{defn:ModuliIAMAbVar} reflect the same construction of an integral affine manifold from a point in the base of a family of integral affine manifolds. 

\begin{remark}
    Making a similar definition for a tropical abelian variety of relative dimension one, yields a space where singularities occur in codimension one. The result is a gadget which could be sensibly described as an integral affine manifold with boundary. 
\end{remark}
\subsubsection{IAMS for type~II degeneration}
Let $\mathpzc{A}/\rho$ be a tropical abelian variety of dimension one; this case arises from the degeneration of abelian surfaces into a cycle of ruled surfaces. Later we realise the polyhedral complexes we now describe arise from algebro--geometric logarithmically smooth degenerations of IHS varieties. Consequently the results in this section are expected by \cite{BrownMazzon}, who computed the PL homeomorphism type of dlt degenerations of generalised Kummer varieties arising from degenerations of the abelian variety.
\begin{proposition}\label{prop:TypeIHomeotype}
    The link $B_K^{2n}(\mathpzc{A})$ of the generalised Kummer variety is homeomorphic to a simplex of dimension $n$.
\end{proposition}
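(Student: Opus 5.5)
By Proposition~\ref{prop:description link}, the link is homeomorphic to the closed subspace
$$K := \{\{x_1,\dots,x_{n+1}\} \in \operatorname{Sym}^{n+1}(S^1) : x_1+\cdots+x_{n+1}=0\},$$
where $S^1 = B(\mathpzc{A}/\rho) \cong \mathbb{R}/\ell\mathbb{Z}$ for some positive integer $\ell$. Rescaling, I take $\ell = 1$. The plan is therefore purely topological: exhibit a homeomorphism from $K$ to the standard $n$-simplex.

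I would lift to $\mathbb{R}^{n+1}$. An unordered tuple of points on $\mathbb{R}/\mathbb{Z}$ has a representative with $0 \le t_1 \le t_2 \le \dots \le t_{n+1} < 1$, unique up to the cyclic ambiguity caused by choosing where to cut the circle. To remove this ambiguity, I would use the zero-sum condition and its translation symmetry. The gaps $g_i = t_{i+1}-t_i$ for $i \le n$, together with $g_{n+1} = 1 + t_1 - t_{n+1}$, are non-negative and sum to $1$. So the configuration up to rotation is a point of the $n$-simplex
$$\Delta^n = \{g \in \mathbb{R}^{n+1}_{\geq 0} : \textstyle\sum g_i = 1\},$$
modulo the $\mathbb{Z}/(n+1)$ action that cyclically permutes the gaps. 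Rotating every point by $c$ changes the sum by $(n+1)c$, so the zero-sum condition fixes the rotation up to the $(n+1)$-torsion $\frac{1}{n+1}\mathbb{Z}/\mathbb{Z}$. This torsion acts freely by rotation and exactly cancels the cyclic relabelling ambiguity. Hence the map
$$K \to \Delta^n/(\mathbb{Z}/(n+1)) \times (\text{choice of rotation})$$
should collapse to a bijection $K \cong \Delta^n$ once a canonical cut point is fixed. Concretely, I would define an inverse map $\Delta^n \to K$: given gaps $g$, place points at $c, c+g_1, c+g_1+g_2, \dots$, and choose $c$ as the unique solution mod $\frac{1}{n+1}$ making the sum zero. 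This is well defined in $\mathbb{R}/\mathbb{Z}$ only after a choice among $n+1$ values, and the choices differ precisely by cyclic shifts of the gap vector. I would check this carefully: rotating by $k/(n+1)$ gives the same unordered set with the same gaps but a different basepoint, not a cyclic permutation of the gaps.

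This is where I expect the main obstacle. My first guess gives a $(n+1)$-to-$1$ correspondence in the wrong direction, so I would redo the count. The parameter space of pairs (gaps, basepoint $t_1$) is $\Delta^n \times S^1$ divided by the $\mathbb{Z}/(n+1)$ relabelling. The zero-sum condition cuts out a $\mathbb{Z}/(n+1)$-worth of basepoints for each gap vector. Therefore $K \cong (\Delta^n \times \mathbb{Z}/(n+1))/(\mathbb{Z}/(n+1))$, with the diagonal action by cyclic permutation of the gaps and a shift on the second factor. Since the action on the second factor is free and transitive, this quotient is homeomorphic to $\Delta^n$: take the slice where the second factor is $0$.

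Finally, the resulting map $\Delta^n \to K$ is a continuous bijection, including at boundary points where gaps vanish, because the formula is continuous in $g$ on the slice. Its source $\Delta^n$ is compact and its target $K$ is Hausdorff, so it is a homeomorphism. This is the step I would verify most carefully, checking continuity across faces where the cyclic labelling could a priori jump.
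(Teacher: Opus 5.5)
Your reduction to the zero-sum locus $K\subset\operatorname{Sym}^{n+1}(S^1)$ is the same as the paper's. From there the paper simply cites Morton (as recorded by Brown--Mazzon) for $K\cong\Delta^n$, whereas you prove it directly. Your third paragraph is the right argument. Discard the second: rotating by $k/(n+1)$ moves the configuration, so different admissible basepoints for the same gap vector give different points of $K$, not relabellings.

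The one load-bearing assertion you do not check is that the relabelling $\tau(c,g)=(c+g_1,\sigma g)$, with $\sigma g=(g_2,\dots,g_{n+1},g_1)$, shifts the sheet index. If it preserved sheets, $K$ would have $n+1$ components. The check is short:
\begin{itemize}
\item Put $w(g)=\sum_{i=1}^{n+1}(n+1-i)g_i$ and $c_k(g)=\tfrac{k-w(g)}{n+1}\bmod 1$. Then the zero-sum locus in $S^1\times\Delta^n$ is $\bigsqcup_{k\in\mathbb{Z}/(n+1)}\{(c_k(g),g)\}$, which is an explicit trivialisation.
\item The identity $w(\sigma g)=w(g)+1-(n+1)g_1$ gives $\tau(c_k(g),g)=(c_{k+1}(\sigma g),\sigma g)$.
\end{itemize}
Combine this with the standard fact that the fibres of $(c,g)\mapsto\{c,\,c+g_1,\dots,c+g_1+\dots+g_n\}$ are exactly the $\tau$-orbits, also at collisions. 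Then every orbit meets the sheet $k=0$ exactly once. So $g\mapsto\{c_0(g)+g_1+\dots+g_{j-1}\}_{j}$ is a continuous bijection from the compact space $\Delta^n$ to the Hausdorff space $K$, hence a homeomorphism. Continuity is automatic from this global formula, so the step you flagged as delicate is not where the content lies.

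Compared with the citation, your route is longer but gives an explicit homeomorphism. The sheet $k=0$ is exactly the normalisation ``real lifts whose sum is zero'' used in Lemma~\ref{lem:IAmRep}, so you are proving the real-points version of that lemma. You also see directly that the vertices $g=e_j$ are the total collisions at the $(n+1)$-torsion points, and that the faces record which points collide, as in Figure~\ref{fig:DualIntCx}.
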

\begin{proof}
    Write $\mathpzc{A} = \mathpzc{A}(\varphi)$. The link of $\mathpzc{A}^{[n+1]}\rightarrow \rho$ inherits the topology of $\operatorname{Sym}^{n+1}(B(\varphi))$. Points in the link of $\mathpzc{A}^{[n+1]}\rightarrow \rho$ which specify points summing to $0$ in the group law are precisely the link of $\operatorname{Kum}_{2n}(\mathpzc{A})$. It follows that the underlying topological space of $B_K^{2n}(\mathpzc{A})$ is homeomorphic to the moduli space of $n+1$ points in $S^1$ which sum to zero in the group law on $S^1$. This space is homeomorphic to $\Delta^{n}$ by \cite{Morton_1967} as noted in \cite[Proposition 6.3.4]{BrownMazzon}.
\end{proof}
\subsubsection{IAMS for type~III degeneration}\label{sec:IASTypeIII} Let $\mathpzc{A}/\rho$ be a tropical abelian variety of dimension two. This case arises from degenerations of an abelian surface into a union of toric varieties glued along their toric boundary. Once again, we are explaining how the prediction of \cite{BrownMazzon} arises in the logarithmic structure; we give a more elementary proof of \cite[Proposition 6.3.4]{BrownMazzon} due to Yuvraj Singh.
\begin{proposition}\label{prop:TypeIIHomeotype}
    The IAMS $B_K^{2n}(\mathpzc{A})$ is homeomorphic to $\mathbb{CP}^{n}$.
\end{proposition}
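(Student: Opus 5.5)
By Proposition~\ref{prop:description link}, the topological space underlying $B_K^{2n}(\mathpzc{A})$ is the subspace $K\subset \operatorname{Sym}^{n+1}(T)$ of unordered $(n+1)$-tuples summing to zero, where $T=B(\mathpzc{A}/\rho)\cong \mathbb{R}^2/L$ is a real two-torus with its group law. The choice of lattice $L$ is irrelevant for the homeomorphism type: a linear isomorphism $\mathbb{R}^2\to\mathbb{R}^2$ carrying $L$ to $\mathbb{Z}^2$ induces a group isomorphism $T\cong S^1\times S^1$, and hence a homeomorphism of the zero-sum loci. I would therefore replace $T$ by the complex elliptic curve $E=\mathbb{C}/(\mathbb{Z}+i\mathbb{Z})$, regarded only as a topological group. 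The plan is to identify $K$ with a fibre of the Abel--Jacobi map and then to recognise that fibre as a projective space.

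The key steps are as follows. First, $\operatorname{Sym}^{n+1}(E)$ is the space of effective divisors of degree $n+1$ on $E$. The summation map $\Sigma\colon \operatorname{Sym}^{n+1}(E)\to E$ agrees with the Abel--Jacobi map $D\mapsto \mathcal{O}_E(D-(n+1)\cdot 0)\in \operatorname{Pic}^0(E)\cong E$, because the identification $\operatorname{Pic}^0(E)\cong E$ sends $\mathcal{O}(p-0)$ to $p$ and is a group homomorphism. Second, the fibre $\Sigma^{-1}(0)$ is therefore the complete linear system $|\mathcal{O}_E((n+1)\cdot 0)|\cong \mathbb{P}(H^0(E,\mathcal{O}_E((n+1)\cdot 0)))$. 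Third, Riemann--Roch on a genus one curve gives $h^0=n+1$ for $n+1\geq 1$, so this fibre is $\mathbb{P}^n$. Fourth, the identification $\Sigma^{-1}(0)=K$ is a homeomorphism. Indeed, $\Sigma^{-1}(0)$ carries the analytic subspace topology of $\operatorname{Sym}^{n+1}(E)$, and that topology coincides with the quotient topology of $E^{n+1}/\operatorname{Sym}(n+1)$, which is the topology used for the link in Proposition~\ref{prop:description link}. Hence $K\cong\mathbb{P}^n(\mathbb{C})=\mathbb{CP}^n$.

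For Singh's more elementary route, I would instead write $K$ directly as an $\mathbb{R}^2$-quotient. Lift a zero-sum tuple to points of $\mathbb{R}^2$ with $x_1+\dots+x_{n+1}\in L$. Then use the $(n+1)$-torsion translations acting on ordered tuples to reduce to the model $T^{n}/\operatorname{Sym}(n+1)$, where $\operatorname{Sym}(n+1)$ acts through the standard $A_n$ reflection representation. This exhibits $K\cong (E\otimes \Lambda_{A_n})/W$, which is homeomorphic to $\mathbb{CP}^n$ by Looijenga's theorem. I would mention this route only as an alternative, and use the algebraic argument as the primary proof.

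The main obstacle is the fourth step. One must make sure the homeomorphism identifies the link, as defined through tropical models, with the quotient topology on $\operatorname{Sym}^{n+1}(T)$, and not just a set-theoretic bijection. For this I would rely on Proposition~\ref{prop:description link}, which already provides a cartesian square of topological realisations. The remaining point is to check that the complex structure on $T$ is used purely as an auxiliary device, transported along a homeomorphism of topological groups. The singular locus $D$ plays no role, since the statement concerns only the underlying topological space.
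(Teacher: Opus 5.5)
Your primary argument is exactly the paper's proof: regard the two-torus $B(\mathpzc{A}/\rho)$ as the topological group underlying an elliptic curve $E$, and identify the zero-sum locus in $\operatorname{Sym}^{n+1}(E)$ with the complete linear system $|\mathcal{O}_E((n+1)\cdot 0)|\cong\mathbb{CP}^n$; your Abel--Jacobi, Riemann--Roch and topology checks just spell out what the paper leaves implicit. The paper credits this linear-system argument to Singh, so your Looijenga route is a separate alternative resting on Looijenga's theorem rather than ``Singh's route''; in that alternative the torsion translations are unnecessary, since the kernel of summation on $T^{n+1}$ is already $T\otimes Q(A_n)\cong T^n$.
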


\begin{proof}
    The argument of Proposition~\ref{prop:TypeIHomeotype} establishes that the link is homeomorphic to the moduli space of $n+1$ points on $(S^1)^2$ which sum to zero. As a topological group, $(S^1)^2$ is isomorphic to the complex points of an (in fact any) elliptic curve $E$. Thus we are required to determine the homeomorphism type of the moduli space of effective degree $n+1$ divisors on an elliptic curve which sum to zero in the group law. But these are precisely the complex sections of a line bundle on $E$ of degree $n+1$ up to rescaling, and thus their moduli is $\mathbb{CP}^n$.
\end{proof}

\subsection{Describing vertex stars}\label{sec:DescribeVertexStar}
Throughout this section $\mathpzc{A}\rightarrow \rho$ is a tropical abelian variety over a ray, and we assume that $B(\mathpzc{A})$ has dimension $s\in \{1,2\}$. Fix a tropical model $\operatorname{Kum}_{2n}^\Lambda(\mathpzc{A})\rightarrow \operatorname{Kum}_{2n}(\mathpzc{A})$, whose existence is guaranteed by Proposition~\ref{prop:KummerSubdivision}, and write $B_\Lambda$ for the associated polyhedral complex structure subdividing the link $B_K^{2n}(\mathpzc{A})$ of $\operatorname{Kum}_{2n}(\mathpzc{A})/\rho$. Applying toric semistable reduction if necessary, we will assume that the morphism $\operatorname{Kum}_{2n}^\Lambda(\mathpzc{A})\rightarrow \rho$ is saturated.

Let $\rho$ be a ray in a cone complex $\Sigma$. Recall that the \textit{star} $\operatorname{Star}_\rho(\Sigma)$ is the cone complex whose cones are cones of $\Sigma$ containing $\rho$ modulo the linear span of $\rho$. We will now describe the star of a ray $\rho_v$ in $\operatorname{Kum}_{2n}^\Lambda(\mathpzc{A})$. Note $\rho_v$ is the same data as a vertex $v$ in $B_\Lambda$.
\begin{definition}
    The \textit{collision type} of a point in $B_K^{2n}(\mathpzc{A})$ is the unordered tuple of integers $(a_1,...,a_k)$ recording the partition of $n+1$ determined by the multiplicities in the corresponding point of $\text{Sym}^{n+1}(B(\mathpzc{A}/\rho))$.
\end{definition}

\begin{definition}\label{defn:TropHilbRn}
    The \textit{tropical Hilbert scheme} $(\mathbb{Z}^s)^{[n+1]}$ of $n+1$ points on $\mathbb{Z}^s$ is the sheaf on $\textbf{RPC}$ whose $\kappa$ valued points are families of $(n+1)$-weighted zero-dimensional piecewise linear subdivisions of $\mathbb{Z}^s\times \kappa$ over $\kappa$. The \textit{universal tropical subscheme} $\mathpzc{Z}_{n+1}(\mathbb{Z}^s)\hookrightarrow \mathbb{Z}^s\times (\mathbb{Z}^s)^{[n+1]}$ is defined similarly to $\mathpzc{Z}_{n+1}(\mathpzc{A})$ in Definition \ref{defn:TropHilb}.
\end{definition}

As in Section \ref{sec:TropGenKummer}, there is a summation map to $\mathbb{Z}^s$,
$$(\mathbb{Z}^s)^{[m]}\rightarrow \mathbb{Z}^s,$$
and thus given a tuple $\textbf{n}(v) = (n_1,...,n_k)$ of positive integers, we define a map
$$\Sigma^s_{n_1,...,n_k}\colon \prod_{i} (\mathbb{Z}^s)^{[n_i]} \rightarrow \mathbb{Z}^s$$
by summing the resulting elements in the functor of points of $\mathbb{Z}^s$. 

We write $\operatorname{Kum}_{\textbf{n}(v)}(\mathbb{Z}^s)$ for $(\Sigma^s_{n_1,...,n_k})^{-1}(0)$, and adopt the notation 
$$\mathpzc{Z}^{\textbf{n}(v)} = \bigsqcup_i \left(\mathpzc{Z}_{n_i}(\mathbb{Z}^s)\times_{(\mathbb{Z}^s)^{[n_i]}}\operatorname{Kum}_{\textbf{n}(v)}(\mathbb{Z}^s) \right).$$

\begin{lemma}\label{lem:TropicalByComponent}
    Let $v$ be a vertex of $B_\Lambda$ of collision type $\textbf{n}(v) = (n_1,...,n_k)$ specifying a ray $\rho_v$ of the cone complex $\operatorname{Kum}_{2n}^\Lambda(\mathpzc{A})$. For some choice of tropical model $$q\colon \operatorname{Kum}_{\textbf{n}(v)}^\Gamma(\mathbb{Z}^s) \rightarrow \operatorname{Kum}_{\textbf{n}(v)}(\mathbb{Z}^s),$$  there is an isomorphism between the star of $\rho_v$ and $\operatorname{Kum}_{\textbf{n}(v)}^\Gamma(\mathbb{Z}^s)$.
\end{lemma}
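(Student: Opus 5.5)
The star of $\rho_v$ is a local object: it records the cones of $\operatorname{Kum}_{2n}^\Lambda(\mathpzc{A})$ containing $\rho_v$, modulo $\rho_v$. So the plan is to describe a neighbourhood of $\rho_v$ in the unsubdivided sheaf $\operatorname{Kum}_{2n}(\mathpzc{A})$, identify it with $\operatorname{Kum}_{\mathbf{n}(v)}(\mathbb{Z}^s)$, and then transport the subdivision $\Lambda$ across this identification.

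\textbf{Step 1: local coordinates near $v$.} The vertex $v$ is a point of $\operatorname{Sym}^{n+1}(B(\mathpzc{A}/\rho))$ with collision type $(n_1,\dots,n_k)$. This means the $n+1$ sections $s_1,\dots,s_{n+1}$ cluster into $k$ distinct points $p_1,\dots,p_k$ with $\sum_i n_i p_i=0$. Consider cones $\tau\to\operatorname{Kum}_{2n}(\mathpzc{A})$ containing $\rho_v$. Modulo the span of $\rho_v$, such a cone records infinitesimal displacements of the clustered sections away from $p_i$. Since $\mathpzc{A}$ is locally isomorphic to $\mathbb{Z}^s\times\rho$ near each $p_i$ (the quotient by $\mathbf{Y}$ acts freely and is locally trivial), passing to the star at the section through $p_i$ replaces $\mathpzc{A}$ by its tangent fan $\mathbb{Z}^s$. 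Concretely, $\operatorname{Star}$ of the image of $\rho_v$ in $\mathpzc{A}^{[n+1]}$ is the sheaf whose $\kappa$-points are $k$-tuples of $n_i$-weighted zero-dimensional piecewise linear subdivisions of $\mathbb{Z}^s\times\kappa$, i.e.\ $\prod_i(\mathbb{Z}^s)^{[n_i]}$.

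I would verify this on functors of points. A $\kappa$-point of the star is a cone $\kappa\times\rho_v$-family of sections with prescribed limit behaviour. Subtracting the constant sections $p_i$, using the group structure, gives sections of $\mathbb{Z}^s\times\kappa$. The weights $w$ and the conical stratification $\mathcal{P}$ then match by Definition~\ref{defn:weightedPLSUBDiv}. Distinct clusters never interact on the star, because they are separated by a positive distance at $v$.

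\textbf{Step 2: matching the summation maps.} The summation $\Sigma$ on $\mathpzc{A}^{[n+1]}$ pulls back, in these local coordinates, to $\sum_i n_ip_i+\Sigma^s_{n_1,\dots,n_k}=\Sigma^s_{n_1,\dots,n_k}$. Here $\Sigma^s_{n_1,\dots,n_k}$ is the sum of the displacements. The fibre product of Definition~\ref{defn:TropGenKummer} commutes with taking stars, since both are computed on functors of points. Hence the star of $\rho_v$ in $\operatorname{Kum}_{2n}(\mathpzc{A})$ is identified with $(\Sigma^s_{\mathbf{n}(v)})^{-1}(0)=\operatorname{Kum}_{\mathbf{n}(v)}(\mathbb{Z}^s)$.

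\textbf{Step 3: carrying over the subdivision.} The tropical model $\operatorname{Kum}^\Lambda_{2n}(\mathpzc{A})$ restricts to the cones containing $\rho_v$. Quotienting by $\rho_v$ yields a cone complex $\operatorname{Kum}^\Gamma_{\mathbf{n}(v)}(\mathbb{Z}^s)$ with a monomorphism to $\operatorname{Kum}_{\mathbf{n}(v)}(\mathbb{Z}^s)$. This map is a bijection on underlying topological spaces, because the star of a point in a subdivision subdivides the star of its image. It is therefore a tropical model $q$, which gives the claimed isomorphism.

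\textbf{Main obstacle.} I expect the hardest part to be Step 1, making the identification of stars rigorous at the level of sheaves on $\textbf{RPC}$ rather than topologically. The difficulty is that $\rho_v$ is a ray in a subdivision, not necessarily a ray of a cone of $\mathpzc{A}^{[n+1]}$ itself. I would first handle this by working with the pulled-back tropical model $\overline{\mathpzc{A}}$ from Lemma~\ref{lem:FunctorPointsTropAV}, choosing $\Lambda$ fine enough that each $p_i$ lies on a ray. I would then check that the translation $s\mapsto s-p_i$ is an isomorphism of sheaves near $p_i$, and deduce the general case from the fact that stars are compatible with further subdivision.
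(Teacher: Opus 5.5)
Your outline follows the paper's proof: a local chart at $v$ made by translating each cluster by its centre and composing with the quotient $w\colon\underline{\mathbb{Z}^s}\to\underline{\mathpzc{A}}$, compatibility of $\Sigma^s_{n_1,\dots,n_k}$ with $\Sigma_{\operatorname{trop}}$, and transport of the cone structure. But Step~1 has a gap. It only works if translating by the cluster centres $p_i$ is an \emph{integral} operation, i.e.\ if the $p_i$ are sections of $\mathpzc{A}\to\rho$. You never justify this, and it is not automatic.

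In general the primitive generator $u_v$ of $\rho_v$ maps to $k\in\rho=\mathbb{N}$ for some $k\geq 1$. The centres at height one are then only $\frac{1}{k}$-integral. For example, with $s=1$, $n=1$ and $\varphi(1)=\alpha$ odd, the two points collide at $\alpha/2$, and there $k=2$. In that case translation by the centres is not a morphism of sheaves over $\rho$. Killing the base coordinate, $(t,q_{ij})\mapsto q_{ij}-t\,x_i$, as you must to land in $\operatorname{Kum}_{\mathbf{n}(v)}(\mathbb{Z}^s)$, does not preserve integral points. Concretely, in the lattice $N$ of labelled configurations near $v$, the displacement lattice $D=\{(0,d_{ij}):\sum d_{ij}=0\}$ has index $k$ in $N/\mathbb{Z}u_v$, because $t \bmod k$ is a well-defined, surjective invariant on $N/\mathbb{Z}u_v$ that vanishes on $D$. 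For the same reason, your claim that the star of $\mathpzc{A}$ at the section through $p_i$ is $\mathbb{Z}^s$ holds only when that ray maps isomorphically onto $\rho$.

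The missing input is the standing hypothesis of Section~\ref{sec:DescribeVertexStar}: $\operatorname{Kum}^\Lambda_{2n}(\mathpzc{A})\to\rho$ is saturated. This forces $u_v\mapsto 1$, so $v$ is a $\rho$-point and each centre $x_i$ comes from a section of $\mathpzc{A}/\rho$; this is exactly the step the paper invokes. With it, the paper argues as follows.
\begin{enumerate}
\item It works on $\textbf{RPC}_{\mathrm{vert},\rho}$, where the bounded-monodromy condition is vacuous (so $w$ exists) and $\underline{\mathcal{F}}=\mathcal{F}\times\rho$ carries the base direction.
\item It defines the chart ``$w$, then translate by $x_i$'' on the subfunctor of $\prod_i\underline{(\mathbb{Z}^s)^{[n_i]}}$ where no new collisions occur.
\item It pulls the cone structure of $\operatorname{Kum}^\Lambda_{2n}(\mathpzc{A})$ back to a neighbourhood of $\mathbf{0}$ in $(\Sigma^s_{n_1,\dots,n_k})^{-1}(0)$ and takes the star at $\mathbf{0}$.
\end{enumerate}
This also removes the obstacle you flagged, with no need to refine until the $p_i$ lie on rays. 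Note too that refining, as you propose, cannot repair the integrality problem: subdivision does not change the index of $\rho_v\to\rho$.
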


We will prove Lemma \ref{lem:TropicalByComponent} at the same time as Lemma \ref{lem:UnivOverComponent}, which we now state. 
Note that the fibre of $\mathpzc{Z}_{K}(\mathpzc{A})$ over $\rho_v\rightarrow \operatorname{Kum}^\Lambda_{2n}(\mathpzc{A})$ consists of several rays, which we label $\rho_1,...,\rho_k$. For each $1\leq j \leq k$ the star of $\rho_j$ maps to the star of $\rho_v$, giving $k$ morphisms of cone complexes $$\Theta_j:= \operatorname{Star}_{\rho_j}\!\left(Z_K^\Lambda(A)\right) \rightarrow \operatorname{Kum}_{\textbf{n}(v)}^\Gamma(\mathbb{Z}^s)$$ where we have used the identification of Lemma \ref{lem:TropicalByComponent} to specify the isomorphism type of the star of $\rho_v$.

\begin{lemma}\label{lem:UnivOverComponent}
    The pullback of $\mathpzc{Z}^{\textbf{n}(v)}$ along $q$ is the disjoint union $\bigsqcup_j \Theta_j$ (with the obvious projection map to $\operatorname{Kum}_{\textbf{n}(v)}^\Gamma(\mathbb{Z}^s)$).
\end{lemma}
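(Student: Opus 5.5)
The plan is to prove Lemma~\ref{lem:TropicalByComponent} and Lemma~\ref{lem:UnivOverComponent} together, by a local (star) analysis at the vertex $v$. The guiding principle is that the star of a ray in a cone complex records the local structure near a point of the link. Near a point of collision type $\textbf{n}(v)=(n_1,\dots,n_k)$, the $n+1$ points on $B(\mathpzc{A}/\rho)$ split into $k$ clusters sitting at distinct positions $p_1,\dots,p_k$. Moving infinitesimally away from $v$ means displacing the points within each cluster by vectors in the tangent lattice $\mathbb{Z}^s$ of $B(\mathpzc{A}/\rho)$, subject to the constraint that the total sum of displacements is zero.

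\textbf{Step 1: identify the star before subdividing.} Work with the unsubdivided sheaf $\operatorname{Kum}_{2n}(\mathpzc{A})$. I will construct a map of sheaves on \textbf{RPC} from $\operatorname{Star}_{\rho_v}$ to $\prod_i (\mathbb{Z}^s)^{[n_i]}$ and show it lands in $\operatorname{Kum}_{\textbf{n}(v)}(\mathbb{Z}^s)$.

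A cone containing $\rho_v$ maps to a family of weighted piecewise linear subdivisions whose sections $s_1,\dots,s_{n+1}$ restrict along $\rho_v$ to the configuration at $v$. Since the $p_i$ are distinct, a small neighbourhood of $\{p_1,\dots,p_k\}$ in $B(\mathpzc{A}/\rho)$ is a disjoint union of $k$ open sets. Each of these is identified with a neighbourhood of $0$ in $\mathbb{Z}^s\otimes\mathbb{R}$ by translation, which is possible because $\mathpzc{A}$ is a quotient of a lattice and therefore has a global integral affine chart. Quotienting by the span of $\rho_v$ turns $s_j-p_{i(j)}$ into a section of $\mathbb{Z}^s\times\kappa\to\kappa$, so the sections in cluster $i$ give a point of $(\mathbb{Z}^s)^{[n_i]}$. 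The weights are preserved because collisions can only happen within a cluster.

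The summation condition $\sum_j s_j=0$ in $\mathpzc{A}$ holds at $v$, which gives $\sum_i n_i p_i=0$. Subtracting this, we get $\sum_j(s_j-p_{i(j)})=0$ in $\mathbb{Z}^s$. The reason no lattice ambiguity appears is that the displacements are infinitesimal relative to $\rho_v$, so the lattice $\textbf{Y}$ cannot contribute. The inverse map is given by translating local data back to the points $p_i$. This yields an isomorphism
\[
\operatorname{Star}_{\rho_v}\bigl(\operatorname{Kum}_{2n}(\mathpzc{A})\bigr)\cong \operatorname{Kum}_{\textbf{n}(v)}(\mathbb{Z}^s),
\]
compatible with the universal subschemes. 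Under it, the fibre of $\mathpzc{Z}_K(\mathpzc{A})$ over $\rho_v$ consists of the rays $\rho_1,\dots,\rho_k$, one for each cluster $p_i$. The star of $\rho_i$ is then identified with the $i$-th summand $\mathpzc{Z}_{n_i}(\mathbb{Z}^s)\times_{(\mathbb{Z}^s)^{[n_i]}}\operatorname{Kum}_{\textbf{n}(v)}(\mathbb{Z}^s)$ of $\mathpzc{Z}^{\textbf{n}(v)}$.

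\textbf{Step 2: pass to the tropical model.} Stars are compatible with subdivision: the star of $\rho_v$ in $\operatorname{Kum}^\Lambda_{2n}(\mathpzc{A})$ maps to the star in $\operatorname{Kum}_{2n}(\mathpzc{A})$ by a monomorphism that is bijective on topological realisations. Since $\operatorname{Kum}^\Lambda_{2n}(\mathpzc{A})$ is a cone complex, this map is a tropical model. Composing with the isomorphism from Step 1 gives the required model $q$, which proves Lemma~\ref{lem:TropicalByComponent}.

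For Lemma~\ref{lem:UnivOverComponent}, note that by Proposition~\ref{prop:KummerSubdivision} the space $\mathpzc{Z}_K^\Lambda(\mathpzc{A})$ is the base change of $\mathpzc{Z}_K(\mathpzc{A})$. Taking stars commutes with this fibre product over the rays lying above $\rho_v$, so the pullback of $\mathpzc{Z}^{\textbf{n}(v)}$ along $q$ decomposes summand by summand as $\bigsqcup_j\Theta_j$.

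\textbf{Main obstacle.} The delicate point is making Step 1 rigorous at the level of functors of points. Concretely, one must show that the translation identification is independent of the choice of lifts of the $p_i$ to $\textbf{Y}$-cosets, and that it respects bounded monodromy.

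My first approach would be to reduce to the case of a single cone containing $\rho_v$ and compute directly with $\operatorname{Hom}(\textbf{X},\operatorname{Hom}(\kappa,\mathbb{Z}))/\textbf{Y}$. Modulo the span of $\rho_v$, the quotient by $\textbf{Y}$ disappears locally, because $\varphi(\textbf{Y})$ is supported along the $\rho$-direction and every nonzero translate moves the points by a distance of order $\rho_v$. I would also need to check that $\textbf{Y}$ acts freely on the cluster positions, which holds since the $p_i$ are distinct.
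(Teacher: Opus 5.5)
Your proposal is essentially the paper's argument. The paper proves Lemmas~\ref{lem:TropicalByComponent} and~\ref{lem:UnivOverComponent} together using the translation chart $\prod_i\underline{(\mathbb{Z}^s)^{[n_i]}}\dashrightarrow\underline{\mathpzc{A}^{[n+1]}}$: the quotient map $w$ followed by translation by the cluster points $x_i$, defined where no new collisions occur and working on the vertical subsite $\textbf{RPC}_{\text{vert},\rho}$ so that bounded monodromy is automatic. It identifies the star of $\textbf{0}$ with the star of $\rho_v$, then reruns the same reasoning on the universal subschemes, exactly as you do. The one input you leave implicit is that the cluster positions $p_i$ are genuine sections of $\mathpzc{A}/\rho$, which is needed for $s_j-p_{i(j)}$ to make sense over every cone containing $\rho_v$; the paper gets this from the standing assumption that $\operatorname{Kum}^\Lambda_{2n}(\mathpzc{A})\to\rho$ is saturated.
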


We define a subsite $\textbf{RPC}_{\text{vert},\rho}$ of $\textbf{RPC}_\rho$ whose objects are cones $\tau\rightarrow \rho$ such that no ray of $\tau$ is mapped to zero, and this site inherits the face topology. For $\mathcal{F}$ a sheaf on \textbf{RPC} (resp. $\textbf{RPC}_\rho$), we write $\underline{\mathcal{F}}$ for the sheaf obtained by restricting the functor of ${\mathcal{F}}\times \rho$ (resp $\mathcal{F}$) to $\textbf{RPC}_{\text{vert},\rho}$. Note that restricting to this subfunctor is harmless for computing stars about rays which map surjectively to $\rho$ because a dense open subset of every cone that contains $\rho$ is not mapped to zero in $\rho$.

Choose a presentation of $\mathpzc{A}$ as the quotient of the bounded monodromy part of $\operatorname{Hom}(\mathbb{Z}^s,\operatorname{Hom}(\tau,\mathbb{Z}))$ by a sublattice. Since the bounded monodromy condition is trivial on the subsite $\textbf{RPC}_{\text{vert},\rho}$, the quotient map defines a "quotient by $Y$" morphism of sheaves,
 $$w\colon \underline{\mathbb{Z}^s}\cong \underline{\operatorname{Hom}(\mathbb{Z}^s,\operatorname{Hom}(-,\mathbb{Z}))}\rightarrow \underline{\mathpzc{A}}. 
$$

\begin{proof}[Proof of Lemma \ref{lem:TropicalByComponent} and Lemma \ref{lem:UnivOverComponent}]
    We begin by constructing a diagram of sheaves on $\textbf{RPC}_{\text{vert},\rho}$,
    $$
\begin{tikzcd}
\prod_i \underline{(\mathbb{Z}^s)^{[n_i]}} \arrow[r, dashed] \arrow[d,"\Sigma_\mathbb{Z}"] & {\underline{\mathpzc{A}^{[n+1]}}} \arrow[d] \\
\underline{\mathbb{Z}}^s \arrow[r] & \underline{\mathpzc{A}}
\end{tikzcd}
$$
The vertical maps are summation in the respective group laws as discussed at the start of Section~\ref{sec:TropGenKummer}. The bottom horizontal map is $w$. Write \textbf{0} for the vertex (i.e. ray but it is a vertex relative base ray) of $\prod_i \underline{(\mathbb{Z}^s)^{[n_i]}}$ which is zero in each factor.

The dashed arrow, which depends on $v$, is defined as follows. 
Note $v$ specifies for each $n_i$ in the collision type a point $x_i$ of $|\mathpzc{A}|$; since the morphism $\operatorname{Kum}_{2n}^\Lambda(\mathpzc{A})\rightarrow \rho$ is saturated, this point comes from a section of $\mathpzc{A}/\rho$.
For each $i$ there is a morphism from a subfunctor of $\underline{(\mathbb{Z}^s)^{[n_i]}}$ (the subfunctor is where the quotient introduces no new collisions) to $\underline{\mathpzc{A}^{[n_i]}}$ defined by $w$ followed by translation by adding $x_i$. 
There is a morphism from a subfunctor of $\prod_i\underline{(\mathbb{Z}^s)^{[n_i]}}$ to $\underline{\mathpzc{A}^{[n+1]}}$. 
This is the subfunctor where the quotient introduces no new collisions between points in each factor of $(\mathbb{Z}^s)^{[n_i]}$, and the images of the points associated to different productands $(\mathbb{Z}^s)^{[n_i]}$ do not meet. 
The dashed arrow represents the map defined on this subfunctor. 

The cone complex structure $\operatorname{Kum}_{2n}^\Lambda(\mathpzc{A})$ on $\Sigma_{\operatorname{trop}}^{-1}(\text{id})$ induces a cone complex structure on an open neighbourhood\footnote{This means it induces an equivalence class of cone complex structures about the point which all agree upon restriction to an open set.} of $\textbf{0}$ in $\Sigma_\mathbb{Z}^{-1}({0})$. The dashed arrow is well defined on this open neighbourhood (and the cone complex structure is induced on every open neighbourhood where the dashed arrow is well defined), and induces an isomorphism from the star of the vertex $\textbf{0}$ to the star of the vertex $v$. But the star at the vertex $\textbf{0}$ is a tropical model which we may write $$\operatorname{Kum}_{\textbf{n}(v)}^\Gamma(\mathbb{Z}^s) \rightarrow \operatorname{Kum}_{\textbf{n}(v)}(\mathbb{Z}^s).$$ This proves Lemma \ref{lem:TropicalByComponent}. We deduce Lemma \ref{lem:UnivOverComponent} by applying the same chain of reasoning to the universal tropical subscheme over $\underline{\mathpzc{A}^{[n+1]}}$ and $\prod_i \underline{(\mathbb{Z}^s)^{[n_i]}}$ respectively.
\end{proof}

\section{The logarithmic generalised Kummer construction}
In this section we establish that the generalised Kummer construction can be performed for families of logarithmic abelian varieties. We show that the resulting logarithmic generalised Kummer varieties are logarithmic IHS varieties in the sense of Definition \ref{defn:LogIHS}. 

\subsection{Degenerations of abelian varieties}
Recall that an abelian variety ${A}\rightarrow S$ over a scheme $S$ is a smooth and proper group object in the category of $S$-schemes of finite presentation whose geometric fibres are connected \cite[Definition 1.1]{FaltingsChai1990}. We write $A^\vee$ for $\mathsf{Pic}^0(A)$. Recall also that a \textit{principally polarised abelian variety} is a pair $({A},\varphi)$ consisting of an abelian variety ${A}$ and an isomorphism $\varphi \colon {A}\rightarrow {A}^\vee$ which is equal to its dual (under the identification of $A$ with its double dual) and which arises from an ample line bundle.

\subsubsection{Compactifying moduli of abelian varieties}\label{sec:log-abelian-variety} The moduli space of principally polarised abelian varieties with level-$n$ structure $\mathcal{A}_g^\circ(n)$ is not compact. Concretely, there exist families of abelian varieties over the generic point $\Delta^\circ$ of a trait which do not extend to abelian varieties over $\Delta$. Such families do, however, admit extensions by semi-abelian schemes \cite{FaltingsChai1990}.

\begin{definition}[{\cite[Definition 2.3]{FaltingsChai1990}}]
    A \emph{semi-abelian scheme} over $S$ is a smooth, separated, commutative group object $G \to S$ in the category of schemes with geometrically connected fibres, such that for each geometric point $s$ in $S$ there exists a short exact sequence of $s$-group schemes
$$
0 \rightarrow T_s \rightarrow G_s = G\times_S s \rightarrow A_s \rightarrow 0
$$
where $T_s \rightarrow s$ is a power of $\mathbb{G}_{m,s}$, and $A_s \to s$ is an abelian scheme.
\end{definition}

To construct a moduli space compactifying $\mathcal{A}_g^\circ(n)$, Kajiwara, Kato, and Nakayama introduce logarithmic abelian varieties \cite{KajiwaraKatoNakayama2008a,KajiwaraKatoNakayama2008}. The full definition of logarithmic abelian variety \cite[Definition~4.1]{KajiwaraKatoNakayama2008a} over a fine and saturated logarithmic scheme is lengthy, and we record only properties which are required later. Although not required here, the reader can find a brief exposition of the theory in \cite[Section~4.15]{MolchoWise}.

\begin{proposition}\label{prop:Summarise Log AV} Let $S$ be an atomic logarithmic scheme with logarithmic structure arising from a map to the Artin cone $a^\ast \sigma$. 
    \begin{enumerate}[leftmargin=*]
        \item Every logarithmic abelian variety $A\rightarrow S$ over $S$ is a proper group object in the category of logarithmic S-spaces. Possibly after replacing $S$ by a strict \'etale cover and $A$ by its pullback, there is a canonical commutative square, 
        $$\begin{tikzcd}
A \arrow[r, "\varpi_A"] \arrow[d] & a^\ast \mathsf{Trop}(A)= a^\ast \mathpzc{A} \arrow[d] \\
S \arrow[r] & a^\ast \sigma
\end{tikzcd}$$
where horizontal maps are strict, and $\mathsf{Trop}(A)$ is a family of tropical abelian varieties over $\sigma$ (Definition~\ref{defn:TropicalAV}). 
\item The morphism $\varpi_A$ is representable in the category of schemes with logarithmic structure, and the fibre over any map from the standard logarithmic point to $S\times_{a^\ast \sigma} a^\ast \mathsf{Trop}(A)$ is a semi-abelian variety. 
        \item Given any tropical model $\Lambda \rightarrow \mathsf{Trop}(A) = \mathpzc{A}$ the composed morphism $$a^\ast \Lambda \times_{a^\ast \mathpzc{A}} A\rightarrow A \rightarrow  S$$
        is a logarithmically smooth morphism of DM stacks with logarithmic structure. 
        \item  Let $A\rightarrow \Delta^\circ$ be a family of principally polarised abelian varieties over the generic point of the trait $\Delta$ (see notation section). Then, possibly after replacing $A$ by its base change along a finite map $\Delta\rightarrow \Delta$, there exists a unique polarised logarithmic abelian variety over $\Delta$ extending $A\rightarrow \Delta^\circ$.
    \end{enumerate}
\end{proposition}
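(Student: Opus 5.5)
This proposition is a summary of the Kajiwara--Kato--Nakayama theory, so the plan is to assemble each item from the literature rather than prove it from scratch. The translation into the Artin fan language of Section~\ref{sec:logschemesandtrop} is where the real work lies.

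For (1), I would start from \cite[Definition~4.1]{KajiwaraKatoNakayama2008a}. A logarithmic abelian variety is by definition a sheaf of abelian groups on the fs logarithmic site. Properness over $S$ and separatedness are established in \cite{KajiwaraKatoNakayama2008}. After a strict \'etale cover of $S$, I would fix the local data \'etale locally: a semi-abelian $G$ with torus part $\mathsf{Hom}(\textbf{X},\mathbb{G}_m)$, together with the period pairing $\textbf{Y}\rightarrow G^{\log}$. Composing the pairing with the tropicalisation $\mathbb{G}_m^{\log}\rightarrow \mathbb{G}_m^{\log}/\mathbb{G}_m$ gives a homomorphism $\varphi\colon \textbf{Y}\rightarrow \mathsf{Hom}(\textbf{X},\mathsf{Hom}(\sigma,\mathbb{Z}))$. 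The polarisability hypothesis of Definition~\ref{defn:TropicalAV} then comes from the existence of a polarisation in the KKN definition.

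Next I would define $\varpi_A$ on functors of points. An $S$-point of $A$ lies in $G^{\log}_{(\textbf{Y})}/\textbf{Y}$. I would send it to its image in $\mathsf{Hom}(\textbf{X},\overline{M}^{\rm gp})^{(\textbf{Y})}/\textbf{Y}$. For atomic $S$ with Artin cone $a^\ast\kappa$, this target is exactly $\mathpzc{A}(\varphi)(\kappa)$. Strict \'etale sheafifying then yields a map $A\rightarrow a^\ast\mathpzc{A}$ over $a^\ast\sigma$. Strictness should follow because the log structure on $A$ is pulled back from the tropical data; this is how KKN describe $A$ locally as a quotient of a toroidal model.

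For (2), I would base change $\varpi_A$ along a cone $\tau\rightarrow\mathpzc{A}$. The fibre is then the kernel part of the exact sequence
$$0\rightarrow G\rightarrow G^{\log}_{(\textbf{Y})}/\textbf{Y}\rightarrow \mathpzc{A}\rightarrow 0.$$
Over a cone, this kernel is a $G$-torsor twisted by a toric chart, which gives representability. Over a standard log point, the fibre is a torsor under $G$, and it is trivialised by the chosen point; hence the fibre is semi-abelian.

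For (3), I would use \cite[Theorem~5.2.1]{KajiwaraKatoNakayama2008}, as already quoted in Lemma~\ref{lem:FunctorPointsTropAV}. Given a tropical model $\Lambda$, the base change $a^\ast\Lambda\times_{a^\ast\mathpzc{A}}A$ is \'etale locally a $G$-bundle over a toric variety with fan $\Lambda$, quotiented by the free action of $\textbf{Y}$. Note that $\textbf{Y}$-invariant fans are precisely tropical models of $\mathpzc{A}$. This local description is of the form ``smooth over the relative Artin fan'', so log smoothness follows from Definition~\ref{defn:logsmoothold}. The quotient is a DM stack rather than a scheme when the fan is not sufficiently fine.

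For (4), I would invoke the semistable reduction theorem for abelian varieties \cite{FaltingsChai1990}: after a finite base change the N\'eron model has semi-abelian reduction. The Faltings--Chai degeneration data then give exactly the KKN data, and \cite[Theorem~4.6 / Section~4]{KajiwaraKatoNakayama2008} (the extension/separatedness result, cf.\ \cite{KajiwaraKatoNakayamaPartVII}) gives existence and uniqueness of the polarised extension. Uniqueness reduces to separatedness of the moduli of polarised logarithmic abelian varieties.

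I expect the main obstacle to be (1)--(2): showing that $\varpi_A$, defined by sheafification, is strict and representable. This requires matching KKN's bounded-monodromy formalism with the $a^\ast$ functor of \cite{ModStckTropCurve}. My first attempt would be to check this on atomic neighbourhoods, using the explicit toroidal presentation from (3).
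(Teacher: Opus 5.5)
Your strategy is the paper's. Both arguments read $\varpi_A$ off as the final map in the exact sequence from the definition of a logarithmic abelian variety \cite[Definition~4.1]{KajiwaraKatoNakayama2008a}, identify the fibres of $\varpi_A$ with the kernel $G$, and cite Kajiwara--Kato--Nakayama for (3) and (4).

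\textbf{The gap: your description of $A$ only holds for constant degeneration.} After a strict \'etale cover you present $A$ as $G^{\log}_{(\textbf{Y})}/\textbf{Y}$ for a period map $\textbf{Y}\rightarrow G^{\log}$, with $G$ having torus part $\mathsf{Hom}(\textbf{X},\mathbb{G}_m)$, and you obtain $\varphi$ from this period map. Such a presentation is exactly the equivalence between log abelian varieties with \emph{constant degeneration} and polarisable log 1-motifs \cite[Theorem~3.4]{KajiwaraKatoNakayama2008a}. It does not exist on any \'etale neighbourhood of the origin of $S=\Delta$, which is the central case in the paper.
\begin{itemize}
\item At points with trivial log structure, the kernel of $\varpi_A$ is an abelian variety, and it has a torus part only over the origin. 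So $\textbf{X}$ is not the character group of a torus of $G$ over $S$.
\item At those same points a period would land in $G$ itself, and $G/\textbf{Y}$ is not an abelian variety, since Tate uniformisation is only formal or analytic.
\end{itemize}
The paper avoids this as follows. It restricts $A$ to the closed stratum $S_Z$, where the degeneration is constant, and reads $\varphi$ off the log 1-motif there. It then uses atomicity, i.e.\ connectedness of $S_Z$, to see that $\varphi$ does not depend on the chosen point. The map $\varpi_A$ is taken directly from the defining exact sequence, which exists \'etale locally over all of $S$. You should do the same, or take the pairing $\textbf{Y}\times\textbf{X}\rightarrow\mathbb{G}_m^{\log}/\mathbb{G}_m$ directly from that sequence. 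Correspondingly, the middle term of your exact sequence in (2) should be $A$, not $G^{\log}_{(\textbf{Y})}/\textbf{Y}$.

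\textbf{Smaller points.}
\begin{itemize}
\item \emph{Canonicity.} The statement asks for a \emph{canonical} square, but you never explain why $\varpi_A$ is independent of the local data. The paper characterises $G$ intrinsically, as the maximal subgroup of $A$ that is a log scheme strict over $S$ with connected fibres. This makes $\varpi_A$ the quotient by $G$, so the local maps descend.
\item \emph{Strictness and representability.} These follow directly from the defining sequence; you do not need the toroidal presentation from (3). The sequence makes $A$ an \'etale-locally trivial $G$-torsor over $a^\ast\mathpzc{A}\times_{a^\ast\sigma}S$, and $G$ is strict over $S$.
\item \emph{The local picture in (3).} Your ``$G$-bundle over a toric variety with fan $\Lambda$'' has too large a dimension. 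The same torsor structure shows that $a^\ast\Lambda\times_{a^\ast\mathpzc{A}}A$ is a $G$-torsor over the Artin-fan base change $a^\ast\Lambda\times_{a^\ast\sigma}S$, which is log \'etale over $S$; this is the form you need for log smoothness.
\item \emph{Citation in (3).} \cite[Theorem~5.2.1]{KajiwaraKatoNakayama2008} only supplies the subdivision. The paper takes log smoothness of the model from \cite[Theorem~8.1]{KajiwaraKatoNakayamaPartIV}.
\end{itemize}
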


We denote the closed logarithmic stratum in $S$ by $S_Z$. The proof of Proposition~\ref{prop:Summarise Log AV} is an exercise in translating \cite{KajiwaraKatoNakayama2008a} into the language of this paper. 

\begin{proof}
Logarithmic abelian varieties are group objects by \cite{KajiwaraKatoNakayama2008}, and proper by \cite[Section~11]{KajiwaraKatoNakayamaPartIV}.
The pullback of $A$ to $S_Z$ is a logarithmic abelian variety of constant degeneration \cite[Definition~3.3(1)]{KajiwaraKatoNakayama2008a}, and is thus the data of a polarisable logarithmic 1-motif\cite[Theorem~3.4]{KajiwaraKatoNakayama2008a}. 

A polarisable logarithmic 1-motif determines a local system $\textbf{Y}$ of finite rank free $\mathbb{Z}$ modules \cite[Definition~2.2(a)]{KajiwaraKatoNakayama2008} on $S$, and a semi-abelian variety $G$ over $S$ \cite[Definition~2.2(b)]{KajiwaraKatoNakayama2008} obtained by extending some abelian variety by a torus $T$. Writing $\textbf{X} = \operatorname{\mathcal{H}om}(T,\mathbb{G}_m)$ the logarithmic 1-motif specifies a pairing \cite[Section 2.3]{KajiwaraKatoNakayama2008} $$\textbf{Y} \times \textbf{X}
\rightarrow \operatorname{Hom}(\sigma,\mathbb{Z}) \text{ or equivalently } \varphi \colon \textbf{Y} \rightarrow \operatorname{Hom}(\textbf{X},\operatorname{Hom}(\sigma,\mathbb{Z})).$$

Pullback to a point $s$ in $S_Z$ specifies a morphism of $\mathbb{Z}$ modules $$\varphi \colon \textbf{Y} \rightarrow \operatorname{Hom}(\textbf{X},\operatorname{Hom}(\sigma,\mathbb{Z})).$$
The isomorphism type is locally constant in $S_Z$ and thus constant because $S$ is atomic which ensures that $S_Z$ is connected. Thus $\varphi$ is well defined independent of the choice of $s$ in $S$. We define $\operatorname{Trop}{A}$ to be $\mathpzc{A}(\varphi)$ in the sense of Definition~\ref{defn:TropicalAV}.

Next we construct the morphism $\varpi_A$.
By definition of logarithmic abelian variety there is a strict \'etale cover $\{S_i\rightarrow S\}$, which we may refine to be a cover by atomic logarithmic schemes, and upon each $S_i$ the $\varpi_A|_{S_i}$ is part of the definition of logarithmic abelian variety: in \cite[Definition~4.1.2]{KajiwaraKatoNakayama2008a} it is the final non-zero morphism in the displayed sequence of (2).

We have specified morphisms $\varpi_A|_{S_i}$ on the strict \'etale cover $S_i$ of $S$, and it remains to show these morphisms descend to a map from $S$.
Indeed, the map from $A$ to $a^\ast \mathsf{Trop}(A)$ is the stack quotient of $A$ by a subgroup $G$. 
The subgroup $G$ can be characterised as the maximal scheme with logarithmic structure which admits a monomorphism to $A$, has connected fibres over $S$, and is strict over $S$.
Being a subgroup with these properties is stable under strict \'etale pullback upon $S$. The map is thus well defined, and claim (1) is proved.
The short exact sequence of \cite[Definition~4.1.2]{KajiwaraKatoNakayama2008a} insists that fibres over any logarithmic point are isomorphic to the semi-abelian variety $G$. Claim (2) is thus proved.
Claim (3) is \cite[Theorem~8.1, especially Remark(3)]{KajiwaraKatoNakayamaPartIV}, and claim (4) is \cite[Theorem~1.6]{KajiwaraKatoNakayamaPartVII}.
\end{proof}

\subsubsection{Moduli of logarithmic abelian varieties with level structure}\label{sec:ModulilogAVLevelStr} Let $g$ and $m$ be positive integers, and fix $V$ a $\mathbb{Z}/m\mathbb{Z}$ module isomorphic to $(\mathbb{Z}/m\mathbb{Z})^{2g}$ equipped with the standard symplectic form. 

\begin{definition}[{\cite[Definition~1.4]{KajiwaraKatoNakayamaPartVII}}] A level $m$ structure on a logarithmic abelian variety $A\rightarrow S$ of dimension $g$ is an isomorphism $$V_S\cong \operatorname{ker}(A\xrightarrow{m} A).$$% where the pairing upon $\operatorname{ker}(A\xrightarrow{m} A)$ is the Weil pairing, and $V_S$ is the constant group sheaf over $S$.
\end{definition}

We will write $A^\circ$ for the kernel of the morphism $\varpi_A$: by Proposition~\ref{prop:Summarise Log AV}(1) this is a semi-abelian variety.
Let $\mathcal A_g(m)$ be the moduli space of principally polarised logarithmic abelian varieties with level $m$ structure, a moduli functor on the site $\mathsf{fs}$ of fine and saturated logarithmic schemes. The universal abelian variety is denoted $\pi\colon  Y_g(m)\rightarrow \mathcal{A}_g(m)$.

\begin{theorem}\label{thm:commuting square}
    Let $m\geq 3$. There is a commutative diagram with strict horizontal maps which are represented in the category of Noetherian DM stacks with logarithmic structure, $$\begin{tikzcd}
 Y_g(m) \arrow[r, "\widetilde{\varpi}"] \arrow[d, "\pi"'] 
& a^\ast \mathpzc{Y}_g(m) \arrow[d, "a^\ast \pi^{\mathsf{trop}}"] \\
\mathcal A_g(m) \arrow[r, "\varpi"] 
& a^\ast  \mathfrak{A}_g(m)
\end{tikzcd}.$$
Moreover, $\mathcal A_g(m)$ is a proper logarithmic space and the base change of $\mathcal{A}_g(m)$ along any tropical model $\Lambda\rightarrow \mathfrak{A}_g(m)$ is a toroidal compactification of the moduli space of (not logarithmic) abelian varieties.
\end{theorem}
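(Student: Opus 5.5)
The plan is to construct the square fibrewise from Proposition~\ref{prop:Summarise Log AV} and then globalise using the moduli-theoretic description of $\mathfrak{A}_g(m)$. For the bottom arrow $\varpi$, I would work with a $T$-valued point of $\mathcal A_g(m)$ over an atomic logarithmic scheme $T$ with Artin cone $a^\ast\sigma$. Such a point is a principally polarised logarithmic abelian variety $A/T$ with level $m$ structure. Proposition~\ref{prop:Summarise Log AV}(1) gives a tropical abelian variety $\mathsf{Trop}(A)=\mathpzc{A}(\varphi)$ over $\sigma$. The principal polarisation $A\to A^\vee$ induces a symmetric isomorphism $\mathpzc{A}\to\mathpzc{A}^\vee$ on pairings, because tropicalisation is functorial in the 1-motif data $(\textbf{X},\textbf{Y},\varphi)$. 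The level structure $V_T\cong A[m]$ maps along $\varpi_A$ onto $\mathpzc{A}[m]$. I expect the surjection becomes visible only after the Kummer base change $[m]$, since $m$-torsion sections of $\mathpzc{A}$ over $\sigma$ need coordinates in $\tfrac1m\operatorname{Hom}(\sigma,\mathbb{Z})$. This yields exactly the data $[m]^\ast\textbf{V}\twoheadrightarrow[m]^\ast\mathpzc{A}[m]$ of Definition~\ref{defn:TropAVLevel}. The coisotropy of the kernel comes from compatibility of the Weil pairing with the filtration $T\subset G\subset A$. This assignment is a $\sigma$-point of $\mathfrak{A}_g(m)$, hence a $T$-point of $a^\ast\mathfrak{A}_g(m)$.

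Both sides satisfy strict \'etale descent and the construction is compatible with strict \'etale pullback, as in the descent argument of Proposition~\ref{prop:Summarise Log AV}. The construction therefore defines $\varpi$ after stackification. Strictness of $\varpi$ should follow from the fact that the minimal logarithmic structure on $\mathcal A_g(m)$ at a point is the dual of the cone of the universal pairing. I would identify this cone with the cone of $\mathfrak{A}_g(m)$ through the quadratic-form description cited before Proposition~\ref{prop:TropAVModel}. The top arrow $\widetilde\varpi$ is the universal version of $\varpi_A$ from Proposition~\ref{prop:Summarise Log AV}(1) applied to $Y_g(m)$. Since $\mathpzc{Y}_g(m)$ is the universal tropical abelian variety, $\varpi_A$ factors through $a^\ast\mathpzc{Y}_g(m)\times_{a^\ast\mathfrak{A}_g(m)}\mathcal A_g(m)$, and this factorisation gives commutativity of the square. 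Strictness of $\widetilde\varpi$ is inherited from strictness of $\varpi_A$.

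For representability, I would base change along a tropical model $\Lambda\to\mathfrak{A}_g(m)$ from Proposition~\ref{prop:TropAVModel} and identify $\mathcal A_g(m)\times_{a^\ast\mathfrak{A}_g(m)}a^\ast\Lambda$ with the toroidal compactifications of \cite{KajiwaraKatoNakayamaPartVII} (their Remark 1.1.1 and Theorem 1.6). These are Noetherian DM stacks with logarithmic structure, and for $m\ge3$ the level structure kills automorphisms. On the universal family I would use Proposition~\ref{prop:Summarise Log AV}(2),(3) relative to the model $\mathpzc{Y}_\Lambda(m)$ to obtain representability of $\widetilde\varpi$. Properness of $\mathcal A_g(m)$ as a logarithmic space reduces to properness of one such model, which is proper by the valuative criterion in Proposition~\ref{prop:Summarise Log AV}(4). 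The toroidal compactification claim is the cited comparison.

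I expect the main obstacle to be matching the tropical level structure with the logarithmic one, specifically the need for the Kummer pullback $[m]^\ast$ and the coisotropy condition. My first attempt would be to check it on the explicit 1-motif $(\textbf{Y}\to G)$, where $A[m]$ is an extension of $\textbf{Y}/m$ by $G[m]$. In that description the tropical part $\mathpzc{A}[m]$ corresponds to $(\tfrac1m\textbf{Y})/\textbf{Y}$ together with its toric-directional contributions.
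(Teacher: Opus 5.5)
Your construction of $\varpi$ and $\widetilde\varpi$, and your proof that the square commutes, follow the paper. You reduce to atomic bases, take $\mathsf{Trop}(A)$ from Proposition~\ref{prop:Summarise Log AV}(1), compose the level structure with $A[m]\to A[m]/A^\circ[m]\cong\varpi_S^\ast a^\ast\mathpzc{A}[m]$, and descend. You are more explicit than the paper about the tropical polarisation and about strictness. You also derive coisotropy from $T[m]^\perp=G[m]$ under the Weil pairing, where the paper cites \cite[Theorem~9.3(3)]{FeusiLopezTropAV}. Your valuative argument for properness of one model is an acceptable substitute for citing \cite[Theorem~1.6]{KajiwaraKatoNakayamaPartVII}.

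The gap is in the representability step. Identifying $\mathcal A_g(m)\times_{a^\ast\mathfrak A_g(m)}a^\ast\Lambda$ with a toroidal compactification only shows that the \emph{models} are Noetherian DM stacks, which is the final clause of the theorem. Representability of $\varpi$ requires $\mathcal A_g(m)\times_{a^\ast\mathfrak A_g(m)}S$ to be algebraic for every atomic $S\to a^\ast\mathfrak A_g(m)$, i.e.\ over every cone $\tau\to\mathfrak A_g(m)$. Your argument gives this only when $\tau\to\mathfrak A_g(m)$ factors through $\Lambda$.

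You cannot descend algebraicity along $S\times_{a^\ast\mathfrak A_g(m)}a^\ast\Lambda\to S$, because this map is a logarithmic alteration, not a strict \'etale cover. A logarithmic abelian variety illustrates this: it is not algebraic although its models are.

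Nor can $\Lambda$ always be chosen adapted to $\tau$. Take $g\geq 2$ and let $\sigma$ be a full-dimensional rational cone of positive semi-definite forms such that $\gamma\sigma$ meets the interior of $\sigma$ for some $\gamma\neq 1$ in the level-$m$ congruence subgroup of $\mathrm{GL}_g(\mathbb Z)$. Any cone of an invariant decomposition containing $\sigma$ would then be stabilised by $\gamma$, which contradicts neatness for $m\geq 3$. So $\sigma\to\mathfrak A_g(m)$ factors through no tropical model.

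The paper avoids this by reducing to atomic $S$, whose tropicalisation is a single cone, and using the argument of \cite[Section~7.2]{KajiwaraKatoNakayamaPartVII} to get algebraicity over that cone directly. You need such a cone-by-cone statement. The same applies to $\widetilde\varpi$, whose representability you also obtain only relative to $\mathpzc{Y}_\Lambda(m)$.
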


\begin{proof}
    Note that $\mathcal A_g(m)$ has a model proper over $\mathbb{Z}[1/m]$ which is of finite type over $\mathbb{Z}$ \cite[Theorem~1.7]{KajiwaraKatoNakayamaPartVII}.
    Thus to specify a map from $\mathcal{A}_g(m)$ to $a^\ast \mathfrak{A}_g(m)$ it will suffice to specify the map on functors of points for Noetherian logarithmic schemes. Noetherian logarithmic schemes admit covers by atomic logarithmic schemes \cite[Lemma 2.2.5]{AWbirational}, so it will suffice to specify $\varpi$ on $S$ valued points for $S$ an atomic Noetherian logarithmic scheme. Since $ Y_g(m)$ is proper and thus finite type over $\mathcal A_g(m)$, the same construction can be used to build $\widetilde{\varpi}$.

    We first construct $\varpi$. An $S$ valued point of $\mathcal A_g(m)$ is a logarithmic abelian variety over $S$ equipped with level structure. For $S$ Noetherian and atomic with Artin cone $\varpi_S\colon S\rightarrow a^\ast \sigma$, Proposition~\ref{prop:Summarise Log AV}(1) constructs a family of tropical abelian varieties over $\sigma$. Writing $A^\circ$ for the kernel of $\pi$, $V_S$ for the constant sheaf associated to the module $V\cong (\mathbb{Z}/m\mathbb{Z})^{2g}$ equipped with its standard symplectic form, and composing the level structure with the surjection of Proposition~\ref{prop:Summarise Log AV}(1) yields a surjection $$ \varpi_S^\ast a^\ast V_\sigma \cong V_S\xrightarrow{\sim} A[m]\rightarrow A[m]/A^\circ[m] \cong  \varpi_S^\ast a^\ast \mathpzc{A}[m].$$
    The kernel is coisotropic by \cite[Theorem~9.3(3)]{FeusiLopezTropAV}.
    We thus obtain a morphism from $\sigma$ to $\mathfrak{A}_g(m)$. The map $\varpi$ is the composition $$S \rightarrow a^\ast \sigma \rightarrow a^\ast \mathfrak{A}_g(m).$$ 
    Similarly we obtain the morphism $\widetilde{\varpi}$ from the morphisms of Proposition~\ref{prop:Summarise Log AV}(1).
    
    To see that $\varpi$ is represented in the category of Noetherian DM stacks with logarithmic structure it will suffice to check that for any map from an atomic logarithmic scheme $S\rightarrow a^\ast \mathfrak{A}_g(m) $ the fibre product is an algebraic space with logarithmic structure. Since when $S$ is atomic, the map on tropicalisations corresponds to the map from a cone to $\mathfrak{A}_g$, the claim is demonstrated by the argument of \cite[Section 7.2]{KajiwaraKatoNakayamaPartVII}. The remainder of our theorem concerning representability of the moduli stack and its base changes are restatements of \cite[Theorem~1.6 and Theorem~1.7]{KajiwaraKatoNakayamaPartVII} in the language of this paper.
\end{proof}

\begin{proposition}\label{prop:LogAVSmooth}
    Any principally polarised logarithmic abelian surface $g\colon A\rightarrow \Delta_0$ over the standard logarithmic point is polarised smoothable.
\end{proposition}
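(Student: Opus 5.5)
The plan is to realise $A$ as the central fibre of a polarised logarithmic abelian surface over the trait $\Delta$, using the moduli theory recalled in Section~\ref{sec:ModulilogAVLevelStr} together with Proposition~\ref{prop:Summarise Log AV}. First, after passing to a strict étale cover of $\Delta_0$, which is harmless for a point over $\mathbb{C}$, I would choose a level $m$ structure on $A$ with $m\geq 3$. Then $A$ is classified by a morphism $\Delta_0\to\mathcal{A}_2(m)$. By Theorem~\ref{thm:commuting square}, after choosing a tropical model $\Lambda\to\mathfrak{A}_2(m)$, the base change $\mathcal{A}_2(m)^\Lambda$ is a toroidal compactification of $\mathcal{A}^\circ_2(m)$. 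Since $\Lambda$ is a cone complex, $\mathcal{A}_2(m)^\Lambda$ is a logarithmically smooth DM stack with logarithmic structure, with dense open interior $\mathcal{A}_2^\circ(m)$.

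The map $\Delta_0\to\mathcal{A}_2(m)$ lifts to $\mathcal{A}_2(m)^\Lambda$ after a logarithmic alteration of $\Delta_0$. Concretely, I would subdivide $\Lambda$ so that the image ray of $\mathbb{N}\to\mathfrak{A}_2(m)$ lies in a cone, and change the lattice on $\Delta_0$ by a Kummer root if needed. The definition of smoothable permits exactly this alteration.

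Next I would construct the curve. In a toroidal (logarithmically smooth) DM stack, a point with monoid $\overline{M}=P$ together with a morphism $P\to\mathbb{N}$ that sends no ray to zero extends to a logarithmic morphism $\Delta\to\mathcal{A}_2(m)^\Lambda$. This morphism should hit the interior on $\Delta^\circ$ and restrict to the given standard log point on $\Delta_0$. Locally, in a toric chart $\mathbb{A}^k\times\mathbb{G}_m^r$, one takes the monomial curve $t\mapsto(c_it^{a_i})$, where $a_i$ are the images of the generators and the $c_i$ are read off from the chart of the log point. Verticality and saturatedness follow by choosing the $a_i$ positive, after further subdivision or a Kummer change of the base if necessary. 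I expect this lifting step, matching the log structure and not merely the underlying point, to be the main obstacle. The key input is that the logarithmic smoothness of the Artin fan map makes lifts along the strict closed immersion $\Delta_0\to\Delta$ exist locally.

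Pulling back the universal family $Y_2(m)$ along $\Delta\to\mathcal{A}_2(m)$ gives a polarised logarithmic abelian surface $A_\Delta\to\Delta$ whose central fibre is $A$. By Proposition~\ref{prop:Summarise Log AV}(3), a tropical model $\Lambda'$ of $\mathsf{Trop}(A_\Delta)$ gives a logarithmically smooth model $a^\ast\Lambda'\times_{a^\ast\mathpzc{A}}A_\Delta\to\Delta$. Applying toric semistable reduction \cite{molcho2019universal} to the map of tropicalisations, and replacing $\Delta$ by a finite base change, makes this model saturated and vertical.

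Finally I would address projectivity. The polarisation extends from the generic fibre; alternatively one can use the ample line bundle on the toroidal compactification of $Y_2(m)$ given by Theorem~\ref{thm:commuting square}. Choosing $\Lambda'$ to be a projective (regular) subdivision makes the model projective over $\Delta$. Its central fibre is a logarithmic alteration of $A$ carrying the restricted polarisation, which proves that $A$ is polarised smoothable.
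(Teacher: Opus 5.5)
Your overall route is the paper's, but the first step fails as written. A strict \'etale cover of the standard logarithmic point is just a disjoint union of copies of it, so it cannot make $A[m]$ constant once $A$ has non-trivial tropicalisation. The $m$-torsion of a logarithmic abelian variety is only Kummer logarithmic \'etale locally constant. For example, for the Tate curve $\mathbb{G}_m^{\log}/q^{\mathbb{Z}}$ over $\Delta_0$ with $q$ the log generator, the $\Delta_0$-points of the $m$-torsion are only $\mu_m$; the torsion points $q^{j/m}$ appear only after adjoining an $m$-th root of the log generator. This is why the tropical level structure of Definition~\ref{defn:TropAVLevel} is imposed after pulling back along $[m]$. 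It is also why the paper first replaces $\Delta_0$ by a logarithmic alteration before choosing a basis of the $3$-torsion. The repair is immediate: the definition of smoothable allows replacing $\Delta_0$ by a Kummer alteration, and you already invoke such a root in your lifting step. Perform it before choosing the level structure.

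With that fixed, your argument is the same as the paper's. You map to a toroidal model of $\mathcal{A}_2(m)$, find a trait through the image point whose generic point avoids the boundary, and pull back the universal family. The only local difference is in how the trait is found. The paper subdivides further so that the image lies in the interior of a boundary divisor and uses the toroidal structure there. You instead write down a monomial curve $t\mapsto (c(p)t^{\phi(p)})$ in a chart at an arbitrary stratum. That step is routine rather than the main obstacle: $\overline{M}_{X,x}\to\mathbb{N}$ is automatically local for a log point mapping to $x$, and the curve lifts through a strict \'etale chart.

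Two smaller points. Verticality and saturation are properties of the family over $\Delta$, not of the curve, and they come from your later semistable reduction step. Also, Theorem~\ref{thm:commuting square} does not provide an ample line bundle on a model of $Y_2(m)$, so for projectivity rely on a regular (Mumford-type) subdivision. In compensation, you explicitly address projectivity, verticality and saturation of the smoothing family, which the paper's proof leaves implicit.
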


Principality is not required, but we never defined non-principal polarisations.

\begin{proof}
    If $A$ is obtained by equipping a conventional abelian surface with strict logarithmic structure over $p$ then the result is clear. Otherwise replace $\Delta_0$ by a logarithmic alteration such that it is possible to choose a basis for the $3$ torsion of $A$. Having done this, we can specify a morphism from $\Delta_0$ to the boundary of $\mathcal{A}_2(3)$. Since the logarithmic structure upon $\Delta_0$ is valuative, this morphism persists after replacing $\mathcal{A}_2(3)$ with a logarithmic modification. We may thus assume that the image of $\Delta_0$ is a point $p$ in the interior (i.e. in a unique boundary divisor) of a boundary divisor in a (representable) logarithmic alteration of $\mathcal{A}_2(3)$. Recall from \cite[Theorem~1.7]{KajiwaraKatoNakayamaPartVII} that the boundary of a (representable) logarithmic modification of $\mathcal{A}_2(3)$ is toroidal, and thus there exists a map from a trait to $\mathcal{A}_2(3)$ whose generic point does not hit the toroidal boundary, but whose special point is $p$. The universal abelian variety pulls back to the required smoothing $\widetilde{A}$.
\end{proof}

\subsection{Constructing logarithmic generalised Kummer varieties}
Throughout this section we fix $S$ an atomic and globally charted logarithmic scheme, with global Artin cone $S\rightarrow a^\ast \sigma$. We will often choose a morphism $T\rightarrow S$, and whenever we do we assume that $T$ is atomic with Artin cone $a^\ast \tau$. 

\subsubsection{The logarithmic Hilbert scheme of points and its tropicalisation map}\label{sec:LogHilbPts}
Fix a logarithmic abelian surface ${A}\rightarrow S$. The tropical abelian surface prescribed by Proposition~\ref{prop:Summarise Log AV} is denoted $\mathsf{Trop}(A) = \mathpzc{A}\rightarrow \sigma$. We will write $A_T= A\times_S T$ and $\mathpzc{A}_\tau =\mathpzc{A}\times_\sigma \tau$.

\begin{definition}
A \textit{tropically framed length $n$ closed subscheme of $A_T/T$} is a pair $$(h \colon \Lambda\rightarrow \mathpzc{A}_\tau, \iota\colon Z\hookrightarrow A_T\times_{a^\ast \mathpzc{A}_\tau} a^\ast \Lambda )$$ where:
\begin{enumerate}[leftmargin = *]
    \item The morphism $h$ is a tropical model such that the composed morphism of cone complexes $ \Lambda\rightarrow \tau$ is saturated.
    \item The map $\iota$ is a strict closed immersion of a length $n$ closed subscheme such that the composed map $Z\rightarrow T$ is logarithmically flat.
\end{enumerate}
\end{definition}
\begin{definition}
    A \textit{length $n$ closed subscheme of $A_T/T$} is an equivalence class of tropically framed length $n$ closed subscheme of $A_T/T$. The equivalence relation identifies pairs $(h\colon \Lambda \rightarrow \mathpzc{A}_\tau,\iota)$ with $(h'\colon \Lambda'\rightarrow \mathpzc{A}_{\tau},\iota')$ if taking the base change of $\iota$ and $\iota'$ respectively with the projection maps $$A\times_{a^\ast \mathpzc{A}}a^\ast \Lambda \leftarrow A\times_{a^\ast \mathpzc{A}}a^\ast \Lambda\times_{a^\ast \mathpzc{A}}a^\ast \Lambda'\rightarrow A\times_{a^\ast \mathpzc{A}} a^\ast \Lambda'$$ specifies the same closed subscheme. 
    The \textit{logarithmic Hilbert scheme} $A^{[n]}/S$ of length $n$ associated to $A/S$ is the sheafification of the presheaf which assigns to a map $T\rightarrow S$ the set of length $n$ closed subschemes of $A_T/T$.
\end{definition}

We now describe the canonical morphism $$A^{[n+1]}\rightarrow a^\ast \mathpzc{A}^{[n+1]}.$$ By Noetherian approximation, it will suffice to specify this morphism on $T$ valued points for $T/S$ an atomic logarithmic scheme. The data of $h$ specifies a subdivision $\Lambda \rightarrow \mathpzc{A}$ such that the composed morphism $h\colon \Lambda\rightarrow \tau$ is integral. 

\begin{definition}
    A \textit{vertex} of $\Lambda/\tau$ is a cone $v$ of $\Lambda$ such that $|h|$ maps $|v|$ isomorphically to $|\tau|$.
\end{definition}
Choose a geometric point $t$ of $T$ whose image in $a^\ast \tau$ lies in the closed point. Irreducible components $W_v$ of the $t$-stack $$(A\times_{a^\ast \mathpzc{A}}a^\ast \Lambda)\times_T t$$ biject with vertices $v$ of $\Lambda$. Logarithmic flatness ensures that the closed subscheme $Z\times_T t$ is contained in the union of the interiors of the irreducible components. 

We write $\mathscr{T}\rightarrow \mathpzc{A}^{[n+1]}$ for the subdivision of piecewise linear spaces associated to the locally closed stratification whose strata are vertices $v$ of $\Lambda$ such that $W_v\cap Z$ is not empty. The length of subscheme on each component defines a function from the stratification $\mathcal{P}_\mathscr{T}$ specifying $\mathcal{T}$, $\mathcal{P}_\mathscr{T}\rightarrow \mathbb{N}$. We have thus specified a map $$\sigma \rightarrow \mathpzc{A}^{[n+1]} \text{ and thus a composition } S\rightarrow a^\ast \sigma \rightarrow a^\ast \mathpzc{A}^{[n+1]}.$$
By the argument of \cite[Proposition 6.3.1]{LogQuot}, this assignment descends to define a functor $$\varpi_n\colon {A}^{[n+1]}\rightarrow a^\ast \mathpzc{A}^{[n+1]}.$$

\subsubsection{Properties of the logarithmic Hilbert scheme}

\begin{theorem}\label{thm:LogHilbProper}
    Continue notation from Proposition~\ref{prop:ModelOfAn}, and fix a choice of $\Lambda$ from the same proposition. Then the morphism $$A^{[m]}_\Lambda := A^{[m]}\times_{a^\ast \mathpzc{A}^{[m]}}a^\ast {\mathpzc{A}^{[m]}(\Lambda)}\rightarrow S$$ is a vertical and proper morphism of DM stacks with logarithmic structure. %This morphism fits into a commutative square with horizontal arrows logarithmic modifications,
%    $$
%\begin{tikzcd}
%A^{[m]}_\Lambda \arrow[r] \arrow[d] & A^{[m]} \arrow[d] \\
%S_\Lambda \arrow[r] & S.
%\end{tikzcd}
%$$
Moreover, $A^{[m]}_\Lambda$ admits an open immersion into the conventional relative Hilbert scheme of points,
$$H^{[m]}_\Lambda := \operatorname{Hilb}^{[m]}\left({A}_{\operatorname{univ}} = \left( A\times_{a^\ast \mathpzc{A}}a^\ast\left(\mathpzc{Z}^{[m]}(\Lambda)\right)\right)/S\times_{a^\ast \sigma}a^\ast\mathpzc{A}^{[m]}(\Lambda)\right).$$
\end{theorem}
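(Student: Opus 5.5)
The plan is to follow the strategy of the logarithmic Quot scheme paper \cite{LogQuot}: build the conventional relative Hilbert scheme over the subdivided base, show that $A^{[m]}_\Lambda$ is the open locus of logarithmically flat subschemes inside it, and then obtain properness from the valuative criterion.

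First I set up the target. By Proposition~\ref{prop:ModelOfAn}, $\mathpzc{Z}^{[m]}(\Lambda)\to\mathpzc{A}^{[m]}(\Lambda)$ is an exact morphism of cone complexes. By Proposition~\ref{prop:Summarise Log AV}(2),(3), the base change $A_{\operatorname{univ}}=A\times_{a^\ast\mathpzc{A}}a^\ast\mathpzc{Z}^{[m]}(\Lambda)$ is then a DM stack with logarithmic structure, logarithmically smooth and projective over $S_\Lambda:=S\times_{a^\ast\sigma}a^\ast\mathpzc{A}^{[m]}(\Lambda)$. Projectivity should follow from the polarisation together with the fact that the models are obtained by toric modification. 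The usual Hilbert stack $H^{[m]}_\Lambda\to S_\Lambda$ of length $m$ substacks therefore exists and is proper; for DM stacks with projective coarse space I will invoke Olsson--Starr.

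Next I define the map $A^{[m]}_\Lambda\to H^{[m]}_\Lambda$. A $T$-point of $A^{[m]}_\Lambda$ consists of a length $m$ subscheme, represented by a tropically framed pair, together with a lift of its tropical support to $\mathpzc{A}^{[m]}(\Lambda)$. The universal property of this lift should show that the framing can be taken to be the pullback of $\mathpzc{Z}^{[m]}(\Lambda)$, so that $Z$ is a strict closed substack of $A_{\operatorname{univ}}\times_{S_\Lambda}T$. This gives the map, and injectivity on points comes from the equivalence relation in the definition, as in \cite[Section~6]{LogQuot}. The image is the locus where $Z$ is logarithmically flat over $T$ and its tropical support agrees with the pulled-back stratification. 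Logarithmic flatness for points amounts to $Z$ avoiding the logarithmic strata that are not vertical over the base, which is an open condition. The support condition is also open, since weights are locally constant on the interiors of components. Hence the map is an open immersion, and in particular $A^{[m]}_\Lambda$ is a DM stack with logarithmic structure.

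Verticality follows because the tropical map $\mathpzc{Z}^{[m]}(\Lambda)\to\mathpzc{A}^{[m]}(\Lambda)\to\sigma$ sends no ray to $0$ over the cone; the subdivision is of a family over $\sigma$, and every stratum surjects onto a face by Definition~\ref{defn:weightedPLSUBDiv}(1).

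The main obstacle is properness, since the open substack is not closed in $H^{[m]}_\Lambda$. I would check the valuative criterion with logarithmic traits. Given a family over the generic point, I take its flat limit in $H^{[m]}_\Lambda$. If this limit meets a non-vertical stratum, I pass to a further expansion: the tropicalisation of the family along the trait (Definition~\ref{defn:RayFromDVR}) determines a ray in $\mathpzc{A}^{[m]}$, and after the finite base change allowed in \cite[Theorem~6.2.1]{LogQuot} this ray factors through a cone of $\Lambda$ and produces a logarithmically flat limit. Uniqueness follows from separatedness of $H^{[m]}_\Lambda$ together with uniqueness of the tropical limit. Boundedness, and hence finite type, comes from the quasi-compactness of $\Lambda$ over $\sigma$.
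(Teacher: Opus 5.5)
Your overall route is the paper's: it also transports the arguments of \cite{LogQuot}. It obtains the open immersion into the Hilbert space of the universal expansion from \cite[Section~6.2]{LogQuot} and \stacks{0D01}, properness from the tropical valuative argument of \cite[Theorem~7.0.1]{LogQuot}, and verticality by construction.

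\textbf{The gap: the Deligne--Mumford property.} You deduce ``in particular'' from the open immersion into $H^{[m]}_\Lambda$ that $A^{[m]}_\Lambda$ is DM, but this does not follow. The base $S_\Lambda=S\times_{a^\ast\sigma}a^\ast\mathpzc{A}^{[m]}(\Lambda)$ is not DM. Cones of $\mathpzc{A}^{[m]}(\Lambda)$ have dimension up to $\dim\sigma+sm$, so $a^\ast\mathpzc{A}^{[m]}(\Lambda)\to a^\ast\sigma$ has torus stabilisers. For instance, over $\Delta_0$ in the type~III case a maximal cone $\tau$ gives a point whose stabiliser contains a rank $2m$ torus. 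This is the stack of expansions, and these tori rescale the expanded components.

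Hence $H^{[m]}_\Lambda$ is an Artin stack with positive-dimensional stabilisers. So is $A_{\operatorname{univ}}$, contrary to your claim that it is DM: the torus rescaling one component fixes the points of the others. An open substack of $H^{[m]}_\Lambda$ is therefore a priori only algebraic.

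What is missing is finiteness of stabilisers on the locus of logarithmically flat subschemes with the prescribed tropical support. The paper imports this from \cite[Theorem~8.1.4]{LogQuot}. The argument runs as follows:
\begin{enumerate}
\item Every component $W_v$ indexed by a vertex contains a point of $Z$, because the weights are positive.
\item The stabiliser torus acts on the interior of each $W_v$ by translation through the torus part of $A^\circ$.
\item Its cocharacter lattice $\ker(\tau^{\operatorname{gp}}\to\sigma^{\operatorname{gp}})$ embeds into the product over the vertices, since $\tau$ is parametrised by the positions of the distinct sections.
\end{enumerate}
So only a finite subgroup can preserve $Z$.

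\textbf{Properness.} Your key idea is right, but the first step should go. With $A_{\operatorname{univ}}$ as in the statement, the non-strict locus has been discarded (see the footnote in the proof of Corollary~\ref{corr:logsmooth}). So neither $A_{\operatorname{univ}}$ nor $H^{[m]}_\Lambda$ is proper over $S_\Lambda$, and in general models are only projective for cofinal choices (Proposition~\ref{prop:Projectivity}). Olsson--Starr is also unnecessary, because $A_{\operatorname{univ}}\to S_\Lambda$ is representable.

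More importantly, ``take its flat limit in $H^{[m]}_\Lambda$'' presupposes a lift of the trait to $S_\Lambda$, which is exactly what has to be produced. Argue in the other order:
\begin{enumerate}
\item Tropicalise the generic subscheme to get a map $\mathbb{N}\to\mathpzc{A}^{[m]}$.
\item After a finite base change, lift it into a cone of $\Lambda$. This fixes the lift of the trait to $S_\Lambda$, and hence the expansion.
\item Take closures in a proper model of $A$ over the trait. Each limit point lands in the interior of the component indexed by its tropicalisation, which is a vertex. This gives logarithmic flatness and the correct weights.
\end{enumerate}
Uniqueness then follows as you say.
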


\begin{proof}
    All of the arguments for this claim are recorded in \cite{LogQuot}, but \textit{op. cit.} is written for a regular crossings pair, so we must note they remain valid in the present situation. The open immersion in the final claim follows from the argument of \cite[Section 6.2]{LogQuot}, and as in \textit{op. cit.} it then follows from \stacks{0D01} that $A^{[m]}_\Lambda$ is an algebraic stack. This algebraic stack is proper over $S$ by the argument of \cite[Theorem~7.0.1]{LogQuot}. The algebraic stack has finite stabilisers by the argument of \cite[Theorem~8.1.4]{LogQuot}. Vertical holds by construction.
\end{proof}

\begin{corollary}\label{corr:logsmooth}
    The proper model ${A}^{[m]}_\Lambda$  of $A^{[m]}$ is logarithmically smooth over $S$. In particular, $A^{[m]}$ is locally logarithmically smooth over $S$.
\end{corollary}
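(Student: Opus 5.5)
The plan is to check logarithmic smoothness of $A^{[m]}_\Lambda\to S$ after a strict \'etale localisation on the source, using Definition~\ref{defn:logsmoothold}. Concretely, I will show that
$$A^{[m]}_\Lambda\longrightarrow S\times_{a^\ast\sigma} a^\ast\mathpzc{A}^{[m]}(\Lambda)$$
is smooth. Since Proposition~\ref{prop:ModelOfAn} and the choice of $\Lambda$ make $\mathpzc{A}^{[m]}(\Lambda)\to\sigma$ a morphism of cone complexes, the induced map of Artin fans is log smooth (indeed log \'etale over each cone). Composing with a strict smooth morphism then gives a log smooth map to $S$.

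The strict smoothness comes from Theorem~\ref{thm:LogHilbProper}. It identifies $A^{[m]}_\Lambda$ with an open substack of the conventional relative Hilbert scheme $H^{[m]}_\Lambda$ of the universal expansion
$$A_{\operatorname{univ}}\to S_\Lambda:=S\times_{a^\ast\sigma}a^\ast\mathpzc{A}^{[m]}(\Lambda).$$
The open locus consists of subschemes that are logarithmically flat, i.e.\ supported in the locus where $A_{\operatorname{univ}}\to S_\Lambda$ is strict, so away from the double locus of the expansion. There are two facts to use here.

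First, by Proposition~\ref{prop:Summarise Log AV}(3) applied to the tropical model $\mathpzc{Z}^{[m]}(\Lambda)\to\mathpzc{A}$, the family $A_{\operatorname{univ}}\to S_\Lambda$ is log smooth. It is moreover integral and saturated by the exactness statement of Proposition~\ref{prop:ModelOfAn}. On the strict locus, log smoothness of a strict morphism is ordinary smoothness, so this locus is a smooth family of surfaces over $S_\Lambda$ (DM stacks, relative dimension two).

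Second, the Hilbert scheme of $m$ points on a smooth family of surfaces is smooth over the base, by Fogarty's theorem in families. A deformation-theoretic version works as well: the obstruction space $\operatorname{Ext}^1(I_Z,\mathcal{O}_Z)$ for points on a smooth surface has the right dimension, and the fibrewise dimension $2m$ is constant. Hence $H^{[m]}_\Lambda$ restricted to the strict locus is smooth over $S_\Lambda$, and so is its open substack $A^{[m]}_\Lambda$. This gives the first claim. The second claim is immediate from Definition~\ref{defn:loclogsmooth}, since $A^{[m]}_\Lambda\to A^{[m]}$ is a proper model and we may take $S'=S$.

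The main obstacle is justifying Fogarty smoothness over a stacky, non-reduced logarithmic base with DM-stack fibres. I will reduce it to a strict \'etale chart $U\to A_{\operatorname{univ}}$ by schemes. A length-$m$ subscheme of a smooth surface with finite support decomposes \'etale locally as a product over its support points. Smoothness of the punctual pieces then follows from the flat family of smooth surfaces being \'etale locally $\mathbb{A}^2_{S_\Lambda}$, where smoothness of $\operatorname{Hilb}^m(\mathbb{A}^2)$ is classical and base change preserves it. A secondary point is confirming that the open-immersion identification of Theorem~\ref{thm:LogHilbProper} is compatible with the log structures, namely that $A^{[m]}_\Lambda$ carries the log structure pulled back from $S_\Lambda$. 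This holds by construction, because the map to $a^\ast\mathpzc{A}^{[m]}(\Lambda)$ is strict.
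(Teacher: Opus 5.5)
Your proposal is correct and follows the paper's proof. The paper also uses Theorem~\ref{thm:LogHilbProper} to embed $A^{[m]}_\Lambda$ as an open in the relative Hilbert scheme of the universal expansion over $S\times_{a^\ast\sigma}a^\ast\mathpzc{A}^{[m]}(\Lambda)$, notes that the expansion (with its non-strict locus discarded) is strict and log smooth hence smooth, invokes Fogarty's Theorem~2.9, and concludes log smoothness over $S$. Your \'etale-local justification of Fogarty over a stacky base is a reasonable supplement that the paper omits. Two of your supporting citations do not say what you use them for, though the argument never needs them: Proposition~\ref{prop:Summarise Log AV}(3) concerns tropical models of $\mathpzc{A}$ over an atomic base, not $\mathpzc{Z}^{[m]}(\Lambda)\to\mathpzc{A}$, and the exactness in Proposition~\ref{prop:ModelOfAn} does not give integrality.
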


\begin{proof}
Theorem~\ref{thm:LogHilbProper} expresses $A^{[m]}_\Lambda$ as an open subset of the relative Hilbert scheme of points $H^{[m]}_\Lambda$ to a family of surfaces.
Since the map ${A}_{\operatorname{univ}}\rightarrow \operatorname{Exp}:=S\times_{a^\ast \sigma}a^\ast\mathpzc{A}^{[m]}_\Lambda$ 
is logarithmically smooth and strict, it is smooth.\footnote{Note in the definition of $A_{\operatorname{univ}}$ we discarded the non-strict locus - i.e. the singular locus where components meet - when defining our universal expansion.}
By \cite[Theorem~2.9]{Fogarty1968}, $A^{[m]}_\Lambda$ is smooth over $\operatorname{Exp}$, and is thus logarithmically smooth over $S$.
\end{proof}

\subsubsection{The logarithmic generalised Kummer construction}\label{sec:LogKummer}
In the sequel we assume that $A\rightarrow S$ is an abelian surface. Write $\mathsf{Sym}^m(A)$ for the sheaf whose $T\rightarrow S$ valued points are equivalence classes of pairs $(q\colon (A\times_ST)'\rightarrow A\times_ST,D)$ where  $q$ is a logarithmic alteration saturated over $T$, and $D$ is a $T$ section of of the m-fold relative symmetric product of the map $(A\times_ST)'\rightarrow T$. This open set is characterised by requiring that points are contained within the strict locus.
\begin{lemma}\label{lem:LogHCGpMap}
    There are logarithmic Hilbert--Chow ($\mathsf{HC}$) and summation ($\Sigma$) maps
    $${A}^{[m]}\xlongrightarrow{\mathsf{HC}} \mathsf{Sym}^m(A)\xlongrightarrow{\Sigma} {A}.$$
\end{lemma}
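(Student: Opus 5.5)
We construct both maps on $T$-valued points for $T\rightarrow S$ atomic, working with tropically framed representatives, and then check compatibility with the equivalence relation and with sheafification. By Noetherian approximation and the cover of Noetherian logarithmic schemes by atomic ones \cite[Lemma 2.2.5]{AWbirational}, this suffices. The recipe is the one the paper already uses to build $\varpi_n$, and we follow the argument of \cite[Proposition 6.3.1]{LogQuot}.

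\textbf{The Hilbert--Chow map.} Fix a representative $(h\colon\Lambda\rightarrow\mathpzc{A}_\tau,\ \iota\colon Z\hookrightarrow A_T\times_{a^\ast\mathpzc{A}_\tau}a^\ast\Lambda)$. Set $(A\times_ST)':=A_T\times_{a^\ast\mathpzc{A}_\tau}a^\ast\Lambda$; this is a logarithmic alteration of $A_T$, and it is saturated over $T$ by condition (1).

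\begin{enumerate}[leftmargin=*]
    \item \emph{Reduce to the strict locus.} Logarithmic flatness of $Z\rightarrow T$ forces $Z$ into the strict locus $U\subset(A\times_ST)'$. This is the open set where the map to $T$ is strict and logarithmically smooth, hence smooth (as in the proof of Corollary~\ref{corr:logsmooth}).
    \item \emph{Apply the classical construction.} On the smooth family $U\rightarrow T$ of surfaces, the classical relative Hilbert--Chow morphism sends $Z$ to a $T$-section $D$ of $\operatorname{Sym}^m(U/T)\subset\operatorname{Sym}^m((A\times_ST)'/T)$. We set $\mathsf{HC}([(h,\iota)])=[(q,D)]$.
    \item \emph{Check the equivalence relation.} Two representatives become equal after pullback to the common refinement $\Lambda\times_{\mathpzc{A}}\Lambda'$. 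Over the strict loci this refinement is an isomorphism, and the classical Hilbert--Chow morphism commutes with flat base change. Hence the two sections $D$ agree in the equivalence class defining $\mathsf{Sym}^m(A)$.
    \item \emph{Check sheafification.} Functoriality in $T$ is clear, so the construction descends.
\end{enumerate}

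\textbf{The summation map.} Here the key input is the group law of the logarithmic abelian variety, Proposition~\ref{prop:Summarise Log AV}(1).

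\begin{enumerate}[leftmargin=*]
    \item \emph{Take $T$-points.} Given $(q,D)$, the composite $U\rightarrow(A\times_ST)'\rightarrow A_T$ lets us view $D$ as an effective relative degree $m$ divisor on $U$. After a finite étale base change $T'\rightarrow T$ that splits $D$, it becomes $m$ sections $x_1,\dots,x_m\in A(T')$.
    \item \emph{Sum and descend.} We set $\Sigma(D)=x_1+\dots+x_m$ using the group law. This is symmetric under permutation, so it descends to $T$ by étale descent for the sheaf $A$.
    \item \emph{Non-reduced $D$.} To handle $D$ that does not split, use the universal case. We define a map $\operatorname{Sym}^m(U/T)\rightarrow A_T$ as the descent of the $\operatorname{Sym}(m)$-invariant morphism $U^{m}_{/T}\rightarrow A_T$, $(u_1,\dots,u_m)\mapsto\sum q(u_i)$. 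This requires $\operatorname{Sym}^m(U/T)$ to be the categorical quotient, which holds because it is a coarse quotient of a quasi-projective family. Then $\Sigma(D)$ is the pullback of this map along $D$.
    \item \emph{Invariance under refinement.} The sum is computed in $A_T$ after composing with $q$, so it is unchanged when we pass to a finer alteration.
\end{enumerate}

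\textbf{Main obstacle.} The delicate step is that $A$ is only a logarithmic space, not a scheme. So descending the invariant morphism $U^m\rightarrow A_T$ through the coarse quotient $\operatorname{Sym}^m(U/T)$ is not immediate.

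We would handle this by first composing with $\varpi_A$ to land in $a^\ast\mathpzc{A}$, where the tropical summation of Section~\ref{sec:TropGenKummer} already gives the descended map. We then work over the strict fibres of $\varpi_A$. By Proposition~\ref{prop:Summarise Log AV}(2) these fibres are representable torsors under the semi-abelian scheme $A^\circ$, and there the classical universal property of the symmetric product applies.
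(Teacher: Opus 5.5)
Your proposal follows the paper's route: $\mathsf{HC}$ is the classical relative Hilbert--Chow morphism on the locus of the saturated alteration that is strict (hence flat) over $T$, and $\Sigma$ is induced by the group law of $A$; you also supply details the paper leaves implicit, namely that the construction is independent of the tropical framing, and that the symmetric sum descends through the coarse quotient via $\varpi_A$ and the $A^\circ$-torsor structure. Two minor corrections: the common refinement is not an isomorphism over the whole strict locus (a vertex section of $\Lambda$ may cross walls of $\Lambda'$), only near the supports of $Z$ and $Z'$, which suffices; and the finite \'etale splitting step fails for branched or non-reduced $D$, so drop it in favour of your universal-case argument.
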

\begin{proof}
    Since all spaces involved are noetherian it will suffice to work on $T$ valued points for $T\rightarrow a^\ast \tau$ atomic. The morphism of atomic logarithmic schemes from $T$ to $S$ induces a map $\tau \rightarrow \sigma$ where $a^\ast \sigma$ is the Artin cone of $S$. A morphism from $T$ to $A^{[m]}$ is the data of a closed subscheme of a logarithmic alteration $(T\times_S A)'\rightarrow T\times_S A$ which is logarithmic flat and of length $m$ over $T$. Since the subscheme has relative dimension zero, it must lie in the locus of $(T\times_S A)'$ which is strict over $T$, and is thus flat over $T$. Applying \cite[Paragraph II.3.2]{Iversen1970} we obtain a morphism from the relative Hilbert scheme of $(T\times_S A)'/ T$ to the relative symmetric product of the same morphism. But all sections in the image of the logarithmic Hilbert scheme under this map are strict, so admit a canonical upgrade to sections of the morphism of logarithmic schemes. Thus we have specified a morphism $\mathsf{HC}$ from $T$ to $\operatorname{Sym}^m(A)$ taken in the logarithmic category. 
    The morphism $\Sigma$ is induced from the group structure upon $A$ in the usual way.
\end{proof}

\begin{definition}\label{defn:logKummer}
    The logarithmic Kummer variety $\operatorname{Kum}_{2n}(A/S)$ is the fibre product 
$$
\begin{tikzcd}
\operatorname{Kum}_{2n}({A}/S) \arrow[r] \arrow[d] & S \arrow[d,"e"] \\
{{A}^{[n+1]}} \arrow[r,"h"]                   & {A}
\end{tikzcd},  
$$
where $e$ is the identity section and $h$ is obtained by composing the morphisms of Lemma \ref{lem:LogHCGpMap}.
\end{definition}

\begin{proposition}\label{prop:ProperLogSmooth}
    Continue notation from Proposition~\ref{prop:KummerSubdivision}, and make a choice of $\Lambda$. The resulting morphism $$\mathrm{Kum}^\Lambda_{2n}({A}/S)\rightarrow S$$
    is a vertical, proper, and logarithmically smooth morphism of DM stacks with logarithmic structure. 
\end{proposition}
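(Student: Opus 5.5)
The plan is to realise $\mathrm{Kum}^\Lambda_{2n}(A/S)$ as a closed substack of the proper model $A^{[n+1]}_{\Lambda'}$ supplied by Theorem~\ref{thm:LogHilbProper}. By the proof of Proposition~\ref{prop:KummerSubdivision}, the model $\operatorname{Kum}^\Lambda_{2n}(\mathpzc{A})$ is the fibre product $\operatorname{Kum}_{2n}(\mathpzc{A})\times_{\mathpzc{A}^{[n+1]}}\mathpzc{A}^{[n+1]}(\Lambda)$, and it embeds in $\mathpzc{A}^{[n+1]}(\Lambda)$ as a sub-cone complex. Pulling back the cartesian square of Definition~\ref{defn:logKummer} along $a^\ast\mathpzc{A}^{[n+1]}(\Lambda)\to a^\ast\mathpzc{A}^{[n+1]}$ therefore expresses $\mathrm{Kum}^\Lambda_{2n}(A/S)$ as the preimage of the identity section under the composite $h_\Lambda\colon A^{[n+1]}_\Lambda\to A$.

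Verticality and properness then follow quickly. First, I would check that $h_\Lambda$ factors through a proper model of $A$ on which the identity section $e$ is a strict closed immersion. Concretely, $e$ lands in the open semi-abelian part $A^\circ$, which lies over the zero section of $\mathpzc{A}$. Compatibility of the tropical summation $\Sigma_{\operatorname{trop}}$ with $h$, via the map $\varpi_n$, shows that $\mathrm{Kum}^\Lambda_{2n}(A/S)$ is the closed substack of $A^{[n+1]}_\Lambda|_{\Sigma_{\operatorname{trop}}^{-1}(e)}$ cut out by $\Sigma=0$. Properness is then inherited from Theorem~\ref{thm:LogHilbProper}, since a closed substack of a proper DM stack is proper. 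Verticality is also inherited, because the map $\operatorname{Kum}^\Lambda_{2n}(\mathpzc{A})\to\sigma$ is the restriction of a vertical map to a subcomplex whose cones still surject onto faces: each stratum projects onto the interior of a face by Definition~\ref{defn:weightedPLSUBDiv}.

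The main obstacle is logarithmic smoothness. For this I would adapt Beauville's argument via the translation action. There is an action of $A$, or more precisely of $A^\circ$ and then of $A$ on log points, on $A^{[n+1]}$ by translation, and the map
$$A\times \mathrm{Kum}_{2n}(A/S)\to A^{[n+1]},\qquad (a,Z)\mapsto a+Z,$$
together with the fibre product over multiplication by $n+1$ on $A$, gives a cartesian square
$$A\times_S \mathrm{Kum}_{2n}(A/S)\;\cong\; A^{[n+1]}\times_{A,[n+1]}A.$$
Multiplication by $n+1$ on a logarithmic abelian variety should be Kummer log étale, since its tropicalisation is multiplication by $n+1$ on $\mathpzc{A}$, which is a change of lattice, and over the interior it is finite étale in characteristic zero. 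Hence $A\times_S\mathrm{Kum}_{2n}(A/S)$ is log étale over $A^{[n+1]}$, and so, after passing to compatible models, it is log smooth over $S$ by Corollary~\ref{corr:logsmooth}. Since $A\to S$ is log smooth and surjective, log smoothness of $\mathrm{Kum}^\Lambda_{2n}(A/S)$ descends, by checking locally via flat descent of smoothness of $X\to\mathcal{A}_{X/S}\times_{\mathcal{A}_S}S$.

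The delicate step is making the translation action and the cartesian square compatible with the chosen tropical models $\Lambda$. Translation acts on $\mathpzc{A}^{[n+1]}$ and need not preserve the subdivision $\mathpzc{A}^{[n+1]}(\Lambda)$. To handle this, I would argue that local log smoothness is insensitive to the model, using Definition~\ref{defn:loclogsmooth} together with the fact that log alterations are log étale. It would then suffice to show log smoothness of the fibre over $\Sigma_{\operatorname{trop}}^{-1}(e)$ in any single model where the square holds. As a fallback, I would argue directly via Lemma~\ref{lem:TropicalByComponent}: strict étale locally, $\mathrm{Kum}^\Lambda_{2n}$ looks like $\Sigma^{-1}(0)$ inside a product of Hilbert schemes of expansions of $A^\circ$. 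There, smoothness of $\Sigma$ relative to the expansion stack follows from surjectivity of its differential, which is ensured by the translation action, exactly as in Beauville's classical proof.
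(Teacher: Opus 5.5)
Your reduction of properness and verticality to Theorem~\ref{thm:LogHilbProper} matches what the paper leaves implicit. One caveat: $a^\ast\operatorname{Kum}^\Lambda_{2n}(\mathpzc{A})$ is \emph{open}, not closed, in $a^\ast\mathpzc{A}^{[n+1]}(\Lambda)$. So closedness in $A^{[n+1]}_\Lambda$ has to come from your factorisation through a model of $A$ on which $e$ is closed, not from restricting to $\Sigma_{\operatorname{trop}}^{-1}(e)$.

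The genuine gap is in log smoothness, at the step you flag yourself. The proposition asserts that the given model $\mathrm{Kum}^\Lambda_{2n}(A/S)\to S$ is log smooth in the sense of Definition~\ref{defn:logsmoothold}. Beauville's square uses translation by all of $A$, whose tropicalisation translates $\mathpzc{A}^{[n+1]}$ by arbitrary tropical points. To realise the square on DM stacks you must therefore pass to models adapted to the translation map, in general refinements of $\Lambda$.

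Your remedy, that log smoothness is insensitive to the model, fails in the direction you need. Log smoothness passes from a model to its log alterations (compose with a log \'etale map) but not back; the remark after Definition~\ref{defn:loclogsmooth} (Ogus, Example IV.4.3.4) records that locally log smooth does not imply log smooth. So log smoothness in ``any single model where the square holds'' says nothing about $\Lambda$.

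The descent step is also unsupported. The map $A_{\Lambda'}\times_S\mathrm{Kum}\to\mathrm{Kum}$ is not strict, so the map $X\to\mathcal{A}_{X/S}\times_{\mathcal{A}_S}S$ for $X=A_{\Lambda'}\times_S\mathrm{Kum}$ is not a base change of the corresponding map for $\mathrm{Kum}$, and flat descent does not apply. Separately, the tropicalisation of $[n+1]$ is an $(n+1)^{s}$-sheeted covering of the tropical torus, not a change of lattice.

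The missing idea is to translate only by the semi-abelian part $A^\circ_S=A\times_{a^\ast\mathpzc{A}}a^\ast\sigma$. This is the kernel of tropicalisation, so it acts strictly, fixes tropical supports, and hence acts on every model. Let $U=A^{[n+1]}\times_{a^\ast\mathpzc{A}^{[n+1]}}a^\ast\operatorname{Kum}^\Lambda_{2n}(\mathpzc{A})$, an open of $A^{[n+1]}_\Lambda$ that is log smooth over $S$ by Corollary~\ref{corr:logsmooth}. The sum map sends $U$ into $A^\circ_S$. The map $(a,Z)\mapsto(a+Z,a)$ then gives $A^\circ_S\times_S\mathrm{Kum}^\Lambda_{2n}(A/S)\cong U\times_{\Sigma,A^\circ_S,[n+1]}A^\circ_S$, with $[n+1]$ strict \'etale in characteristic zero. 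Log smoothness then descends along the strict smooth surjection $A^\circ_S\times_S\mathrm{Kum}^\Lambda_{2n}(A/S)\to\mathrm{Kum}^\Lambda_{2n}(A/S)$.

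The paper argues slightly differently. Base changing Definition~\ref{defn:logKummer} along $a^\ast\operatorname{Kum}^\Lambda_{2n}(\mathpzc{A})$, it reduces to ordinary smoothness of the strict map $\Sigma\colon U\to A\times_{a^\ast\mathpzc{A}}a^\ast\operatorname{Kum}^\Lambda_{2n}(\mathpzc{A})$. It proves flatness by the fibrewise criterion plus miracle flatness. It proves the fibres, which are zero-sum loci in $\prod_i(A^\circ)^{[n_i]}$, are smooth by Beauville's argument applied to the semi-abelian variety $A^\circ$.

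Your fallback is close to this, with two corrections. It does not need Lemma~\ref{lem:TropicalByComponent}, which is proved only over a ray. And surjectivity of the differential gives smoothness only once you note that source and target are both smooth over the expansion base $S\times_{a^\ast\sigma}a^\ast\operatorname{Kum}^\Lambda_{2n}(\mathpzc{A})$.
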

\begin{proof}
    Note that there is a cartesian square,
$$
\begin{tikzcd}
\operatorname{Kum}_{2n}^\Lambda(A/S) \arrow[r] \arrow[d] & S \arrow[d] \\
{A^{[n+1]}}\times_{a^\ast \mathpzc{A}^{[n+1]}}{a^\ast\operatorname{Kum}_{2n}^\Lambda(\mathpzc{A})} \arrow[r]                   & A\times_{a^\ast \mathpzc{A}} a^\ast \sigma
\end{tikzcd},
$$
which we obtain by base change of the diagram in Definition~\ref{defn:logKummer} along compatible morphisms from $a^\ast \operatorname{Kum}_{2n}^\Lambda(\mathpzc{A})\rightarrow a^\ast \sigma$ to left and right columns of the associated tropical diagram (see Definition~\ref{defn:TropGenKummer}).
It will thus suffice to establish that the bottom horizontal map, which is the base change of the summation map $h$ of Definition~\ref{defn:logKummer}, is logarithmically smooth. 

The required logarithmic smoothness is equivalent to conventional smoothness of the first morphism, denoted $\Sigma$, in the composition $${A^{[n+1]}}\times_{a^\ast \mathpzc{A}^{[n+1]}}{a^\ast \operatorname{Kum}_{2n}^\Lambda(\mathpzc{A})} \xrightarrow{\Sigma}                    A\times_{a^\ast \mathpzc{A}}{a^\ast \operatorname{Kum}_{2n}^\Lambda(\mathpzc{A})}\rightarrow S\times_{a^\ast \sigma}a^\ast\operatorname{Kum}_{2n}(\mathpzc{A}).$$

We begin by establishing flatness of $\Sigma$. Corollary \ref{corr:logsmooth} asserts that the above composition is smooth, and in particular flat. By critère de platitude par fibres \stacks{039C}(in the reference using (1) implies (2) direction), flatness of $\Sigma$ may be checked after pulling back along maps from the spectrum of an arbitrary field to $S\times_{a^\ast \sigma}a^\ast\operatorname{Kum}_{2n}^\Lambda(\mathpzc{A})$. After such a base change, both domain and codomain of $\Sigma$ are regular since they are (the base change of) spaces which admit a smooth map to a perfect field. Miracle flatness applies to the restriction of $\Sigma$, where the constant fibre has dimension $2n$. 

To complete our proof that $\Sigma$ is smooth, we are left to prove that the fibre of $\Sigma$ over any $\mathbb{C}$ point of $A\times_{a^\ast \mathpzc{A}}{a^\ast \operatorname{Kum}_{2n}^\Lambda(\mathpzc{A})}$ is smooth\footnote{Since after pullback to a smooth cover which is a scheme we may check upon closed points.}. But for any such fibre there is a semi-abelian variety $A^\circ$ over $\mathbb{C}$ and a tuple of positive integers $n_1,...,n_k$ with sum $n+1$ such that the fibre is isomorphic to the collection of points in $$(A^\circ)^{[n_1]}\times...\times (A^\circ)^{[n_k]}$$ whose sum is $0\in A^\circ$. This locus is smooth by Beauville's original argument for smoothness of generalised Kummer varieties \cite[Section 7, page 769]{Beauville1983}, applied to a semi-abelian variety in place of an abelian variety.
\end{proof}

\subsection{Relatively ample line bundles}
Let $A\rightarrow S$ be a logarithmic abelian variety over $S$ an atomic Noetherian logarithmic scheme with Artin cone $a^\ast \sigma$, and recall that by \cite[Theorem~1.11]{MR3962244} there is a model ${A}_{\Lambda}\rightarrow S$ projective over $S$. We write $\mathpzc{A}_{\Lambda}\rightarrow \sigma$ for the associated tropical abelian variety.

Our next definition is the local version of \cite[Definition 2.9]{MR1844577} which reflects the idea of \cite[Definition 5.5]{Kresch2009} locally in families.

\begin{definition}
    We say that a morphism of DM stacks with log structure $g\colon X\rightarrow S$ is \textit{locally quasi-projective} if there is a smooth cover $c_i\colon U_i\rightarrow S$ such that $U_i \times_S X\rightarrow U_i$ is a \textit{quasi-projective DM stack} in the sense that it has the following properties.
    \begin{enumerate}[leftmargin = *]
        \item $U_i\times_SX$ can be expressed as a global quotient stack of an algebraic space $V_i$ by the action of a subgroup $G$ of $\operatorname{GL}_n/S$ for some integer $n$ where $G$ is flat over $S$. 
        \item $U_i\times_S X$ admits a quasi-projective coarse moduli space.
    \end{enumerate}
    A locally quasi-projective morphism is said to be locally projective if it is also proper.
\end{definition}
\begin{lemma}\label{lem:LocProjBaseChange}
    The property of being a locally (quasi)-projective morphism of DM stacks is preserved under base change. 
\end{lemma}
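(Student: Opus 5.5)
We fix a locally (quasi-)projective morphism $g\colon X\rightarrow S$ with smooth cover $c_i\colon U_i\rightarrow S$ witnessing the definition, and an arbitrary morphism $f\colon S'\rightarrow S$ of DM stacks with logarithmic structure. The plan is to produce a smooth cover of $S'$ by base change and then check the two defining conditions one at a time. Set $U_i' = U_i\times_S S'$. Smooth surjective morphisms are stable under base change, so $\{U_i'\rightarrow S'\}$ is a smooth cover of $S'$. We also have
$$U_i'\times_{S'}(X\times_S S') \cong (U_i\times_S X)\times_{U_i} U_i'.$$
This reduces the statement to the following claim: the base change of a quasi-projective DM stack over $U_i$ along the morphism $U_i'\rightarrow U_i$ is again quasi-projective.

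For condition (1), we write $U_i\times_S X\cong [V_i/G]$ with $G\subset \operatorname{GL}_n$ flat. Quotient stacks commute with base change, so
$$(U_i\times_SX)\times_{U_i}U_i' \cong [(V_i\times_{U_i}U_i')/(G\times_{U_i}U_i')].$$
Here $V_i\times_{U_i} U_i'$ is still an algebraic space, and $G\times_{U_i}U_i'$ is a flat subgroup of $\operatorname{GL}_n$ over the new base. One point needs care: the definition asks that $G$ be flat over $S$. We interpret this as flatness over the base of the given chart, and observe that flatness is preserved by any base change.

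Condition (2) is the step we expect to be the main obstacle, because coarse moduli spaces do not commute with arbitrary base change. We would argue as follows. Let $M_i$ be the quasi-projective coarse space of $U_i\times_S X$. By Keel--Mori (the Abramovich--Vistoli version in characteristic zero), the formation of the coarse space commutes with flat base change. For a general base change, the natural map from the new coarse space $M_i'$ to $M_i\times_{U_i}U_i'$ is a universal homeomorphism, and since we are over $\mathbb{C}$ (tame stabilisers) it is moreover finite. The target $M_i\times_{U_i}U_i'$ is quasi-projective over $U_i'$, and an algebraic space finite over a quasi-projective one is quasi-projective, because ample line bundles pull back to ample line bundles along finite morphisms. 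Hence $M_i'$ is quasi-projective. For an alternative route in the quotient-stack setting, we could use the Kresch criterion \cite[Definition 5.5]{Kresch2009}, namely a quotient presentation plus a quasi-projective coarse space. That notion is shown stable under base change in \cite{Kresch2009}, so we could instead simply cite it there.

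Finally, properness is stable under base change, so the locally projective case follows from the locally quasi-projective one.
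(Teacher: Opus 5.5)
Your proof is correct and shares the paper's skeleton. You base change the smooth cover, observe that the quotient presentation and the flatness of $G\subset \operatorname{GL}_n$ survive base change, and get the projective case from stability of properness. Where you differ is the coarse space step.

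\textbf{The paper's route.} Over $\mathbb{C}$ every DM stack is tame, so its coarse space is a good moduli space in the sense of \cite{AlperGoodModuli}. Good moduli spaces are stable under arbitrary base change of the target. Hence the coarse space of $(U_i\times_S X)\times_{U_i}U_i'$ is exactly $M_i\times_{U_i}U_i'$, which is quasi-projective by \stacks{0B3G}.

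\textbf{Your route.} You use only that the comparison map $M_i'\to M_i\times_{U_i}U_i'$ is a universal homeomorphism, hence integral and of finite type, hence finite. You then pull back a relatively ample bundle along it. This is a little longer but needs no tameness, so it works for any separated finite-type DM stack. The paper's route is shorter and gives the sharper statement that the coarse space itself base changes.

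Two corrections to your framing. First, in characteristic zero coarse spaces \emph{do} commute with arbitrary base change; this is exactly what the paper invokes. Second, the finiteness of $M_i'\to M_i\times_{U_i}U_i'$ comes from universal homeomorphism plus finite type, not from tameness; tameness actually makes this map an isomorphism.

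I would drop the fallback citation of \cite{Kresch2009}. That notion is formulated absolutely over a field, so it does not by itself give the relative base-change statement you need.

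Finally, a point the paper also glosses over: if $S'$ is a genuine DM stack, then $U_i\times_S S'$ need not be an algebraic space. You should refine it by an atlas before asserting that $V_i\times_{U_i}U_i'$ is an algebraic space.
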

\begin{proof}
    We establish the quasi-projective claim: the projective case follows because properness is preserved under base change. The claim follows from two facts. First, by \cite[Example 8.1, Main Properties (3) and Theorem 6.6 (the latter is for uniqueness)]{AlperGoodModuli} passing to a coarse space commutes with base change and the base change of a quasi-projective morphism remains quasi-projective by \stacks{0B3G}. Second, being a global quotient stack over $U_i$ is preserved by base change along any map to $U_i$. Flatness of the quotient group scheme, as well as inclusion into the general linear group, are likewise preserved under base change.
\end{proof}
\begin{proposition}\label{prop:Projectivity}
    Let $S$ be a globally charted affine Noetherian logarithmic scheme and let $A\rightarrow S$ be a log abelian variety over $S$. There exist cofinal choices of logarithmic alteration $S'\rightarrow S$ and proper model $A^{[n]}_\Xi\rightarrow A^{[n]}$ which are both saturated and locally projective over $S'$. 
\end{proposition}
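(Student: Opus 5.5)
Start from a projective model of the logarithmic abelian variety and make the Hilbert scheme of points projective relative to the base expansions. By \cite[Theorem~1.11]{MR3962244} there is a model $A_{\Lambda_0}\rightarrow S$ which is projective over $S$, with tropicalisation $\mathpzc{A}_{\Lambda_0}\rightarrow \sigma$. After a further subdivision we may take $\mathpzc{A}_{\Lambda_0}$ to be a tropical model of $\mathpzc{A}$ carrying a strictly convex piecewise linear function, that is, a polarisation. The first step is to construct $\Xi$, refining the construction in Proposition~\ref{prop:ModelOfAn}, and simultaneously the base $S'$.

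\begin{enumerate}[leftmargin=*]
\item Start from $\overline{\mathpzc{A}}=\mathpzc{A}_{\Lambda_0}$ and the subdivision of $\overline{\mathpzc{A}}^{n}$ recording coincidences $s_i=s_j$.
\item Pass to a $\operatorname{Sym}(n)$-equivariant projective subdivision, namely one obtained as a sequence of equivariant blowups of cone complexes. This gives $\mathpzc{A}^{[n]}(\Xi)$.
\item Choose a subdivision $\sigma'\rightarrow\sigma$ and apply toric semistable reduction \cite{molcho2019universal} so that $\mathpzc{A}^{[n]}(\Xi)\rightarrow\sigma'$ and the universal expansion $\mathpzc{Z}^{[n]}(\Xi)\rightarrow\sigma'$ are integral and saturated.
\end{enumerate}

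Set $S'=S\times_{a^\ast\sigma}a^\ast\sigma'$. Cofinality follows because every further subdivision can again be refined to one of this form: projective subdivisions are cofinal, and the semistable reduction step can be repeated.

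Next, check local projectivity of the expansion stack $\operatorname{Exp}=S'\times_{a^\ast\sigma'}a^\ast\mathpzc{A}^{[n]}(\Xi)$ over $S'$. This stack is a toric stack bundle. Strict étale locally on $S'$ it is a global quotient $[V/G]$ of a toric variety $V$, which is projective over the chart because the subdivision was projective, by a diagonalisable group $G\subset \operatorname{GL}_N$, which is flat. Its coarse space is a toric variety that is projective over the chart. Likewise, the universal expansion $A_{\operatorname{univ}}\rightarrow\operatorname{Exp}$ is locally projective. This follows because $A_{\Lambda_0}$ is projective over $S$ and the further subdivision is projective, so that the pullback of an ample line bundle twisted by the line bundle attached to the convex piecewise linear function is relatively ample. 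Lemma~\ref{lem:LocProjBaseChange} lets us perform all these base changes freely.

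By Theorem~\ref{thm:LogHilbProper}, $A^{[n]}_\Xi$ is an open substack of the relative Hilbert scheme $H^{[n]}_\Xi$ of $A_{\operatorname{univ}}/\operatorname{Exp}$, and it is proper over $S'$. The relative Hilbert scheme of points of a locally projective family of DM stacks is locally quasi-projective, via Grothendieck's Quot construction on the global quotient $[V_i/G]$ using a generating sheaf in the sense of \cite{Kresch2009}. Composing with $\operatorname{Exp}\rightarrow S'$ gives local quasi-projectivity over $S'$. Properness then upgrades this to local projectivity. Saturatedness follows because the composite $A^{[n]}_\Xi\rightarrow\operatorname{Exp}\rightarrow S'$ is strict smooth (Corollary~\ref{corr:logsmooth}) followed by a saturated morphism.

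I expect the main obstacle to be the composition step. Local quasi-projectivity is not obviously stable under composition for DM stacks, and we need a single global quotient presentation of the coarse space of $A^{[n]}_\Xi$ over $U_i$, together with an ample line bundle there. My first approach would be to produce an explicit relatively ample line bundle on the coarse space. Take the determinant line bundle of the universal subscheme pushed forward along a large power of the relatively ample bundle on $A_{\operatorname{univ}}$, and tensor it with the pullback of the ample bundle from $\operatorname{Exp}$ coming from the strictly convex piecewise linear function on $\mathpzc{A}^{[n]}(\Xi)$. A suitable power of this tensor product should be relatively ample over $S'$, by the usual argument that ampleness relative to the target plus ampleness on the target gives ampleness of a twist.
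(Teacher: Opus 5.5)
\textbf{There is a genuine gap at the composition step you flagged, and it already breaks the earlier claim that $\operatorname{Exp}\to S'$ is locally projective.}

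The map $\mathpzc{A}^{[n]}(\Xi)\to\sigma'$ is not a subdivision, because its cones have larger dimension than their images. Consequently $\operatorname{Exp}=S'\times_{a^\ast\sigma'}a^\ast\mathpzc{A}^{[n]}(\Xi)$ is locally a quotient by the relative torus $\ker(T_\tau\to T_{\sigma'})$ of the dense tori, with positive-dimensional stabilisers over the boundary. So it is an Artin stack, not a DM stack. It has no coarse moduli space (its inertia is not finite) and is not locally projective in the paper's sense. Projectivity of your subdivision only gives projectivity relative to $a^\ast\mathpzc{A}^{[n]}$, never relative to $a^\ast\sigma'$.

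A line bundle coming from a PL function on $\mathpzc{A}^{[n]}(\Xi)$ is acted on by these tori through non-trivial characters. It is therefore a GIT linearisation, not an ample bundle on a target, and ``relatively ample plus ample on the target'' does not apply. In fact no bundle pulled back from an Artin fan can supply absolute ampleness, since it is trivial over the locus of $S'$ with trivial logarithmic structure. For the same reason, the polarisation of $A$ is not a PL function on a tropical model of $\mathpzc{A}$, contrary to your opening sentence.

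What you would actually need is that $\det(p_\ast L^k)\otimes\mathcal{O}(\phi)^{m}$ descends to an ample bundle on the coarse space of the DM open $A^{[n]}_\Xi\subset H^{[n]}_\Xi$. That is a Hilbert--Mumford statement: the logarithmically flat locus must be exactly the stable $=$ semistable locus. This is the hard part of the GIT approach of \cite{GulbrandsenHalleHulek2019}, and your proposal gives no argument for it.

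\textbf{The paper avoids this by never extracting projectivity from $\operatorname{Exp}\to S'$.}
\begin{enumerate}
\item It starts from the projective model $A_\Lambda\to S$ and chooses a $\operatorname{Sym}(n)$-equivariant tropical model $\mathpzc{A}^n_\Lambda\to(\mathpzc{A}_\Lambda)^n$ that refines the collision-type stratification and is projective over $(\mathpzc{A}_\Lambda)^n$.
\item It sets $A^n_\Lambda=(A_\Lambda)^n\times_{a^\ast(\mathpzc{A}_\Lambda)^n}a^\ast\mathpzc{A}^n_\Lambda$. This is projective over $S'$, being a base change of a representable projective subdivision over the projective $(A_\Lambda)^n$.
\item The coarse space $A^{(n)}_\Lambda$ of the $\operatorname{Sym}(n)$-quotient is projective over $S'$ (descend a symmetrised ample). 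Its tropicalisation $\mathpzc{A}^{(n)}_\Lambda$ is a tropical model of $\mathpzc{A}^{[n]}$.
\item For $A^{[n]}_\Lambda=A^{[n]}\times_{a^\ast\mathpzc{A}^{[n]}}a^\ast\mathpzc{A}^{(n)}_\Lambda$, the Hilbert--Chow map $h\colon A^{[n]}_\Lambda\to A^{(n)}_\Lambda$ has fibres on which $\det(p_\ast L)$ restricts to a Pl\"ucker bundle. Hence $h$ is projective, and so is the composite to $S'$.
\end{enumerate}
Your determinant bundle is the right ingredient, but it must be paired with this genuinely projective target rather than with the expansion stack.
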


We require a preparatory lemma.
\begin{lemma}\label{lem:ProjectiveRefinement}
    Let $X$ be a Noetherian log scheme and assume that $X$ has locally free log structure and an Artin fan which is a cone complex. Let $X'\rightarrow X$ be any log alteration. Then there exists a further log alteration $$X''\rightarrow X'\rightarrow X$$ such that the coarse space of $X''$ is projective over $X$.
\end{lemma}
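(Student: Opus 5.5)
The plan is to reduce the statement to toric geometry, using the hypothesis that the Artin fan of $X$ is a cone complex $\Sigma$ (so $\mathcal{A}_X = a^\ast\Sigma$). By Definition~\ref{defn:logAlt}, the log alteration $X'\rightarrow X$ is, strict \'etale locally, base changed from an alteration of Artin cones. Because the Artin fan is a cone complex, these local tropical alterations glue to a single global tropical alteration $\Sigma'\rightarrow \Sigma$. Refining $X'$ if necessary, we may therefore assume $X' = X\times_{a^\ast\Sigma} a^\ast\Sigma'$. It suffices to produce a further tropical alteration $\Sigma''\rightarrow\Sigma'$ such that $X'' := X\times_{a^\ast\Sigma}a^\ast\Sigma''$ has coarse space projective over $X$.

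\textbf{Step 1: factor the alteration.} As in the proof of Proposition~\ref{prop:HomotopyGroups}, factor $\Sigma'\rightarrow\Sigma$ as a change of lattice followed by a subdivision. The lattice change yields a root stack whose coarse space is finite over the corresponding space with the coarser lattice. Finite morphisms are projective, so the projectivity question reduces to the subdivision part.

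\textbf{Step 2: make the subdivision projective.} Choose a further subdivision $\Sigma''\rightarrow\Sigma'$, simplicial and unimodular, such that the composite $\Sigma''\rightarrow\Sigma$ is projective. Projective here means it admits a strictly convex piecewise linear function, linear on each cone of $\Sigma$, which is integral on $\Sigma''$. Such refinements exist by the toroidal version of De Concini--Procesi / Kempf--Knudsen--Mumford--Saint-Donat: any subdivision of a cone complex is dominated by an iterated stellar subdivision, i.e.\ a sequence of star subdivisions. Each star subdivision is a log blowup of a monomial ideal sheaf pulled back from $\Sigma$. Locally freeness of the log structure on $X$ means the cones of $\Sigma$ are smooth, so these ideals are well defined, and the iterated blowup is projective over $X$. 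Because the stars are taken in a cone complex rather than a stack, the ideal sheaves glue globally.

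\textbf{Step 3: compose and check properties.} The coarse space of $X''$ is obtained by composing projective blowups with a finite coarse map from the root-stack part. Order the operations so that the lattice change is performed last, after all subdivisions; then projective followed by finite is projective over $X$.

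The main obstacle is Step 2. One must ensure that the star subdivisions refining $\Sigma'$ are \emph{globally} monomial over $X$, rather than merely \'etale locally. Monodromy in the Artin fan could prevent a piecewise linear function from being well defined. This is exactly where the hypothesis that the Artin fan is a cone complex, with no self-gluings, is used: I would check that each stellar centre corresponds to a coherent ideal $\mathcal{I}\subset\mathcal{O}_X$ generated by pullbacks of monomials along $X\rightarrow a^\ast\Sigma$.
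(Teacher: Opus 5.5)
Your argument is correct. Its skeleton matches the paper's: reduce to $X'=X\times_{a^\ast\Sigma}a^\ast\Sigma'$ for a global tropical alteration (the paper does this by citing a refinement lemma from \cite{logcoh}), split off the lattice change, whose coarse map is finite, and then make the subdivision part projective.

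The real difference is in that last step.
\begin{itemize}
\item \emph{The paper's route.} It uses local freeness together with the cone complex hypothesis to realise $a^\ast\Sigma$ as a Zariski open in $[\mathbb{A}^n/\mathbb{G}_m^n]$. The subdivision is then the torus quotient of an honest toric modification $Y'\to Y$. The paper applies the toric Chow lemma to a completion of that fan to get $Y''\to Y'\to Y$ with $Y''\to Y$ projective. It notes that the relatively ample bundle on $Y''$ is torus-linearised, so it descends to $a^\ast\Sigma''$. Its pullback to $X''$ is relatively ample because ampleness can be checked smooth-locally on the target.
\item \emph{Your route.} You stay inside cone complexes. You use iterated stellar subdivisions, each of which is regular and hence a log blowup along a coherent log ideal, and you glue them globally via piecewise linear functions on $\Sigma$.
\end{itemize}

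Each route has its advantage. Yours shows that the projectivity mechanism does not really need local freeness, since a piecewise linear function on any cone complex already defines a log ideal on $X$. The price is that you need De Concini--Procesi for cone complexes rather than for fans. The paper's embedding $\Sigma\hookrightarrow\mathbb{N}^n$, which uses local freeness, is exactly what lets you quote the fan version, so you can borrow it. The paper's route avoids describing log blowups and relies only on standard toric results plus descent of torus-linearised line bundles.

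Three small corrections to your Step~2:
\begin{itemize}
\item The function certifying projectivity should be linear on the cones of $\Sigma''$ and strictly convex on each cone of $\Sigma$, not linear on the cones of $\Sigma$.
\item After the first star subdivision, the blowup centres are ideals on the intermediate modification $X_k$, pulled back from $\Sigma_k$ rather than from $\Sigma$.
\item Smoothness of the cones is not what makes these ideals well defined, and weighted stars need not preserve it. Log blowups along coherent log ideals make sense on any fs log scheme, and composites of projective morphisms over the Noetherian $X$ remain projective.
\end{itemize}
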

\begin{proof}
    Since $X$ has locally free log structure and admits an Artin fan which is a cone complex, there is an integer $n$ such that the Artin fan of $X$ can be expressed as a Zariski open in the global quotient stack of $\mathbb{A}^n$ by its dense torus. By \cite[Lemma 4.4.7]{logcoh}, we can find $\hat{X}' = X\times_{\mathcal{A}_X}a^\ast \Sigma'$ a log alteration of $X$ where $q_t\colon a^\ast\Sigma'\rightarrow \mathcal{A}_X$ is an alteration.\footnote{The cited lemma is stated only for log modifications but the argument holds also for alterations.} Nothing is lost in assuming that $X' = \hat{X}'$, since we may put ourselves in this situation by replacing $X'$ with a further log alteration. Thus we can arrange that the morphism $q_t$ is obtained by passing to the global quotient stack to an alteration $q'\colon Y'\rightarrow Y$ of toric varieties.

    Every toric alteration can be factored as a root stack followed by a toric modification. By \cite[Example 8.1, Main Properties (3) and Theorem 6.6 (the latter is for uniqueness)]{AlperGoodModuli} we know that passing to the coarse space of a DM stack commutes with fibre products. Therefore Kummer logarithmic alterations are projective. We are thus free to assume that $q_t$ is a logarithmic modification. In summary we have arranged such that $q_t$ is the global quotient stack of a toric modification $q'$. 

    By \cite[Theorem 6.1.18]{CLSToric} (applied to any completion of our fan) the toric modification $q'$ admits a refinement $ Y''\xrightarrow{q''} Y'\xrightarrow{q'} Y$ such that the composition is projective, and thus $Y''$ possesses a line bundle $\mathcal{L}$ which is relatively ample for the composition. Write $\Sigma''$ for the cone complex associated to the Artin fan of $Y''$. By \cite[Section 3.4]{Fulton+2016}, any line bundle on the toric variety $Y''$ is pulled back from a line bundle on the global quotient stack of $Y''$ by its dense torus, and so $\mathcal{L}$ is pulled back from a line bundle $\mathcal{L}_t$ on $a^\ast \Sigma''$. Since the property that a given line bundle is ample may be checked smooth locally on the target \stacks{0D2P}, pulling back $\mathcal{L}_t$ to $X'' = X\times_{\mathcal{A}_X}a^\ast \Sigma''$ specifies a line bundle which is relatively ample over $X$. Since any log alteration is proper, it follows that $X''\rightarrow X$ is proper and the result is proved. 
%    
%    The morphism $q_t$ is representable in the category of DM stacks with log structure. Since the property that a given line bundle is relatively ample may be checked smooth locally in the source, see \cite{}, we are free to talk about relatively ample line bundles for the morphism $q_t$. Since ampleness can be checked smooth locally and is preserved under arbritary base change \stacks{0893}, if we can arrange for $q'$ to admit a relatively ample line bundle which descends to the quotient stack, then the proof will be complete.
%    
%    Projectivity for morphisms of DM stacks can be checked on the level of coarse spaces by \cite{}. Every toric modification admits a projective refinement by \cite{}.
\end{proof}

\begin{proof}[Proof of Proposition~\ref{prop:Projectivity}]
    In this proof we repeatedly saturate maps: when doing so we follow the procedure of \cite{molcho2019universal} (see \cite[1.3.1(2)]{logcoh} for a summary) and thus saturation of a map $Y\rightarrow S'$ proceeds by replacing $S'$ by a logarithmic alteration $S''\to S'$, and replacing $Y$ by $Y\times_{S'}S''$. We will write $\sigma'$ for the subdivision of $\sigma$ such that $S$ admits a strict map to $\sigma'$. In particular, the saturation of a locally projective morphism remains locally projective because the base change of a locally projective map remains locally projective by Lemma \ref{lem:LocProjBaseChange}. 
    We saturate, and thus use this fact, repeatedly. 
    Although this process modifies $S'$, we suppress the change 
    from the notation and continue to write $S'$ for a logarithmic alteration of $S$ 
    whose isomorphism type is repeatedly updated. In making such a modification we lose the ability to assume that $S$ is affine, but may instead assume $S$ is quasi-projective because it is projective over an affine by Lemma~\ref{lem:ProjectiveRefinement}. We similarly abuse notation and do not change notation when replacing other spaces by logarithmic alterations during saturation steps.
    In particular, it will suffice to construct $A^{[n]}_\Xi$ which is projective, but possibly not saturated, over $S'$. %FALSE: Since projectivity can be checked strict \'etale locally on the target, whenever we saturate a map it is possible to check projectivity on a strict \'etale cover of $S'$. In particular, we can assume that $S'$ is atomic with Artin cone $a^\ast \sigma'$.

    We fix notation which will be used later. Throughout this proof we will discuss quotients of CFGs by the action of a finite group. By this we always mean the stackification of the action groupoid $G \times X \rightrightarrows X$, taken in the strict étale or face topology as dictated by context.

    Since $S$ is globally charted, after saturating ${A}_{\Lambda}\to S'$, the $n$-fold (fs = non fs because our map is saturated) fibre product of ${A}_{\Lambda}$ with itself over $S'$ has Artin fan obtained as the $n$-fold product of $\mathpzc{A}_\Lambda$ over $\sigma'$. We denote this Artin fan $(\mathpzc{A}_{\Lambda})^n$. The collision type of points in $(\mathpzc{A}_{\Lambda})^n$ determines a conical (\cite[Section 1.2.1]{LogQuot}), locally 
    closed stratification of $|(\mathpzc{A}_\Lambda)^n|$. A conical locally closed stratification defines a subdivision of piecewise linear spaces $\mathscr{T}\rightarrow (\mathpzc{A}_\Lambda)^{n}$, see \cite[Proposition~1.2.5]{LogQuot}. Choose a tropical model 
    $\mathpzc{A}_\Lambda^{n} \rightarrow \mathscr{T}$ such that the morphism 
    $\mathpzc{A}_\Lambda^{n} \rightarrow  (\mathpzc{A}_\Lambda)^{n}$ is projective. This is possible: a (possibly not projective) tropical model exists by \cite[Proposition 1.2.5]{LogQuot}, and projective alterations form a co-final system among all choices of logarithmic alteration by Lemma~\ref{lem:ProjectiveRefinement}. 
    
    There is an action of $\operatorname{Sym}(n)$ on $\mathpzc{A}^n$ over $\sigma'$ by permuting terms. Without loss of generality, we may assume that $\mathpzc{A}_\Lambda^{n}\rightarrow \mathpzc{A}^n$ is equivariant for this action. Indeed, given any projective tropical model $\Sigma \to (\mathpzc{A}_\Lambda)^{n}$, take the common refinement of $w^*(\Sigma)$ 
    over $w \in \operatorname{Sym}(n)$. 
    Projectivity is preserved because the base change of a projective map remains projective. After a further saturation step, we may assume $\mathpzc{A}_\Lambda^{n}$ is saturated and projective over $S'$.

    Write $\mathpzc{A}_\Lambda^{(n)}$ for the coarse space of the CFG quotient of 
    $\mathpzc{A}_\Lambda^{n}$ by the action of $\operatorname{Sym}(n)$; this is a cone space (as defined in \cite{ModStckTropCurve}) by construction, and without loss of generality we may take it to be a cone complex (by subdividing further and using the fact that we constructed our subdivision such that collision type, and thus stabiliser group, is constant on each cone).

    The $n$-fold product $({A}_{\Lambda})^n$ of ${A}_{\Lambda} = {A}\times_{a^\ast \mathpzc{A}}a^\ast \mathpzc{A}_{\Lambda}$ with itself over $S'$ is projective over $S'$, being an iterated base change of projective morphisms. The logarithmic alteration
    $$
        {A}_{\Lambda}^n:= ({A}_{\Lambda})^n\times_{a^\ast (\mathpzc{A}_\Lambda)^{n}}a^\ast \mathpzc{A}_\Lambda^{n}   \;\rightarrow\; ({A}_{\Lambda})^n,
    $$
    is the base change of a projective morphism $\mathpzc{A}_\Lambda^{n} \to (\mathpzc{A}_\Lambda)^{n}$, and is thus projective. Since the composition of projective is projective, ${A}_\Lambda^{n}$ is projective over $S$.

    Since automorphisms act by strict morphisms, the CFG quotient of $A_\Lambda^{n}$ by the permutation action of $\operatorname{Sym}(n)$ is represented by 
    a DM stack with logarithmic structure. 
    Write ${A}_\Lambda^{(n)}$ for its coarse space, equipped with the descended 
    logarithmic structure (which descends to the stack quotient by definition, and to the coarse space because whenever the group action fixes a scheme theoretic point it also acts trivially on $M_X$). Since 
    ${A}_\Lambda^{(n)}$ is the coarse space of a finite quotient of a projective 
    DM stack, it is projective over $S'$ due to the following argument. We may assume the polarising ample is $\operatorname{Sym}(n)$ equivariant by tensoring by its pullbacks along automorphisms, and thus it descends to the quotient stack so that we can apply \stacks{0GFB}.
%This must be projective because 

    Note that ${A}_\Lambda^{(n)}$ admits a strict map to $a^\ast \mathpzc{A}_\Lambda^{(n)}$ which itself is a tropical model of $\mathpzc{A}^{[n]}$. Thus ${A}_\Lambda^{(n)}$ can be identified with a subset of $\operatorname{Sym}^n(a^\ast\mathpzc{Z}^{[n]}(\Lambda)\times_{a^\ast \mathpzc{A}}A/\mathpzc{A}^{[n]}\times_{a^\ast \sigma'}S')$. Combining this observation with Theorem \ref{thm:LogHilbProper}, the conventional Hilbert--Chow map specifies a canonical Hilbert--Chow morphism
    $$
        h \colon {A}_\Lambda^{[n]}:=A^{[n]}\times_{a^\ast \mathpzc{A}^{[n]}}a^\ast\mathpzc{A}^{(n)}_\Lambda \;\longrightarrow\; {A}_\Lambda^{(n)}.
    $$
    We already know that ${A}_\Lambda^{(n)}$ is locally projective, so will check that the above morphism is projective. By \cite[Corollaire 9.6.4]{EGA5}, 
    it is enough to exhibit a line bundle on $\overline{A}_\Lambda^{[n]}$ whose restriction to fibres over $\overline{A}_\Lambda^{(n)}$ is ample. The morphism we are required to show projectivity of is precisely the relative Hilbert--Chow morphism associated to the relative Hilbert scheme of points for the morphism $A_{\operatorname{univ}}\rightarrow \mathpzc{A}^{[n]}$. Writing $p\colon Z_n\rightarrow A^{[n]}$ for the universal subscheme and $L$ for the pullback of the ample on $A_\Lambda$ to $Z_n$, the line bundle $\operatorname{det}(p_\ast L)$ is ample on fibres of the above Hilbert Chow map, because it restricts to the Pl\"ucker bundle (with respect to any choice of ample on an alteration of $A$) on fibres of $h$.
\end{proof}

\begin{corollary}\label{corr:projectivityGenKummer}
    Let $A\rightarrow S$ be a logarithmic abelian variety over $S$ an atomic scheme with logarithmic structure. There is a locally projective, vertical, logarithmically smooth, and saturated morphism $$\operatorname{Kum}_{2n}^\Lambda(A)\rightarrow S'$$ which fits into a commutative square with two sides logarithmic alterations and the final side the morphism $\operatorname{Kum}_{2n}(A)\rightarrow S$.
\end{corollary}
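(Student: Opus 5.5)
The plan is to obtain $\operatorname{Kum}_{2n}^\Lambda(A)$ as the base change of the projective Hilbert scheme model from Proposition~\ref{prop:Projectivity} along a closed substack cut out by the summation map. Logarithmic smoothness and verticality will then be imported from Proposition~\ref{prop:ProperLogSmooth}.

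\textbf{Projective model of the Hilbert scheme.} After passing to a strict \'etale cover and shrinking, we may assume $S$ is affine and globally charted. Proposition~\ref{prop:Projectivity} then supplies a logarithmic alteration $S'\rightarrow S$ and a proper model $A^{[n+1]}_\Xi\rightarrow A^{[n+1]}$ that is saturated and locally projective over $S'$. Its tropical model $\mathpzc{A}^{(n+1)}_\Lambda\rightarrow\mathpzc{A}^{[n+1]}$ may be refined further. Following the proof of Proposition~\ref{prop:KummerSubdivision}, I would choose this refinement so that:
\begin{itemize}
\item the locus $\Sigma_{\operatorname{trop}}^{-1}(|e|)$ is a union of cones, so that its restriction defines a tropical model $\operatorname{Kum}_{2n}^\Lambda(\mathpzc{A})\hookrightarrow \mathpzc{A}^{[n+1]}(\Lambda)$ which is a sub-cone complex;
\item the map to a model of $\mathpzc{A}$ is integral.
\end{itemize}
The refinement should also remain projective over $(\mathpzc{A}_\Lambda)^{n+1}$. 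This uses Lemma~\ref{lem:ProjectiveRefinement}, the cofinality of projective subdivisions, and averaging over $\operatorname{Sym}(n+1)$ exactly as in the proof of Proposition~\ref{prop:Projectivity}. Any resulting saturation step only replaces $S'$ by a further alteration, and Lemma~\ref{lem:LocProjBaseChange} preserves local projectivity under these base changes.

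\textbf{Cutting out the Kummer locus.} Set
\[
\operatorname{Kum}_{2n}^\Lambda(A)=\operatorname{Kum}_{2n}(A)\times_{a^\ast\mathpzc{A}^{[n+1]}}a^\ast\operatorname{Kum}^\Lambda_{2n}(\mathpzc{A}).
\]
By the cartesian square in the proof of Proposition~\ref{prop:ProperLogSmooth}, this is the fibre of the summation map over the identity section of $A\times_{a^\ast\mathpzc{A}}a^\ast\sigma$. Since that identity section is a closed immersion (the model of $A$ is separated), $\operatorname{Kum}_{2n}^\Lambda(A)$ is a closed substack of $A^{[n+1]}_\Xi$. A closed substack of a locally projective DM stack is locally projective: the global quotient presentation restricts to a closed invariant subspace, and the coarse space of the closed substack is finite, hence projective, over a closed subspace of the original coarse space.

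Proposition~\ref{prop:ProperLogSmooth} gives that this morphism is vertical, proper and logarithmically smooth over the base, and base change along $S'\rightarrow S$ preserves these properties. A final saturation, using toric semistable reduction as in the proof of Proposition~\ref{prop:Projectivity}, yields saturatedness. Saturation is again a base change along an alteration of $S'$, so local projectivity and logarithmic smoothness survive. The commutative square is the one formed by $\operatorname{Kum}_{2n}^\Lambda(A)\rightarrow\operatorname{Kum}_{2n}(A)$ and $S'\rightarrow S$, and both are logarithmic alterations.

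\textbf{Main obstacle.} I expect the hardest step to be making the refinement that cuts out $\Sigma^{-1}_{\operatorname{trop}}(|e|)$ simultaneously projective, $\operatorname{Sym}(n+1)$-equivariant and saturated. I would try the order used in Proposition~\ref{prop:Projectivity}: first take the conical stratification on $(\mathpzc{A}_\Lambda)^{n+1}$ by collision type together with the preimage of $|e|$ under summation; then take a projective refinement and symmetrise it; and only at the end saturate.

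A secondary check is that the coarse-space argument and the Hilbert--Chow projectivity, via the relatively ample $\det(p_\ast L)$, still apply after restricting to the closed Kummer locus. This should follow because relative ampleness is preserved under restriction to closed substacks.
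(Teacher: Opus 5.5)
Your proposal is essentially the paper's proof. It picks a tropical model in which the Kummer locus is a sub-cone complex and passes, via the cofinality in Proposition~\ref{prop:Projectivity}, to a locally projective saturated Hilbert scheme model over $S'$; since any refinement keeps the Kummer locus a sub-cone complex, the obstacle you anticipate does not really arise. It then regards $\operatorname{Kum}^\Lambda_{2n}(A)$ as a strict closed substack and takes logarithmic smoothness from Proposition~\ref{prop:ProperLogSmooth}.

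One small correction concerns the closed-substack step. The functor $a^\ast$ sends the inclusion of a sub-cone complex to an open immersion, so your cartesian square only shows that $\operatorname{Kum}^\Lambda_{2n}(A)$ is closed in an open substack of $A^{[n+1]}_\Xi$. To finish, add that $\operatorname{Kum}^\Lambda_{2n}(A)$ is proper over $S'$ while $A^{[n+1]}_\Xi$ is separated, or simply that a proper locally closed substack of a locally projective stack is locally projective.
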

\begin{proof}
     Writing $\mathpzc{A}\rightarrow \sigma$ for the associated family of tropical abelian varieties, choose a tropical alteration $\mathpzc{A}^{[n]}_\Lambda \rightarrow \mathpzc{A}^{[n]}$ such that the subsheaf $$\operatorname{Kum}_{2n}^\Lambda(\mathpzc{A}):= \mathpzc{A}^{[n]}_\Lambda\times_{\mathpzc{A}^{[n]}} \operatorname{Kum}_{2n}(\mathpzc{A}) \hookrightarrow \mathpzc{A}^{[n]}_\Lambda$$ is the inclusion of a sub-cone complex. By the cofinality statement of Proposition \ref{prop:Projectivity} we are free to assume that $\mathpzc{A}^{[n]}_\Lambda\times_{\mathpzc{A}^{[n]}} A$ is locally projective over some logarithmic alteration $S'$ of $S$. Thus $\operatorname{Kum}_{2n}^\Lambda(A)$ is a strict closed subscheme of a locally projective, vertical, and saturated logarithmic scheme over $S'$, and is thus locally projective over $S'$. Logarithmic smoothness is Proposition~\ref{prop:ProperLogSmooth}. 
\end{proof}

\subsection{Logarithmic generalised Kummer varieties are logarithmic IHS varieties}

\begin{proposition}\label{prop:TwoFormExist}
    Let $A\rightarrow \Delta_0$ be a principally polarised logarithmic abelian surface, and write $\mathsf{pr}_i\colon A^{n+1}\rightarrow A$ for the $i^\text{th}$ projection map. 
    \begin{enumerate}[leftmargin = *]
        \item There is a unique (up to scaling) section $\eta_A$ of $\omega^2_{A/\Delta_0}$.
        \item The form $\sum_i \mathsf{pr}^\ast_i \eta_A$ on $A^{n+1}$ descends to a 2-form ${\eta}_\circ^{(n+1)}$ on the complement of the large diagonal in $A^{(n+1)}$. Identifying the complement of the large diagonal of $A^{(n+1)}$ with the complement of the Hilbert--Chow exceptional divisor in $A^{[n+1]}$, this form extends uniquely to a global section ${\eta}^{[n+1]}$ of $\omega^2_{A^{[n+1]}/\Delta_0}$.
    \end{enumerate}
   We write $\eta$ for the pullback of $\eta^{[n+1]}$ to $\operatorname{Kum}_{2n}(A)$ under the inclusion $$\operatorname{Kum}_{2n}(A)\hookrightarrow A^{[n+1]}.$$ 
    \begin{enumerate}[start = 3, leftmargin = *]
        \item The form $\eta$ is \textit{nowhere degenerate}, in that it induces an isomorphism
    $$\phi_\eta\colon \mathsf{T}_{\operatorname{Kum}_{2n}(A)/\Delta_0}^\mathsf{log}\rightarrow \omega^1_{\operatorname{Kum}_{2n}(A)/\Delta_0}\quad \quad  v \mapsto \eta(v,-).$$
    \item There is an equality, $$H^0(\operatorname{Kum}_{2n}(A),\omega^2_{\operatorname{Kum}_{2n}(A)/\Delta_0}) = \mathbb{C} \eta.$$
    \end{enumerate}
\end{proposition}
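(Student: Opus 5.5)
We work throughout on a proper model: choose $\Lambda$ as in Corollary~\ref{corr:projectivityGenKummer} so that $A_\Lambda$, $A^{[n+1]}_\Lambda$ and $\operatorname{Kum}_{2n}^\Lambda(A)$ are saturated, vertical and logarithmically smooth DM stacks over (an alteration of) $\Delta_0$. By Definition~\ref{defn:PerfectPair} and the descent lemma following it, each claim may be checked on one such model. By Proposition~\ref{prop:LogAVSmooth}, after further alteration $A$ is the central fibre of a projective, saturated, logarithmically smooth family $\widetilde A\to\Delta$ with generic fibre an abelian surface. The same then holds for $\widetilde A^{[n+1]}_\Lambda$ and $\operatorname{Kum}^\Lambda_{2n}(\widetilde A)$, by the relative versions of Theorem~\ref{thm:LogHilbProper} and Proposition~\ref{prop:ProperLogSmooth}.

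\textbf{(1).} On the semi-abelian variety $A^\circ$, translation invariance gives $\omega^1_{A/\Delta_0}|_{A^\circ}\cong\mathcal O\otimes\operatorname{Lie}(A^\circ)^\vee$. The logarithmic cotangent bundle of a group compactification of the torus directions is globally free, spanned by the invariant forms $dt/t$ together with the abelian part. Hence $\omega^1_{A/\Delta_0}\cong\mathcal O_A^{2}$, and $\omega^2\cong\mathcal O_A$ is generated by the wedge of the invariant forms. Uniqueness up to scaling follows from $H^0(A,\mathcal O)=\mathbb C$, which holds by properness and geometric connectedness.

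\textbf{(2).} The form $\sum_i\mathsf{pr}_i^\ast\eta_A$ is $\operatorname{Sym}(n+1)$-invariant, so it descends to the étale locus of $A^{n+1}\to A^{(n+1)}$. Over this locus the Hilbert--Chow map is an isomorphism. Extension across the exceptional divisor $E$ is local over $A^{(n+1)}$. Away from triple collisions this reduces, as in Beauville's argument, to the model $\operatorname{Hilb}^2$ of a smooth surface: the blow-up of the diagonal followed by a $\mathbb Z/2$ quotient. The key local fact is that $\mathcal O_A\cdot\eta_A$ is a trivialisation of $\omega^2$ near the point, so $A$ is strictly logarithmically smooth there and the classical computation applies verbatim. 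The locus of triple collisions has codimension at least $2$ in the smooth stack $A^{[n+1]}_\Lambda$, so Hartogs extension for the locally free sheaf $\omega^2$ handles the rest. Uniqueness holds because $E$ is a divisor and $\omega^2$ is torsion free.

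\textbf{(3).} The form $\eta^{[n+1]}$ gives a map $\phi\colon T^{\log}\to\omega^1$ between bundles of equal rank. Its degeneracy locus is cut out by $\det\phi\in H^0\bigl((\omega^{\rm top})^{\otimes 2}\bigr)$, so it is either empty or a divisor. Off $E$ the form is a sum of nondegenerate forms, hence nondegenerate. Along the generic point of $E$ it is nondegenerate by the local model in (2). Thus $\eta^{[n+1]}$ is nowhere degenerate. For $\operatorname{Kum}_{2n}(A)=\Sigma^{-1}(0)$, I will follow Beauville. $A$ acts on $A^{[n+1]}$ by translation, preserving $\eta^{[n+1]}$, and the map $A\times\operatorname{Kum}_{2n}(A)\to A^{[n+1]}$, $(a,Z)\mapsto a+Z$, is étale of degree $(n+1)^4$; this uses Proposition~\ref{prop:ProperLogSmooth}. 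Pulling back, $\eta^{[n+1]}$ becomes $(n+1)\,\eta_A\oplus\eta$, a product decomposition because the orbit directions are $\eta$-orthogonal to the fibres of $\Sigma$. Hence $\eta$ is nondegenerate.

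\textbf{(4).} This is the step I expect to be the main obstacle, since Beauville's Hodge-theoretic proof uses $H^{2,0}$ of a compact Kähler manifold. I will instead use the smoothing. By (2) and (3) applied to the family $\operatorname{Kum}_{2n}^\Lambda(\widetilde A)\to\Delta$, the bundle $\pi_\ast\omega^2$ is coherent and torsion free on $\Delta$, hence locally free. Its generic rank is $1$, by Beauville's theorem for the generic fibre. Logarithmic Hodge numbers of a proper, saturated, logarithmically smooth family are locally constant (Illusie--Kato--Nakayama's degeneration of the logarithmic Hodge--de Rham spectral sequence together with Kato--Nakayama comparison). Therefore
\[
h^0\bigl(\operatorname{Kum}_{2n}(A),\omega^2\bigr)=1,
\]
and this space is spanned by the nonzero form $\eta$.
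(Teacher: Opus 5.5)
Your parts (1)--(3) are a workable alternative to the paper, but part (4) has a genuine gap. On the smoothing $\pi\colon\operatorname{Kum}^\Lambda_{2n}(\widetilde A)\to\Delta$, torsion-freeness of $\pi_\ast\omega^2$ and Beauville's theorem on the generic fibre give only $h^0(\operatorname{Kum}_{2n}(A),\omega^2)\geq 1$, by upper semicontinuity. The reverse inequality is a cohomology-and-base-change statement. For it you cite local constancy of logarithmic Hodge numbers (Illusie--Kato--Nakayama together with the Kato--Nakayama comparison). Those theorems are proved for proper, saturated, logarithmically smooth morphisms of fs log schemes or log analytic spaces. The proper models $\operatorname{Kum}^\Lambda_{2n}(\widetilde A)$ are DM stacks with logarithmic structure, not log schemes. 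The paper stresses that the relevant logarithmic Hilbert schemes admit no logarithmic alteration by a scheme, so there is no reduction to the known case. This is why the paper invokes Fujisawa--Nakayama only for $A$ itself, after noting via Kajiwara--Kato--Nakayama that $\widetilde A$ has a scheme model, and never for the Kummer variety. As written, your (4) proves only the inequality.

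The paper's proof of (4) uses no Hodge theory of the Kummer variety. The Hilbert--Chow map identifies $\operatorname{Kum}_{2n}(A)$ with $K_{(2n)}(A)=\Sigma^{-1}(0)\subset A^{(n+1)}$ away from the codimension-two collision locus $W$. The map $R\colon A^n\to K_{(2n)}(A)$, $(a_1,\dots,a_n)\mapsto[(a_1,\dots,a_n,-\sum_i a_i)]$, is a $\operatorname{Sym}(n+1)$-torsor off $R^{-1}(W)$. Hartogs on the Cohen--Macaulay models (Lemma~\ref{lem:ExtendSections}) then reduces the claim to the $\operatorname{Sym}(n+1)$-invariants of $H^0(A^n,\omega^2)\cong\bigwedge^2(\operatorname{Std}\oplus\operatorname{Std})$, which form a line by Schur's lemma.

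Your part (1) already supplies exactly the needed input: $\omega^1_{A/\Delta_0}\cong\mathcal O_A^2$ and $H^0(\mathcal O_A)=\mathbb C$. For the latter, use a saturated, hence reduced, model. This is a more elementary route than the paper's appeal to Fujisawa--Nakayama.

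For (2)--(3) you follow Beauville directly, whereas the paper extends forms over the total space of a smoothing. Your route works, but two points need fixing. First, $A^{[n+1]}_\Lambda$ is not smooth: it is the reducible central fibre of a degeneration. So your Hartogs steps must use the Cohen--Macaulay property, and the local model near a double collision is the relative $\operatorname{Hilb}^2$ of the smooth strict family $A_{\operatorname{univ}}\to\operatorname{Exp}$ of Theorem~\ref{thm:LogHilbProper}. Second, $A\times\operatorname{Kum}_{2n}(A)\to A^{[n+1]}$ is the base change of $[n+1]\colon A\to A$ along $\Sigma$. When $A$ degenerates this is only Kummer logarithmically \'etale, not \'etale; that is still enough for pulling back logarithmic forms.
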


We require a preparatory lemma.

\begin{lemma}\label{lem:ExtendSections}
    Let $X\rightarrow \Delta$ be a logarithmically smooth integral morphism of DM stacks with logarithmic structure.
    \begin{enumerate}[leftmargin = *]
        \item Then both $X$ and $X_0 \cong X\times_\Delta \Delta_0$ are Cohen--Macaulay (CM).
        \item Let $\mathcal{E}$ be any vector bundle on $X_0$ (resp. $X$) and let $j\colon U\hookrightarrow X_0$ be an open subscheme of $X_0$ (resp $X$) whose complement has codimension at least two in $X_0$ (resp. $X$). Then the morphism $\mathcal{E}\rightarrow j_\ast j^\ast \mathcal{E}$ is an isomorphism. In particular, any section of $\mathcal{E}|_{U}$ extends to a section of $\mathcal{E}$.
        \item Let $Y$ be any smooth scheme, and let $\mathcal{E}$ be any vector bundle on $Y$. Let $j'\colon W\subset Y$ be an open subset whose complement has codimension at least two. Then the morphism $\mathcal{E}\rightarrow j'_\ast (j')^\ast \mathcal{E}$ is an isomorphism, and in particular every section of $\mathcal{E}|_W$ extends to a section of $\mathcal{E}$.
    \end{enumerate}
\end{lemma}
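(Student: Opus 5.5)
The plan is to deduce (1) from the local structure of log smooth morphisms, and then (2) and (3) from depth and reflexivity arguments.

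\textbf{Part (1).} Logarithmic smoothness of $X\rightarrow\Delta$ means the map $X\rightarrow \mathcal{A}_{X/\Delta}\times_{a^\ast\mathbb{N}}\Delta$ is smooth. Étale locally, $X$ is therefore smooth over a toric chart $\operatorname{Spec}\mathbb{C}[P]\times_{\mathbb{A}^1}\Delta$, where $\mathbb{N}\rightarrow P$ is integral. Since $\Delta$ is a smooth curve, this base change is étale locally a normal affine toric variety, which is Cohen--Macaulay by Hochster's theorem. Smooth morphisms preserve the CM property, so $X$ is CM. Being CM is étale local, hence passes to the DM stack.

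For $X_0$, observe that $X_0$ is cut out in $X$ by the pullback $t$ of a uniformiser of $\Delta$. The morphism is integral, and log smooth plus integral implies flat. Hence $t$ is a nonzerodivisor, and a Cartier divisor in a CM scheme is CM. Concretely, one can instead note that $\mathbb{C}[P]/(t)$ is CM: it is a quotient of a CM toric ring by a nonzerodivisor, or one may appeal to the standard fact for integral monoid maps. Either route gives that $X_0$ is CM.

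\textbf{Part (2).} Work étale locally, so that $\mathcal{E}$ is trivial and it suffices to treat $\mathcal{E}=\mathcal{O}$. For a CM (in particular $S_2$) Noetherian scheme and a closed set $Z$ of codimension at least two, the local cohomology groups $H^0_Z(\mathcal{O})$ and $H^1_Z(\mathcal{O})$ vanish, since $\operatorname{depth}\mathcal{O}_{Z}\geq\operatorname{codim}\geq 2$. The exact sequence $0\rightarrow H^0_Z\rightarrow \mathcal{O}\rightarrow j_\ast j^\ast\mathcal{O}\rightarrow H^1_Z\rightarrow 0$ then gives the isomorphism.

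One caveat concerns codimension on $X_0$: it must be measured within the equidimensional scheme $X_0$. Here $X_0$ is CM and hence locally equidimensional, so depth at a point of $Z$ equals the dimension of the local ring, which is at least $2$. The statement descends to DM stacks because pushforward commutes with étale base change.

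\textbf{Part (3).} This is the special case of the same argument with $Y$ smooth, hence CM. Equivalently, it is classical Hartogs extension for reflexive sheaves on a normal scheme.

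\textbf{Main obstacle.} The only delicate point is the CM property of $X_0$, specifically the flatness and regular-element step. I expect to handle it by invoking Kato's theorem that integral log smooth morphisms are flat, which I would cite rather than reprove.
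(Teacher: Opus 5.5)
Your proposal is correct and follows essentially the same route as the paper. The paper packages (1) as ``log smooth and integral, hence a Cohen--Macaulay (flat with CM fibres) morphism over the CM base $\Delta$'', so that $X$ is CM and $X_0$ is CM as a fibre. You unpack this via toric charts and Hochster's theorem, and then cut $X_0$ out of $X$ by the nonzerodivisor $t$; the underlying ingredients are the same. Parts (2) and (3) are the same depth-two extension argument, and your local cohomology sequence is exactly the content of the Stacks Project lemma the paper cites.
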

\begin{proof}
    Note that $\Delta$ is CM because it is the spectrum of a regular local ring. The morphism from $X$ to $\Delta$ is logarithmically smooth and integral, and is thus a CM map. It follows that $X$ is CM. 
    We know that $X_0$ is CM by the final sentence of \stacks{02JN}. Thus (1) is proved.

    To establish (2) we would like to apply \stacks{0E9I}. Cohen-Macaulay ensures that every point of codimension at least two in $X_0$ also has depth at least two, and thus $\mathcal{E}$ has depth two at every point of the codimension at least two locus $X\backslash U$. All except the final sentence is an \'etale local claim, so the scheme hypothesis does no harm. The final sentence follows by definition of pushforward.

    Claim (3) follows similarly - using the fact that any smooth scheme is CM.
\end{proof}

\begin{proof}[Proof of Proposition \ref{prop:TwoFormExist}(1)]
%    %Throughout the proof we will use the following construction. and $\widetilde{\varpi}\colon \operatorname{Kum}_{2n}(\widetilde{A})\rightarrow \Delta$ for the associated generalised Kummer variety. Whenever we work on a logarithmic alteration of any of these spaces we denote the choice by a subscript $\Xi$ and require induced map to $\Delta$ is saturated, exact, and has domain with locally free logarithmic structure. It follows that all the logarithmic alterations we consider will be smooth over $\mathbb{C}$ in the conventional sense.
%     
     We first prove claim (1). By \cite[Theorem~2.6]{FujisawaNakayama2020} the logarithmic Hodge numbers of $A$ coincide with the Hodge numbers of the conventional abelian surface which is the general fibre of $\widetilde{A}/\Delta$. Appealing to \cite[Proposition 7.27]{EGM-AV} for these conventional Hodge numbers, we deduce that $h^{0,2}_{\log}(A/\Delta_0)=1$ completing the proof.
\end{proof}
    
    In the case that $A$ is a conventional abelian variety, Beauville proves a version of (2) by constructing a two-form on an open subset of $A^{[n+1]}$ with codimension two complement, see \cite[Start of Section 6]{Beauville1983}. He then appeals to a version of Lemma~\ref{lem:ExtendSections}(3). 
    Replicating this argument in the logarithmic setting is inconvenient: in particular requiring the required bound on the codimension of a closed set. We therefore use the existence of a smoothing together with Beauville's original result in the following proof.
    
\begin{proof}[Proof of Proposition~\ref{prop:TwoFormExist} continued.]
    The first sentence of (2) is strict \'etale descent and we will now prove the rest of (2). 
    Our proof requires the following construction. 
    By Proposition~\ref{prop:LogAVSmooth} we can find a logarithmic abelian variety $\widetilde{\pi}\colon \widetilde{A}\rightarrow \Delta$ whose special fibre is $A$. We write $\widetilde{\pi}_{[n+1]}\colon \widetilde{A}^{[n+1]}\rightarrow \Delta$ for the relative Hilbert scheme.
    
    Let $U_2^{[n+1]}$ be the open subset of some logarithmic alteration $\widetilde{A}^{[n+1]}_\Xi\rightarrow \widetilde{A}^{[n+1]}$ which is disjoint from the exceptional divisor of the Hilbert--Chow map and let $U_1^{[n+1]} = \widetilde{\pi}^{-1}(\Delta \backslash\Delta_0)$. 
    The form $\eta_\circ^{[n+1]}|_{U_1^{[n+1]}\cap U_2^{[n+1]}}$ extends from $U_1^{[n+1]}\cap U_2^{[n+1]}$ to $U_1^{[n+1]}$ by the smooth case proved by Beauville in \cite{Beauville1983}. 
    By the argument of the first sentence of the previous paragraph, $\eta_\circ^{[n+1]}|_{U_1^{[n+1]}\cap U_2^{[n+1]}}$ extends to a form on $U_2^{[n+1]}$. 
    By Zariski descent the previous two sentences give us an extension $\eta_{12}$ of $\eta_\circ^{[n+1]}|_{U_2^{[n+1]}}$ to $U_1^{[n+1]}\cup U_2^{[n+1]}$. Note $\omega^2_{A^{[n]}/\Delta_0}$ is locally free because $A^{[n]}$ is log \'etale locally logarithmically smooth by Corollary~\ref{corr:logsmooth}, so without loss of generality $\omega^2_{A^{[n]_\Xi}/\Delta_0}$ is strict \'etale locally smooth. 
    
    We claim that the codimension of the complement of $U_2^{[n+1]}\cap \widetilde{\pi}_{[n]}^{-1}(0)$ is at least one within $\widetilde{\pi}_{[n]}^{-1}(0)$. Indeed, each irreducible component of $\widetilde{\pi}_{[n]}^{-1}(0)$ contains a dense open which is a conventional Hilbert scheme of points on a smooth (but not compact) surface. The complement of the locus where no two points meet is codimension one in any such Hilbert scheme. 
    
    Since $\widetilde{\pi}_{[n]}^{-1}(0)$ is itself codimension one, we deduce that the complement of $U_1^{[n+1]}\cup U_2^{[n+1]}$ is codimension two in the total space of $\widetilde{A}^{[n+1]}_\Xi$.
    It follows from Lemma~\ref{lem:ExtendSections}(2) (total space case) that $\eta_{12}$ extends to a form on $\widetilde{A}^{[n+1]}_\Xi$ and (2) is proved.
\end{proof}
The proof of (3) is a check in coordinates, parallel to Beauville's strategy.

\begin{proof}[Proof of Proposition \ref{prop:TwoFormExist}(3)]
    Choose a tropical model $\widetilde{A}^n_\Xi\rightarrow \widetilde{A}^n$ saturated over $\Delta$ such that there exists a logarithmic alteration $\operatorname{Kum}_{2n}^\Xi(\widetilde{A})\rightarrow \operatorname{Kum}_{2n}(\widetilde{A})$ for which there is a strict map $f\colon \operatorname{Kum}_{2n}^\Xi(\widetilde{A})\rightarrow \widetilde{A}^{(n)}_\Xi$. Since the tropicalisations of $\widetilde{A}^{[n+1]}$ and $\widetilde{A}^{(n+1)}$ coincide, we obtain a logarithmic alteration of $\widetilde{A}^{[n+1]}_\Xi$ of $\widetilde{A}^{[n+1]}$ from this data.  Let $U_1,U_2$ be the preimages of $U_1^{[n+1]},U_2^{[n+1]}$ respectively under $f$.
    We will show that the restriction of $\phi_\eta$ to the union of $U_1$ and $U_2$ is an isomorphism. Since hom sheaves of vector bundles are themselves hom sheaves, the result will follow from Lemma \ref{lem:ExtendSections}(2). 
    
    Restricting to $U_1$ we are in the non-logarithmic situation, and the result is true by \cite{Beauville1983}. To establish the result on $U_2$, note that checking $\phi_\eta$ is an isomorphism can be done on stalks. By \stacks{00DV} it is enough that the morphism pulls back to an isomorphism on the fibres over closed points. It will suffice to check that the pullback to the special fibre $U_2^{0} = \operatorname{Kum}_{2n}^\Xi(A/\Delta_0)\cap U_2$ is an isomorphism.

    Fix a point $p$ in $U_2^0$ whose image in $A^{(n)}$ is $\overline{p}$. Let $q$ in $A^{n+1}$ be a preimage of $\overline{p}$ under the quotient map. Note by construction $q$ is disjoint from the large diagonal. The two form on $A$ is dual to coordinates $x,y$ on the stalk of $T_A$ at the point $\mathsf{pr}_i(q)$. Pulling back along $\mathsf{pr}_i$ for all $i$ yields a natural set of coordinates $x_i,y_i$ for $A^{n+1}$ about $q$. Since $q$ lies in $U_2$, the quotient map is \'etale and these coordinates descend to a set of coordinates $x_i,y_i$ about ${p}$.

     The form $\eta^{(n+1)}$ can now be expressed in the dual basis as $dx_i\wedge dy_i$. We highlight that $d x_i$ and $dy_i$ need not be of the form $df$ for $f$ in $\mathcal{O}_X$ or $M_X$, and will in general be a sum of such terms. Write $B_2$ for the matrix representing $dx \wedge dy$ with respect to the basis $(x,y)$. The restriction of $\eta$ to the tangent bundle of $\operatorname{Kum}_{2n}(A)$ is represented by the restriction of the matrix $\mathsf{Diag}(B_2,....,B_2)$ to the subspace where all even and all odd coordinates add to zero. By elementary linear algebra, the matrix is non-degenerate, and thus neither is $\eta$.
\end{proof}

%If Conjecture \ref{conj:annoying} was a theorem then property (4) would follow immediately by extending $\Delta$ to a curve of finite type over $\mathbb{C}$ and appealing to the following consequence of our conjecture. The logarithmic Hodge numbers would be constant in a logarithmically smooth family, thus forcing $h^{0,2}(\operatorname{Kum}_{2n}(A),\mathbb{C})$ to be constant in families. Since the conjecture is not known, we work around it.

\begin{proof}[Proof of Proposition~\ref{prop:TwoFormExist}(4)]
     We have shown that $C\eta \leq H^0(\operatorname{Kum}_{2n}(A),\omega^2_{\operatorname{Kum}_{2n}(A)/\Delta_0})$ with $\eta$ non-degenerate. It remains to show that $h^0(\operatorname{Kum}_{2n}(A),\omega^2_{\operatorname{Kum}_{2n}(A)/\Delta_0})=1$.
     %any class $\kappa \in H^0(\operatorname{Kum}_{2n}(A),\omega^2_{\operatorname{Kum}_{2n}(A)/\Delta_0})$ can be written as $\lambda \eta$ for some complex number $\lambda$. 
     Indeed, writing $K_{(2n)}(A)$ for the preimage of $0$ under the summation map $A^{(n+1)}\rightarrow A$, there is a restricted Hilbert--Chow morphism $\operatorname{HC}\colon \operatorname{Kum}_{2n}(A)\rightarrow K_{(2n)}(A)$. For every choice of model $K_{(2n)}^\Xi\rightarrow K_{(2n)}$, the morphism of logarithmic alterations $$\operatorname{HC}_{\Xi}\colon \operatorname{Kum}_{2n}^\Xi(A)\rightarrow K_{(2n)}^\Xi(A)$$ is an isomorphism on dense open subsets whose complement in $K_{(2n)}^\Xi$ we denote $W$. Note $W$ is the locus where two or more points meet, and thus has codimension two.  
     Thus we have an injection $$H^0(\operatorname{Kum}_{2n}(A),\omega^2_{\operatorname{Kum}_{2n}(A)/\Delta_0})\hookrightarrow H^0(\operatorname{K}_{(2n)}^\Xi(A)\backslash W,\omega^2_{\operatorname{K}_{(2n)}(A)/\Delta_0})$$ and it will suffice to show that $h^0(K_{(2n)}^\Xi\backslash W,\omega^2_{\operatorname{K}_{(2n)}(A)/\Delta_0})=1$ for some choice of $\Xi$.

     To complete the proof of (4) consider the morphism 
     $$R\colon A^{n}\rightarrow \operatorname{K}_{(2n)}(A)$$ 
     $$(a_1,...,a_n)\mapsto \left[\left(a_1,...,a_n,-\sum_{i=1}^n a_i\right)\right].$$ 
     Note $\operatorname{Sym}(n+1)$ acts on the domain via the embedding $A^n\rightarrow A^{n+1}$ which induces $R$. We can choose $\operatorname{Sym}(n+1)$ equivariant logarithmic alterations with domains which are DM stacks with logarithmic structure, $$A^{n+1}_{\Xi}\rightarrow A^{n+1}\text{ and }A^{n}_{\Xi}\rightarrow A^n\times_{A^{n+1}}A^{n+1}_\Xi.$$ We can assume that the latter is saturated because universal semistable reducition preserves $\operatorname{Sym}(n+1)$ symmetry. The image of the latter map lies within $K^\Xi_{(2n)}$ for the induced choice of $\Xi$. Thus we obtain a morphism of DM stacks with logarithmic structure $$R_\Xi\colon A^{n}_\Xi \rightarrow \operatorname{K}_{(2n)}^\Xi.$$
     The map $R_\Xi$ is a $\operatorname{Sym}(n+1)$ torsor away from $$R_\Xi^{-1}(W) = \left \{ (a_1,...,a_n)|a_i=a_j\text{ some } i,j\right\} \cup \left\{(a_1,...,a_n) \text{ such that }\exists j\text{ with } a_j = -\sum_{i=1}^n a_i\right\}.$$

    We deduce that the restriction of $R_\Xi$ to the complement of $R_\Xi^{-1}(W)$ is the global quotient stack of $A^n_\Xi \backslash R_\Xi^{-1}(W)$ by the free action of the symmetric group on $n+1$ letters. 
    The map $R|_{A^n_\Xi \backslash R_\Xi^{-1}(W)}$ is strict and \'etale so $\omega^2_{\operatorname{K}_{(2n)}(A)/\Delta_0}$ pulls back along the quotient map to $\omega^2_{A^n/\Delta_0}|_{A^n_\Xi \backslash R_\Xi^{-1}(W)}$. 
    The proof of (4) will be complete when we show the final equality in the chain $$h^0(\operatorname{K}_{(2n)}(A),\omega^2_{\operatorname{K}_{(2n)}(A)/\Delta_0}) = \operatorname{dim}(H^0(A^n_\Xi \backslash R_\Xi^{-1}(W),\omega^2_{A^n/\Delta_0}|_{A^n_\Xi \backslash R_\Xi^{-1}(W)})^{\operatorname{Sym}(n+1)}) = 1$$
    
    Since $W$ is codimension at least two and $R_\Xi$ is finite, the inclusion of $A^n_\Xi \backslash R_\Xi^{-1}(W)\hookrightarrow A^n_\Xi$ is an isomorphism outside a locus of codimension at least two. 
    Lemma \ref{lem:ExtendSections}(2) applies and we deduce that pullback induces an isomorphism which respects the symmetric group action, $$H^0(A^n\backslash R_\Xi^{-1}(W),\omega^2_{A^n/\Delta_0}|_{A^n_\Xi \backslash R_\Xi^{-1}(W)}) \cong H^0(A^n,\omega^2_{A^n/\Delta_0}).$$ It remains to show that the vector bundle $\omega^2_{A^n/\Delta_0}$ on $A^n$ admits a single dimension of symmetric group invariant sections. 
    
    Letting $\widetilde{A}/\Delta$ be a logarithmic abelian variety with central fibre $A/\Delta_0$, note that by the main results of \cite{KajiwaraKatoNakayamaPartIV} $\widetilde{A}$ admits a logarithmic alteration which is a scheme with logarithmic structure. It follows from \cite{FujisawaNakayama2020} that the logarithmic Hodge theory of $A$ specifies a variation of logarithmic Hodge structure on $\Delta$. Thus the logarithmic Hodge numbers of any logarithmic abelian surface coincide with the logarithmic Hodge numbers of a smooth abelian surface, and we deduce that $h^0(A,\omega^1) =2.$ Choose generators $\langle x^a,x^b\rangle \cong H^0(A,\omega^1)$ for this group. Since $\omega^1$ is additive over direct products, we obtain a basis $$H^0(\omega^1,A^n) = \langle x_1^a = p_1^\ast x^a,x_1^b = p_1^\ast x^b,..., x_n^b = p_n^\ast x^b \rangle.$$
    Note that by \cite[Chapter VI, Theorem 1.1(3)]{FaltingsChai1990} $\omega^1$ is the trivial rank two vector bundle: thus taking wedge power commutes with global sections. 
    
    Thus $H^0(\omega^2,A^n)$ has dimension $2n$ choose $2$. We now describe its structure as a $\operatorname{Sym}(n+1)$ representation. Following \cite[Proposition 3.12]{FultonHarris}, the canonical representation of $\operatorname{Sym}(n+1)$ on a vector space with basis indexed by $1,...,n+1$ splits as the direct sum $\operatorname{trivial}\oplus \operatorname{Std}$ where $\operatorname{Std}$ is irreducible by \cite[Proposition 3.12]{FultonHarris}. Certainly there is an isomorphism of representations $$H^0(\omega^2,A^n)\cong \left(\operatorname{Std}\oplus \operatorname{Std}\right) \bigwedge \left(\operatorname{Std}\oplus \operatorname{Std}\right)\cong \left(\operatorname{Std}\wedge\operatorname{Std}\right)^{\oplus 2}\oplus \operatorname{Std}\otimes \operatorname{Std}.$$ By \cite[Proposition 3.12]{FultonHarris} $\operatorname{Std}\wedge\operatorname{Std}$ is irreducible but not the trivial irreducible representation. By \cite[Comment end of section 3.2]{FultonHarris} $\operatorname{Std}\otimes \operatorname{Std}$ has at least a one dimensional space of invariants, and by \cite[Proposition 2.1]{FultonHarris} $\operatorname{Std}$ being a real irreducible representation is self dual so in fact the $G$-invariant part of $\operatorname{Std}\otimes \operatorname{Std}\cong \operatorname{Std}\otimes \operatorname{Std}^\vee \cong \operatorname{Hom}(\operatorname{Std},\operatorname{Std})$ has dimension at most one by Schur's lemma completing the proof. 
%     Moreover we know that $h^0(\operatorname{Kum}_{2n}(A),\omega^2_{\operatorname{Kum}_{2n}(A)/\Delta_0})=1$ because this dimension is constant in log smooth families and earlier in this proof we noted that $A$ admits a smoothing with the total space logarithmically smooth, from which it follows that $\operatorname{Kum}_{2n}(A)$ admits a smoothing with the total space logarithmically smooth.
\end{proof}

\begin{lemma}\label{lem:LogSimplyConnected}
    Let $A\rightarrow \Delta_0$ be a principally polarised logarithmic abelian surface. There are isomorphisms
        $$\pi_1(\mathsf{KN}_0(\operatorname{Kum}_{2n}(A/\Delta_0)))\cong \operatorname{ker}(\pi_1^{\operatorname{log}}(\operatorname{Kum}_{2n}(A/\Delta_0))\rightarrow \pi_1^{\operatorname{log}}(\Delta_0))\cong 0.$$
\end{lemma}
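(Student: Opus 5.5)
The plan is to deduce the statement from Beauville's computation for a smooth generalised Kummer variety, transported to the central fibre through a logarithmically smooth degeneration and the Serre fibration of Lemma~\ref{lem:Serre Fibration}. The first isomorphism is the definition of $\pi_1(\mathsf{KN}_0(-))$ in Definition~\ref{defn:logFG}, together with Proposition~\ref{prop:HomotopyGroups}, which shows the groups do not depend on the chosen logarithmic alteration. All the content is therefore in the vanishing of the kernel.

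First, I would use Proposition~\ref{prop:LogAVSmooth} to choose a polarised smoothing $\widetilde{A}\rightarrow\Delta$ of $A$. Shrinking $\Delta$ to an algebraic curve if necessary, I would use Corollary~\ref{corr:projectivityGenKummer} to choose a proper model $\operatorname{Kum}_{2n}^\Lambda(\widetilde{A})\rightarrow\Delta'$, where $\Delta'\rightarrow\Delta$ is a logarithmic alteration; this model is saturated, vertical, proper and logarithmically smooth. After a further alteration I would also arrange locally free logarithmic structure and a sheafy DM total space. Lemma~\ref{lem:Serre Fibration} then gives a Serre fibration $|\mathsf{KN}(\operatorname{Kum}_{2n}^\Lambda(\widetilde{A}))|\rightarrow|\mathsf{KN}(\Delta')|$. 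Its fibre over a point above $\Delta'\setminus\Delta_0$ is $\operatorname{Kum}_{2n}(\widetilde{A}_t)^{an}$, which is simply connected by Beauville. Its fibre over a point of $\tau^{-1}(0)\cong S^1$ is the fibre of $|\mathsf{KN}(\operatorname{Kum}_{2n}^\Lambda(A))|\rightarrow|\mathsf{KN}(\Delta_0)|=S^1$, because Kato--Nakayama spaces commute with the fibre product along the strict inclusion of the central fibre \cite[Proposition~3.1]{cailotto2004katonakayama}. Since $|\mathsf{KN}(\Delta')|$ is path connected (a disc with the origin replaced by a circle), all fibres are homotopy equivalent, so the fibre $F$ of the central fibration $|\mathsf{KN}(\operatorname{Kum}_{2n}^\Lambda(A))|\rightarrow S^1$ is simply connected.

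Second, I would apply the long exact sequence of the central fibration. The segment
$$\pi_1(F)\rightarrow\pi_1(\mathsf{KN}(\operatorname{Kum}_{2n}(A)))\rightarrow\pi_1(S^1)$$
is exact, and $\pi_1(F)=0$, so the map $\pi_1(\mathsf{KN}(\operatorname{Kum}_{2n}(A)))\rightarrow\pi_1(\mathsf{KN}(\Delta_0))$ is injective. This is exactly the vanishing of the kernel. Connectedness of $F$ also follows, which gives the geometric connectedness needed later in Definition~\ref{defn:LogIHS}.

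The hard step is justifying that the general fibre of the smoothing is a genuine Beauville Kummer variety and that the central fibre of the model is the chosen alteration of $\operatorname{Kum}_{2n}(A)$. For the central fibre I would use the base-change compatibility of the logarithmic Hilbert scheme and the summation map, which holds by construction. For the general fibre I would use that the logarithmic structure is trivial away from $0$, so that the logarithmic Hilbert scheme there is the ordinary relative Hilbert scheme.

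A second subtlety is that the base alteration $\Delta'\rightarrow\Delta$ may be a Kummer cover. In that case the central fibre lives over a possibly different standard logarithmic point. I would handle this by noting that a root stack of $\Delta_0$ has the same Kato--Nakayama homotopy type as $\Delta_0$, as in the proof of Proposition~\ref{prop:HomotopyGroups}, so the kernel is unchanged.
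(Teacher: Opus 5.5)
Your proposal is correct and follows essentially the same route as the paper: a smoothing from Proposition~\ref{prop:LogAVSmooth}, Beauville's simple connectivity of the general fibre, the Serre fibration of Lemma~\ref{lem:Serre Fibration}, and a final descent along the Kummer alteration of the base. The difference is only in detail. You make explicit the fibre comparison and the exactness of $\pi_1(F)\rightarrow\pi_1(\mathsf{KN}(\operatorname{Kum}_{2n}(A)))\rightarrow\pi_1(S^1)$, both of which the paper leaves implicit. For the last step, the paper observes that the Kummer alteration only replaces $\operatorname{KN}(\Delta_0)$ by a finite cover, which is injective on $\pi_1$, whereas you observe that the root stack does not change it; these give the same conclusion.
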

\begin{proof}
    The first isomorphism is a definition. For the second isomorphism note that by Proposition~\ref{prop:LogAVSmooth} and Proposition~\ref{prop:ProperLogSmooth} there is a logarithmically smooth morphism $$\operatorname{Kum}_{2n}(\widetilde{A})\rightarrow \Delta$$ which has central fibre which is a logarithmic alteration of the generalised Kummer variety in the lemma's statement. The general fibre has trivial fundamental group by \cite{Beauville1983}, and thus by Lemma~\ref{lem:Serre Fibration} we deduce that after replacing $A/\Delta_0$ by a kummer logarithmic alteration pulled back from $\Delta$ we have $\pi_1(\mathsf{KN}_0(\operatorname{Kum}_{2n}(A/\Delta_0)))\cong 0$. But the statement descends along a kummer logarithmic alteration pulled back from $\Delta$: such an alteration only replaces $\operatorname{KN}(\Delta_0)\cong S^1$ by a finite covering space. Pulling back along a covering space does not affect homotopy groups of the fibres.
\end{proof}
\begin{corollary}\label{corr:familyIHS}
    Let $A\rightarrow S$ be a principally polarised family of logarithmic abelian surfaces. The associated generalised Kummer variety $\pi\colon \operatorname{Kum}_{2n}(A)\rightarrow S$ is a family of logarithmic IHS varieties.
\end{corollary}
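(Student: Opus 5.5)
The plan is to check the conditions of Definition~\ref{defn:LogIHS} one at a time, after first establishing the global hypotheses (properness and local logarithmic smoothness of $\pi$). For the global part, we work strict \'etale locally on $S$. This lets us assume $S$ is atomic and globally charted with Artin cone $a^\ast\sigma$. We then choose a tropical model $\operatorname{Kum}_{2n}^\Lambda(\mathpzc{A})$ as in Proposition~\ref{prop:KummerSubdivision}, possibly refined through Corollary~\ref{corr:projectivityGenKummer} so that it sits over a logarithmic alteration $S'\to S$.
\begin{itemize}
\item Proposition~\ref{prop:ProperLogSmooth} shows that $\operatorname{Kum}^\Lambda_{2n}(A)\to S'$ is proper, vertical and logarithmically smooth.
\item This is exactly what Definition~\ref{defn:loclogsmooth} asks for, so $\pi$ is locally logarithmically smooth.
\item Properness of $\pi$ follows because $\operatorname{Kum}^\Lambda_{2n}(A)\to\operatorname{Kum}_{2n}(A)$ is a logarithmic alteration (hence proper and surjective) and the composite to $S$ is proper.
\end{itemize}

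Next we reduce the fibrewise conditions to results over a standard logarithmic point. Let $p\colon\Delta_0\to S$ be a standard logarithmic point. Formation of the logarithmic Hilbert scheme, the Hilbert--Chow and summation maps of Lemma~\ref{lem:LogHCGpMap}, and the fibre product in Definition~\ref{defn:logKummer} all commute with base change. Hence $\mathcal{X}_p\cong\operatorname{Kum}_{2n}(A_p/\Delta_0)$, where $A_p=A\times_S\Delta_0$ is a principally polarised logarithmic abelian surface over $\Delta_0$. The three conditions are then handled as follows.
\begin{enumerate}
\item[(1)] Triviality of the kernel on logarithmic fundamental groups is Lemma~\ref{lem:LogSimplyConnected}.
\item[(2)] The symplectic condition is Proposition~\ref{prop:TwoFormExist}(3) together with Proposition~\ref{prop:TwoFormExist}(4).
\item[(3)] Projectivity up to logarithmic alteration follows from Corollary~\ref{corr:projectivityGenKummer} applied over $\Delta_0$: a locally projective model over a point is a DM stack with projective coarse space, and it carries an ample line bundle.
\end{enumerate}

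The remaining step, and the main obstacle, is geometric connectedness of the fibres in (1). I would deduce this from the smoothing used in Lemma~\ref{lem:LogSimplyConnected}. By Proposition~\ref{prop:LogAVSmooth}, $A_p$ is the central fibre of some $\widetilde A\to\Delta$. The generic fibre of $\operatorname{Kum}_{2n}(\widetilde A)\to\Delta$ is then a classical generalised Kummer variety, which is connected. Lemma~\ref{lem:Serre Fibration} shows that the Kato--Nakayama spaces form a fibre bundle over $\mathsf{KN}(\Delta)$, so the fibre over a point of $\mathsf{KN}(\Delta_0)$ is connected. This fibre surjects onto the underlying space of the central fibre, because $\tau$ is surjective. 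Hence the central fibre, and therefore $\mathcal{X}_p$ up to logarithmic alteration, is connected. Logarithmic alterations are surjective, so connectedness passes back to $\mathcal{X}_p$ itself.

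The point needing care is that the DM-stack and sheafy subtleties of Lemma/Definition~\ref{lemdefn:KNSpace} enter here. For this reason I would phrase the argument entirely in terms of a fixed saturated semistable model $\operatorname{Kum}^\Lambda_{2n}(\widetilde A)\to\Delta$, so that Lemma~\ref{lem:Serre Fibration} applies directly.
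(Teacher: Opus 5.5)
Your proposal is correct and is essentially the paper's proof. The paper verifies Definition~\ref{defn:LogIHS} by citing Proposition~\ref{prop:ProperLogSmooth}, Lemma~\ref{lem:LogSimplyConnected}, Proposition~\ref{prop:TwoFormExist} and Corollary~\ref{corr:projectivityGenKummer} exactly as you do. Your additional steps fill in points the paper leaves implicit: the base change $\mathcal{X}_p\cong\operatorname{Kum}_{2n}(A_p/\Delta_0)$, and geometric connectedness of fibres via the smoothing and Lemma~\ref{lem:Serre Fibration}. In the connectedness step, surjectivity of a single fibre of $\mathsf{KN}(\mathcal{X}_0)\to S^1$ onto $\mathcal{X}_0$ needs divisibility of $S^1$, not just surjectivity of $\tau$; alternatively, note that $\mathsf{KN}(\mathcal{X}_0)$ is itself connected and then apply $\tau$.
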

\begin{proof}
    We verify that the hypotheses of Definition~\ref{defn:LogIHS} hold. Proposition~\ref{prop:ProperLogSmooth} ensures that $\pi$ is logarithmically smooth and proper over $S$. Relatively simply connected is Lemma~\ref{lem:LogSimplyConnected}. The logarithmic symplectic structure is recorded in Proposition~\ref{prop:TwoFormExist}. Corollary \ref{corr:projectivityGenKummer} ensures after base change to any standard log point there is a projective logarithmic modification.
\end{proof}

\section{Irreducible components of logarithmic generalised Kummer varieties}

The purpose of this section is to study the following situation.

\begin{situation}\label{sitch}
Set $A\rightarrow \Delta$ be a family of logarithmic abelian surfaces, and write $\mathcal{X}' = \operatorname{Kum}_{2n}({A})/\Delta$ for the associated logarithmic generalised Kummer variety. Choosing a tropical model with domain a free cone complex and saturating the resulting morphism \cite{AbramovichKaru,molcho2019universal} specifies a saturated map $\pi \colon \mathcal{X}/\Delta$ of DM stacks with logarithmic structure. There is a corresponding saturated morphism of cone complexes $\operatorname{Kum}_{2n}^\Lambda(\mathpzc{A}) \rightarrow \rho$ where $\rho$ is a ray, $a^\ast \rho$ is the Artin fan of $\Delta$, and $\operatorname{Kum}_{2n}^\Lambda(\mathpzc{A}) \rightarrow \operatorname{Kum}_{2n}(\mathpzc{A})$ is a tropical model. 
\end{situation}

%We will describe irreducible components of $\mathcal{X}_0= \mathcal{X}\times_\Delta \Delta_0$. %Subject to Conjecture \ref{conj:annoying} we also compute the associated logarithmic variation of Hodge structure.

This section will describe the irreducible components of $\mathcal{X}_0 = \mathcal{X}\times_{\Delta}\Delta_0$ for $\mathcal{X}$ as defined in Situation~\ref{sitch}.

\subsection{Developing language to describe components of the special fibre}\label{sec:LogHilbCompactSAV} Recall that the \textit{logarithmic multiplicative group $\mathbb{G}_m^\text{log}$} is defined through its functor of points on the category of schemes with logarithmic structure. This functor of points sends a logarithmic scheme $T$ to $\Gamma(T, M_T^\text{gp}).$ The inclusion of $\mathbb{G}_m(T)$ into $M_T^\text{gp}$ induces a monomorphism $\mathbb{G}_m\hookrightarrow \mathbb{G}_m^\text{log}$.

\begin{lemma}
    The cokernel of the map $\mathbb{G}_m\hookrightarrow \mathbb{G}_m^\text{log}$ is the sheaf $\mathbb{G}_m^\text{trop}:= a^\ast \mathbb{Z}$.
\end{lemma}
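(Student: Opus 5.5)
The plan is to compute the quotient sheaf $M^{\mathrm{gp}}_T/\mathcal{O}_T^\times = \overline{M}^{\mathrm{gp}}_T$ on strict étale neighbourhoods, and then identify the sheaf $T\mapsto \Gamma(T,\overline{M}_T^{\mathrm{gp}})$ with $a^\ast\mathbb{Z}$. The first step is the exact sequence $0\to\mathcal{O}_T^\times\to M_T^{\mathrm{gp}}\to \overline{M}_T^{\mathrm{gp}}\to 0$ of sheaves on the strict étale site of $T$. Exactness on the left holds because $\alpha$ restricted to $\alpha^{-1}(\mathcal{O}_T^\times)$ is an isomorphism. For integral monoids, the map $\mathcal{O}^\times\to M^{\mathrm{gp}}$ is injective, and its cokernel is $(M/\mathcal{O}^\times)^{\mathrm{gp}}$.

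Taking global sections and sheafifying in the strict étale topology on $\textbf{LogSch}$, the cokernel of $\mathbb{G}_m\to\mathbb{G}_m^{\log}$ is the sheafification of $T\mapsto\Gamma(T,M_T^{\mathrm{gp}})/\Gamma(T,\mathcal{O}_T^\times)$. This agrees with $T\mapsto \Gamma(T,\overline{M}_T^{\mathrm{gp}})$ after sheafification. The reason is that $H^1$ obstructions vanish strict étale locally: every section of $\overline{M}^{\mathrm{gp}}$ lifts locally to $M^{\mathrm{gp}}$.

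It remains to identify $T\mapsto\Gamma(T,\overline{M}^{\mathrm{gp}}_T)$ with $a^\ast\mathbb{Z}$, where $\mathbb{Z}$ is the sheaf $\sigma\mapsto \operatorname{Hom}(\sigma^{\mathrm{gp}},\mathbb{Z})$, i.e. integral linear functions on $\sigma$. By the description of $a^\ast$ recalled in Section~\ref{sec:logschemesandtrop}, it suffices to work with Noetherian atomic $T$ with Artin cone $a^\ast\sigma$. In that case $a^\ast\mathbb{Z}(T)$ is the strict étale sheafification of $T\mapsto \mathbb{Z}(\sigma)$. Here $\sigma$ is dual to the stalk $P=\overline{M}_{T,t}$ at the closed stratum, so $\mathbb{Z}(\sigma)=\operatorname{Hom}(\operatorname{Hom}(P,\mathbb{N})^{\mathrm{gp}},\mathbb{Z})=P^{\mathrm{gp}}$, using that $P$ is fine, saturated and sharp, and hence toric.

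On the other side, $\Gamma(T,\overline{M}^{\mathrm{gp}}_T)\cong P^{\mathrm{gp}}$ for atomic $T$. The closed stratum is connected and $\overline{M}$ is constant there. A global chart $P\to M_T$ identifies sections on $T$ with $P^{\mathrm{gp}}$, since the generization maps $P^{\mathrm{gp}}\to\overline{M}^{\mathrm{gp}}_{t'}$ are face quotients and every point specializes to the closed stratum.

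Finally, I would check that both identifications are compatible with pullback along maps of atomic schemes $T'\to T$, which induce $\tau\to\sigma$ and hence $P^{\mathrm{gp}}\to P'^{\mathrm{gp}}$. The sheaves then agree on a basis of the strict étale topology and therefore coincide.

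The main obstacle is making the passage from atomic charts to the sheaf $a^\ast\mathbb{Z}$ rigorous, specifically the claim that global sections of $\overline{M}^{\mathrm{gp}}$ on an atomic scheme equal the stalk at the closed stratum. I would argue this via the chart and the equality $\mathcal{A}_T=a^\ast\sigma$, noting that $\overline{M}^{\mathrm{gp}}_T$ is pulled back from the Artin cone, where the statement is immediate.
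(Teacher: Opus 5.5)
Your proposal is correct and follows the paper's proof. Both arguments identify the cokernel with the sheafification of $T\mapsto\Gamma(T,\overline{M}^{\mathrm{gp}}_T)$, evaluate $a^\ast\mathbb{Z}$ on atomic $T$ as $\operatorname{Hom}(\operatorname{Hom}(P,\mathbb{N}),\mathbb{Z})$, and conclude by duality for toric monoids; your version additionally writes out the exact sequence and the identification $\Gamma(T,\overline{M}^{\mathrm{gp}}_T)\cong P^{\mathrm{gp}}$, which the paper leaves implicit.

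In that last step, however, the claim ``every point specializes to the closed stratum'' is false. For example, the closed point $(1,0)$ of $\mathbb{A}^2$ with toric log structure does not specialize to the origin, yet it has nontrivial $\overline{M}$. The step is easily repaired. Work with atomic neighbourhoods satisfying $\Gamma(T,\overline{M}_T)\cong P$, and take $q$ in the interior of $P$. For $N\gg0$, any $s\in\Gamma(T,\overline{M}^{\mathrm{gp}}_T)$ then satisfies $s+N\bar q\in\Gamma(T,\overline{M}_T)$, and so $s$ lies in the image of $P^{\mathrm{gp}}$.
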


\begin{proof}
    The cokernel has functor of points obtained by sheafifying the presheaf which sends a test scheme $T$ to $\Gamma(T,\overline{M}_T^{\operatorname{gp}})$. On the other hand, the functor of points of $a^\ast \mathbb{Z}$ is obtained by sheafifying the presheaf which sends a test scheme $T$ to the collection of maps from $\operatorname{Hom}(\overline{M}_T,\mathbb{N})$ to $\mathbb{Z}$. There is an isomorphism of monoids $$\overline{M}_T^{\operatorname{gp}}\rightarrow \operatorname{Hom}(\operatorname{Hom}(\overline{M}_T,\mathbb{N}),\mathbb{Z}).$$ To see that this map is an isomorphism note first that by duality for fine and saturated torsion-free monoids $\overline{M}_T\cong \operatorname{Hom}(\operatorname{Hom}(\overline{M}_T,\mathbb{N}),\mathbb{N})$; and note second that for any toric monoid $A$ there is an equality $\operatorname{Hom}(A,\mathbb{Z}) = \operatorname{Hom}(A,\mathbb{N})^{\operatorname{gp}}$.
\end{proof}

We expect, but do not prove, that our next definition coincides with the pushout construction in \cite[Section 2.1]{KajiwaraKatoNakayama2008a}. The definition presented here is a logarithmic analogue of the main object of study in \cite{MR4774210}. 

\begin{definition}\label{defn:Alog}
    Let $A^\circ$ be a semi-abelian variety, and write $\mathbb{G}_m^s$ for the 
    \textit{torus part of $A^{\circ}$}, i.e. the kernel of the map from $A^\circ$ to its Albanese. We write 
    $$A^\text{log} = [A^\circ\times (\mathbb{G}^\text{log}_m)^s/\mathbb{G}_m^s]$$
    where $\mathbb{G}_m^s$ acts diagonally as the subtorus of the second factor, and as the torus part of $A^\circ$.
\end{definition}

Note that the toric variety bundles of \cite{MR4774210} are logarithmic alterations of $A^{\operatorname{log}}$. Being the quotient of an abelian group by a subgroup, $A^\text{log}$ carries a group structure. 

\begin{definition}
    The \textit{logarithmic Hilbert scheme} $(A^{\log})^{[m]}$ of $m$ points on $A^\text{log}$ is defined through its functor of points on the category of fine and saturated logarithmic schemes, which assigns to a test scheme $S$ the set of equivalence classes of pairs $$(Y'\rightarrow S\times A^\mathsf{log}, Z'\hookrightarrow Y'),$$ 
    where:
    \begin{enumerate}[leftmargin = *]
        \item The scheme with logarithmic structure $Y'$ is a logarithmic alteration of $S\times A^\mathsf{log}$ which is saturated over $S$. 
        \item The morphism $Z'\hookrightarrow Y'$ is a strict closed immersion of Hilbert polynomial $m$ which renders $Z'$ logarithmically flat over $S$.
        \item The equivalence relation identifies $(Y',Z'\hookrightarrow Y')$ with $(Y'',Z''\hookrightarrow Y'')$ if the pullbacks of $Z'$ and $Z''$ to $Y''\times_{S\times A^\mathsf{log}} Y'$ coincide.
    \end{enumerate}
\end{definition}

Our next theorem refers to tropical Hilbert schemes of points on $\mathbb{Z}^s$. We thus continue notation from Section~\ref{sec:DescribeVertexStar}.  

\begin{theorem}\label{thm:ComponentlogHilb}
    There is a strict morphism represented by DM stacks with logarithmic structure $$(A^{\operatorname{log}})^{[m]}\rightarrow a^\ast (\mathbb{Z}^s)^{[m]}.$$
    Moreover for any choice of tropical model $$ (\mathbb{Z}^s)^{[m]}(\Lambda)\rightarrow (\mathbb{Z}^s)^{[m]},$$ there is a canonical open immersion into a (conventional) relative Hilbert scheme, $$(A^{\operatorname{log}})^{[m]}(\Lambda) \rightarrow {\operatorname{Hilb}}^m(A^{\operatorname{log}} \times_{a^\ast\mathbb{Z}^s}a^\ast \mathpzc{Z}_{m}(\mathbb{Z}^{s})(\Lambda) /a^\ast (\mathbb{Z}^s)^{[m]}(\Lambda)).$$
\end{theorem}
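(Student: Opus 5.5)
The plan is to transport the construction of Section~\ref{sec:LogHilbPts} and the proof of Theorem~\ref{thm:LogHilbProper} from a logarithmic abelian surface $A\rightarrow S$ to the logarithmic group $A^{\operatorname{log}}$. The role of $\mathpzc{A}$ is now played by the tropicalisation of $A^{\operatorname{log}}$.

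The first step is to identify that tropicalisation. By Definition~\ref{defn:Alog} and the lemma identifying the cokernel of $\mathbb{G}_m\hookrightarrow\mathbb{G}_m^{\log}$ with $a^\ast\mathbb{Z}$, quotienting $(\mathbb{G}_m^{\log})^s$ by $\mathbb{G}_m^s$ gives a strict morphism $\varpi\colon A^{\operatorname{log}}\rightarrow (\mathbb{G}_m^{\operatorname{trop}})^s = a^\ast\mathbb{Z}^s$. Its fibres over atomic points are $A^\circ$-torsors. For any tropical model $\Sigma\rightarrow\mathbb{Z}^s$, the base change $A^{\operatorname{log}}\times_{a^\ast\mathbb{Z}^s}a^\ast\Sigma$ is the toric-variety bundle $[A^\circ\times X_\Sigma/\mathbb{G}_m^s]$. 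This is a DM stack with logarithmic structure, logarithmically smooth over $\operatorname{Spec}\mathbb{C}$, and it is the analogue of Proposition~\ref{prop:Summarise Log AV}(1)--(3).

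The second step constructs the tropical support map $(A^{\operatorname{log}})^{[m]}\rightarrow a^\ast(\mathbb{Z}^s)^{[m]}$. I will copy the construction of $\varpi_n$ in Section~\ref{sec:LogHilbPts} verbatim. Take an atomic $T$ with Artin cone $a^\ast\tau$ and a tropically framed subscheme $Z\hookrightarrow Y'$. Logarithmic flatness forces $Z$ over the closed point of $T$ into the interiors of the components $W_v$, which are indexed by vertices $v$ of the relevant subdivision of $\mathbb{Z}^s\times\tau$. The vertices meeting $Z$, weighted by length, give an $m$-weighted zero-dimensional piecewise linear subdivision in the sense of Definition~\ref{defn:TropHilbRn}, hence a map $\tau\rightarrow(\mathbb{Z}^s)^{[m]}$. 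Independence of the framing, and descent to the sheafification, follow by the argument of \cite[Proposition 6.3.1]{LogQuot}.

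The third step is strictness, representability and the open immersion, which I will prove together following \cite[Section 6.2]{LogQuot}. Fix a tropical model $(\mathbb{Z}^s)^{[m]}(\Lambda)$, with universal subdivision $\mathpzc{Z}_m(\mathbb{Z}^s)(\Lambda)$ as in Proposition~\ref{prop:ModelOfAn}. A point of the fibre product $(A^{\operatorname{log}})^{[m]}(\Lambda)$ over $T$ is then a subscheme of the pulled-back universal expansion $A^{\operatorname{log}}\times_{a^\ast\mathbb{Z}^s}a^\ast\mathpzc{Z}_m(\mathbb{Z}^s)(\Lambda)$. This subscheme must be flat over $T\times a^\ast(\mathbb{Z}^s)^{[m]}(\Lambda)$, lie in the strict locus, and have tropical support exactly the given one. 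Flatness plus lying in the strict locus is an open condition in the ordinary relative Hilbert scheme, since the non-strict locus is closed and Hilbert schemes of points are proper. I expect ``tropical support equals the given point'' to be open as well: it says every component indexed by a positive-weight stratum meets $Z$ with the prescribed length, and lengths on a disjoint union of opens are locally constant in flat families. This yields the open immersion. Algebraicity and DM-ness then follow from \stacks{0D01} and the finite-stabiliser argument of \cite[Theorem~8.1.4]{LogQuot}. Representability by DM stacks of the map to $a^\ast(\mathbb{Z}^s)^{[m]}$ follows because every $T$-point of $a^\ast(\mathbb{Z}^s)^{[m]}$ factors, locally, through some model $\Lambda$.

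I expect the main obstacle to be checking that the equivalence relation over different framings is compatible with fixing the tropical support. In other words, the open substack must not depend on the choice of $\Lambda$-compatible expansion, so that the fibre product is genuinely a sheaf rather than a quotient. I would first try to show that the framing is canonically the minimal expansion determined by the tropical support, which is the ``stack of tropical supports is the moduli of minimal expansions'' principle quoted in Section~\ref{sec:relatedlit}. The fact that $\mathbb{Z}^s$, unlike $\mathpzc{A}$, has no bounded-monodromy or quotient subtleties should make this simpler than in Theorem~\ref{thm:LogHilbProper}.
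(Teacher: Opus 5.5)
Your overall route is the paper's. Its proof is the single sentence that the arguments of \cite{LogQuot} apply as written, and your three steps unpack exactly that: tropicalise $A^{\operatorname{log}}$ via $A^{\operatorname{log}}\to a^\ast\mathbb{Z}^s$, build the support map as for $\varpi_n$, then realise the fibre product over a model $\Lambda$ as an open in a relative Hilbert scheme.

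\textbf{The gap.} One step fails as written: the deduction that $(A^{\operatorname{log}})^{[m]}\to a^\ast(\mathbb{Z}^s)^{[m]}$ is represented by DM stacks. You justify it by claiming that every $T$-point of $a^\ast(\mathbb{Z}^s)^{[m]}$ factors locally through $a^\ast\Lambda$ for some tropical model $\Lambda$. Strict \'etale locally this is false.
\begin{itemize}
\item Take $s=m=1$, so $(\mathbb{Z})^{[1]}=\mathbb{Z}$. Let $T=\mathbb{A}^2_{u,v}$, with its toric logarithmic structure, map to $a^\ast\mathbb{Z}=\mathbb{G}_m^{\operatorname{trop}}$ by the class of $u/v$.
\item The corresponding cone map $\mathbb{N}^2\to\mathbb{Z}$, $(a,b)\mapsto a-b$, is a legitimate point in the sense of Definition~\ref{defn:TropHilbRn}. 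But its image is all of $\mathbb{R}$, so it lies in no strictly convex cone of any tropical model.
\item Shrinking $T$ around the origin does not change its Artin cone. Geometrically, $[u:v]$ extends to $\mathbb{P}^1$ only after blowing up the origin.
\item Placing all $m$ points on this section gives the same failure for every $m$ and $s$.
\end{itemize}
Log \'etale locally the factorisation does exist, but then it proves nothing. The map $T\times_{a^\ast(\mathbb{Z}^s)^{[m]}}a^\ast\Lambda\to T$ is a log modification, not a strict \'etale cover, so algebraicity of the fibre over it does not descend to the fibre over $T$. What your open-immersion argument actually yields is only representability of $(A^{\operatorname{log}})^{[m]}(\Lambda)\to a^\ast\Lambda$ for each $\Lambda$.

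\textbf{The fix.} Zero-dimensional subschemes need no subdivision of the base, so compute the fibre over an arbitrary atomic $T$ with cone $\tau$ directly.
\begin{itemize}
\item The $\tau$-point gives sections $s_1,\dots,s_m$. A logarithmically flat subscheme lies in the strict locus, which only sees the cones $s_i(\tau)\cong\tau$.
\item Concretely, each pullback $P_i=A^{\operatorname{log}}\times_{a^\ast\mathbb{Z}^s,\,s_i}T$ is an $A^\circ$-torsor over $T$, because $A^{\operatorname{log}}\to a^\ast\mathbb{Z}^s$ is strict with torsor fibres.
\item $P_i$ and $P_j$ are glued over the open of $T$ lying over the face where $s_i=s_j$; by flatness this is a single face.
\item The resulting union is an algebraic space, strict and smooth over $T$. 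The fibre of $(A^{\operatorname{log}})^{[m]}$ over $T$ is cut out, inside its relative Hilbert functor of length $m$ subschemes, by the open condition that each piece carries its prescribed weight.
\end{itemize}
This is also why the statement can use $\mathpzc{Z}_m(\mathbb{Z}^s)(\Lambda)$ rather than a full expansion.

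A related minor point: these pieces overlap over smaller faces, so over a base of positive rank your appeal to ``lengths on a disjoint union of opens'' is not accurate. Argue instead by lower semicontinuity of the length of the universal subscheme on each open piece, together with the total length being $m$.
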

\begin{proof}
    The arguments of \cite{LogQuot} apply as written to this more general situation. 
\end{proof}

Let $\textbf{n} = (n_1,...,n_k)$ be a tuple of positive integers summing to $n+1$. As in Lemma \ref{lem:LogHCGpMap}, the group structure induces for each $j$ a morphism $(A^{\operatorname{log}})^{[n_j]}\rightarrow A^{\operatorname{log}},$ and thus a morphism 
$$m\colon \prod_{i=1}^k (A^{\operatorname{log}})^{[n_i]}\rightarrow A^{\operatorname{log}}.$$
\begin{definition}
    Writing $\iota_0$ for the inclusion of the identity into the group $A^\circ$ (which we identify with a subgroup of $A^{\log}$), we define two further variants of the logarithmic Kummer variety: $$
\begin{tikzcd}
\operatorname{Kum}_{\mathbf{n}(v)}(A^\circ)
  \arrow[r]
  \arrow[d]
&
\displaystyle \prod_{n_i \in \mathbf{n}(v)} (A^\circ)^{[n_i]}
  \arrow[d,"m"]
\\
\operatorname{Spec}(\mathbb{C})
  \arrow[r,"\iota_0"]
&
A^\circ
\end{tikzcd}
\quad \quad
\begin{tikzcd}
\operatorname{Kum}_{\mathbf{n}(v)}(A^{\log})
  \arrow[r]
  \arrow[d]
&
\displaystyle \prod_{n_i \in \mathbf{n}(v)} (A^{\log})^{[n_i]}
  \arrow[d,"m"]
\\
\operatorname{Spec}(\mathbb{C})
  \arrow[r,"\iota_0"]
&
A^{\log}
\end{tikzcd}.
$$
\end{definition}

\begin{definition}
    Let $\Sigma$ be a cone complex and let $\kappa$ be a cone in $\Sigma$. Consider a morphism of algebraic stacks $$X\rightarrow a^\ast \Sigma.$$ Note that there is an open immersion of the stack quotient of the toric variety associated to $\kappa$ by its dense torus into $a^\ast \Sigma$. Write $W_\kappa$ for the image of the closed torus orbit under the stack quotient followed by open immersion. The \textit{stratum of $X$ associated to $\kappa$} is $X\times_{a^\ast \Sigma}W_\kappa$.
\end{definition}

\begin{proposition}
    \label{prop:KummerStrataIrreducible}Let $$\operatorname{Kum}^\Gamma_{\textbf{n}(v)}(\mathbb{Z}^s)\rightarrow \operatorname{Kum}_{\textbf{n}(v)}(\mathbb{Z}^s)$$ be any tropical model. Define $$\operatorname{Kum}^{\Gamma}_{\textbf{n}(v)}(A^{\operatorname{log}}) = \operatorname{Kum}^\Gamma_{\textbf{n}(v)}(\mathbb{Z}^s)\times_{ \operatorname{Kum}_{\textbf{n}(v)}(\mathbb{Z}^s)} \operatorname{Kum}_{\textbf{n}(v)}(A^{\operatorname{log}}).$$
    \begin{enumerate}[leftmargin =*]
        \item The stack $\operatorname{Kum}_{\mathbf{n}(v)}(A^\circ)$ is represented by an irreducible smooth scheme of dimension $2n$.
        \item The canonical inclusion $$\operatorname{Kum}_{\mathbf{n}(v)}(A^\circ)\hookrightarrow \operatorname{Kum}^{\Gamma}_{\textbf{n}(v)}(A^{\operatorname{log}})$$ is the inclusion of a dense open into a logarithmically smooth DM stack with logarithmic structure.
        \item For any cone $\kappa$ in the cone complex $\operatorname{Kum}^\Gamma_{\textbf{n}(v)}(\mathbb{Z}^s)$, the stratum of $\operatorname{Kum}^{\Gamma}_{\textbf{n}(v)}(A^{\operatorname{log}})$ associated to $\kappa$ is irreducible.
    \end{enumerate}
\end{proposition}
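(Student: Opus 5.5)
For (1) I would adapt Beauville's argument directly to the semi-abelian group $A^\circ$. Write $\mathbf{n}(v)=(n_1,\dots,n_k)$ and consider
$$m\colon \prod_i (A^\circ)^{[n_i]}\rightarrow A^\circ .$$
Each $(A^\circ)^{[n_i]}$ is a smooth irreducible quasi-projective scheme of dimension $2n_i$ by Fogarty \cite{Fogarty1968}, since $A^\circ$ is a smooth surface. The group $A^\circ$ acts on the product by simultaneous translation. Translation of each point by $a$ changes $m$ by $(n+1)a$, and multiplication by $n+1$ on $A^\circ$ is étale and surjective. So there is a cartesian square
$$A^\circ\times \operatorname{Kum}_{\mathbf{n}(v)}(A^\circ)\rightarrow \prod_i (A^\circ)^{[n_i]}$$
over the multiplication by $n+1$ map, exactly as in \cite[Section 7]{Beauville1983}. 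This square exhibits the product as an étale quotient of $A^\circ\times\operatorname{Kum}_{\mathbf{n}(v)}(A^\circ)$. Hence $m$ is smooth and $\operatorname{Kum}_{\mathbf{n}(v)}(A^\circ)$ is smooth of dimension $2(n+1)-2=2n$. It is a scheme because it is a closed subscheme of a product of Hilbert schemes of a quasi-projective surface.

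Irreducibility needs more care. I would show that the fibre $m^{-1}(0)$ is connected, and hence irreducible since it is smooth. Let $\Sigma'$ be the restricted summation $\prod\operatorname{Sym}^{n_i}(A^\circ)\to A^\circ$, and restrict to the open dense locus of distinct points. There the fibre is the quotient of
$$\{(a_j)\in (A^\circ)^{n+1} : \textstyle\sum a_j=0\}\cong (A^\circ)^{n},$$
which is connected. Density of the distinct-point locus inside the fibre would follow from the fact that the punctual part has dimension less than $2n$, which is the usual Briançon-type count. Alternatively, it follows from flatness of $m$ together with irreducibility of the source.

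For (2), I would first identify $\operatorname{Kum}_{\mathbf{n}(v)}(A^{\log})$ and its model $\operatorname{Kum}^\Gamma_{\mathbf{n}(v)}(A^{\log})$ through Theorem~\ref{thm:ComponentlogHilb}. This realises $\prod_i (A^{\log})^{[n_i]}(\Gamma)$ as an open in a relative Hilbert scheme of a family that is smooth and strict over the Artin fan. Then I would repeat the proofs of Corollary~\ref{corr:logsmooth} and Proposition~\ref{prop:ProperLogSmooth} verbatim, with $A^{\log}$ in place of $A$. This gives logarithmic smoothness of $\operatorname{Kum}^\Gamma_{\mathbf{n}(v)}(A^{\log})$ over $\operatorname{Spec}\mathbb{C}$. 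The fibrewise smoothness input is again Beauville's argument for semi-abelian varieties, now applied on each stratum. Openness of $\operatorname{Kum}_{\mathbf{n}(v)}(A^\circ)$ comes from being the preimage of the open cone $0\in a^\ast\operatorname{Kum}_{\mathbf{n}(v)}(\mathbb{Z}^s)$, which corresponds to the locus where all points lie in $A^\circ$ with no expansion. Density follows because a logarithmically smooth stack has its trivial-log locus dense.

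For (3), I would describe the stratum attached to $\kappa$ concretely. A point of the relative interior of $\kappa$ records positions in $\mathbb{Z}^s$, up to the tropical sum constraint, of clusters of the $n+1$ points. The stratum then parametrises, for each vertex $u$ of the expansion dual to $\kappa$, subschemes in the open torus-bundle part $A^\circ$ of the corresponding expansion component, modulo the torus rescalings of the expansion, subject to the sum condition. I would then produce a smooth surjection onto the stratum from a product of $\operatorname{Kum}$-type spaces and Hilbert schemes of the form $\prod_u\prod_i (A^\circ)^{[m_{u,i}]}$, cut out by a single summation condition and quotiented by a connected torus. Irreducibility would follow from the argument of (1) applied with the new tuple of multiplicities.

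The main obstacle is (3). One must check that the sum condition on a stratum involves only the $A^\circ$-component of the total sum, since the $\mathbb{G}_m^{\mathrm{trop}}$-part is fixed by $\kappa$. If the relevant stabiliser in $\mathbb{G}_m^s$ were disconnected, the fibre over $0$ could split into several components. I would first try to rule this out using saturation of the tropical model together with connectedness of the torus acting on expansions.
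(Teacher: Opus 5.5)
Your treatment of (2) is the paper's. Your strategy for (3) is also the paper's: surject onto the stratum from a $\operatorname{Kum}$-space of the refined collision type $\mathbf{n}_\kappa(v)$ and invoke (1).

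\textbf{Part (1).} The genuine difference is in the irreducibility argument. The paper argues topologically. The restricted Hilbert--Chow map $\operatorname{Kum}_{\mathbf{n}(v)}(A^\circ)\to K_{\mathbf{n}(v)}(A^\circ)$ is closed and surjective with connected fibres. Its target is connected, being the image of $(A^\circ)^n$ under $(a_i)\mapsto(a_1,\dots,a_n,-\sum a_i)$. So the source is connected, and smooth plus connected gives irreducible.

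You instead prove that the irreducible distinct-point locus is dense. This works, and Briançon is not needed. Let $Z\subset\prod_i(A^\circ)^{[n_i]}$ be the non-distinct locus. Irreducibility of the product (Fogarty) gives $\dim Z\le 2n+1$. Since $Z$ is translation invariant, its preimage under the \'etale cover $A^\circ\times\operatorname{Kum}_{\mathbf{n}(v)}(A^\circ)\to\prod_i(A^\circ)^{[n_i]}$ is $A^\circ\times(Z\cap\operatorname{Kum}_{\mathbf{n}(v)}(A^\circ))$. Hence $\dim(Z\cap\operatorname{Kum}_{\mathbf{n}(v)}(A^\circ))\le 2n-1$, so it contains no component of the pure $2n$-dimensional fibre.

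Your ``alternatively, flatness of $m$'' is not enough on its own. Flatness does not make a dense open dense in a given special fibre. It is the translation homogeneity, i.e.\ the \'etale square, that transports the generic statement to the fibre over the identity.

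Your route yields the slightly stronger fact that the locus of $n+1$ distinct points is dense. The paper's needs only connectedness of the Hilbert--Chow fibres.

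\textbf{Part (3).} The obstacle you flag does not arise. Saturation plays no role; it is not even available, since the statement is for any tropical model.

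The stratum lies over $W_\kappa\cong BT_\kappa$. Base change along the atlas $\operatorname{Spec}\mathbb{C}\to BT_\kappa$, that is, to the log point with chart $\kappa^\vee\oplus\mathbb{C}^\times$. Use the chart lifts $(x_j,1)$ of the cluster positions $x_j\colon\kappa^{\mathrm{gp}}\to\mathbb{Z}^s$ to trivialise the interiors of the relevant expansion components as copies of $A^\circ$. With $w_j$ the multiplicity at $x_j$, the weighted sum of the lifts is $(\sum_j w_jx_j,1)$. Because $\kappa$ maps to $\operatorname{Kum}_{\mathbf{n}(v)}(\mathbb{Z}^s)$, the tropical sum $\sum_j w_jx_j$ vanishes identically as a linear map on $\kappa^{\mathrm{gp}}$. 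So this lift is the identity, and equivalently $T_\kappa$ acts trivially on the $A^\circ$-valued sum.

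The sum condition is therefore literally one equation in $A^\circ$, and the base change is exactly $\operatorname{Kum}_{\mathbf{n}_\kappa(v)}(A^\circ)$. It maps smoothly and surjectively onto the stratum. Hence the stratum is irreducible as the continuous image of a space that is irreducible by (1). No stabiliser, connected or not, can disconnect it, since any quotient of a connected space is connected.
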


\begin{proof}
    We first prove (1). 
    $(A^\circ)^{[n_i]}$ is a conventional Hilbert scheme of points on a smooth surface and is thus a smooth scheme. The morphism $m$ is smooth by the argument of \cite[Second paragraph of Section 7]{Beauville1983} - which applies to semi-abelian varieties just as well as to abelian varieties. Since smoothness is preserved under base change and since also the category of schemes is closed under fibre products, $\operatorname{Kum}_{\mathbf{n}(v)}(A^\circ)$ is a smooth scheme. 
    A connected DM stack which is smooth over a field is irreducible by \cite[Lemma 4.5]{shi2026gcompanionsalgebraicstacksapplications}. 
    It remains to argue that $\operatorname{Kum}_{\mathbf{n}(v)}(A^\circ)$ is connected. 
    Indeed, let $$K_{(\textbf{n}(v))}(A^\circ) = \left(\prod_i\operatorname{Sym}^{n_i}(A^\circ)\right)\times_{A^\circ} e$$ where the product is over the product map and the inclusion $e\rightarrow A^\circ$ of the identity.
    The Hilbert--Chow map restricts to define a universally closed morphism $\operatorname{Kum}_{\mathbf{n}(v)}(A^\circ)\rightarrow K_{\mathbf{n}(v)}(A^\circ)$, and this morphism is surjective. Since fibres of the Hilbert--Chow map are connected, by \stacks{0377} it suffices to show that $K_{\mathbf{n}(v)}$ is connected. 
    But $(A^\circ)^n$ is connected, and there is a surjective map $(A^\circ)^n\rightarrow K_{\mathbf{n}(v)}$ sending $(a_1,...,a_n)$ to $(a_1,...,a_n, -\sum_ia_i)$. 
    Since the image of a connected set is connected, the result is proved.

    We now prove (2). Note that $\operatorname{Kum}^{\Gamma}_{\textbf{n}(v)}(A^{\operatorname{log}})$ is a DM stack with logarithmic structure by argument of Proposition \ref{prop:ProperLogSmooth} - i.e. it is an open subset in the conventional relative Hilbert scheme of Theorem \ref{thm:ComponentlogHilb}. Logarithmic smoothness is also similar to Proposition \ref{prop:ProperLogSmooth}. Logarithmic smoothness ensures that the map from $\operatorname{Kum}^{\Gamma}_{\textbf{n}(v)}(A^{\operatorname{log}})$ to $a^\ast\operatorname{Kum}^{\Gamma}_{\textbf{n}(v)}(\mathbb{Z}^s)$ is smooth, and thus flat of finite presentation. A flat morphism of finite presentation is open, and thus the preimage of any dense set in $a^\ast\operatorname{Kum}^{\Gamma}_{\textbf{n}(v)}(\mathbb{Z}^s)$ is dense in $\operatorname{Kum}^{\Gamma}_{\textbf{n}(v)}(A^{\operatorname{log}})$. But $\operatorname{Kum}_{\mathbf{n}(v)}(A^\circ)$ is the preimage of the dense set $\operatorname{Spec}(\mathbb{C})\subset a^\ast\operatorname{Kum}^{\Gamma}_{\textbf{n}(v)}(\mathbb{Z}^s)$. Thus $\operatorname{Kum}_{\mathbf{n}(v)}(A^\circ)$ is dense in $\operatorname{Kum}^{\Gamma}_{\textbf{n}(v)}(A^{\operatorname{log}})$.

    Finally we prove (3). By (2) we know that $\operatorname{Kum}^{\Gamma}_{\textbf{n}(v)}(A^{\operatorname{log}})$ is logarithmically smooth. Therefore the stratum of $\operatorname{Kum}^{\Gamma}_{\textbf{n}(v)}(A^{\operatorname{log}})$ associated to $\kappa$, being a fibre over the (smooth) map to the Artin fan, is smooth. Again by \cite[Lemma 4.5]{shi2026gcompanionsalgebraicstacksapplications}, it will suffice to check that the stratum associated to $\kappa$ is connected. 
    
    Writing $\textbf{n}_\kappa(v)$ for the collision type specified by any point in the interior of $\kappa$, the stratum associated to $\kappa$ is the stack quotient of $\operatorname{Kum}_{\textbf{n}_\kappa(v)}(A^\circ)$ by the action of a group. But we already argued that schemes of the form $\operatorname{Kum}_{\textbf{n}_\kappa(v)}(A^\circ)$ are connected, and the image of a connected set is connected. The result is proved.
\end{proof}

%\begin{lemma}
%There is a strict map $$\operatorname{Kum}_{\mathbf{n}(v)}(A^{\log})\rightarrow a^\ast\left(0\times_{\mathbb{Z}^s}\prod_{n_i\in \textbf{n}}(\mathbb{Z}^s)^{[n_i]}\right)=:a^\ast\operatorname{Kum}_{\textbf{n}(v)}(\mathbb{Z}^s).$$
%\end{lemma}
%\begin{proof}
%    
%\end{proof}
\subsection{Irreducible components of the generalised Kummer locus}

We continue notation from Situation \ref{sitch}, and further declare $A^\circ$ for the semi-abelian part of the logarithmic abelian variety $A\times_\Delta \Delta_0$.
The tropical model $\operatorname{Kum}_{2n}^\Lambda(\mathpzc{A}) \rightarrow \operatorname{Kum}_{2n}(\mathpzc{A})$ specifies a polyhedral complex structure $B_\Lambda$ upon the integral affine manifold $B_K^{2n}(\mathpzc{A})$ which was associated to $\mathpzc{A}(\varphi)$ in Section \ref{sec:TropGenKummerIAMSing}.

The toric dictionary ensures that irreducible components of the relative Artin fan $$E:=a^\ast \operatorname{Kum}_{2n}^\Lambda(\mathpzc{A}) \times_{a^\ast \rho} \Delta_0 $$ biject with vertices of $B_\Lambda$. Write $Y_v$ for the irreducible component associated to vertex $v$. 

\begin{definition}
For $v$ any vertex of $B_\Lambda$ let $\mathcal{X}_v^0$ be the fibre product, 
$$
\begin{tikzcd}
\mathcal{X}_v^0 \arrow[r] \arrow[d] & Y_v \arrow[d, hook] \\
\mathcal{X}_0 \arrow[r] & E.
\end{tikzcd}
$$
\end{definition}
The stratification of $\operatorname{Sym}^{n+1}(B(\mathpzc{A}))$ by collision type assigns to $v$ an unordered tuple $\textbf{n}(v) = (n_1,...,n_k)$ of positive integers which sum to $n+1$.
To state our next theorem we adopt the following notation: for $W$ any logarithmic scheme we will denote the underlying scheme without logarithmic structure of $W$ by $\underline{W}$.

\begin{theorem}\label{prop:ExplicitDescription} We establish the following statements.
\begin{enumerate}[leftmargin = *]
    \item The morphism $$\underline{\mathcal{X}_v^0}\hookrightarrow \underline{\mathcal{X}_0}$$ is the inclusion of an irreducible component. 
    \item For some choice of tropical model 
    $\operatorname{Kum}^\Gamma_{\textbf{n}(v)}(\mathbb{Z}^s)\rightarrow \operatorname{Kum}_{\textbf{n}(v)}(\mathbb{Z}^s)$ there is an isomorphism in the category of algebraic stacks between $\underline{\mathcal{X}_v^0}$ and $\underline{\operatorname{Kum}_{\textbf{n}(v)}^\Gamma(A^{\operatorname{log}})}$.
    \item There is an isomorphism in the category of algebraic stacks between $\underline{\operatorname{Kum}_{\textbf{n}(v)}(A^\circ)}$ and the interior\footnote{That is, the open subset disjoint from all other components.} of $\underline{\mathcal{X}_v^0}$. 
\end{enumerate}
\end{theorem}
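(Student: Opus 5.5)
The plan is to transport the local tropical description of Lemma~\ref{lem:TropicalByComponent} and Lemma~\ref{lem:UnivOverComponent} to the algebraic side, exploiting that every space in sight is strict over its tropicalisation.

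\textbf{Step 1: $\mathcal{X}_v^0$ is an irreducible component (claim (1)).} Because $\pi$ is saturated and logarithmically smooth, $\mathcal{X}_0\rightarrow E$ is smooth, so $\mathcal{X}_v^0$ is the preimage of the irreducible component $Y_v$. Consequently the underlying stacks of the $\mathcal{X}_v^0$ cover $\underline{\mathcal{X}_0}$, and each has pure dimension $2n$. Irreducibility is deduced from Step 2 together with Proposition~\ref{prop:KummerStrataIrreducible}(1),(2), which give that $\operatorname{Kum}^\Gamma_{\textbf{n}(v)}(A^{\log})$ has the irreducible dense open $\operatorname{Kum}_{\textbf{n}(v)}(A^\circ)$.

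\textbf{Step 2: identify the star neighbourhood (claim (2)).} Let $\operatorname{Star}(v)\subset \operatorname{Kum}^\Lambda_{2n}(\mathpzc{A})$ be the open star of the ray $\rho_v$. Since $\mathcal{X}\rightarrow a^\ast\operatorname{Kum}_{2n}^\Lambda(\mathpzc{A})$ is strict and smooth, the component $\mathcal{X}_v^0$ is the closed stratum of the open $\mathcal{X}\times_{a^\ast\operatorname{Kum}^\Lambda_{2n}(\mathpzc{A})} a^\ast \operatorname{Star}(v)$, and it therefore carries the logarithmic structure whose tropicalisation is $\operatorname{Star}_{\rho_v}$. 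By Lemma~\ref{lem:TropicalByComponent}, this is $\operatorname{Kum}^\Gamma_{\textbf{n}(v)}(\mathbb{Z}^s)$.

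It remains to produce a morphism $\operatorname{Kum}^\Gamma_{\textbf{n}(v)}(A^{\log})\rightarrow \mathcal{X}_v^0$ lifting the isomorphism of stars and then to check that it is an isomorphism. On the algebraic side, the restriction of $A_0$ to the locus lying over the open star at each point $x_i$ of $\mathpzc{A}$ determined by $v$ is identified with a neighbourhood in $A^{\log}$ translated by a lift of $x_i$. This works because $A_0$ is an $A^\circ$-bundle over $a^\ast\mathpzc{A}$, and over the open star at $x_i$ that bundle is pulled back along the lift $w$ of the proof of Lemma~\ref{lem:TropicalByComponent}. That lemma's proof supplies the dashed arrow, which sends $\prod_i (\mathbb{Z}^s)^{[n_i]}\rightarrow \mathpzc{A}^{[n+1]}$ via $w$ followed by translation by $x_i$ on the no-new-collision subfunctor. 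The algebraic counterpart sends tuples of subschemes $(Z_1,\dots,Z_k)$ on expansions of $A^{\log}$ to $\bigsqcup_i (Z_i+\tilde x_i)$. Lemma~\ref{lem:UnivOverComponent} ensures that the universal expansions agree, namely that the pullback of $\mathpzc{Z}^{\textbf{n}(v)}$ is $\bigsqcup_j\Theta_j$. Combining this with the open immersions into conventional relative Hilbert schemes (Theorem~\ref{thm:LogHilbProper} and Theorem~\ref{thm:ComponentlogHilb}) turns the map into an isomorphism of Hilbert schemes of points on isomorphic expansions. Passing to fibres of the summation maps (Definition~\ref{defn:logKummer}) then yields the isomorphism of claim (2) on underlying stacks.

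\textbf{Step 3: the interior (claim (3)).} The interior of $\mathcal{X}_v^0$ is the preimage of the open torus orbit of $Y_v$, which corresponds to the ray $\rho_v$ alone. Under the isomorphism of Step 2 this open torus orbit corresponds to the vertex $\textbf{0}$, whose preimage is $\operatorname{Kum}_{\textbf{n}(v)}(A^\circ)$, as in the proof of Proposition~\ref{prop:KummerStrataIrreducible}(2). This gives claim (3).

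The main obstacle is Step 2, specifically showing that the locally defined map is well defined. Two issues must be controlled. First, the translation by $\tilde x_i$ requires the points $x_i$ to come from honest sections, and it is exactly saturation that guarantees this. Second, on the no-new-collision locus, interactions with the lattice quotient and between the different $n_i$-clusters have to be excluded. I would handle both by checking the construction on atomic $T$-points and descending, as in \cite[Proposition 6.3.1]{LogQuot}.
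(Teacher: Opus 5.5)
Your architecture matches the paper's. You deduce (1) and (3) from (2) as the paper does, and you prove (2) through the star of $\rho_v$, Lemma~\ref{lem:UnivOverComponent}, and the open immersions into conventional relative Hilbert schemes. The genuine gap is the last sentence of Step~2, ``passing to fibres of the summation maps''. Your map $(Z_1,\dots,Z_k)\mapsto\bigsqcup_i(Z_i+\tilde x_i)$ intertwines the summation map on $A^{[n+1]}$ with $m$ only up to translation by $c=\sum_i n_i\tilde x_i$. So for arbitrary lifts $\tilde x_i$, the fibre over the identity on the $A$-side corresponds to $m^{-1}(-c)$, not to $\operatorname{Kum}_{\mathbf{n}(v)}(A^{\log})=m^{-1}(0)$. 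The missing idea is to normalise the lifts. Because $\sum_i n_i x_i=0$ in $\mathpzc{A}$ (this is what the fibre product with $a^\ast\operatorname{Kum}^\Gamma_{\mathbf{n}(v)}(\mathbb{Z}^s)$ records), $c$ lies in $A^\circ$. Since $A^\circ$ is divisible, you may change $\tilde x_1$ by an element of $A^\circ$ to make $c$ the identity. You must then still show the two summation maps agree on the whole Hilbert scheme of the expansion, including boundary strata where ``translate by $\tilde x_i$'' is not an available formula. The paper does this by passing to the symmetric-product level and checking equality on the dense open $\prod_i(A^\circ)^{[n_i]}$, where the map really is pushforward along $k$ translated open immersions $A^\circ\hookrightarrow A$. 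It then extends the equality by irreducibility and separatedness (\stacks{01RH}).

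The first half of Step~2 also needs repair. Taken literally, ``$\mathcal{X}_v^0$ carries the logarithmic structure whose tropicalisation is $\operatorname{Star}_{\rho_v}$'' is false for the structure inherited from $\mathcal{X}_0$. That structure has rank one at the generic point of $\mathcal{X}_v^0$, whereas the vertex $\mathbf{0}$ of the star corresponds to trivial structure. What you actually need is an isomorphism of underlying stacks $\underline{Y_v}\cong\underline{a^\ast\operatorname{Kum}^\Gamma_{\mathbf{n}(v)}(\mathbb{Z}^s)}$, which is Lemma~\ref{lem:StarAsComponent}. This is where saturation is genuinely used. On a chart, $\underline{Y_v}$ is $[D_{v,\sigma}/T_{\ker h}]$ with $h\colon\sigma^{\mathrm{gp}}\to\mathbb{Z}$, while the star gives $[D_{v,\sigma}/D^\circ_{v,\sigma}]$. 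These agree because $h(u_v)=1$ makes $\ker h\to\sigma^{\mathrm{gp}}/\mathbb{Z}u_v$ an isomorphism; otherwise the two tori differ by a finite isogeny and the quotients do not match. Once you have the resulting closed immersion $\iota$, the well-definedness problem you flag as the main obstacle goes away. The morphism is simply the base change of the conventional relative Hilbert scheme along $\iota$, restricted to the locus with length $n_i$ on the corresponding strata. No gluing of local translations or descent from atomic $T$-points is needed.
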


\begin{proof}[Proof assuming claim (2)]
    Note (3) follows directly from (2) by restricting attention to the open where the logarithmic structure has minimal rank. Next we show that (1) also follows once claim (2) is established. Indeed, first observe that $\underline{\mathcal{X}_v^0}$, being the preimage of an irreducible component under the tropicalisation map, is closed in $\underline{\mathcal{X}_0}$. Also $\underline{\mathcal{X}_0}$ is the union over $v$ of the $\underline{\mathcal{X}_v^0}$. By combining (2) with Proposition \ref{prop:KummerStrataIrreducible}, we learn that each $\underline{\mathcal{X}_v^0}$ is irreducible of dimension $2n$, so no one $\underline{\mathcal{X}_v^0}$ is properly contained in another. It remains to prove claim (2).
\end{proof}
    Following Lemma \ref{lem:TropicalByComponent}, the tropical model $\operatorname{Kum}_{2n}^\Lambda(\mathpzc{A}) \rightarrow \operatorname{Kum}_{2n}(\mathpzc{A})$ induces a model $$\operatorname{Kum}^\Gamma_{\textbf{n}(v)}(\mathbb{Z}^s)\rightarrow \operatorname{Kum}_{\textbf{n}(v)}(\mathbb{Z}^s)$$ where $\operatorname{Kum}^\Gamma_{\textbf{n}(v)}(\mathbb{Z}^s)$ is isomorphic to the star of $\rho_v$ in $\operatorname{Kum}_{2n}^\Lambda(\mathpzc{A})$. We will write $A_0 = A\times_\Delta \Delta_0$. 
    
    Since the cone complex $\operatorname{Kum}_{2n}^\Lambda(\mathpzc{A})$ is smooth, there is a piecewise linear function (in the sense of \cite[Remark 7.3]{ModStckTropCurve}) which adopts the value $1$ on the generator of $\rho_v$ and $0$ on all other rays. Let $\overline{Y}_v$ be the effective Cartier divisor associated to this function. Since closed immersions are preserved under base change, we have a closed immersion of algebraic stacks$$\underline{Y_v}=\underline{\Delta_0\times_{a^\ast \rho} \overline{Y}_v}\hookrightarrow \underline{\Delta_0\times_{a^\ast \rho} a^\ast \operatorname{Kum}_{2n}^\Lambda(\mathpzc{A})}.$$
    We will use the saturation hypothesis of Situation \ref{sitch} in our next lemma:
    \begin{lemma}\label{lem:StarAsComponent}
        There is an isomorphism in the category of algebraic stacks $$\underline{Y_v}\cong \underline{a^\ast \operatorname{Kum}^\Gamma_{\textbf{n}(v)}(\mathbb{Z}^s)}.$$
    \end{lemma}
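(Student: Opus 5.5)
The plan is to reduce the statement to the toric dictionary for stars of rays, in the same spirit as Lemma~\ref{lem:TropicalByComponent}. Write $\Sigma = \operatorname{Kum}_{2n}^\Lambda(\mathpzc{A})$. The Cartier divisor $\overline{Y}_v\subset a^\ast\Sigma$ cut out by the piecewise linear function dual to $\rho_v$ is the closure of the stratum associated to $\rho_v$. Since $\Sigma$ is a smooth (free) cone complex, I would first work cone by cone. For each cone $\kappa\ni\rho_v$, the intersection $\overline{Y}_v\cap a^\ast\kappa$ is the quotient of the toric divisor $D_{\rho_v}\subset U_\kappa$ by the dense torus $T_\kappa$. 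The standard orbit-closure description gives $D_{\rho_v}\cong U_{\kappa/\rho_v}$, with $T_\kappa$ acting through $T_{\kappa/\rho_v}$, modulo the one-dimensional subtorus $\mathbb{G}_m^{\rho_v}$ attached to $\rho_v$. That subtorus acts trivially on $D_{\rho_v}$. Hence
\[
\overline{Y}_v\cap a^\ast\kappa\cong a^\ast(\kappa/\rho_v)\times B\mathbb{G}_m .
\]
These identifications are compatible with face inclusions, so they glue to
\[
\underline{\overline{Y}_v}\cong \underline{a^\ast\operatorname{Star}_{\rho_v}(\Sigma)}\times B\mathbb{G}_m .
\]

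Next I would take the fibre product with $\Delta_0\to a^\ast\rho$. Because $\Delta_0$ is the standard log point, $\Delta_0\to a^\ast\rho$ is the inclusion $\operatorname{Spec}\mathbb{C}\to B\mathbb{G}_m$ of the closed point. The restriction of $a^\ast\Sigma\to a^\ast\rho$ to $\overline{Y}_v$ factors through the closed point $B\mathbb{G}_m$ of $a^\ast\rho$. The key check is that on the $B\mathbb{G}_m$ factor this map is the map induced by $\mathbb{G}_m^{\rho_v}\to\mathbb{G}_m^{\rho}$, i.e.\ by the lattice map $\mathbb{Z}\rho_v\to\mathbb{Z}\rho$. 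I will use saturation of $\Sigma\to\rho$ here. Saturation, together with the fact that vertices of $B_\Lambda$ biject with rays mapping onto $\rho$, gives that the generator of $\rho_v$ maps to the generator of $\rho$. So this map is an isomorphism of tori, and base change along $\operatorname{Spec}\mathbb{C}\to B\mathbb{G}_m$ simply kills the $B\mathbb{G}_m$ factor:
\[
\underline{Y_v}\cong\underline{a^\ast\operatorname{Star}_{\rho_v}(\Sigma)} .
\]
One also needs that the projection $\operatorname{Star}_{\rho_v}(\Sigma)$ of the quotient lattices is compatible with this splitting. I would handle this by choosing, cone by cone, a splitting of $N_\kappa$ adapted to $\rho_v$; such a splitting exists since $\rho_v$ is primitive.

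Finally, Lemma~\ref{lem:TropicalByComponent} identifies $\operatorname{Star}_{\rho_v}(\Sigma)$ with $\operatorname{Kum}^\Gamma_{\mathbf{n}(v)}(\mathbb{Z}^s)$ as cone complexes, and $a^\ast$ is functorial, which gives the claim. The main obstacle I expect is the middle step: making the splitting $D_{\rho_v}/T_\kappa\cong a^\ast(\kappa/\rho_v)\times B\mathbb{G}_m$ canonical enough to glue across cones, and verifying that the saturation hypothesis really makes $\mathbb{Z}\rho_v\to\mathbb{Z}\rho$ an isomorphism rather than multiplication by a multiplicity. Without this, the fibre would acquire a $B\mu_d$ gerbe. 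If the gluing is awkward, I would instead argue directly with functors of points. A map from a scheme $T$ to $Y_v$ is a log structure pulled back from $\Sigma$ whose image lies over $\rho_v$, together with a trivialisation of the line bundle associated to $\rho$. Since $\rho_v\mapsto\rho$ is unimodular, that trivialisation equals the data of a trivialisation of the $\rho_v$-line bundle. This removes exactly the $\rho_v$ direction and leaves the Artin fan of the star.
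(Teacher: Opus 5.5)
Your proposal is correct and is essentially the paper's argument: you reduce via Lemma~\ref{lem:TropicalByComponent} to the star of $\rho_v$, describe $\overline{Y}_v$ cone by cone through the toric divisor $D_{v,\sigma}$, and use saturation to get $h(u_v)=1$, so that base change along $\operatorname{Spec}\mathbb{C}\to B\mathbb{G}_m$ removes exactly the $\rho_v$-direction of the torus. The only difference is your detour through a product decomposition $\overline{Y}_v\cong a^\ast\operatorname{Star}_{\rho_v}\times B\mathbb{G}_m$, where arbitrary cone-by-cone splittings need not glue; the paper avoids this by computing the fibre product directly as $[D_{v,\sigma}/T_{\ker(h)}]$ and using the canonical isomorphism $\ker(h)\cong\sigma^{\operatorname{gp}}/\mathbb{Z}u_v$ (equivalently, take your complement to be $\ker(h)$, which is automatically compatible with face maps).
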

    \begin{proof}
        By Lemma \ref{lem:TropicalByComponent} it will suffice to identify $\underline{Y_v}$ with $\underline{a^\ast\operatorname{Star}_{\rho_v}( \operatorname{Kum}^\Lambda_{2n}(\mathpzc{A}))}$.
        Note $\underline{Y_v}$ has a Zariski cover $U_\sigma \rightarrow \underline{Y_v}$ indexed by cones $\sigma$ of $\operatorname{Kum}_{2n}^\Lambda(\mathpzc{A})$ which contain $\sigma$ as a face. Indeed, this cover is pulled back from the open cover of $\operatorname{Kum}_{2n}^\Lambda(\mathpzc{A})$ indexed by the toric dictionary. By definition of star,  $\underline{a^\ast\operatorname{Star}_{\rho_v}( \operatorname{Kum}^\Lambda_{2n}(\mathpzc{A}))}$ has a Zariski open cover $$V_\sigma \rightarrow \underline{a^\ast\operatorname{Star}_{\rho_v}( \operatorname{Kum}^\Lambda_{2n}(\mathpzc{A}))}$$ with the same indexing set. 

        The proof will be complete if we provide for each $\sigma$ an isomorphism $V_\sigma\cong U_\sigma$ in a fashion compatible with face maps.
        We fix $\sigma$ and construct the desired isomorphism. Let $X_\sigma$ be the toric variety associated to $\sigma$ so that $a^\ast \sigma = [X_\sigma/X_\sigma^\circ]$. 
        Write $h\colon \sigma^{\operatorname{gp}}\rightarrow \mathbb{Z} = \rho^{\operatorname{gp}}$ for the lattice map induced by $\sigma \rightarrow \rho.$ Letting $u_v$ be the primitive generator of $\rho_v$, saturation means that projection induces an isomorphism
$$
g\colon \operatorname{ker}(h)\rightarrow \sigma^{\operatorname{gp}}/\mathbb Zu_v.$$

Let $D_{v,\sigma}\subset X_\sigma$ be the toric divisor associated to $\rho_v$. The restriction of $ \underline{\overline Y_v}$ to the Zariski open $a^\ast \sigma$ of $a^\ast \operatorname{Kum}_{2n}^\Lambda(\mathpzc{A})$ is $[D_{v,\sigma}/X^\circ_\sigma]$. 
Its map to $a^\ast\rho=[\mathbb A^1/\mathbb G_m]$ factors through the inclusion of the closed point $B\mathbb G_m$, and thus there is an isomorphism
$$
U_\sigma = [D_{v,\sigma}/X_\sigma^\circ]\times_{B\mathbb G_m}\operatorname{Spec}\mathbb C
\cong [D_{v,\sigma}/T_{\ker(h)}].
$$
But $D_{v,\sigma}$ is an affine toric variety whose dense torus we denote $D_{v,\sigma}^\circ$. By definition of star, $V_\sigma$ is $[D_{v,\sigma}/D_{v,\sigma}^\circ]$. The lattice isomorphism $g$ now provides the second isomorphism in the identification:
$$ V_\sigma \cong  [D_{v,\sigma}/D_{v,\sigma}^\circ] \cong [D_{v,\sigma}/T_{\ker(h)}]\cong U_\sigma.$$
\end{proof}
    Lemma \ref{lem:StarAsComponent} specifies a closed immersion
    $$\iota \colon \underline{a^\ast \operatorname{Kum}^\Gamma_{\textbf{n}(v)}(\mathbb{Z}^s)}\rightarrow \underline{\Delta_0\times_{a^\ast \rho} a^\ast \operatorname{Kum}_{2n}^\Lambda(\mathpzc{A})}.$$ 

    \begin{proof}[Proof of (2)]
    Under $\iota$ the morphism (which we think of as the universal expansion) 
    $$\pi_1\colon \underline{A_0\times_{a^\ast \mathpzc{A}} a^\ast(\mathpzc{Z}_{K}({\mathpzc{A}})\times_{\operatorname{Kum}_{2n}(\mathpzc{A})}\operatorname{Kum}_{2n}^\Lambda(\mathpzc{A}))}\rightarrow \underline{\Delta_0\times_{a^\ast \rho} a^\ast \operatorname{Kum}_{2n}^\Lambda(\mathpzc{A})}$$ pulls back to 
    $$\pi_2\colon \underline{A_0 \times_{a^\ast \mathpzc{A}}a^\ast(\mathpzc{Z}_{\mathbf{n}(v)}\times_{\operatorname{Kum}_{\textbf{n}(v)}(\mathbb{Z}^s)}\operatorname{Kum}^\Gamma_{\textbf{n}(v)}(\mathbb{Z}^s))}\rightarrow \underline{a^\ast \operatorname{Kum}^\Gamma_{\textbf{n}(v)}(\mathbb{Z}^s)}$$ by Lemma \ref{lem:UnivOverComponent}. 
    
    In particular $\operatorname{Hilb}^{[n+1]}(\pi_1)$ pulls back to $\operatorname{Hilb}^{[n+1]}(\pi_2)$ along $\iota$. Moreover, tracking the Hilbert polynomial of the subscheme in each stratum shows that the locally closed subset $A^{[n+1]}_\Lambda\times_{\mathpzc{A}^{[n+1]}\times_{a^\ast \rho} \Delta} \underline{Y_v}$ of $\operatorname{Hilb}^{[n+1]}(\pi_1)$ where a stratum associated to a cone $\kappa$ of weight $w(\kappa) = n_i$ contains a length $n_i$ subscheme (cf. Theorem~\ref{thm:LogHilbProper}) pulls back to an open subset of $\operatorname{Hilb}^{[n+1]}(\pi_2)$ isomorphic to
    $$(A^{\operatorname{log}})^{[\textbf{n}(v)]}_{a^\ast \operatorname{Kum}_\Gamma}:=\left(\prod_i(A^{\operatorname{log}})^{[n_i]}\right)\times_{a^\ast \operatorname{Kum}_{\textbf{n}(v)}(\mathbb{Z}^s)}a^\ast\operatorname{Kum}^\Gamma_{\textbf{n}(v)}(\mathbb{Z}^s)$$ 
    (cf. Theorem~\ref{thm:ComponentlogHilb}). 
    
    We have thus specified a morphism of algebraic stacks $$t\colon \underline{(A^{\operatorname{log}})^{[\textbf{n}(v)]}_{a^\ast \operatorname{Kum}_\Gamma}} \longrightarrow \underline{{A^{[n+1]}\times_{\mathpzc{A}^{[n+1]}\times_{^\ast \rho} \Delta} E}}.$$
    To complete the proof we let $$(A^{\circ})^{(\textbf{n}(v))}_{a^\ast \operatorname{Kum}_\Gamma}:=\left(\prod_i(A^\circ)^{(n_i)}\right)\times_{a^\ast \operatorname{Kum}_{\textbf{n}(v)}(\mathbb{Z}^s)}a^\ast\operatorname{Kum}^\Gamma_{\textbf{n}(v)}(\mathbb{Z}^s)$$
    and write 
    $$t^\circ \colon \underline{(A^{\circ})^{(\textbf{n}(v))}_{a^\ast \operatorname{Kum}_\Gamma}} \longrightarrow \underline{{A^{(n+1)}\times_{\mathpzc{A}^{[n+1]}\times_{^\ast \rho} \Delta} E}}$$ for the restriction of $t$ to the locus of subschemes with support inside $A^\circ$. The morphism $t^\circ$ is obtained by pushing forward under $k$ different open immersions $\underline{A^\circ}\hookrightarrow A$. Each inclusion is $A^\circ$ equivariant and thus determined by the image of the identity in $A^\circ$. Write $m_1,...,m_k$ for the image of the identity under each of these maps. Note that the image of $\sum_j n_j m_j$ in $A/A^\circ$ is zero, and thus $\sum_j n_j m_j$ lies in $A^\circ$. Indeed, this was achieved by taking a fibre product with $a^\ast \operatorname{Kum}_\Gamma$. Applying an automorphism to the first copy of $A^\circ$ - i.e. modifying $m_1$ by post-multiplying by some element of $A^\circ$ and noting that $A^\circ$ is divisible- we may assume that $\sum_j n_j m_j$ is the identity.
    %We have thus identified the open subset $A^{[n]}\times_{\mathpzc{A}^{[n]}\times_{^\ast \rho} \Delta} E$ of $Y_v\times_{E}\operatorname{Hilb}^{[n]}(\pi_1)$ (cf. Theorem~\ref{thm:LogHilbProper}) with the open subset 
    %$$(A^{\operatorname{log}})^{[\textbf{n}_i]}_{a^\ast \operatorname{Kum}_\Gamma}:=\prod_i(A^{\operatorname{log}})^{[n_i]}\times_{a^\ast \operatorname{Kum}_{\textbf{n}(v)}(\mathbb{Z}^s)}a^\ast\operatorname{Kum}^\Gamma_{\textbf{n}(v)}(\mathbb{Z}^s)$$ in the product of the conventional relative Hilbert schemes of points $\prod_i\operatorname{Hilb}^{[n_i]}(\pi_2)$ (cf. Theorem~\ref{thm:ComponentlogHilb}).

    Consider the following commutative square in the category of schemes
    $$\begin{tikzcd}
{\underline{(A^{\operatorname{log}})^{[\textbf{n}(v)]}_{a^\ast \operatorname{Kum}_\Gamma}}} \arrow[r, "t"] \arrow[d,"HC"] \arrow[dd, "m"', bend right=60] & {\underline{A^{[n+1]}\times_{\mathpzc{A}^{[n+1]}\times_{^\ast \rho} \Delta} E}} \arrow[d, "HC"] \arrow[dd, "S", bend left=90, looseness=1.8] \\
\underline{(A^{\operatorname{log}})^{(\textbf{n}(v))}_{a^\ast \operatorname{Kum}_\Gamma}} \arrow[r,"t'"] \arrow[d, "m'", dashed] & {\underline{A^{(n)}\times_{\mathpzc{A}^{[n+1]}\times_{^\ast \rho} \Delta} E}} \arrow[d, "S'", dashed] \\
\underline{A^\circ} \arrow[r] & \underline{A^\circ}
\end{tikzcd}$$
The morphism $t'$ is defined similarly to $t$ but on the level of Chow variety of points.
The morphisms labelled $HC$ are Hilbert--Chow morphisms, and the top square commutes because the Hilbert--Chow map commutes with open immersion. The morphism $m'$ (resp. $S'$) is multiplication in the group law of $A^{\operatorname{log}}$ (resp. $A$) (weighted where a point appears multiple times in the symmetric product). 

The proof of (2) will be complete if we identify the fibres of $S\circ t$ and $m$ over the identity, and thus follows once we know that $S \circ t = m$. This is immediate from our last paragraph once we check that $S'\circ t' = m'$. 
Since $(A^{\operatorname{log}})^{[\textbf{n}_i]}_{a^\ast \operatorname{Kum}_\Gamma}$ is irreducible and $A^\circ$ is separated, by \stacks{01RH} it will suffice to show that these maps agree on the (scheme theoretically dense in any model) open $\prod_i(A^{\circ})^{[n_i]}$. 
But we observed that the restriction $t^\circ$ of $t'$ is pushforward under $k$ inclusions, and thus this last claim follows from elementary group theory once we recall that $\sum_j n_j m_j$ is the identity. 
\end{proof}

\begin{corollary}\label{corr:DualIntCx}
    The dual intersection complex of $\mathcal{X}_0$ is canonically $B_\Lambda$.
\end{corollary}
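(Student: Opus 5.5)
The plan is to compare the stratification of $\mathcal{X}_0$ with the cone structure of $\operatorname{Kum}_{2n}^\Lambda(\mathpzc{A})$ through the strict tropicalisation map $\mathcal{X}_0\rightarrow E = a^\ast \operatorname{Kum}_{2n}^\Lambda(\mathpzc{A})\times_{a^\ast\rho}\Delta_0$. By the toric dictionary, the closed strata of $E$ lying over the closed point of $a^\ast\rho$ are in inclusion-reversing bijection with the cones of $\operatorname{Kum}_{2n}^\Lambda(\mathpzc{A})$ that surject onto $\rho$. Since the morphism is saturated, these cones correspond bijectively to the cells of the link $B_\Lambda$, with cones of dimension $d+1$ giving $d$-cells. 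For each such cone $\kappa$ I would form the stratum $\mathcal{X}_0\times_{E} W_\kappa$. The dual intersection complex is then the polyhedral complex obtained by recording these strata, their incidences, and their connected components. So it suffices to show three things: each stratum is nonempty; each stratum is irreducible, hence connected; and the intersection of the components $\underline{\mathcal{X}_{v_0}^0},\dots,\underline{\mathcal{X}_{v_d}^0}$ is exactly the union of the strata indexed by cones containing $\rho_{v_0},\dots,\rho_{v_d}$.

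The vertex case is Theorem~\ref{prop:ExplicitDescription}(1): vertices $v$ of $B_\Lambda$ correspond bijectively to irreducible components $\underline{\mathcal{X}_v^0}$. For higher cells I would localise at a vertex $v$ of the cell $\kappa$. Lemma~\ref{lem:StarAsComponent} and the proof of Theorem~\ref{prop:ExplicitDescription}(2) identify $\underline{\mathcal{X}^0_v}$ with $\underline{\operatorname{Kum}^\Gamma_{\textbf{n}(v)}(A^{\log})}$, and they do so compatibly with the identification of $\operatorname{Star}_{\rho_v}$ with $\operatorname{Kum}^\Gamma_{\textbf{n}(v)}(\mathbb{Z}^s)$ from Lemma~\ref{lem:TropicalByComponent}. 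Under this identification the stratum of $\mathcal{X}_0$ attached to $\kappa\ni\rho_v$ becomes the stratum of $\operatorname{Kum}^\Gamma_{\textbf{n}(v)}(A^{\log})$ attached to the image cone $\kappa/\rho_v$. Proposition~\ref{prop:KummerStrataIrreducible}(3) then shows that this stratum is irreducible. Nonemptiness follows from logarithmic smoothness: the map to the Artin fan is smooth, hence flat and open, and it is surjective onto the cones in the image. Surjectivity should hold because every cone of the tropical Kummer model is realised by an actual configuration of points in $A^\circ$. This uses the divisibility of $A^\circ$, as in the proof of (2), to arrange that the points sum to the identity.

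The incidence step uses the fact that the tropicalisation map is strict. A point lies in $\underline{\mathcal{X}^0_{v}}$ exactly when its image lies in $Y_v$, which happens precisely when its cone contains $\rho_v$. So an intersection of components is the preimage of the corresponding intersection of toric strata. Within it, the locus with minimal cone $\kappa$ is the single irreducible stratum just described. Because $\operatorname{Kum}_{2n}^\Lambda(\mathpzc{A})$ is a cone complex in which any two cones meet in a common face, the intersection of $d+1$ components is either empty or has its top-dimensional stratum indexed by the unique cone spanned by $\rho_{v_0},\dots,\rho_{v_d}$. Hence each cell of $B_\Lambda$ contributes exactly one stratum, and face relations are preserved. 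This gives a canonical isomorphism between the dual intersection complex and $B_\Lambda$.

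I expect the main obstacle to be checking that the local identification at $v$ matches strata of $\mathcal{X}_0$ with strata of $\operatorname{Kum}^\Gamma_{\textbf{n}(v)}(A^{\log})$ on the nose, and not merely on the open interior. I would settle this using the Zariski cover $U_\sigma\cong V_\sigma$ from Lemma~\ref{lem:StarAsComponent}, checking that closed torus orbits correspond cone by cone.
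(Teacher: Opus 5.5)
Your proposal is correct and follows essentially the same route as the paper. The paper's argument maps $\mathcal{X}_0$ to $E$ and matches points of $E$ with cells of $B_\Lambda$; it uses Theorem~\ref{prop:ExplicitDescription} (through Lemma~\ref{lem:StarAsComponent}) to identify each stratum with a logarithmic stratum of $\operatorname{Kum}^{\Gamma}_{\textbf{n}(v)}(A^{\operatorname{log}})$, and concludes irreducibility from Proposition~\ref{prop:KummerStrataIrreducible}(3). Your extra checks of nonemptiness (realising each tropical type by points of $A^\circ$, adjusted by divisibility) and of incidences make explicit steps that the paper's short proof leaves implicit.
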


\begin{proof}
    Certainly we have a morphism $$\mathcal{X}_0\rightarrow a^\ast \operatorname{Kum}_{2n}^\Lambda(\mathpzc{A})\times_{a^\ast\rho}\Delta_0$$ and points of $a^\ast \operatorname{Kum}_{2n}^\Lambda(\mathpzc{A})\times_{a^\ast\rho}\Delta_0$ are in canonical, specialisation preserving, bijection with strata of $B_\Lambda$. Moreover, Theorem \ref{prop:ExplicitDescription} identifies the stratum associated to each point of $a^\ast \operatorname{Kum}_{2n}^\Lambda(\mathpzc{A})\times_{a^\ast\rho}\Delta_0$ with a logarithmic stratum of $\operatorname{Kum}^{\Gamma}_{\textbf{n}(v)}(A^{\operatorname{log}})$ for some choice of $\Gamma$. But each of these logarithmic strata is irreducible by Proposition~\ref{prop:KummerStrataIrreducible}(3) and the claim is proved.
\end{proof}

\section{The logarithmic generalised Kummer locus and its tropicalisation}
The purpose of this section is to prove Theorem \ref{mainthm:Niceness properties}, establish the commutative square of Theorem~\ref{Thm:maintrop}, and prove Theorem~\ref{mainthm:ReadOffModel}. 
\subsection{The conventional generalised Kummer locus} 
The goal of this subsection is to define a divisor $\mathcal{K}_{2n}^\circ$ in the moduli of conventional polarised IHS varieties which we call the generalised Kummer locus, and which Theorem~\ref{mainthm:Niceness properties} compactifies. We go on to relate the generalised Kummer locus to the moduli of principally polarised abelian surfaces.

\subsubsection{Hodge theory of generalised Kummer varieties}
%Taking the first chern class, we will think of the polarisation on a conventional Kummer variety $\operatorname{Kum}_{2n}(A)$ as an element of $H^2(\operatorname{Kum}_{2n}(A),\mathbb{Z})$.
Let $X$ be an irreducible holomorphic symplectic variety and recall from \cite[Section 1.9]{Huybrechts1999} that the cohomology of an IHS variety carries a bilinear \textit{Beauville--Bogomolov--Fujiki (BBF) form}, which we denote $$q\colon H^2(X,\mathbb{Z})\times H^2(X,\mathbb{Z}) \rightarrow \mathbb{Z}.$$ Given a smooth projective morphism $f\colon \mathcal{X}\rightarrow S$, $R^2f_\ast \mathbb{Z}$ is a local system, which inherits a bilinear form from the construction of $q$. This observation is used extensively in \cite{Markman2011}. The lattice isometry type of $(H^2(X,\mathbb{Z}),q)$ is thus locally constant in smooth families.

\begin{proposition}[{\cite[Proposition 8]{Beauville1983} and \cite[Remark 5(2)]{Britze2002}}]\label{prop:HodgeTheoryRecall}
    Let $A$ be a conventional abelian surface over $\mathbb{C}$, and let $n\geq 2$ be an integer. There is an isomorphism of Hodge structures, $$H^2(\operatorname{Kum}_{2n}(A),\mathbb{Z})\cong H^2(A,\mathbb{Z})\oplus \langle \delta \rangle$$
    and using $q$ to define a bilinear form on the right hand side, $\delta$ is a class such that $q(\delta,\delta)=-2(n+1)$ and $q(\delta,x)=0$ whenever $x\in H^2(A,\mathbb{Z})$. Moreover, under this isomorphism $2\delta$ is identified with the class of the exceptional divisor of the Hilbert--Chow morphism restricted to the Kummer variety. Finally $\delta$ has Hodge type $(1,1)$.
\end{proposition}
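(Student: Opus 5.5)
This statement is cited from Beauville and Britze, so the plan is to reconstruct Beauville's argument. We work with the Hilbert--Chow morphism $\varepsilon\colon \operatorname{Kum}_{2n}(A)\to K_{(2n)}(A)$, where $K_{(2n)}(A)\subset A^{(n+1)}$ is the locus of points summing to zero. We also use the finite map $R\colon A^n\to K_{(2n)}(A)$, $(a_1,\dots,a_n)\mapsto[(a_1,\dots,a_n,-\sum a_i)]$, which is the quotient by $\operatorname{Sym}(n+1)$ acting through the embedding $A^n\hookrightarrow A^{n+1}$ (the same map used in the proof of Proposition~\ref{prop:TwoFormExist}(4)).

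The first step is to compute $H^2(K_{(2n)}(A),\mathbb{Q}) = H^2(A^n,\mathbb{Q})^{\operatorname{Sym}(n+1)}$. The invariants come from $\operatorname{Sym}^2$ of the invariant part of $H^1$, together with $\bigwedge^2$-type invariants. The representation theory already used in the proof of Proposition~\ref{prop:TwoFormExist}(4) splits $H^1(A^n)\cong H^1(A)\otimes\operatorname{Std}$, where $\operatorname{Std}$ is the standard representation. Since $\operatorname{Std}$ is self-dual with a one-dimensional space of invariants in $\operatorname{Std}\otimes\operatorname{Std}$ and none in $\bigwedge^2\operatorname{Std}$, we get
$$H^2(A^n)^{\operatorname{Sym}(n+1)}\cong \big(\bigwedge^2 H^1(A)\otimes(\operatorname{Std}\otimes\operatorname{Std})^{\operatorname{Sym}(n+1)}\big)\oplus\cdots\cong H^2(A,\mathbb{Q}).$$
This identification is Hodge-theoretic, because it is induced by pulling back classes along the natural correspondence with $A$.

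The second step is to pass to $\operatorname{Kum}_{2n}(A)$ by the decomposition theorem, or more elementarily by Beauville's argument that $\varepsilon$ is semismall with a single exceptional divisor $E$ over the diagonal stratum, which is irreducible for $n\geq 2$. This gives $H^2(\operatorname{Kum}_{2n})=\varepsilon^*H^2(K_{(2n)})\oplus\mathbb{Q}[E]$ rationally. For the integral statement, one shows $[E]$ is divisible by $2$ and that $\delta=[E]/2$ together with $\varepsilon^*H^2(A,\mathbb{Z})$ spans the lattice. I would take this from Beauville's computation via the $\operatorname{Hilb}^{n+1}(A)$ analogue, where $H^2(A^{[n+1]},\mathbb{Z})=H^2(A)\oplus H^1(A)^{\wedge 2}\text{-part}\oplus\mathbb{Z}\delta$, followed by restriction. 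Since $E$ is a divisor, $\delta$ has type $(1,1)$.

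The third step is the BBF form. Orthogonality $q(\delta,x)=0$ follows from the Fujiki relation $\int \alpha^{2n}=c\,q(\alpha)^n$ applied to $x+t\delta$: because $\varepsilon$ contracts $E$, the terms odd in $t$ vanish. The value $q(\delta,\delta)=-2(n+1)$ follows by computing $\int(x+t\delta)^{2n}$ through the local structure of $E$ over the generic diagonal stratum, which is a $\mathbb{P}^1$-bundle. Alternatively one cites Britze's normalisation, which fixes the Fujiki constant. I expect this numerical normalisation to be the main obstacle; in practice I would defer it to \cite{Britze2002}, since the statement is quoted, and verify only the orthogonality and Hodge type directly.
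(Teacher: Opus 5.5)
The gap is in the integral part of your second step. Writing ``followed by restriction'' tacitly assumes that $H^2(A^{[n+1]},\mathbb{Z})\to H^2(\operatorname{Kum}_{2n}(A),\mathbb{Z})$ is surjective. Suppose $\delta=[E]/2$ is already known to be integral. Even then, your rational computation (and Beauville's) only shows that $H^2(A,\mathbb{Z})\oplus\mathbb{Z}\delta$ sits inside $H^2(\operatorname{Kum}_{2n}(A),\mathbb{Z})$ with finite index. Saturation is the non-formal content of the integral statement. The paper does not prove it but quotes it from \cite{Beauville1983} and \cite{Britze2002}.

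Surjectivity of restriction can be supplied as follows.
\begin{itemize}
\item The summation map $\Sigma\colon A^{[n+1]}\to A$ is locally trivial with fibre $\operatorname{Kum}_{2n}(A)$, since it is the quotient of $A\times\operatorname{Kum}_{2n}(A)$ by the free action of $A[n+1]$.
\item In its Leray spectral sequence $E_2^{p,1}=0$, because $H^1(\operatorname{Kum}_{2n}(A))=0$.
\item $H^2(\operatorname{Kum}_{2n}(A),\mathbb{Z})$ is torsion free by simple connectivity. Monodromy acts trivially on it because it does so rationally: the rational $H^2$ is spanned by restrictions of global classes.
\item The differential $d_3\colon E_3^{0,2}\to E_3^{3,0}=H^3(A,\mathbb{Z})$ vanishes, since it vanishes after tensoring with $\mathbb{Q}$ and the target is torsion free.
\end{itemize}
Hence $E_\infty^{0,2}=H^2(\operatorname{Kum}_{2n}(A),\mathbb{Z})$, and restriction is onto. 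Even then, your argument rests on an integral description of $H^2(A^{[n+1]},\mathbb{Z})$ for an abelian surface, including divisibility of $[E]$ by $2$ there. That must be cited in integral form; it does not follow from the rational decomposition you quote.

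The rest is sound, and for the rational part it genuinely differs from the cited source. Beauville excises the codimension-two locus with at least two collisions and then computes via a blow-up. You instead use that the restricted Hilbert--Chow map is semismall with every stratum relevant, so the decomposition theorem gives $H^2(\operatorname{Kum}_{2n}(A),\mathbb{Q})=\varepsilon^*H^2(K_{(2n)}(A),\mathbb{Q})\oplus\mathbb{Q}[E]$. The key input is irreducibility of the collision stratum for $n\geq 2$. This is heavier machinery, but it isolates cleanly why $n\geq 2$ is needed.

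Two small corrections:
\begin{itemize}
\item $H^1(A^n,\mathbb{Q})^{\operatorname{Sym}(n+1)}=0$, so the invariants in $H^2$ come solely from $\bigwedge^2H^1(A)\otimes(\operatorname{Sym}^2\operatorname{Std})^{\operatorname{Sym}(n+1)}$, not from ``$\operatorname{Sym}^2$ of the invariant part of $H^1$''.
\item The contraction argument kills only the $t$-linear term of $\int(x+t\delta)^{2n}$, because $\dim\varepsilon(E)=2n-2<2n-1$. That already suffices for $q(\delta,x)=0$ via the Fujiki relation.
\end{itemize}
Deferring $q(\delta,\delta)=-2(n+1)$ to \cite{Britze2002} is in line with how the paper treats this proposition.
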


\begin{remark}\label{rem:MoveClasses}
    We pause to explain a canonical inclusion $$\hat{\iota}\colon H^2(A,\mathbb{Z})\hookrightarrow H^2(\operatorname{Kum}_{2n}(A),\mathbb{Z}).$$
    Write $\operatorname{p}_i\colon A^{n+1}\rightarrow A$ for the $i^\text{th}$ projection map, and note that given a class $\eta$ in $H^2(A,\mathbb{Z})$ the form $\eta^{n+1} = \sum_i \operatorname{p}_i^\ast \eta$ descends to a form $\eta^{(n+1)}$ on the symmetric product $A^{(n+1)}$. Pullback of $\eta^{(n+1)}$ along the Hilbert--Chow map defines a class $\eta^{[n+1]}$ on $A^{[n+1]}$. Let $\hat{\iota}(\eta)$ be the restriction of $\eta^{[n+1]}$ to the generalised Kummer variety.
\end{remark}
Let $U$ be the rank two lattice with hyperbolic bilinear form, set $\langle k\rangle$ to be the one dimensional lattice whose generator is always denoted $\delta$ and has $q(\delta,\delta)=k$. In the case that $n\geq 2$, it follows from Proposition \ref{prop:HodgeTheoryRecall} that the lattice type of the second cohomology (with form $q$) of any generalised Kummer variety of dimension $2n$ is $$\Lambda := U\oplus U \oplus U \oplus \langle -2(n+1) \rangle.$$
Where confusion is not likely we will write $\langle -2(n+1) \rangle = \langle \delta \rangle$. %Note that $2\delta$ is the class of the exceptional divisor $E$ for the restriction of the Hilbert Chow morphism, see \cite[Proposition 8]{Beauville1983}. 
Following (for example) \cite[Chapter 14 Section 0.1]{huybrechtsK3}, note that $q$ induces a map $$\Lambda \rightarrow \Lambda^\vee:= \operatorname{Hom}(\Lambda,\mathbb{Z}),$$
and define the \textit{divisibility} of an element $v$ in $\Lambda$ to be the largest integer $k$ such that the image of $v$ in $\Lambda^\vee$ under the morphism induced by $q$ is $k$ times a primitive class. The \textit{discriminant group} is the quotient $\Lambda^\vee/\Lambda$.

\begin{definition}\label{defn:PrincipalPolarisation}
    Let $(X,L)$ be a polarised IHS variety of generalised Kummer type. We say that the polarisation $L$ is \textit{principal} if there exists an integer $N$ and a choice of lattice isometry $$H^2(X,\mathbb{Z}) \cong U\oplus U \oplus U \oplus \langle \delta \rangle$$ which identifies $L$ with a class of the form $NH -\delta$ where $H\in U^{\oplus 3}$ and $q_{U^{\oplus 3}}(H,H) = 2$. 
\end{definition}

The context for Definition \ref{defn:PrincipalPolarisation} is that under the map $\hat{\iota}$ of Remark~\ref{rem:MoveClasses} a polarisation $H^2(A,\mathbb{Z})$ is sent to a class $H$ for which $q(H,H)=2$ if and only if $H$ is a principal polarisation of the abelian variety. Thus Definition~\ref{defn:PrincipalPolarisation} is modelled on a polarisation obtained from a principal polarisation on $A$.

\begin{definition}\label{defn:K2nH}
    Fix a positive integer $N$. We define the Noether--Lefschetz locus $\operatorname{NL}(N)$ to be the locus in the moduli of principally polarised IHS varieties $(X,L)$ of generalised Kummer type such that $q(L,L) = 2N^2-2(n+1)$ and also there exists a class $\delta\in H^2(X,\mathbb{Z})$ which satisfies the following hypotheses.
    \begin{enumerate}[leftmargin = *]
        \item The class $\delta$ is of Hodge type $(1,1)$.
        \item There is an equality $q(\delta,\delta) = -2(n+1)$.
        \item The image $q(\delta,-) = \delta^\vee$ of $\delta$ in $\Lambda^\vee$ has divisibility $2(n+1)$ in $\Lambda^\vee$. %, and $\delta^\vee/2(n+1)$ is a generator of the discriminant group.
        \item There is an equality $q(\delta,L) = 2(n+1)$.
        \item The class $L+\delta$ is $N$ times a primitive class.
    \end{enumerate}
\end{definition}

\begin{remark}
    In the sequel we would like to argue that the Hodge structure on $H^2(A,\mathbb{Z})$ can be recovered from the principal polarisation class together with the Hodge structure on $H^2(\operatorname{Kum}_{2n}(A),\mathbb{Z})$. This is a key tool for relating the generalised Kummer locus to the moduli of principally polarised abelian varieties. Fortunately by Eichler's criterion, see \cite[Proposition~3.3]{GritsenkoHulekSankaran}, for any choice of $\delta$ satisfying the hypotheses of Definition \ref{defn:K2nH} there is a lattice isometry such that $$H^2(X,\mathbb{Z}) \cong U\oplus U \oplus U \oplus \langle {\delta} \rangle.$$
    However, whilst the first three summands are determined by a choice of $\delta$ (as its orthogonal complement under $q$), one can worry that there are many choices of $\delta$ satisfying our hypotheses, at least for small values of $N$. Later we will fix $N$ large and argue that this makes $\delta$ unique.
\end{remark}

\subsubsection{From abelian surfaces to generalised Kummer varieties} 
%Although the generalised Kummer locus will be a divisor in the moduli of IHS varieties, we will construct the logarithmic generalised Kummer locus from the moduli of abelian varieties with level structure. 
We now take the first steps in relating the moduli of principally polarised abelian surfaces to the moduli of IHS varieties of generalised Kummer type. This will allow us to define $\mathcal{K}_{2n}^\circ$.
\begin{proposition}\label{prop:TangentSpaceANDUniquePointwise}
    We establish the following facts. 
    \begin{enumerate}[leftmargin = *]
        \item Let $x\in \operatorname{NL}(N)$ be a point associated to a polarised IHS variety $(X,L=NH-\delta)$ such that there exists a unique class $\delta$ in $H^2(X,\mathbb{Z})$ satisfying the hypotheses listed in Definition \ref{defn:K2nH}. The period map identifies the Zariski tangent space $T_x\operatorname{NL}(N)$ with $\langle H,\delta\rangle^\perp\cap H^{1,1}(X,\mathbb{C})$.
        \item Let $A$ be any abelian surface and $H$ any principal polarisation for $A$ (i.e. $q(H,H)=2$). Let $\delta$ be half of the Hilbert--Chow exceptional class in $H^2(\operatorname{Kum}_{2n}(A),\mathbb{Z})$. There is an integer $N_0$, which does not depend on $A$, such that for all $N>N_0$, the class $\delta$ in the cohomology of the principally polarised generalised Kummer variety $(\operatorname{Kum}_{2n}(A), NH -\delta)$ is the unique class satisfying the hypotheses of Definition \ref{defn:K2nH}. 
    \end{enumerate}
\end{proposition}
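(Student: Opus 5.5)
For (1), I would argue via the local period map. By local Torelli for IHS varieties, the unobstructed deformation space of $X$ maps isomorphically onto an open subset of the period domain $\{\omega\in\mathbb{P}(\Lambda_\mathbb{C}) : q(\omega,\omega)=0,\ q(\omega,\bar\omega)>0\}$. The tangent space there is $\operatorname{Hom}(H^{2,0},H^{1,1})\cong H^{1,1}(X,\mathbb{C})$. Near $x$, the polarised moduli space is cut out by requiring $L$ to stay of type $(1,1)$; to first order this is the condition $q(\omega,L)=0$, i.e. tangent vectors orthogonal to $L$.

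The locus $\operatorname{NL}(N)$ further requires some class $\delta'$ with the listed properties to remain $(1,1)$. Since $H^2(X,\mathbb{Z})$ is discrete and the conditions are closed, any nearby point's class $\delta'$ is, by parallel transport, one of the classes satisfying the hypotheses at $x$. By the uniqueness assumption it must be $\delta$ itself. So locally $\operatorname{NL}(N)$ is the preimage of the hyperplane section $\{q(\omega,\delta)=0\}$, and its Zariski tangent space is $L^\perp\cap\delta^\perp\cap H^{1,1}$. Since $L=NH-\delta$, this equals $\langle H,\delta\rangle^\perp\cap H^{1,1}$. The linear conditions $q(\cdot,\delta)$ and $q(\cdot,H)$ are independent, because $\delta$ and $H$ are orthogonal with nonzero squares, so the locus is smooth of codimension one in the polarised moduli.

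For (2), I would write $H^2=U^{\oplus3}\oplus\langle\delta\rangle$ and $L=NH-\delta$. Let $\delta'=a+b\delta$ with $a\in U^{\oplus3}$ be another class satisfying the hypotheses of Definition~\ref{defn:K2nH}.
\begin{enumerate}[leftmargin=*]
\item Hypothesis (3) forces the divisibility of $\delta'$ to be $2(n+1)$. Since $U^{\oplus3}$ is unimodular, this gives $a\in 2(n+1)U^{\oplus3}$; write $a=2(n+1)a'$.
\item Hypothesis (4) gives $2(n+1)Nq(a',H)+2(n+1)b=2(n+1)$, i.e. $b=1-Nq(a',H)$.
\item Hypothesis (2) gives $4(n+1)^2q(a',a')-2(n+1)b^2=-2(n+1)$, i.e. $2(n+1)q(a',a')=b^2-1$.
\item Hypothesis (5) says $L+\delta'=NH+2(n+1)a'+(b-1)\delta$ is divisible by $N$. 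This forces $N\mid 2(n+1)a'$ and $N\mid b-1$.
\end{enumerate}

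Next I would use positivity. The class $\delta'$ is $(1,1)$ with $q(\delta',L)>0$, and $L$ is ample. Hodge index on $H^{1,1}$, which has signature $(1,h^{1,1}-1)$, bounds the size of $\delta'$ relative to $L$. Taking $N>2(n+1)$, the divisibility $N\mid 2(n+1)a'$ with $a'$ primitive-ish forces $a'\in N U^{\oplus3}$ up to bounded factors, so $a'$ is a large multiple. Then $b=1-Nq(a',H)$ is large unless $q(a',H)=0$. Now decompose $a'=cH+a''$ with $a''\perp H$ (rationally). The condition $b^2-1=2(n+1)q(a',a')$, together with $q(a'',a'')$ being negative-ish on the $(1,1)$ part orthogonal to $L$, should leave only $a'=0$, $b=1$ once $N$ exceeds an explicit bound depending only on $n$.

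The main obstacle is making this last step rigorous. The issue is that $a''$ need not be $(1,1)$ in general, and $\delta'$ being Hodge does not immediately make $a'$ Hodge. I would handle this by projecting onto the Hodge-orthogonal complement of $H^{2,0}\oplus H^{0,2}$. Since $\delta$ is $(1,1)$, $\delta'$ being $(1,1)$ forces $a'$ to be $(1,1)$. So $a'\in NS(A)$ via $\hat\iota$, and Hodge index on $NS(A)$ applies with bounds independent of $A$.
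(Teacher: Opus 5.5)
Your part (1) is essentially the paper's argument. The paper also identifies the tangent space with $H^{1,1}(X,\mathbb{C})$ and intersects the Hodge loci of $H$ and of $\delta$ transversally. You make the use of the uniqueness hypothesis more explicit, which is fine.

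For (2) your route is genuinely different, and it works, but you stopped before the decisive inequality; you should write it out. Set $m=q(a',H)\in\mathbb{Z}$. As you note, $a'$ is of type $(1,1)$, and $q$ restricts to the intersection form on $H^2(A,\mathbb{Z})$ with $q(H,H)=2$. So the Hodge index theorem on $\operatorname{NS}(A)$ gives $q(a',a')\le m^2/2$. Combining this with your (ii) and (iii),
\[
N^2m^2-2Nm \;=\; b^2-1 \;=\; 2(n+1)\,q(a',a') \;\le\; (n+1)\,m^2 .
\]
If $m\neq 0$, dividing by $m^2$ gives $N^2-2N\le n+1$, which is impossible once $N^2-2N>n+1$. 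Hence $m=0$ and $b=1$, so $q(a',a')=0$ with $a'\perp H$. Negative definiteness of $H^\perp\subset\operatorname{NS}(A)_{\mathbb{R}}$ then forces $a'=0$, i.e.\ $\delta'=\delta$. Your step (iv) and the ``large multiple'' heuristic play no role in this.

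The paper never splits $H^2(\operatorname{Kum}_{2n}(A),\mathbb{Z})$. It applies the Hodge index theorem for the BBF form on $\operatorname{NS}(\operatorname{Kum}_{2n}(A))$ and writes $\delta'=aH+y$ with $y\in H^\perp$. Cauchy--Schwarz for $-q$ on $H^\perp$ then bounds $|a|<1/2$, so $a=0$, and $y=\delta$ follows from $q(y-\delta,y-\delta)=0$. That argument uses only the numbers $q(H,H)=2$, $q(H,\delta)=0$, $q(\delta,\delta)=-2(n+1)$, together with hypotheses (1), (2), (4).

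Your argument instead uses the finer input of Proposition~\ref{prop:HodgeTheoryRecall}: the integral, Hodge-compatible orthogonal splitting, and the fact that $q$ restricts to the intersection form on $H^2(A,\mathbb{Z})$. In exchange it reduces the problem to integer arithmetic plus Hodge index on a surface. This gives a cleaner explicit bound, $N^2-2N>n+1$, where the paper needs roughly $N>4(n+1)+\sqrt{n+1}$.
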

\begin{proof}
    For claim (1) note the tangent space to the moduli of IHS manifolds is canonically $H^1(X,T_X)$, and the symplectic form induces an isomorphism  $ H^1(X,T_X) \cong H^{1,1}(X,\mathbb{C})$. The locus $\operatorname{NL}(N)$ consists of varieties with a polarisation and satisfying five conditions. Conditions (2-5) are open and thus \'etale locally $\operatorname{NL}(N)\subset M_\text{IHS}$ is the closed subset characterised by the existence of a polarisation $L$ and class $\delta$ described in condition (1). By \cite[Lemma 5.16]{Voisin_2003}, the tangent space to the Noether--Lefschetz locus associated to $\delta$ is $\langle \delta \rangle^\perp\subset H^{1,1}(X,\mathbb{C})$. Similarly the tangent space of the NL divisor associated to $H$ is $\langle H \rangle^\perp\subset H^{1,1}(X,\mathbb{C})$. Since $\delta$ is not ample but $L$ is, the sublattice $\langle H,\delta \rangle$ which contains $L$ has rank $2$ and so these Noether--Lefschetz loci intersect transversely, and their intersection has tangent space as described in (1).

    We now prove (2). By \cite[Section 1.10]{Huybrechts1999} the restriction of $q$ to the N\'eron--Severi group $\operatorname{NS}_\mathbb{R} = \mathbb{R}\otimes \operatorname{NS}(\operatorname{Kum}_{2n}(A))$ has signature $(1,\rho-1)$ for $\rho$ the Picard rank. We know that $q(H,H)=2>0$, and thus the restriction of $q$ to the orthogonal complement $H^\perp$ of $H$ is negative definite.

    Take any $\delta'$ in $\operatorname{NS}(X)$ which satisfies the hypotheses of Definition \ref{defn:K2nH}. Note that there is a unique way to express $\delta' = aH + y$ in $\operatorname{NS}_\mathbb{R}$ where $y \in H^\perp$ and $a = q(\delta',H)/q(H,H)\in (1/2)\mathbb{Z}$. Constraints (2) and (4) of Definition \ref{defn:K2nH} are now 
    \begin{equation}\label{eq:constr} q(\delta',\delta')= 2a^2 + q(y,y) = -2(n+1)\quad \quad q(\delta',L)  = 2Na - q(y,\delta) = 2(n+1).\end{equation}

    Both $y$ and $\delta$ lie in $H^\perp$ where $-q$ is positive definite, so we may apply Cauchy--Schwarz to $-q$ with inputs $y,\delta$. Starting by applying the triangle inequality to the second expression in equation~\eqref{eq:constr}, and then applying Cauchy--Schwarz to $-q$ to obtain a second inequality, we learn:
    $$
\begin{aligned}
2N|a|
&\leq 2(n+1)+|q(y,\delta)| \\
&\leq 2(n+1)
   + \sqrt{|q(y,y)|}\sqrt{2(n+1)} \\
&\leq 2(n+1)
   + \sqrt{2(n+1)}
     \bigl(|a| \sqrt{2}+\sqrt{2(n+1)}\bigr) \\
&= 4(n+1)
   + \sqrt{2(n+1)}(|a| \sqrt{2}) .
\end{aligned}
$$
    where line 3 followed by rearranging the first expression of equation~\eqref{eq:constr}. Rearranging the final inequality for $|a|$, we learn that for $N$ large $|a|<1/2$, and is thus zero. In particular $y=\delta'$ lies in $H^\perp$.

    Since we know $a=0$, the second formula of equation \eqref{eq:constr} ensures that $q(y,\delta) = -2(n+1)$. Thus using condition (2) of Definition~\ref{defn:K2nH} twice we find that $$q(y-\delta,y-\delta) = q(y,y) -2q(y,\delta) +q(\delta,\delta) = 0.$$ But $y,\delta\in H^\perp$ where $-q$ is positive definite, so this is only possible if $(\delta' =) y=\delta$.
\end{proof}

Continuing Remark \ref{rem:MoveClasses}, we will identify $H^2(A,\mathbb{Z})$ with the direct summand of $H^2(\operatorname{Kum}_{2n}(A),\mathbb{Z})$ which is pulled back from $A^{(n+1)}$.

\subsubsection{The generalised Kummer locus}
We now define the generalised Kummer locus. This is the central object of Theorem \ref{mainthm:Niceness properties}. We write $\delta$ for half of the class of the exceptional divisor of the Hilbert--Chow morphism.

\begin{definition}\label{defn:genKummerLocus}
    Fix once and for all an integer $N>N_0$ with $N_0$ as in Proposition \ref{prop:TangentSpaceANDUniquePointwise}. Further assume that $N_0$ is sufficiently large to ensure that $NH-\delta$ is ample for any $A$ an abelian surface. We write $\mathcal{K}_{2n}^\circ$ for the \textit{generalised Kummer locus}: the connected component in $\operatorname{NL}(N)$ whose closed points are of the form $(\operatorname{Kum}_{2n}(A),NH-\delta)$ where $A$ is a principally polarised abelian surface. 
\end{definition}

\begin{lemma}\label{lem:resolveComment}
    The locus of Definition~\ref{defn:genKummerLocus} is a connected component in $\operatorname{NL}(N)$. 
\end{lemma}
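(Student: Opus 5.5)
The plan is to show that the locus $\mathcal{K}_{2n}^\circ$ of pairs $(\operatorname{Kum}_{2n}(A),NH-\delta)$ is open and closed in $\operatorname{NL}(N)$. Equivalently, we show it is a union of connected components, and since it is the image of the connected space $\mathcal{A}_2^\circ$ (or of $\mathcal{A}_2^\circ(m)$ with level structure, which maps onto it), it is itself connected. The first step is to construct a morphism $\Phi\colon \mathcal{A}_2^\circ(3)\rightarrow \operatorname{NL}(N)$. We apply the relative generalised Kummer construction to the universal principally polarised abelian surface. This gives a smooth projective family whose fibres are $\operatorname{Kum}_{2n}(A)$ with polarisation $NH-\delta$, which is ample by the choice of $N_0$ in Definition~\ref{defn:genKummerLocus}. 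By Proposition~\ref{prop:HodgeTheoryRecall}, together with $q(H,\delta)=0$ and $q(H,H)=2$, this polarisation satisfies $q(L,L)=2N^2-2(n+1)$ and is principal. Moreover, $\delta$ satisfies conditions (1)--(5) of Definition~\ref{defn:K2nH}: for (3) we use that $\delta$ spans the summand $\langle -2(n+1)\rangle$ of $\Lambda$, and (5) holds because $L+\delta=NH$ with $H$ primitive. The image of $\Phi$ is therefore the set of closed points in question, and it is connected.

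\textbf{Openness.} We compare dimensions. The space $\mathcal{A}_2^\circ$ has dimension $3$. By Proposition~\ref{prop:TangentSpaceANDUniquePointwise}(2), $\delta$ is the unique class satisfying Definition~\ref{defn:K2nH} at every point of the image. Proposition~\ref{prop:TangentSpaceANDUniquePointwise}(1) then gives $T_x\operatorname{NL}(N)=\langle H,\delta\rangle^\perp\cap H^{1,1}$. Since $h^{1,1}=5$ for the $b_2=7$ lattice $\Lambda$, this space has dimension $5-2=3$. So $\operatorname{NL}(N)$ is smooth of dimension $3$ at these points, provided we also know it has dimension at least $3$ there, which holds because it is cut out from the $5$-dimensional moduli space by two Hodge conditions. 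Next, $\Phi$ is quasi-finite: by Proposition~\ref{prop:TangentSpaceANDUniquePointwise}(2) the Hodge structure of $\operatorname{Kum}_{2n}(A)$ together with $L$ determines $\delta$, hence determines $H^2(A,\mathbb{Z})=\delta^\perp$ as a Hodge structure together with $H$. By the Torelli theorem for abelian surfaces via $H^2$, this determines $(A,H)$ up to finitely many choices (up to $\pm$ and dual). A quasi-finite morphism from a smooth $3$-dimensional space into a space smooth of dimension $3$ along the image is open, say by the differential of the period map or by invariance of domain.

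\textbf{Closedness.} Take a point $x=(X,L)$ of $\operatorname{NL}(N)$ in the closure of the image, and choose a disc $\Delta\rightarrow\operatorname{NL}(N)$ with $\Delta^\circ$ mapping into the image and $0\mapsto x$. After a finite base change, $\Delta^\circ$ lifts to $\mathcal{A}_2^\circ(3)$. Because $\delta$ is the unique such class, it is flat along the family, so it specialises to a class $\delta_x$ at $x$ satisfying Definition~\ref{defn:K2nH}. The orthogonal complement $\delta_x^\perp\cong U^{3}$ then carries a weight-two Hodge structure with $h^{2,0}=1$, and $H$ is a class in it with $q(H,H)=2$. The period of the abelian surfaces therefore converges to a point of the period domain for $(U^3,H)$ that lies in the interior and not on the boundary, because the limit $X$ is a genuine smooth IHS variety. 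By properness of the Torelli/period map for principally polarised abelian surfaces over the interior, the family $A_t$ extends to a smooth principally polarised abelian surface $A_0$. Then $\operatorname{Kum}_{2n}(A_0)$ and $X$ are two limits of the same family with the same polarisation $L$. By separatedness of the moduli of polarised IHS varieties (polarised limits are unique, a consequence of the Matsusaka--Mumford theorem), we get $X\cong\operatorname{Kum}_{2n}(A_0)$, so $x$ lies in the image.

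The main obstacle is this closedness step, specifically showing that the limiting weight-two Hodge structure on $\delta^\perp$ genuinely comes from an abelian surface rather than from a Hodge structure that fails to be abelian. This is where we would invoke the standard identification of the period domain of $(U^3,H)$ with $\mathcal{A}_2$. Since the identification only works up to an arithmetic group, we may need the level structure to make it precise.
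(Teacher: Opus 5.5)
Your proposal is correct and follows the same route as the paper. Openness comes from the tangent-space computation of Proposition~\ref{prop:TangentSpaceANDUniquePointwise}(1); you phrase it as quasi-finiteness plus a dimension count rather than as an isomorphism on tangent spaces, which amounts to the same thing. Closedness holds because the sub-Hodge structure $\delta^\perp\cong H^2(A,\mathbb{Z})$ cannot degenerate inside a smooth limit. Your closedness step is more complete than the paper's one-line argument: you make explicit the separatedness (Matsusaka--Mumford) input needed to identify the limit $X$ with $\operatorname{Kum}_{2n}(A_0)$, which the paper leaves implicit.
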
 
\begin{proof}
We will show that it is both open and specialisation closed. Specialisation closed because Proposition \ref{prop:HodgeTheoryRecall} forces the Hodge structure to degenerate whenever the abelian variety degenerates. It is open because, as follows from Proposition~\ref{prop:TangentSpaceANDUniquePointwise}(1), the map from $\mathcal{A}_2$ induces an isomorphism on tangent spaces at each point.
\end{proof}
Note that $\mathcal{K}_{2n}^\circ$ a priori depends on $N$, but \textit{a posteriori} we deduce from Proposition \ref{prop:NLlocus} that its isomorphism class, and the isomorphism class of its universal (unpolarised) IHS variety, is independent of $N>N_0$. For this reason we suppress $N$ from the notation.

\subsection{The logarithmic generalised Kummer locus}\label{sec:LogGenKummerLocus}
We now turn to constructing the logarithmic generalised Kummer locus $\mathcal{K}_{2n}$ which will compactify $\mathcal{K}_{2n}^\circ$. Subsection \ref{subsec:ConstructCompact} constructs the logarithmic DM stack $\mathcal{K}_{2n}$ and its universal family, while Subsection \ref{subsec:RelateK2n0} identifies the open substack which has trivial logarithmic structure (and its universal family) with the DM stack $\mathcal{K}_{2n}^\circ$ (and its universal family).

\subsubsection{Automorphisms of logarithmic Kummer varieties}\label{sec:AutLogKummer}
We begin by describing how the gerbe structure of $\mathcal{K}_{2n}$, which will be constructed later, will act upon the universal generalised Kummer varieties. Let $A$ be a logarithmic abelian variety over $\Delta_0$ and let $T_{n+1}$ be its $(n+1)$-torsion subgroup. 
\begin{definition}
    Let $\mathsf{Aut}(A)$ be the group of automorphisms of $A$ as polarised abelian variety. Let $\mathsf{Aut}(\operatorname{Kum}_{2n}(A))$ be the group of automorphisms of $\operatorname{Kum}_{2n}(A)$ as a sheaf on the category of logarithmic schemes.
\end{definition}
    
Automorphisms of $A$ which preserve the identity induce automorphisms of $\operatorname{Kum}_{2n}(A)$. Thus we obtain an injection $$\mathsf{Aut}(A)\hookrightarrow \mathsf{Aut}(\operatorname{Kum}_{2n}(A)).$$
Moreover, translation by $x\in T_{n+1}$ preserves $\Sigma_{n+1}^{-1}(0)$ in $A^{[n+1]}$ and thus acts upon $\operatorname{Kum}_{2n}(A)$. Our two group actions combine to define a group homomorphism
$$\varphi_{2n}\colon T_{n+1}\rtimes \mathsf{Aut}(A)\rightarrow \mathsf{Aut}(\operatorname{Kum}_{2n}(A))$$

\begin{proposition}\label{prop:AMGpAnyField}
    For any integer $n\geq 2$ the morphism $\varphi_{2n}$ is an injection of groups.
\end{proposition}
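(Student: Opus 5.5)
To prove that $\varphi_{2n}$ is injective, I would take an element $(x,g)\in T_{n+1}\rtimes \mathsf{Aut}(A)$ acting trivially on $\operatorname{Kum}_{2n}(A)$ and show that $x=0$ and $g=\operatorname{id}$. The idea is to test the action on the open locus of reduced subschemes supported in the semi-abelian part $A^\circ$. By Theorem~\ref{prop:ExplicitDescription}(3) and Proposition~\ref{prop:KummerStrataIrreducible}(1), this locus contains the dense open subscheme of $\operatorname{Kum}_{\mathbf{n}}(A^\circ)$ for the collision type $\mathbf{n}=(1,\dots,1)$. Concretely, this is the configuration space of $n+1$ distinct points $a_1,\dots,a_{n+1}\in A^\circ$ (or in a translate $A^\circ$ inside $A$ over a vertex) with $\sum a_i=0$.

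The element $(x,g)$ sends the reduced subscheme $\{a_1,\dots,a_{n+1}\}$ to $\{g(a_i)+x\}$. Triviality of the action therefore gives, for every such tuple, an equality of unordered sets $\{g(a_i)+x\}=\{a_i\}$. Choose a tuple with the $a_i$ generic. A permutation $\tau$ with $g(a_i)+x=a_{\tau(i)}$ is then locally constant on a connected dense open, hence constant. Since $n+1\ge 3$, I can fix $a_1$ and vary $a_2,\dots,a_n$ freely, with $a_{n+1}$ determined by the sum condition. This forces $\tau(1)=1$ and hence $g(a_1)+x=a_1$ on a dense open of $A^\circ$. Concretely: if $\tau(1)=j\neq 1$, the identity $g(a_1)+x=a_j$ would contradict the independence of $a_j$ from $a_1$. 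The hypothesis $n\ge 2$ enters here, because it ensures each point has at least one free partner. When $n+1=2$ one has $a_2=-a_1$, and $g=-1$, $x=0$ genuinely acts trivially; this is why the statement excludes $n=1$.

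From $g(a)+x=a$ on a dense open of $A^\circ$, setting $a=0$ (after extending by continuity) gives $x=0$, and then $g=\operatorname{id}$ on $A^\circ$. Since $g$ is a polarised group automorphism of the logarithmic abelian variety, it is determined by its restriction to the dense open subgroup $A^\circ$, as both are maps to a separated target agreeing on a dense open; so $g=\operatorname{id}$.

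The main obstacle is making the first step rigorous in the logarithmic setting. We need to know that an automorphism of $\operatorname{Kum}_{2n}(A)$ as a sheaf on logarithmic schemes, which is trivial, acts trivially on the $\mathbb{C}$-points of the interior of each component, and that these points really are parametrised by reduced subschemes as described. I would handle this by evaluating on standard logarithmic points lying over the interior of the component. Over such points the log structure is pulled back from $\Delta_0$, so a trivial automorphism is trivial on the underlying scheme $\operatorname{Kum}_{\mathbf{n}}(A^\circ)$, where the classical argument above applies.
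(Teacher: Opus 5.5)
\textbf{Comparison with the paper's route.} Your argument differs from the paper's. The paper first passes from $\operatorname{Kum}_{2n}(A)$ to the symmetric-product Kummer $K_{(2n)}$: Hilbert--Chow is an isomorphism on a scheme-theoretically dense open, so it suffices to show $\varphi_{(2n)}$ is injective. It then restricts to the locus $a\mapsto(a,\dots,a,-na)$. This locus is preserved by $T_{n+1}\rtimes\mathsf{Aut}(A)$ because $(n+1)x=0$, and the induced action on it is the affine map $a\mapsto g(a)+x$ of \emph{all} of $A$. For $n\ge 2$ the map $a\mapsto(a,\dots,a,-na)$ is injective on points (multiplicity $n$ versus $1$), which gives a retraction onto $T_{n+1}\rtimes\mathsf{Aut}(A)$.

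You instead stay in the reduced, log-trivial locus of $\operatorname{Kum}_{2n}(A)$. This avoids $K_{(2n)}$ and the density comparison of automorphisms entirely. The irreducibility argument making $\tau$ constant, the use of $n\ge2$ to supply a free partner, and your $n=1$ counterexample are all correct. One slip: reduced subschemes supported in $A^\circ$ itself lie over the tropical tuple $(0,\dots,0)$, of collision type $(n+1)$. The relevant interior is therefore a dense open of $\operatorname{Kum}_{(n+1)}(A^\circ)$. The $(1,\dots,1)$ components instead parametrise tuples lying in $n+1$ distinct $A^\circ$-torsors.

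\textbf{The gap: your final step.} When $A$ is degenerate, $A^\circ$ is not dense in $A$ in any sense that lets you argue ``two maps to a separated target agreeing on a dense open are equal''. $A$ is not a scheme. In any model $A\times_{a^\ast\mathpzc{A}}a^\ast\Lambda$ the underlying space has one irreducible component per vertex of $\Lambda$. $A^\circ$ is dense only in the component over the zero vertex. The other components are compactified torsors $m+A^\circ$, on which you have proved nothing. So $x=0$ and $g|_{A^\circ}=\mathrm{id}$ do not yet give $g=\mathrm{id}$ by this reasoning. This is exactly what the paper's locus sidesteps, since $(a,\dots,a,-na)$ sees every point of $A$.

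You can repair the step in either of two ways.
\begin{itemize}
\item Rerun your permutation argument on the interiors of components whose tropical tuple places $a_1$ over an arbitrary vertex. Theorem~\ref{prop:ExplicitDescription}(3) and Proposition~\ref{prop:KummerStrataIrreducible}(1) still supply irreducibility, and you obtain $g(a)+x=a$ on every torsor.
\item Use the log $1$-motif description invoked in the proof of Proposition~\ref{prop:Summarise Log AV}. Since $g|_{A^\circ}=\mathrm{id}$, $g$ acts trivially on $\textbf{X}$. Nondegeneracy of the polarisable monodromy pairing then forces $g$ to act trivially on $\textbf{Y}$, hence $g=\mathrm{id}$.
\end{itemize}
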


\begin{proof}
    The preimage of the identity in $A$ under the summation map $A^{(n+1)}\rightarrow A$ is denoted $K_{(2n)}$. The Hilbert--Chow morphism $A^{[n+1]}\rightarrow A^{(n+1)}$ restricts to define a map $\operatorname{HC}\colon \operatorname{Kum}_{2n}(A)\rightarrow K_{(2n)}$, and this map is an isomorphism on the dense open of both source and target where the support consists of $n+1$ points. 
    
    One can construct a morphism $$\varphi_{(2n)}\colon T_{n+1}\rtimes \mathsf{Aut}(A)\rightarrow \mathsf{Aut}(\operatorname{K}_{(2n)})$$ similarly to the construction of $\varphi_{2n}$. The images of an element of $T_{n+1}\rtimes \mathsf{Aut}(A)$ under $\varphi_{2n}$ and $\varphi_{(2n)}$ agree on the common open subset defined by $\operatorname{HC}$. Since an automorphism is determined by its restriction to a scheme-theoretically dense open by \stacks{01RH}, it will suffice to show that $\varphi_{(2n)}$ is injective. 
    
    A point of $K_{(2n)}$ is the data of $n+1$ sections of $A/\Delta_0$ which sum to the identity. We write these supports as an unordered tuple $(p_0,...,p_n)$ of points on $A$ and reiterate that $\sum_i p_i$ is trivial in the group law on $A$.

    There is a map which is injective on closed points from $A$ to $\operatorname{K}_{(2n)}$ sending $a$ to $(a,...,a, -na)$. Restriction defines a map $$\operatorname{Im}(\varphi_{2n})\rightarrow T_{n+1}\rtimes\mathsf{Aut}(A)$$ whose image is precisely the automorphisms of $A$ which send the identity to a point of the $(n+1)$-torsion. This map is a splitting of $\varphi_{(2n)}$ and thus $\varphi_{(2n)}$ is injective.
\end{proof}
\begin{remark}\label{rem:ClassicalCase}
    For $A$ an abelian surface over $\mathbb{C}$ the morphism $\varphi$ is known to be an isomorphism to the collection of automorphisms which preserve the Hilbert--Chow exceptional divisor, see \cite{BOISSIERE2011553}. We expect the same statement remains true in the logarithmic category.
\end{remark}

\subsubsection{Constructing $\mathcal{K}_{2n}$.}\label{subsec:ConstructCompact}
Recall that every Noetherian logarithmic scheme admits a strict \'etale cover by atomic logarithmic schemes \cite[Lemma 2.2.5]{AWbirational}. Let $S$ be a Noetherian logarithmic scheme. Since the generalised Kummer construction of Definition~\ref{defn:logKummer} commutes with base change, for each integer $n$ this construction descends to associate a logarithmic generalised Kummer variety to any family of logarithmic abelian surfaces $A\rightarrow S$. 

\begin{definition}
    Recall $\mathcal{Y}_2(n+1)\rightarrow \mathcal{A}_2(n+1)$ is the universal logarithmic abelian surface (Section~\ref{sec:ModulilogAVLevelStr}) over the moduli space of logarithmic abelian surfaces with $(n+1)$-level structure. We write $$\pi_{n+1}\colon Y_2(n+1):=\operatorname{Kum}_{2n}(\mathcal{Y}_2(n+1)/\mathcal{A}_2(n+1)) \rightarrow \mathcal{A}_2(n+1)$$
    for the associated logarithmic Kummer variety.
\end{definition}

\begin{definition}
Define a finite group,
$$
G_{n+1}=(\mathbb{Z}/(n{+}1)\mathbb{Z})^{4}\rtimes 
\mathsf{Sp}_{4}(\mathbb{Z}/(n{+}1)\mathbb{Z}),
$$ where $\mathsf{Sp}_{4}(\mathbb{Z}/(n{+}1)\mathbb{Z})$ acts upon $(\mathbb{Z}/(n{+}1)\mathbb{Z})^{4}$ via the standard representation.
\end{definition}
We consider $\mathcal{A}_2(n+1)$ to be a $G_{n+1}$ variety through the action of $\mathsf{Sp}_{4}(\mathbb{Z}/(n{+}1)\mathbb{Z})$ on the universal family's level structure via the standard action of the symplectic group on $(\mathbb{Z}/(n+1)\mathbb{Z})^4$.
\begin{lemma}\label{lem:GroupActionGerbe}
    Let $n\geq 2$ be an integer. There is an action of $G_{n+1}$ upon ${Y}_2(n+1)$ satisfying the following hypotheses.
    \begin{enumerate}[leftmargin = *]
        \item The morphism $\pi_{n+1}$ is $G_{n+1}$-equivariant, and thus descends to a morphism of quotient stacks
        $$\mathcal{U}_{2n}\coloneqq[{Y}_2(n+1)/G_{n+1}] \rightarrow \mathcal{K}_{2n}\coloneqq [\mathcal{A}_2(n+1)/G_{n+1}].$$
        \item Let $p$ be a standard logarithmic geometric point of $\mathcal{K}_{2n}$ and let $A_p/p$ be the abelian surface specified by any point of its preimage in $\mathcal{A}_2(n+1)$. The gerbe structure at $p$ in $\mathcal{K}_{2n}$ is the group $(\mathbb{Z}/(n+1)\mathbb{Z})^4\rtimes \operatorname{Aut}(A_p)$ and the action on $\operatorname{Kum}_{2n}(A_p)$ is induced by the morphism $\varphi_{2n}$ described after Definition~\ref{sec:AutLogKummer}.
    \end{enumerate}
\end{lemma}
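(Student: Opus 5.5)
The plan is to construct the action of $G_{n+1}$ on $Y_2(n+1)$ by letting the two factors act separately and then checking that they assemble into an action of the semidirect product. The factor $\mathsf{Sp}_4(\mathbb{Z}/(n+1)\mathbb{Z})$ acts on $\mathcal{A}_2(n+1)$ by precomposing the level structure $V_S\cong \ker(A\xrightarrow{n+1}A)$ with a symplectic automorphism of $V$. Since the universal family $\mathcal{Y}_2(n+1)$ is pulled back along this action (the underlying logarithmic abelian surface is unchanged), the action lifts canonically to $\mathcal{Y}_2(n+1)$. The generalised Kummer construction of Definition~\ref{defn:logKummer} is functorial for isomorphisms of polarised logarithmic abelian surfaces over $S$ preserving the identity, so the action lifts further to $Y_2(n+1)$. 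The factor $(\mathbb{Z}/(n+1)\mathbb{Z})^4=V$ acts fibrewise over $\mathcal{A}_2(n+1)$, and trivially on the base. An element $x\in V$ is sent through the universal level structure to a section $t_x$ of the $(n+1)$-torsion of $\mathcal{Y}_2(n+1)$. Translation by $t_x$ acts on $\mathcal{Y}_2(n+1)^{[n+1]}$, and it preserves $\Sigma^{-1}(0)$ because $(n+1)t_x=0$; this is exactly the construction preceding Proposition~\ref{prop:AMGpAnyField}, now carried out in families.

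The semidirect product relation should follow formally. If $g\in \mathsf{Sp}_4$ changes the level structure from $\alpha$ to $\alpha\circ g$, then translation by $x$ computed with respect to the new level structure is translation by $\alpha(gx)$. This is the required relation $g\, x\, g^{-1} = g(x)$, with the usual care about left versus right actions. Equivariance of $\pi_{n+1}$ holds by construction, since translations live over the identity of the base and the $\mathsf{Sp}_4$ action was defined over the base action. Part (1) is then immediate from the universal property of quotient stacks in CFGs over \textbf{LogSch}.

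For (2), fix a standard logarithmic point $p\to\mathcal{A}_2(n+1)$ with abelian surface $A_p$ and level structure $\alpha$. The stabiliser in $G_{n+1}$ of the image point is all of $V$, since $V$ acts trivially on the base, extended by the subgroup of $\mathsf{Sp}_4$ whose elements $g$ admit an isomorphism $(A_p,\alpha)\cong (A_p,\alpha\circ g)$. The plan is to identify the latter subgroup with $\mathsf{Aut}(A_p)$ through $f\mapsto \alpha^{-1}\circ f|_{A_p[n+1]}\circ\alpha$. This map is injective for $n+1\geq 3$ by the rigidity (Serre's lemma) argument, valid for logarithmic abelian varieties by \cite{KajiwaraKatoNakayamaPartVII}, which underlies the representability of $\mathcal{A}_2(m)$ for $m\geq 3$ in Theorem~\ref{thm:commuting square}. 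It is surjective by definition of the stabiliser. Under this identification the action on the fibre $\operatorname{Kum}_{2n}(A_p)$ is, by construction, translation together with the induced automorphism, which is $\varphi_{2n}$.

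The main obstacle I expect is making sure the automorphism group of the quotient stack at $p$ is genuinely the group-theoretic stabiliser, with nothing extra. For this I will use that $\mathcal{A}_2(n+1)$ is a logarithmic space, so it has no automorphisms of its own, and then invoke Proposition~\ref{prop:AMGpAnyField} to see that $V\rtimes\mathsf{Aut}(A_p)$ acts faithfully on $\operatorname{Kum}_{2n}(A_p)$. Faithfulness is what guarantees that this gerbe structure is not killed on the universal family.
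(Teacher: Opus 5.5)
Your proposal is correct and follows essentially the same route as the paper. Translations come from the universal level structure, $\mathsf{Sp}_4(\mathbb{Z}/(n+1)\mathbb{Z})$ acts by reframing via the universal property of $\mathcal{A}_2(n+1)$, and equivariance holds by construction. For (2), stabilisers on $\mathcal{A}_2(n+1)$ are trivial by rigidity for $n+1\geq 3$. Hence the gerbe at $p$ is the group-theoretic stabiliser, which contains $(\mathbb{Z}/(n+1)\mathbb{Z})^4$ and has image $\operatorname{Aut}(A_p)$ in $\mathsf{Sp}_4$.

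Your explicit check of the semidirect product relation is a useful addition. The appeal to Proposition~\ref{prop:AMGpAnyField} for faithfulness is not needed for the statement as written.
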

\begin{proof}
    The universal family over $\mathcal{A}_2(n+1)$ is, by definition of level structure, equipped with an identification of finite \'etale $\mathcal{A}_2(n+1)$ group schemes, $$h\colon (\mathbb{Z}/(n+1)\mathbb{Z})^4_{\mathcal{A}_2(n+1)} \rightarrow \mathcal{Y}_2[(n+1)],$$ where $\mathcal{Y}_2[(n+1)]$ is the $(n+1)$-torsion of $\mathcal{Y}_2(n+1)$. The group $(\mathbb{Z}/(n{+}1)\mathbb{Z})^{4}$ acts upon $\mathcal{Y}_2(n+1)$ by sending $y$ to translation through $h(y)$. The group $\mathsf{Sp}_{4}(\mathbb{Z}/(n{+}1)\mathbb{Z})$ acts as follows: an element $g$ specifies a new framing $g\circ h$ and thus, through the universal property of $\mathcal{A}_2(n+1)$, an automorphism of $\mathcal{Y}_2(n+1)$. By construction $\mathsf{Sp}_{4}(\mathbb{Z}/(n{+}1)\mathbb{Z})$ the map $\pi_{n+1}$ is $G_{n+1}$ equivariant.

    Every automorphism of a logarithmic abelian surface which fixes the $(n+1)$-torsion is the identity \cite[Lemma 5.1]{KajiwaraKatoNakayamaPartVII}, and thus $\mathcal{A}_2(n+1)$ has trivial stabiliser groups. It follows that the gerbe structure at a point $p$ in $\mathcal{K}_{2n}$ associated to abelian surface $A\rightarrow p$ is the stabiliser of any point in the preimage of $p$ within $\mathcal{A}_2(n+1)$. In particular, the stabiliser group is the subgroup $K$ of $G_{n+1}$ which fixes the abelian variety $A$. The group $K$ certainly contains the normal subgroup $(\mathbb{Z}/(n+1)\mathbb{Z})^4$ of $G_{n+1}$ and is thus determined by its image in $G_{n+1}/(\mathbb{Z}/(n+1)\mathbb{Z})^4$. But this image is exactly the stabiliser of $A$ in the moduli of logarithmic abelian surfaces without level structure, and thus coincides with $\text{Aut}(A)$.
\end{proof}

\begin{definition}
    The stack quotient $\mathcal{K}_{2n}\coloneqq [\mathcal{A}_2(n+1)/G_{n+1}]$ is called the \textit{logarithmic generalised Kummer locus}, and the morphism $\mathcal{U}_{2n}\rightarrow \mathcal{K}_{2n}$ of Lemma~\ref{lem:GroupActionGerbe}(1) is the \textit{universal logarithmic generalised Kummer variety}.
\end{definition}

\subsubsection{Relation to moduli of polarised IHS varieties}\label{subsec:RelateK2n0}
    In order to relate $\mathcal{K}_{2n}$ to the moduli space of polarised IHS varieties, we equip $\mathcal{U}_{2n}$ with a polarisation relative to $\mathcal{K}_{2n}$. Recall that in Definition~\ref{defn:genKummerLocus} we fixed once and for all an integer $N$.

\begin{definition}\label{defn:polarisation}
    Given a family of principally polarised logarithmic abelian surfaces $(A\rightarrow S,H)$, we define a polarisation upon $A^{n+1}$ by tensoring the principal polarisation pulled back along each projection to $A$. This polarisation descends to a polarisation $\mathcal{L}$ on the stack quotient (and thus also on the coarse space of) $A^{n+1}/\text{Sym}(n+1)$. The Hilbert--Chow map of the relative Hilbert scheme, $$A^{[n+1]}\rightarrow A^{n+1}/\text{Sym}(n+1)$$
    is a resolution of singularities with exceptional divisor $E$. We define a polarisation upon $A^{[n+1]}$ through the formula $\mathcal{L}^{\otimes N}\left(- \frac{1}{2}E\right)$ and note this restricts to a polarisation on $\operatorname{Kum}_{2n}(A)$.
\end{definition}

We write $\mathcal{K}_{2n}^{\operatorname{str}}$ for the open in (any model of) $\mathcal{K}_{2n}$ characterised as the locus with trivial logarithmic structure. Write $\mathcal{U}_{2n}^{\operatorname{str}} = \mathcal{U}_{2n}\times_{\mathcal{K}_{2n}}\mathcal{K}_{2n}^{\operatorname{str}}$.

\begin{proposition}\label{prop:NLlocus}
    The family $$\pi^{\operatorname{str}}\colon \mathcal{U}_{2n}^{\operatorname{str}} \rightarrow \mathcal{K}_{2n}^{\operatorname{str}},$$ equipped with the polarisation of Definition~\ref{defn:polarisation}, induces an isomorphism of conventional DM stacks $$k \colon \mathcal{K}_{2n}^{\operatorname{str}}\rightarrow \mathcal{K}_{2n}^\circ.$$
\end{proposition}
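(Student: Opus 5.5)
We first construct the classifying morphism $k$, then check it is bijective on geometric points, then check it is étale, and finally compare stabilisers.

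\textbf{Step 1: constructing $k$.} Over $\mathcal{K}_{2n}^{\operatorname{str}}$ the logarithmic structure is trivial. So $\mathcal{A}_2(n+1)^{\operatorname{str}}$ is the classical fine moduli space of principally polarised abelian surfaces with level $(n+1)$ structure. The family $\pi_{n+1}$ restricts to the classical relative generalised Kummer construction, and by Proposition~\ref{prop:ProperLogSmooth} (with no logarithmic structure) it is smooth and projective. By Beauville and Corollary~\ref{corr:familyIHS} its fibres are IHS varieties of generalised Kummer type. The polarisation of Definition~\ref{defn:polarisation} is $G_{n+1}$-invariant: translation by torsion points and polarised automorphisms preserve both $\mathcal{L}$ and $E$. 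It is ample for $N>N_0$ by the choice made in Definition~\ref{defn:genKummerLocus}, and it has the form $NH-\delta$ by Proposition~\ref{prop:HodgeTheoryRecall}. Hence it defines a morphism to the moduli of polarised IHS varieties. Its image lies in $\operatorname{NL}(N)$: conditions (1)--(5) of Definition~\ref{defn:K2nH} follow from Proposition~\ref{prop:HodgeTheoryRecall}, since $q(L,L)=2N^2-2(n+1)$, $q(\delta,L)=2(n+1)$, and $L+\delta=NH$. The image is connected and contains the points $(\operatorname{Kum}_{2n}(A),NH-\delta)$, so it lies in $\mathcal{K}_{2n}^\circ$. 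Because the construction is $G_{n+1}$-equivariant, it descends to $k\colon[\mathcal{A}_2(n+1)^{\operatorname{str}}/G_{n+1}]\to\mathcal{K}_{2n}^\circ$.

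\textbf{Step 2: bijectivity on geometric points.} Surjectivity holds by definition of $\mathcal{K}_{2n}^\circ$, since every point is of the form $\operatorname{Kum}_{2n}(A)$. For injectivity we recover $(A,H)$ from $(X,L)$. Proposition~\ref{prop:TangentSpaceANDUniquePointwise}(2) makes $\delta$ the unique class satisfying Definition~\ref{defn:K2nH}, so $\delta$ is intrinsic. Then $H=(L+\delta)/N$ is determined, and so is the sub-Hodge structure $\delta^\perp\cong H^2(A,\mathbb{Z})$ via $\hat\iota$ of Remark~\ref{rem:MoveClasses}. The polarised weight-two Hodge structure $(H^2(A,\mathbb{Z}),H)$ determines a principally polarised abelian surface up to the ambiguity $A\leftrightarrow A^\vee$ and $\pm1$. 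For principally polarised surfaces $A\cong A^\vee$ canonically, which resolves this ambiguity. Hence $A$ is recovered up to isomorphism.

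\textbf{Step 3: étaleness.} Both sides are smooth of dimension $3$. By Lemma~\ref{lem:resolveComment} and Proposition~\ref{prop:TangentSpaceANDUniquePointwise}(1), the differential of $k$ identifies $T\mathcal{A}_2=\langle H\rangle^\perp\cap H^{1,1}(A)$ with $\langle H,\delta\rangle^\perp\cap H^{1,1}(X)$. So $k$ is étale, and being bijective on points it is a representable-up-to-stabilisers homeomorphism.

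\textbf{Step 4: stabilisers (main obstacle).} It remains to show $k$ induces isomorphisms on automorphism groups. By Lemma~\ref{lem:GroupActionGerbe}(2) the source stabiliser is $(\mathbb{Z}/(n+1))^4\rtimes\operatorname{Aut}(A)$. This injects into $\operatorname{Aut}(\operatorname{Kum}_{2n}(A),L)$ by Proposition~\ref{prop:AMGpAnyField}. For surjectivity, any automorphism preserving $L$ preserves the intrinsic class $\delta$ by Step 2, hence preserves the exceptional divisor. The classical result of \cite{BOISSIERE2011553} (Remark~\ref{rem:ClassicalCase}) then identifies such automorphisms with the image of $\varphi_{2n}$. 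The hardest part is making this rigorous in families rather than pointwise, i.e. upgrading a bijective étale map with isomorphic stabilisers to an isomorphism of DM stacks. We would handle this via the standard criterion that an étale, representable, universally injective map of separated DM stacks with isomorphic stabilisers is an isomorphism.
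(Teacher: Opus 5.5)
Your proposal is correct and is essentially the paper's proof. It has the same ingredients:
\begin{itemize}
\item $k$ is built from the polarised family.
\item \'Etaleness comes from the tangent-space comparison of Proposition~\ref{prop:TangentSpaceANDUniquePointwise}(1).
\item Injectivity on $\mathbb{C}$-points comes from the uniqueness of $\delta$ together with Shioda's Torelli theorem for abelian surfaces.
\item The stabiliser comparison comes from Proposition~\ref{prop:AMGpAnyField} and \cite{BOISSIERE2011553}.
\end{itemize}
The paper's Steps III--IV (representability via inertia stacks, then a local $[\widetilde{V}/G]\to[V/G]$ argument) are exactly the standard criterion you invoke. Note that in that criterion representability is deduced from injectivity on stabilisers rather than assumed, and surjectivity comes from your Step~2.

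Your justification that a polarised automorphism must fix $\delta$, because $\delta$ is the unique class satisfying Definition~\ref{defn:K2nH}, fills in a step the paper only asserts. Passing from $g^\ast\delta=\delta$ to $g(E)=E$ additionally uses rigidity of the irreducible negative-square divisor $E$.
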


\begin{proof}
    %Noting that the polarisation of Definition~\ref{defn:polarisation} specified a class $h\in H^2(X, \mathbb{Z})$, we recall also the polarised period domain, $$\Omega_h\cong \{[\eta] \in \mathbb{P}(\Lambda \otimes \mathbb{C})| q(\eta, h)=0, q(\eta,\overline{\eta})>0, q(\eta, \eta)=0\}$$ which has dimension $4$. Write $\Omega_h(\delta)$ for the collection of points $[\eta]$ in $\Omega_h$ such that $q(\eta, \delta)=0$. 
    The morphism $\pi^{\operatorname{str}}$ together with the polarisation of Definition \ref{defn:polarisation} define a morphism from $\mathcal{K}_{2n}^{\operatorname{str}}$ to the moduli of polarised IHS varieties of generalised Kummer type. By construction the image lies within $\mathcal{K}_{2n}^\circ$ and thus we have a morphism $$k\colon \mathcal{K}_{2n}^{\operatorname{str}} \rightarrow \mathcal{K}_{2n}^\circ.$$ 
    We will now show that $k$ is an isomorphism of DM stacks.
    
    \textbf{Step I: k is \'etale.} We claim that $k$ induces an isomorphism of tangent spaces at each point. Let $p$ be any point of $\mathcal{K}_{2n}^{\operatorname{str}}$ and choose any point $x$ in the preimage of $p$ in $\mathcal{A}_2(n+1)$. Write $A$ for the abelian surface associated to $x$, and set $H\in H^2(A,\mathbb{Z})$ its polarisation class. Since $G_{n+1}$ is \'etale, the tangent space at $p$ is canonically identified with the tangent space at $x$. The period map identifies the tangent space of $\mathcal{A}_2(n+1)$ at $x$ with $H^\perp$ in $H^2(A,\mathbb{C})$. 
    On the level of period maps $k$ is the inclusion of $H^2(A,\mathbb{C})$ into $H^2(\operatorname{Kum}_{2n}(A),\mathbb{C})$ described in Proposition~\ref{prop:HodgeTheoryRecall}. Under this inclusion, $H^\perp\cap H^{1,1}(A,\mathbb{C})$ in $H^2(A,\mathbb{C})$ is mapped isomorphically to $\langle H,\delta \rangle^\perp\cap H^{1,1}(\operatorname{Kum}_{2n}(A),\mathbb{C})$ in $H^2(\operatorname{Kum}_{2n}(A),\mathbb{C})$. But by Proposition~\ref{prop:TangentSpaceANDUniquePointwise}(1) (and Lemma~\ref{lem:resolveComment}) $\langle H,\delta \rangle^\perp$ in $H^2(\operatorname{Kum}_{2n}(A),\mathbb{C})$ is the tangent space of $\mathcal{K}_{2n}^\circ$ at $k(p)$, and the identification of tangent spaces is proved.

    Any morphism of smooth complex DM stacks which induces an isomorphism on tangent spaces at every point has locally equidimensional fibres and so miracle flatness applies. Thus $k$ is flat. The isomorphism of tangent spaces further forces $k$ unramified. Flat and unramified maps are \'etale.

\textbf{Step II: k induces a bijection on $\mathbb{C}$-points.} Surjectivity is by construction: it remains to prove injectivity. By Proposition~\ref{prop:TangentSpaceANDUniquePointwise} the polarised variety $(X,L)$ specified by a geometric point of $\mathcal{K}_{2n}^\circ$ determines the class $\delta$ which is half the class of the exceptional divisor of the Hilbert--Chow map. The orthogonal complement of $\delta$ in $H^2(X,\mathbb{Z})$ under $q$ is thus determined, and by Proposition \ref{prop:HodgeTheoryRecall} we obtain $H^2(A,\mathbb{Z})$. But knowing $H^2(A,\mathbb{Z})$ determines $A$ up to dualising by the main theorem of \cite{Shioda1978ThePM}. Since $A$ admits a principal polarisation it is self dual, and $A$ is determined. It remains to show that the polarisation on $A$ is determined.

The polarisation class add $\delta$ divided by $N$ determines a class $H$ in a polarised Hodge structure isomorphic to $H^2(A,\mathbb{Z})$ with square $2$. Note that $H$ is ample on $A$ and thus effective. We need to show that this polarisation class is well defined independent of the choice of isomorphism between this abstract polarised Hodge structure $\delta^\perp$ and $H^2(A,\mathbb{Z})$. We will adopt notation of \cite{Shioda1978ThePM}, and in particular this allows us to discuss the determinant of a morphism between two lattices isomorphic to $H^2(A,\mathbb{Z})$. If there are two choices of isomorphism between $\delta^\perp$ and $H^2(A,\mathbb{Z})$ this gives an automorphism of $H^2(A,\mathbb{Z})$ as a polarised Hodge structure. If this automorphism of $H^2(A,\mathbb{Z})$ has determinant one then by \cite[Lemma 1]{Shioda1978ThePM} either it will send an ample class to minus an ample class, or it is induced by an isomorphism on $H^1(A,\mathbb{Z})$ and thus an automorphism of $A$. Since at most one of $H$ or $-H$ is effective, $H$ is determined. If instead our automorphism of $H^2(A,\mathbb{Z})$ has determinant $-1$ then we can compose with the morphism $H^2(A,\mathbb{Z})\rightarrow H^2(A^\vee,\mathbb{Z})$ of \cite[Lemma 3]{Shioda1978ThePM} which the same reference shows has determinant $-1$. We have thus reduced to the determinant $1$ case.

\textbf{Step III: $k$ is representable by algebraic spaces.} For $N$ large, and $A$ a (conventional) abelian variety, all polarised automorphisms of $\operatorname{Kum}_{2n}(A)$ preserve the Hilbert--Chow exceptional. Thus by Remark~\ref{rem:ClassicalCase}, the morphism on stabiliser groups induced by $k$ at all $\mathbb{C}$ points is an isomorphism. The morphism of inertia stacks $\mathcal{I}_{\mathcal{K}_{2n}^{\operatorname{str}}}\rightarrow \mathcal{I}_{\mathcal{K}_{2n}^\circ}\times_{\mathcal{K}_{2n}^\circ} \mathcal{K}_{2n}^{\operatorname{str}}$ induced by $k$ is an isomorphism by \stacks{0DU9}. 
Invoking \stacks{04Z6}, the relative inertia stack is isomorphic to $\mathcal{K}_{2n}^{\operatorname{str}}$. 
It now follows from \stacks{04YY} ((3) implies (1)) that $k$ is representable by algebraic spaces. 

\textbf{Step IV: Conclusion.} Since $\mathcal{K}_{2n}^\circ$ is a Deligne--Mumford stack of finite type, \'etale locally it is of the form $[V/G]$ for $G$ a finite group, and by representability of $k$ this morphism is \'etale locally a map $$[\widetilde{V}/G]\rightarrow [V/G],$$ induced by an \'etale map of algebraic spaces $k'\colon \widetilde{V}\rightarrow V$. By orbit-stabiliser theorem, bijectivity of $k$ on geometric points implies bijectivity of $k'$ on geometric points. Thus $k'$ is an \'etale map of algebraic spaces of finite type over $\mathbb{C}$ which induces a bijection on geometric points. Such spaces are Jacobson, and thus $k'$ is an \'etale monomorphism so an open immersion which surjects on closed (and thus all by Jacobson property) points. We deduce that $k'$ is an isomorphism, but being an isomorphism is \'etale local on the target so our proof is complete. 
\end{proof}

\subsection{The tropical generalised Kummer locus}\label{sec:TropGenKummerlocus}
We establish the commutative square described in Theorem~\ref{Thm:maintrop}. 
\begin{lemma}\label{lem:squarefromMainThm}
    There are compatible actions of $G_{n+1}$ on $\mathpzc{A}_2(n+1)$ and $\mathpzc{Y}_2(n+1)$, such that the commutative square of Theorem~\ref{thm:commuting square} descends to a diagram of stack quotients,
      $$\begin{tikzcd}
\mathcal{U}_{2n} \arrow[r, "\widetilde{\varpi}_\mathcal{U}"] \arrow[d, "\pi"'] 
& a^\ast [\operatorname{Kum}_{2n}(\mathpzc{Y}_2(n+1))/G_{n+1}]=\colon a^\ast\mathpzc{U}_{2n} \arrow[d, "a^\ast \pi^{\mathsf{trop}}"] \\
 \mathcal{K}_{2n} \arrow[r, "\varpi_\mathcal{K}"] 
& a^\ast  [\mathpzc{A}_2(n+1)/G_{n+1}] =\colon  a^\ast \mathpzc{K}_{2n}
\end{tikzcd}.$$  
in which the horizontal maps are strict.
\end{lemma}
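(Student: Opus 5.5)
The plan is to define the tropical $G_{n+1}$-actions by tropicalising the logarithmic ones of Lemma~\ref{lem:GroupActionGerbe}. Then I will check that the maps $\varpi$ and $\widetilde{\varpi}$ of Theorem~\ref{thm:commuting square} are equivariant, and finally pass to stack quotients, which commute with $a^\ast$.

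\textbf{Actions on the base.} On $\mathfrak{A}_2(n+1)=\mathpzc{A}_2(n+1)$, let $\mathsf{Sp}_4(\mathbb{Z}/(n+1)\mathbb{Z})$ act on a $\sigma$-valued point $(\mathpzc{A},\lambda,\pi_{n+1})$ by precomposing the level structure $\pi_{n+1}\colon [n+1]^\ast\textbf{V}\twoheadrightarrow [n+1]^\ast\mathpzc{A}[n+1]$ with $g\in\mathsf{Sp}_4$. Since $g$ preserves the symplectic form, it preserves coisotropy of kernels, so Definition~\ref{defn:TropAVLevel} is respected. The normal subgroup $(\mathbb{Z}/(n+1)\mathbb{Z})^4$ acts trivially on the base.

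\textbf{Actions on the universal family and on $\operatorname{Kum}_{2n}(\mathpzc{Y}_2(n+1))$.} On the universal family, the translation part acts by translation by $\pi_{n+1}(y)$. This is a section of $[n+1]^\ast\mathpzc{A}[n+1]$, so the action is defined after the harmless pullback along $[n+1]$; equivalently, one can use the sheaf quotient. These actions induce actions on $\mathpzc{Y}^{[n+1]}$ by acting on each section $s_i$, preserving weights. Translation by an $(n+1)$-torsion element $t$ changes $\sum_i s_i$ by $(n+1)t=0$, so the action preserves $\operatorname{Kum}_{2n}$ of Definition~\ref{defn:TropGenKummer}.

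\textbf{Equivariance.} I will check that $\varpi$ and $\widetilde{\varpi}$ are equivariant on $T$-valued points for atomic $T$. This is the main point. The construction in the proof of Theorem~\ref{thm:commuting square} composes the logarithmic level structure with the quotient $A[n+1]\to A[n+1]/A^\circ[n+1]\cong\varpi_T^\ast a^\ast\mathpzc{A}[n+1]$, and this composition is visibly compatible with changing the framing by $g$. For translations, the tropicalisation map $A\to a^\ast\mathpzc{A}$ of Proposition~\ref{prop:Summarise Log AV}(1) is a group homomorphism, since it is the quotient by the subgroup $A^\circ$. Hence translation by $h(y)$ goes to translation by its image, which is exactly $\pi_{n+1}(y)$. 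The tropicalisation $A^{[n+1]}\to a^\ast\mathpzc{A}^{[n+1]}$ is built from the vertices carrying the subscheme and their lengths, so it intertwines translation and automorphisms. Restricting this to the fibre over $e$, and using the cartesian squares of Definitions~\ref{defn:logKummer} and~\ref{defn:TropGenKummer}, gives a $G_{n+1}$-equivariant map $Y_2(n+1)\to a^\ast\operatorname{Kum}_{2n}(\mathpzc{Y}_2(n+1))$. It lies over $\varpi$.

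\textbf{Descent and strictness.} An equivariant square descends to the quotient stacks. I will use that $a^\ast$ commutes with quotients by a constant finite group: $a^\ast[\mathpzc{F}/G]\cong[a^\ast\mathpzc{F}/G]$, since $a^\ast$ is defined by stackification and commutes with the action groupoid. Strictness can be checked after the étale cover $\mathcal{A}_2(n+1)\to\mathcal{K}_{2n}$, where it is the strictness from Theorem~\ref{thm:commuting square}, together with the strictness of the Kummer tropicalisation obtained by base change from $A^{[n+1]}\to a^\ast\mathpzc{A}^{[n+1]}$.

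I expect the main obstacle to be the careful verification that the translation action is well defined tropically. This requires the $[n+1]$ pullback in the level-structure definition, and a check that the tropical torsion sections are the images of the logarithmic ones.
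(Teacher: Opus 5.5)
Your proposal follows the paper's proof. The paper defines the tropical $G_{n+1}$-action exactly as in Lemma~\ref{lem:GroupActionGerbe}, by re-framing the level structure via $\mathsf{Sp}_4(\mathbb{Z}/(n+1)\mathbb{Z})$ and translating by torsion; it then observes that the square is $G_{n+1}$-equivariant, descends it to quotient stacks, and checks strictness strict \'etale locally. You spell out several points the paper leaves implicit: preservation of the sum-zero locus, passage from the Hilbert-scheme tropicalisation to the Kummer one via the cartesian squares, and $a^\ast$ commuting with quotients by a constant finite group. You also flag the $[n+1]^\ast$ subtlety in translating by tropical torsion points, which the paper passes over silently, but the route is the same.
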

\begin{proof}
    Note that $G_{n+1}$ acts upon the objects of Definition~\ref{defn:TropAVLevel} exactly as in the proof of Lemma~\ref{lem:GroupActionGerbe}. The same argument establishes a $G_{n+1}$ equivariant commutative square, which descends to a diagram of quotient stacks. Since strictness can be checked strict \'etale locally, this property descends to our quotient square.
\end{proof}

\begin{theorem}\label{thm:GettingModels} We establish the following.
\begin{enumerate}[leftmargin = *]
    \item For any diagram of stacks upon the tropical site, 
    $$
    \begin{tikzcd} 
    \mathpzc{U}_{2n}^\Lambda \arrow[r] \arrow[d,"\pi"]
  & \mathpzc{U}_{2n} \arrow[d] \\
\mathpzc K_{2n}^\Lambda \arrow[r]
  & \mathpzc{K}_{2n}.
\end{tikzcd}$$
    where the horizontal maps are tropical alterations, and $\pi$ is a saturated morphism of cone stacks, the morphism
    $$\mathcal{U}_{2n}^\Lambda = \mathcal{U}_{2n}\times_{a^\ast \mathpzc{U}_{2n}}a^\ast \mathpzc{U}^\Lambda_{2n}\rightarrow \mathcal{K}_{2n}^\Lambda = \mathcal{K}_{2n}\times_{a^\ast \mathpzc{K}_{2n}} a^\ast \mathpzc{K}^\Lambda_{2n}$$
    is a vertical, saturated, and logarithmically smooth morphism of DM stacks with logarithmic structure.
    
    \item The open immersion $\mathcal{K}_{2n}^\circ \hookrightarrow \mathcal{K}_{2n}^\Lambda$ is a toroidal compactification.
    \end{enumerate}
\end{theorem}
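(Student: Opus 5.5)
The plan is to reduce both claims to statements already proved over atomic bases, using the fact that the square of Lemma~\ref{lem:squarefromMainThm} has strict horizontal arrows. First, fix the tropical diagram in the statement. Base change along the strict maps $\varpi_\mathcal{K}$ and $\widetilde{\varpi}_\mathcal{U}$ commutes with the stack quotient by $G_{n+1}$. We may therefore work $G_{n+1}$-equivariantly on $\mathcal{A}_2(n+1)$ and $Y_2(n+1)$ and descend at the end: verticality, saturation and logarithmic smoothness are all strict étale local on source and target, so they descend along the étale quotient by the finite group.

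By Theorem~\ref{thm:commuting square}, $\mathcal{K}_{2n}^\Lambda$ is a DM stack with logarithmic structure, since $\varpi$ is represented by Noetherian DM stacks with log structure. Moreover, $\mathcal{U}_{2n}^\Lambda \to \mathcal{U}_{2n}\times_{\mathcal{K}_{2n}}\mathcal{K}^\Lambda_{2n}$ is the base change of a tropical alteration.

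The core step is local on $\mathcal{K}_{2n}^\Lambda$. Take a strict étale cover by atomic, globally charted logarithmic schemes $S$ with Artin cone $a^\ast\sigma \to a^\ast \mathpzc{K}_{2n}^\Lambda$. Over $S$ the universal family is $\operatorname{Kum}_{2n}(A/S)$ for a logarithmic abelian surface $A$, and its pullback is $\operatorname{Kum}_{2n}(A/S)\times_{a^\ast\operatorname{Kum}_{2n}(\mathpzc{A})} a^\ast(\mathpzc{U}^\Lambda_{2n}\times_{\mathpzc{K}^\Lambda_{2n}}\sigma)$. The restriction of $\pi$ over $\sigma$ is a tropical model of $\operatorname{Kum}_{2n}(\mathpzc{A})\to\sigma$. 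After refining to a model factoring through $\mathpzc{A}^{[n+1]}(\Lambda)$ as in Proposition~\ref{prop:KummerSubdivision}, Proposition~\ref{prop:ProperLogSmooth} gives that this is a vertical, proper, logarithmically smooth DM stack over $S$.

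The main obstacle is that a general $\mathpzc{U}^\Lambda_{2n}$ need not be one of the specific models built in Proposition~\ref{prop:KummerSubdivision}. I would handle this by redoing the proof of Proposition~\ref{prop:ProperLogSmooth} for an arbitrary model. The map $A^{[n+1]}\times_{a^\ast\mathpzc{A}^{[n+1]}}a^\ast(\text{model})\to A\times_{a^\ast\mathpzc{A}}a^\ast(\text{model})$ is smooth by the fibrewise Beauville argument and miracle flatness, and both sides are log smooth over the model by Corollary~\ref{corr:logsmooth}. This is because log smoothness is preserved under pulling back tropical alterations whose composite to the base is integral. Saturation of $\pi$ then makes the fs and fine fibre products agree, so the composite to $S\times_{a^\ast\sigma}a^\ast(\text{base model})$ is saturated; verticality is inherited from Theorem~\ref{thm:LogHilbProper}.

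For (2), by Theorem~\ref{thm:commuting square} the base change $\mathcal{A}_2(n+1)\times_{a^\ast\mathfrak{A}_2(n+1)}a^\ast\Lambda$ is a toroidal compactification of the open moduli of abelian surfaces with level structure. Its interior is the locus of trivial log structure and is dense, since $\varpi$ is strict and $\mathcal{A}_2(n+1)$ is log smooth. Taking the quotient by $G_{n+1}$ preserves being a toroidal compactification. Finally, Proposition~\ref{prop:NLlocus} identifies the trivial-log-structure locus $\mathcal{K}_{2n}^{\operatorname{str}}$ with $\mathcal{K}_{2n}^\circ$. Properness of the compactification comes from properness of $\mathcal{A}_2(n+1)$ together with properness of tropical alterations.
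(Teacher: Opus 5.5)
Your proposal is correct and follows the paper's route. For (1) you reduce strict \'etale locally on $\mathcal{K}_{2n}^\Lambda$ to atomic bases where Proposition~\ref{prop:ProperLogSmooth} (or its proof) applies, and for (2) you use the \'etale cover $\mathcal{A}_2(n+1)\to\mathcal{K}_{2n}$ together with the Kajiwara--Kato--Nakayama toroidal compactification theorem. Your two additions make explicit what the paper's one-line proof leaves implicit: rerunning the proof of Proposition~\ref{prop:ProperLogSmooth} for an arbitrary model rather than only those of Proposition~\ref{prop:KummerSubdivision}, and using Proposition~\ref{prop:NLlocus} to identify the trivial-log-structure locus with $\mathcal{K}_{2n}^\circ$.
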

\begin{proof}
    Note (1) can be checked \'etale locally on $\mathcal{K}_{2n}^\Lambda$, which is \'etale locally atomic. Thus (1) follows from Proposition~\ref{prop:ProperLogSmooth}. Claim (2) can also be checked \'etale locally, and $\mathcal{K}_{2n}$ admits an \'etale cover by $\mathcal{A}_2(n+1)$ by construction, so the result follows from \cite[Theorem~1.8]{KajiwaraKatoNakayamaPartVII}.
\end{proof}

\subsection{Properties of the logarithmic generalised Kummer locus}
Write $\operatorname{KumSh}$ for the functor that sends a stack on the strict \'etale site for its stackification in the kummer logarithmic \'etale topology.
\begin{proposition}\label{prop:KnBehaved} We record the following properties of the logarithmic generalised Kummer locus.
\begin{enumerate}
    \item The morphism $\operatorname{KumSh}(\mathcal{A}_2)\rightarrow \operatorname{KumSh}(\mathcal{K}_{2n})$ induces an isomorphism at the level of coarse moduli spaces.
    \item The stack $\mathcal{K}_{2n}$ is a proper logarithmic DM stack.
    \item The stack $\mathcal{U}_{2n}$ is a proper logarithmic DM stack.
\end{enumerate}
\end{proposition}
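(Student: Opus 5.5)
For (1), the plan is to compare the two quotient presentations directly. By definition $\mathcal{K}_{2n}=[\mathcal{A}_2(n+1)/G_{n+1}]$ with $G_{n+1}=(\mathbb{Z}/(n+1)\mathbb{Z})^4\rtimes\mathsf{Sp}_4(\mathbb{Z}/(n+1)\mathbb{Z})$. The normal subgroup $(\mathbb{Z}/(n+1)\mathbb{Z})^4$ acts trivially on $\mathcal{A}_2(n+1)$, because translating by a torsion point does not change the isomorphism class of the abelian surface. The symplectic factor acts by changing the level structure. Consequently $\mathcal{K}_{2n}\to[\mathcal{A}_2(n+1)/\mathsf{Sp}_4(\mathbb{Z}/(n+1)\mathbb{Z})]$ is a gerbe banded by $(\mathbb{Z}/(n+1)\mathbb{Z})^4$. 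Forgetting level structure identifies the target with $\mathcal{A}_2$; this is standard in \cite{KajiwaraKatoNakayamaPartVII}, and I would check it after Kummer sheafification. A gerbe induces an isomorphism on sheaves of isomorphism classes, so $\operatorname{KumSh}$ of both sides have the same coarse sheaf.

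For (2), separatedness and properness can be checked after the finite étale surjection $\mathcal{A}_2(n+1)\to\mathcal{K}_{2n}$. Theorem~\ref{thm:commuting square} says $\mathcal{A}_2(n+1)$ is a proper logarithmic space for $n+1\geq3$, and quotienting by a finite group preserves properness. For the logarithmic DM property, I would take a tropical model $\Lambda\to\mathfrak{A}_2(n+1)$ that is $G_{n+1}$-equivariant, obtained by refining along the finite group as in Proposition~\ref{prop:ModelOfAn}. The base change $\mathcal{A}_2(n+1)_\Lambda$ is then a toroidal compactification and hence a DM stack with logarithmic structure (Theorem~\ref{thm:commuting square}). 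Its quotient by $G_{n+1}$ is a DM stack with logarithmic structure, and it is a logarithmic alteration of $\mathcal{K}_{2n}$. Choosing an étale atlas of this quotient gives the required logarithmic étale cover by a scheme. The diagonal is representable by logarithmic spaces because the diagonal of $\mathcal{A}_2(n+1)$ is, and finite group quotients add only finite étale pieces.

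For (3), I would use the same strategy with Theorem~\ref{thm:GettingModels}(1) and Corollary~\ref{corr:projectivityGenKummer}. After a compatible equivariant choice $\mathpzc{U}_{2n}^\Lambda\to\mathpzc{K}_{2n}^\Lambda$, the morphism $\mathcal{U}_{2n}^\Lambda\to\mathcal{K}_{2n}^\Lambda$ is a proper (indeed locally projective) morphism of DM stacks with logarithmic structure. Composing it with properness of $\mathcal{K}_{2n}^\Lambda$ gives properness of $\mathcal{U}_{2n}^\Lambda$, which is a logarithmic alteration of $\mathcal{U}_{2n}$. The DM property again follows from an étale atlas of the alteration.

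The main obstacle is making the choices of tropical models $G_{n+1}$-equivariant and saturated simultaneously. My first attempt would be to take the common refinement of the translates $g^\ast\Lambda$ over all $g\in G_{n+1}$, then apply universal semistable reduction \cite{molcho2019universal}, which is functorial and so preserves the symmetry. After that I need to check that properness of the logarithmic space is compatible with properness of these models, i.e. that logarithmic alterations are proper and surjective.
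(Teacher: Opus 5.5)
Your proposal is correct and is essentially the paper's argument. For (1), the paper likewise uses that $(\mathbb{Z}/(n+1)\mathbb{Z})^4$ acts trivially on $\mathcal{A}_2(n+1)$ and that level structures exist Kummer log \'etale locally; your gerbe formulation is equivalent to its comparison of setoids. For (2)--(3), it likewise reduces to properness of a finite-group quotient of a model of $\mathcal{A}_2(n+1)$ together with Proposition~\ref{prop:ProperLogSmooth}, which gives properness of $\mathcal{U}_{2n}^\Lambda\to\mathcal{K}_{2n}^\Lambda$ for every model, whereas Corollary~\ref{corr:projectivityGenKummer} gives local projectivity only for a cofinal family of models.

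The obstacle you flag is dispatched in the paper by observing that properness, tested by the Molcho--Wise valuative criterion against valuative log points, is unchanged by log modifications and root stacks. Equivariance comes for free there, because one takes tropical models of the quotient $\mathpzc{K}_{2n}=[\mathpzc{A}_2(n+1)/G_{n+1}]$ and pulls them back to $\mathcal{A}_2(n+1)$.
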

\begin{proof}
    For (1) observe that kummer log \'etale locally every principally polarised logarithmic abelian variety admits a level (n+1) structure \cite[Proposition 18.1(3)]{KajiwaraKatoNakayamaPartIV}. Thus the fibre over a test scheme $S$ in the category fibred in groupoids $\operatorname{KumSh}(\mathcal{A}_2)$ is the groupoid which can be presented with objects the maps from $S$ to $\operatorname{KumSh}(\mathcal{A}_2(n+1))$ and morphisms induced by the action of $\operatorname{Sp}_4(\mathbb{Z}/(n+1)\mathbb{Z})$. The fibre over $S$ in the category fibred in groupoids $\operatorname{KumSh}(\mathcal{K}_{2n})$ admits a presentation with the same objects and morphisms induced by $G_{n+1}$. It suffices to show that the setoids associated to the groupoids named above coincide. Equivalently, two maps from $S$ to $\mathcal{A}_2(n+1)$ are identified by $G_{n+1}$ if and only if they are identified by $\operatorname{Sp}_4(\mathbb{Z}/(n+1)\mathbb{Z})$. This is true because the action of $\mathbb{Z}/(n+1)\mathbb{Z}$ on $\mathcal{A}_2(n+1)$ is trivial.

    For (2) we first check that $\mathcal{K}_{2n}$ is a logarithmic DM stack. Indeed $\mathcal{K}_{2n}$ is defined as the global quotient stack of $\mathcal{A}_2(n+1)$ by the action of a finite \'etale group scheme. Composing with the projection, every logarithmic \'etale cover of $\mathcal{A}_2(n+1)$ thus induces a logarithmic \'etale cover of $\mathcal{K}_{2n}$. 
    
    Properness is invariant under logarithmic modifications and Kummer root stacks, and is thus invariant under all logarithmic alterations. Indeed for passing to a root stack this is clear, and invariance under logarithmic modification holds because the valuative criterion of \cite[Theorem~2.2.5.2]{MolchoWise} tests only against valuative test schemes, and whenever $S$ is valuative the $S$-valued points of a logarithmic space $X$ are invariant under logarithmic modifications of $X$. It thus suffices to check that $\mathcal{K}_{2n}$ admits a proper logarithmic alteration. We will show that $\mathcal{K}_{2n}^\Lambda$ is proper. Indeed, it is the global quotient stack of $\mathcal{A}_2(n+1)\times_{\mathcal{K}_{2n}} \mathcal{K}_{2n}^\Lambda$ by a finite group, and $\mathcal{A}_2(n+1)$ is proper by \cite[Theorem~1.6]{KajiwaraKatoNakayamaPartVII}.

    An \'etale cover of $\mathcal{K}_{2n}$ pulls back along the universal morphism to an \'etale cover of $\mathcal{U}_{2n}$. Claim (3) follows by combining claim (2) with Proposition~\ref{prop:ProperLogSmooth}. 
\end{proof}

\section{Examples}\label{sec:examples}
In this section we study two geometries:
\begin{enumerate}[leftmargin = *]
    \item We describe the geometry of the logarithmic generalised Kummer variety, and especially the family of integral affine manifolds, associated to a specific\footnote{We fix an example for concreteness, but note our analysis works in great generality.} family of degenerating abelian surfaces over $\mathbb{A}^2$. Our example captures both type~II and type~III degenerations. 
    \item We name an explicit cone complex structure upon the family of integral affine manifolds associated to any one-parameter degeneration for both the type~II and type~III geometries. 
\end{enumerate}
\subsection{Naming a family of logarithmic abelian varieties} We will now name a logarithmic abelian surface over $S = \mathbb{A}^2_{u,v}\backslash V((u^\alpha-1)(v^\beta-1))$ with its toric logarithmic structure. Over the interior of each torus invariant divisor the degeneration is of type~II, whilst the degeneration over the torus fixed point is of type~III.

\subsubsection{Families of logarithmic abelian varieties} Recall that associated to a proper, vertical, and logarithmically smooth family of varieties $X$ over a Noetherian scheme $S$ is a \textit{logarithmic Picard group} \cite{LogPicHigherDim,MolchoWise}. The connected component of the identity $\operatorname{Pic}^0(X/S)$ is known to be a logarithmic abelian variety whenever $S$ has generically trivial logarithmic structure \cite{LogPicHigherDim,MolchoWise}.
\begin{example}\label{ex:AbSurface1}
    Fix positive integers $\alpha, \beta$. Let $S=\mathbb{A}^2_{u,v}\backslash V((u^\alpha-1)(v^\beta-1))$ be affine space over $\mathbb{C}$ less $\alpha + \beta$ lines, with toric logarithmic structure and coordinates $u$ and $v$. 
    We write $S^\circ$ for the complement of $V(uv)$ inside $S$. 
    Consider the morphism, $$W = \mathbb{P}^2_{x_1,y_1,z_1}\times \mathbb{P}^2_{x_2,y_2,z_2}\times S\rightarrow S$$ and write $W^\circ= W\times_S S^\circ$. 
    Let $\hat{A}$ be the surface over $S$ whose ideal in $\mathcal{O}_W$ is given by: $$\left(y_1^2z_1 -x_1 (x_1-z_1)(x_1-u^\alpha z_1),y_2^2 z_2 - x_2 (x_2 - z_2)(x_2 - v^\beta z_2)\right),$$ and write $A^\circ$ for the abelian scheme $\hat{A}\times_S S^\circ$ over $S^\circ$.
    Thus $A^\circ$ is the product of two degenerating elliptic curves. Note that $\hat{A}\rightarrow S$ is logarithmically smooth, and thus $$A = \operatorname{Pic}^0_S(\operatorname{Pic}^0_S(\hat{A}))\rightarrow S$$ is a (and thus the) family of logarithmic abelian varieties over $S$ whose restriction over $S^\circ$ is $A^\circ$.
\end{example}

\subsubsection{Associated family of tropical abelian varieties} Following Proposition~\ref{prop:Summarise Log AV}(1) we record the family of tropical abelian varieties (i.e. the family of integral affine manifolds) which tropicalises $A\rightarrow S$. The Artin fan of $S$ with toric logarithmic structure is $[\mathbb{A}^2/\mathbb{G}_m^2]$ (with $\mathbb{G}_m^2$ acting as the dense torus). This Artin fan may be expressed as $a^\ast \mathbb{N}^2$, and we will denote $\sigma=\mathbb{N}^2$. Thus we will construct a family of integral affine manifolds over $\sigma$. 

\begin{example}\label{ex:TropAV1}
    We record the tropicalisation associated to Example \ref{ex:AbSurface1}, whose notation we continue. The tropical abelian variety is $\mathpzc{A}(\varphi)$, where $\varphi$ is the morphism $$\mathbb{Z}^2\rightarrow \operatorname{Hom}(\mathbb{Z}^2,\operatorname{Hom}(\mathbb{N}^2,\mathbb{Z})\cong \mathbb{Z}^2), \quad (s,t) \mapsto \left((a,b)\mapsto (2a \alpha s, 2b \beta t )\right).$$
    The total space of $\mathpzc{A}(\varphi)$ is of real dimension four (being a tropical abelian surface over $\sigma$ which has dimension two), and a depiction is far beyond the author's artistic skill. Instead we depict two instructive slices. First, choose a ray $(p=\text{log}(u),q=\text{log}(v))$ in $\mathbb{N}^2$ and pull our family of tropical abelian varieties back to this ray. Below centre we assume $p,q\ne 0$, and the associated integral affine manifold is of real dimension two. Right and left we assume precisely one of these parameters is zero (if both are zero then no degeneration has occurred and the tropicalisation is a point). Algebro-geometrically left and right are type~II degenerations of abelian surfaces, while centre is type~III.
    \begin{figure}
\begin{tikzpicture}[scale=0.9]
%========================
% Picture 2: circle with six lattice points
%========================
\begin{scope}[xshift=0cm]
  \node at (0,3.3) {\large $\mathbb{R}/6\mathbb{Z}$};

  \draw[thick] (0,1) circle (1.3);

  \foreach \ang in {90,150,210,270,330,30}
    \fill (0,1) ++(\ang:1.3) circle (2.5pt);
\end{scope}

%========================
% Picture 1: torus fundamental domain
%========================
\begin{scope}[xshift=4.8cm]
  \node at (3,5.3) {\large $\mathbb{R}^2/\langle (6,0),(0,4)\rangle$};

  % grid
  \draw[step=1, gray!50, dashed] (0,0) grid (6,4);

  % rectangle
  \draw[thick] (0,0) rectangle (6,4);

  % lattice points
  \foreach \x in {0,1,2,3,4,5,6}
    \foreach \y in {0,1,2,3,4}
      \fill (\x,\y) circle (2.2pt);

  % labels
  \node[below left] at (0,0) {$(0,0)$};
  \node[below right] at (6,0) {$(6,0)$};
  \node[above left] at (0,4) {$(0,4)$};
  \node[above right] at (6,4) {$(6,4)$};

  % edge identifications
  \draw[->, very thick, blue] (0.7,4.25) -- (5.3,4.25);
  \draw[->, very thick, blue] (0.7,-0.25) -- (5.3,-0.25);

  \draw[->, very thick, red] (-0.25,0.5) -- (-0.25,3.5);
  \draw[->, very thick, red] (6.25,0.5) -- (6.25,3.5);

  \node[blue] at (3,4.55) {\small identify};
  \node[red, rotate=90] at (6.55,2) {\small identify};
\end{scope}

%========================
% Picture 3: circle with four lattice points
%========================
\begin{scope}[xshift=14.5cm]
  \node at (0,3.3) {\large $\mathbb{R}/4\mathbb{Z}$};

  \draw[thick] (0,1) circle (1.3);

  \foreach \ang in {90,180,270,0}
    \fill (0,1) ++(\ang:1.3) circle (2.5pt);
\end{scope}

\end{tikzpicture}
        \caption{In this caption the phrase a ray of $\sigma$ refers to a map from $\mathbb{N}$ to $\sigma$. Rays are determined by the image of $1$. Fix $\alpha =3 , \beta = 2.$ Integral affine manifolds associated to the pullback of the tropical abelian variety of Example \ref{ex:TropAV1}, from left to right, along the inclusion of the rays $(1,0)$, $(1,1)$ and $(0,1)$. The black dots are integral points of each integral affine manifold. The number of lattice points visible in the left hand picture would double if $(1,0)$ were replaced with $(2,0)$. Replacing the ray $(1,1)$ with an arbitrary ray $(p,q)$ would rescale the number of lattice points in the $x$ and $y$ directions in the central figure by a factor of $p$ or $q$ respectively. In the limit of $q$ becoming large with fixed $p=1$ this diagram asymptotes to the circle on the left. Reversing the roles of $p$ and $q$ causes the central diagram to asymptote to the circle on the right.}
        \label{fig:TropAV1}
    \end{figure}
\end{example}

\subsection{Integral affine manifolds for generalised Kummer varieties, and their singularities} 
We now describe the integral affine manifold associated to $\operatorname{Kum}_{2n}(A)$ with $A$ as in Example \ref{ex:AbSurface1}. To do so we must apply the tropical generalised Kummer construction to the family of tropical abelian varieties in Example \ref{ex:TropAV1}.

\subsubsection{The surface $n=1$ situation} For $n=1$ the output is a family of integral affine manifolds over $\sigma$ which, as with the family of tropical abelian surfaces, is of total dimension $4$ and thus not depicted. Instead Figure \ref{fig:TropGenKummerFibres} follows the pattern of Figure \ref{fig:TropAV1} by drawing the integral affine manifolds associated via the construction of Section \ref{sec:TropGenKummerIAMSing} to pulling back $\operatorname{Kum}_2(\mathpzc{A})$ along rays $(1,0)$, $(1,1)$, and $(0,1)$ within $\sigma$.

We first explain how to compute these integral affine manifolds, starting with the type~II situation. The moduli space of two points on a circle which sum (in the group law on the circle) to the origin is homeomorphic to the interval. Indeed, identifying our circle with the quotient of the interval $[0,1]$ by identifying end points, any two points with sum zero mod 1 may be expressed as $(\alpha, 1-\alpha)$. Such an unordered tuple corresponds to a unique point $\alpha$ in $[0,0.5]$. There are two singular points to our integral affine structure: the endpoints of the interval where $\alpha=0$ and $\alpha = 0.5$. Thinking of the interval as the moduli of two points on a circle, this is the locus where both points collide. See left and right of Figure \ref{fig:TropGenKummerFibres}.

Now we describe the type~III locus. We are tasked with studying the locus of points summing to zero within the moduli space of two points in the torus as depicted in the centre of Figure \ref{fig:TropAV1}. To put coordinates on this space, we track the position of whichever point is further left, and if both points tie then the point which is lower down. Thus, a fundamental domain is given by the shaded region in Figure \ref{figure:fundamentaldomain}. The solid and dashed blue lines are identified; similarly the solid and dashed purple lines are identified. Thus we obtain a sphere. The sphere has four singular points, corresponding to where the two tropical points coincide. On our coordinate square, these are the corners of the blue rectangle (contributing two points), and the two points where a solid and dashed line meet. 

\begin{figure}
\centering
\begin{tikzpicture}

\begin{scope}[xshift=4.5cm]
  \node at (1.5,3.3) {\large $\mathbb{R}^2/\langle (3,0),(0,2)\rangle$};

  % shaded left half
  \fill[blue!25] (0,0) rectangle (1.5,2);

  % outer rectangle
  \draw[thick] (0,0) -- (3,0) -- (3,2) -- (0,2);
  \draw[purple, very thick, dashed] (0,2) -- (0,1);
  \draw[purple, very thick] (0,1) -- (0,0);

  % middle vertical line: top dashed, bottom solid
  \draw[blue, very thick, dashed] (1.5,2) -- (1.5,1);
  \draw[blue, very thick] (1.5,1) -- (1.5,0);

  % labels
  \node[below left] at (0,0) {$(0,0)$};
  \node[below right] at (3,0) {$(3,0)$};
  \node[above left] at (0,2) {$(0,2)$};
  \node[above right] at (3,2) {$(3,2)$};

  % edge identifications
  \draw[->, very thick, blue] (0.4,2.25) -- (2.6,2.25);
  \draw[->, very thick, blue] (0.4,-0.25) -- (2.6,-0.25);

  \draw[->, very thick, red] (-0.25,0.3) -- (-0.25,1.7);
  \draw[->, very thick, red] (3.25,0.3) -- (3.25,1.7);

  \node[blue] at (1.5,2.55) {\small identify};
  \node[blue] at (1.5,-0.45) {\small identify};
  \node[red, rotate=90] at (-0.55,1) {\small identify};
  \node[red, rotate=90] at (3.55,1) {\small identify};
\end{scope}
\end{tikzpicture}
\caption{In blue is the fundamental domain for $n=1$ type~III case, depicted as a subset of the associated tropical abelian variety. To form a sphere we identify dashed with solid blue (resp. purple) lines.}\label{figure:fundamentaldomain}
\end{figure}

\begin{figure}
\centering
\begin{tikzpicture}[scale=1.0, line cap=round, line join=round]

  % --- LEFT: horizontal line with red endpoints ---
  \begin{scope}[xshift=-4cm]
    \draw[thick] (-1.5,0) -- (1.5,0);
    \fill[red] (-1.5,0) circle (2.5pt);
    \fill[red] (1.5,0) circle (2.5pt);
  \end{scope}

  % --- CENTER: slightly smaller sphere ---
  \begin{scope}
    % sphere
    \shade[ball color=gray!15, opacity=0.9] (0,0) circle (1.6);
    \draw[thick] (0,0) circle (1.6);

    % equator
    \draw[thick] (-1.6,0) arc[start angle=180,end angle=360,x radius=1.6,y radius=0.55];
    \draw[dashed] (1.6,0) arc[start angle=0,end angle=180,x radius=1.6,y radius=0.55];

    % four red points
    \fill[red] (-0.95,0.6) circle (2.5pt);
    \fill[red] (0.95,0.6) circle (2.5pt);
    \fill[red] (-0.95,-0.6) circle (2.5pt);
    \fill[red] (0.95,-0.6) circle (2.5pt);
  \end{scope}

  % --- RIGHT: vertical line with red endpoints ---
  \begin{scope}[xshift=4cm]
    \draw[thick] (0,-1.5) -- (0,1.5);
    \fill[red] (0,-1.5) circle (2.5pt);
    \fill[red] (0,1.5) circle (2.5pt);
  \end{scope}
\end{tikzpicture}
\caption{The integral affine manifold associated (from left to right) with the rays $(1,0)$, $(1,1)$, and $(0,1)$ of $\sigma$. As in Figure \ref{fig:TropAV1}, we imagine a family of integral affine manifolds parameterised by ray directions $(p,q)$. Fixing $q=1$ and enlarging $p$ fixes the distance between the pairs of red dots on left and right. However, the distance between the left and right pairs becomes large. Thus asymptotically the sphere collapses to the left hand diagram. Conversely, fixing $p=1$ and making $q$ large keeps horizontal distances constant and enlarges vertical distances.}\label{fig:TropGenKummerFibres}
\end{figure}
\subsubsection{The fourfold $n=2$ situation.} In the type~II situation we are led to study moduli of three points on $ S^1$ which sum to zero. The result is a 2-simplex with singularities along the entire boundary, as depicted on the left and right of Figure \ref{fig:n=3 singular loci}. The red nodes depict distinguished points within the singular locus, which are in some sense especially singular - this special singularity means there is no torus direction in the irreducible component associated to this vertex in any model of our degeneration.

In the type~III case we have the the moduli of three points on an integral affine manifold homeomorphic to a torus which sum to zero in its group law. This integral affine manifold has dimension four, and thus the total space of our family is six. We are unable to depict a four dimensional space: instead we draw the two dimensional singular locus in the centre of Figure \ref{fig:n=3 singular loci}. 

We explain how to identify the topology of this singular locus with that of $( S^1)^2$. The singular locus is the moduli space of three points in a torus which sum to zero, and which have the property that at least two points coincide. To identify with a torus note that for each singular point specifying an unordered tuple $(p_1,p_2,p_3)$ with $p_i$ in $ S^1\times  S^1$, there is exactly one point $p$ of $ S^1\times  S^1$ such that $p$ appears in the tuple $(p_1,p_2,p_3)$ at least twice.

Within the singular locus there are nine distinguished points corresponding to tuples of the form $(p,p,p)\in ( S^1\times  S^1)^3$. These tuples are associated to the nine 3-torsion points of our tropical abelian surface.

\begin{figure}
\begin{tikzpicture}[scale=1.0, line cap=round, line join=round]

  % --- LEFT TRIANGLE ---
  \begin{scope}[shift={(0,0)}]
    \coordinate (A) at (0,0);
    \coordinate (B) at (2.4,0);
    \coordinate (C) at (1.2,2.1);

    \draw[red, very thick] (A) -- (B) -- (C) -- cycle;
    \fill[red] (A) circle (2.8pt);
    \fill[red] (B) circle (2.8pt);
    \fill[red] (C) circle (2.8pt);
  \end{scope}

  % --- MIDDLE TORUS AS IDENTIFICATION SQUARE ---
  \begin{scope}[shift={(4.5,-0.2)}]
    % shaded square
    \fill[red!15] (0,0) rectangle (3,3);
    \draw[red, very thick] (0,0) rectangle (3,3);

    % 9 distinguished points
    \foreach \x in {0.5,1.5,2.5} {
      \foreach \y in {0.5,1.5,2.5} {
        \fill[red] (\x,\y) circle (2.8pt);
      }
    }

    % horizontal identifications
    \draw[->, blue, thick] (0.4,3.25) -- (2.6,3.25);
    \draw[->, blue, thick] (0.4,-0.25) -- (2.6,-0.25);

    % vertical identifications
    \draw[->, thick] (-0.25,0.4) -- (-0.25,2.6);
    \draw[->, thick] (3.25,0.4) -- (3.25,2.6);
  \end{scope}

  % --- RIGHT TRIANGLE ---
  \begin{scope}[shift={(10.0,0)}]
    \coordinate (A) at (0,0);
    \coordinate (B) at (2.4,0);
    \coordinate (C) at (1.2,2.1);

    \draw[red, very thick] (A) -- (B) -- (C) -- cycle;
    \fill[red] (A) circle (2.8pt);
    \fill[red] (B) circle (2.8pt);
    \fill[red] (C) circle (2.8pt);
  \end{scope}
\end{tikzpicture}
\caption{In red we display singular loci for the integral affine manifolds associated to the $n=2$ case over rays (from left to right) $(1,0)$, $(1,1)$ and $(0,1)$. In the type~II case (left and right) the integral affine manifold is a two simplex with boundary shown in red. In the type~III case, the integral affine manifold is homeomorphic to a four dimensional space ($\mathbb{CP}^2$), and thus only the singular locus is depicted.}\label{fig:n=3 singular loci}
\end{figure}

\begin{figure}
\begin{tikzcd}[column sep=3em, row sep=3em]
% top left circle
\begin{tikzpicture}[scale=0.7, baseline=-0.5ex]
  \draw (0,0) circle (0.8);
  \fill[blue] (35:0.8) circle (3pt);
  \fill[blue] (-35:0.8) circle (3pt);
  \node at (0.95,0.35) {$1$};
  \node at (0.95,-0.6) {$2$};
\end{tikzpicture}
\arrow[rd, green!60!black]
&
% top middle circle
\begin{tikzpicture}[scale=0.7, baseline=-0.5ex]
  \draw (0,0) circle (0.8);
  \fill[blue] (25:0.8) circle (3pt);
  \node at (1.0,0.2) {$3$};
\end{tikzpicture}
\arrow[d, green!60!black]
&
% top right circle
\begin{tikzpicture}[scale=0.7, baseline=-0.5ex]
  \draw (0,0) circle (0.8);
  \fill[blue] (140:0.8) circle (3pt);
  \fill[blue] (220:0.8) circle (3pt);
  \node at (-1.0,-0.45) {$1$};
  \node at (-0.8,0.7) {$2$};
\end{tikzpicture}
\arrow[ld, green!60!black]
\\
% bottom-left circle
\begin{tikzpicture}[scale=0.7, baseline=-0.5ex]
  \draw (0,0) circle (0.8);
  \fill[blue] (270:0.8) circle (3pt);
  \node at (0,-1.5) {$3$};
\end{tikzpicture}
\arrow[r, green!60!black]
&
% middle triangle
\begin{tikzpicture}[scale=1.0, baseline=-0.5ex]
  \coordinate (A) at (0,0);
  \coordinate (B) at (2.4,0);
  \coordinate (C) at (1.2,2.1);
  \draw[thick] (A)--(B)--(C)--cycle;
  \fill (A) circle (2pt);
  \fill (B) circle (2pt);
  \fill (C) circle (2pt);
\end{tikzpicture}
&
% middle-right circle
\begin{tikzpicture}[scale=0.7, baseline=-0.5ex]
  \draw (0,0) circle (0.8);
  \fill[blue] (180:0.8) circle (3pt);
  \node at (-1.2,-0.2) {$3$};
\end{tikzpicture}
\arrow[l, green!60!black]
\\
&
% bottom-middle circle
\begin{tikzpicture}[scale=0.7, baseline=-0.5ex]
  \draw (0,0) circle (0.8);
  \fill[blue] (240:0.8) circle (3pt);
  \fill[blue] (300:0.8) circle (3pt);
  \node at (-0.7,-1) {$2$};
  \node at (0.7,-1) {$1$};
\end{tikzpicture}
\arrow[u, green!60!black]
&
\end{tikzcd}
\caption{The dual intersection complex in the type~II, $n=2$ case is the moduli space of three points on $ S^1$ which sum to zero. Combinatorial flatness enforces that the collision type is fixed on each face. The open face corresponds to three non-intersecting points. Above, we show how each other face corresponds to a different collision type. The green arrow shows the face which parameterises the collision type depicted in the arrow's source.}\label{fig:DualIntCx}
\end{figure}

\subsection{Tropical models for one-parameter degenerations} Let $\Delta$ be a curve smooth over $\mathbb{C}$ with divisorial logarithmic structure from a closed point $\Delta_0$. Let $A\rightarrow \Delta$ be a logarithmic abelian surface. In this section we will name a choice of cone complex structure on the family of tropical abelian varieties $\mathpzc{A}\rightarrow \mathbb{N}\cong \rho$.

\subsubsection{Polyhedral structures for type~II degeneration} 
Let $A\rightarrow \Delta$ be a type~II degeneration of logarithmic abelian varieties (type II means that the tropical abelian variety $\mathpzc{A} \rightarrow \rho$ is of relative dimension one over the ray $\rho$). For example, pull back the family of Example \ref{ex:AbSurface1} along a curve $\mathbb{A}^1 = \Delta\rightarrow \mathbb{A}^2$ which meets the $x$ axis in one point and does not meet the $y$ axis. Further assume that $\mathpzc{A}\cong \mathpzc{A}(\varphi)$ where $$\varphi\colon \mathbb{Z}\longrightarrow \mathsf{Hom}(\mathbb{Z},\mathsf{Hom}(\rho,\mathbb{Z})),\qquad \varphi(m)(a)(t)=\alpha mat,\qquad \alpha\in\mathbb{Z}_{>0}.$$ Set $n$ a positive integer.

We define a morphism of monoids 
$$A_\alpha
:=
\left\{
(d,g_1,\ldots,g_{n+1})\in\mathbb N^{n+2}
\;\middle|\;
\sum_{i=1}^{n+1}g_i=\alpha d,\quad
\sum_{i=1}^{n+1}i\,g_i\equiv0\pmod{n+1}
\right\}
\rightarrow \rho:=\mathbb N,$$ by projection to the first factor.

\begin{lemma}\label{lem:IAmRep}
    The functor of points of $A_\alpha$ on $\textbf{RPC}_\rho$ represents $\operatorname{Kum}_{2n}(\mathpzc{A})$.
\end{lemma}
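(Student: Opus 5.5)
The plan is to exhibit a natural bijection, functorial in the test cone $\kappa\to\rho$, between $\kappa$-points of $\operatorname{Kum}_{2n}(\mathpzc{A})$ and monoid maps $\kappa^\vee$-style data, i.e. morphisms $\kappa\to A_\alpha$ over $\rho$. We start from Definition~\ref{defn:TropHilb} and Definition~\ref{defn:TropGenKummer}. A $\kappa$-point of $\mathpzc{A}^{[n+1]}$ is a weighted zero-dimensional piecewise linear subdivision of $\mathpzc{A}\times_\rho\kappa$ determined by $n+1$ unordered sections. Over a ray base with $\mathbf{X}=\mathbf{Y}=\mathbb{Z}$, a section is an element of $\operatorname{Hom}(\kappa,\mathbb{Z})$ taken modulo $\alpha d$, where $d\in\operatorname{Hom}(\kappa,\mathbb{N})$ is the pullback of the coordinate on $\rho$; this is the bounded monodromy condition.

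First, we encode the unordered tuple by its gaps. We cut the circle $\mathbb{R}/\alpha d$ at the first section and list the $n+1$ points cyclically. Let $g_1,\dots,g_{n+1}\in\operatorname{Hom}(\kappa,\mathbb{N})$ be the consecutive gaps, so that $\sum g_i=\alpha d$. The flatness requirement in Definition~\ref{defn:weightedPLSUBDiv} is that strata map onto whole faces, meaning coincidences are constant on interiors of faces. This requirement says precisely that the ordering of the points does not jump, so each $g_i$ is a linear function that is non-negative on $\kappa$. Points of multiplicity greater than one correspond to zero gaps, and the weight function $w$ is recovered from the runs of zero gaps.

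Second, we impose the summation condition. With the first point at position $x$, the point with index $j$ sits at $x+\sum_{i<j}g_i$. The total sum is therefore $(n+1)x+\sum_i (n+1-i)g_i$, and requiring it to vanish mod $\alpha d$ forces $(n+1)x\equiv \sum_i i\,g_i$ modulo $\alpha d$ and $\mathbb{Z}$-span considerations. Conversely, the location $x$ is only determined up to an $(n+1)$-torsion ambiguity. The main obstacle is exactly this point: showing that the remaining datum beyond the gaps is absorbed, so that the functor is exactly the congruence $\sum_i i\,g_i\equiv 0 \pmod{n+1}$ rather than a finite cover or quotient of it. I would first try to use the cyclic relabelling freedom. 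Rotating the starting point changes $\sum i g_i$ by a multiple of $\alpha d$ minus $(n+1)g_1$. The aim is then to show that choosing $x=\frac{1}{n+1}\bigl(\sum_i i\,g_i - k\alpha d\bigr)$ integral on $\kappa$ produces, after quotienting by the cyclic $\mathbb{Z}/(n+1)$ relabelling, a bijection with the integral points described by the congruence.

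Third, we check functoriality in $\kappa$, which holds since all constructions are linear pullbacks. We also check that the face topology gluing is compatible, so that the two sheaves agree on $\textbf{RPC}_\rho$. Both sheaves are determined by their values on cones, and every presheaf on \textbf{RPC} is a sheaf, so a natural bijection on cone-valued points suffices.
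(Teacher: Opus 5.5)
Your set-up is fine: ordering constant on the interior by non-crossing, gaps $g_i\in\operatorname{Hom}(\kappa,\mathbb{N})$ with $\sum_i g_i=\alpha d$, and functoriality. But the proof stops exactly at the step that carries the content. That the $(n+1)$-fold ambiguity in $x$ and the cyclic relabelling ambiguity cancel is stated only as an aim, and the mechanism you suggest cannot work.

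Modulo $n+1$, a cyclic rotation of the gaps shifts $\sum_i i\,g_i$ by $\pm\alpha d$. So when $(n+1)\mid\alpha$, rotations do not change the residue at all. Moreover $A_\alpha$ consists of \emph{ordered} gap tuples, so no quotient by $\mathbb{Z}/(n+1)$ may be taken. What you need is a rule choosing the starting point, and that rule must depend on the position, not only on the gaps.

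For instance, take $n=1$, $\alpha$ even and $\kappa=\rho$. The configurations $\{0,0\}$ and $\{\alpha/2,\alpha/2\}$ have the same cyclic gap data $\{(0,\alpha),(\alpha,0)\}$. Both satisfy your congruence modulo $\alpha d$ for either rotation. Yet they must go to the distinct points $(1;0,\alpha)$ and $(1;\alpha,0)$ of $A_\alpha$. Working modulo $\alpha d$ throughout is what creates the problem.

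The missing idea, which is how the paper argues, is to lift to $\operatorname{Hom}(\kappa,\mathbb{Z})$ and impose the sum condition exactly. Choose lifts $\hat s_1,\dots,\hat s_{n+1}$ with $|\hat s_i-\hat s_j|\le\alpha d$ on the interior of $\kappa$ and $\sum_i\hat s_i=0$.

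\emph{Existence.} Arrange the window at one interior point; it propagates to the whole interior by non-crossing. The sum is then a multiple of $\alpha d$. Moving a largest lift down by $\alpha d$, or a smallest one up, keeps the window and changes the sum by $\mp\alpha d$, so the sum can be brought to $0$.

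\emph{Uniqueness.} Such lifts are unique as a multiset. Two choices differ by $k_i\alpha d$ with $\sum_i k_i=0$, and the window forces the nonzero $k_i$ to be $\pm1$, merely exchanging copies of one point between the two ends of the window.

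Order the lifts and take gaps from the smallest. Then $\hat s_i=\hat s_1+\sum_{j<i}g_j$, and $\sum_i\hat s_i=0$ gives the \emph{exact} identity $(n+1)\hat s_1=\sum_i(i-n-1)g_i$ in the torsion-free group $\operatorname{Hom}(\kappa,\mathbb{Z})$. This recovers $\hat s_1$, hence the configuration, from $(d,g)$. Moreover $\hat s_1$ is integral precisely when $\sum_i i\,g_i\equiv0\pmod{n+1}$. Conversely, every $(d,g)\in A_\alpha$ produces such a window lift by the same formulas, and the resulting sections do not cross, because a non-negative linear function vanishing at an interior point of $\kappa$ vanishes identically.

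In your notation this amounts to fixing $k=n+1$ in $x=\frac{1}{n+1}\bigl(\sum_i i\,g_i-k\alpha d\bigr)$. Uniqueness of the window lift then shows each configuration has exactly one starting point admitting such an $x$. Cones mapping to $0\in\rho$ should be treated separately; there both sides are a single point.
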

The proof strategy is to note that a version of the argument in \cite[Top of page 351]{Morton_1967} works in families. The geometry of this proof is reflected on the link over $\tau$, see Figure \ref{fig:DualIntCx} for the $n=2$ case.

\begin{proof}
    A map from a test cone $\tau$ to $\operatorname{Kum}_{2n}(\mathpzc{A})$ is $n+1$ unordered sections $(s_1,...,s_{n+1})$ of $$\mathpzc{A}_\tau:=\mathpzc{A}\times_\rho \tau\rightarrow \tau$$ which do not meet over the interior of $\tau$ unless they coincide. If the image of (the interior of and thus the whole of) $\tau$ is $0$ in $\rho$ then the only map from $\tau$ to both $\operatorname{Kum}_{2n}(\mathpzc{A})$ and $A_\alpha$ is the zero map. 
    
    If the interior of $\tau$ is not mapped to $0$ in $\rho$ then we proceed as follows. 
    %First we specify a cyclic ordering: over the interior $|\tau^\circ|$ of the topological realisation $\tau\otimes \mathbb{R}$ of $\tau$, the link of $|\mathpzc{A}|\rightarrow |\tau^\circ|$ is the trivial $\mathbb{S}^1$ bundle. Choosing any point in $|\tau^\circ|$, our sections specify $n+1$ points $(\overline{q}_1,...,\overline{q}_{n+1})$ on a circle, and thus a cyclic ordering on sections. There is a unique lift to points $({q}_1,...,{q}_{n+1})$ in $\mathbb{R}$ satisfying the following hypotheses.
    %\begin{enumerate}
    %    \item The difference between the smallest and largest section is less than the length $L$ of our circle.\footnote{The length of the circle at a point $p$ in $\tau^\circ$ is obtained by multiplying the image of this point in $|\rho|= \mathbb{N}\otimes \mathbb{R}$ by $\alpha$.} 
    %    \item The $q_i$ sum to $0$ in $\mathbb{R}$.
    %\end{enumerate}
    %A lift satisfying both properties is possible: choose any lift satisfying the first property. Since the $\overline{q}_i$ sum to $0$ we know that the sum of $q_i$ is $kL$ for some integer $k$. Now update the $q_i$ by subtracting $L$ from the largest $k$ elements. Since $q_i$ are canonically in bijection with the sections $s_i$, we have ordered the sections $s_i$. Since sections can not meet over interior of $\tau$ by hypothesis, this ordering is independent of point in $\tau^\circ$. The construction is clearly functorial in $\tau$.
    Each $s_i$ is an equivalence class of elements in $\mathsf{Hom}(\mathbb{Z},\mathsf{Hom}(\tau,\mathbb{Z}))= \mathsf{Hom}(\tau,\mathbb{Z}).$ 
    The equivalence relation identifies two elements where their difference lies in the image of $\varphi$. Choose lifts $\hat{s}_1,...\hat{s}_{n+1}$ of the sections $s_i$ to $\mathsf{Hom}(\mathbb{Z},\mathsf{Hom}(\tau,\mathbb{Z}))$ satisfying the following properties: 
    \begin{enumerate}
        \item Considered functions on $\tau$, for any $i,j$ we have $|\hat{s}_{i}-\hat{s}_j|\leq|\varphi(1)|$ at each interior point of $\tau$.
        \item In the group law of $\mathsf{Hom}(\tau,\mathbb{Z})$ there is an equality $\sum_i\hat{s}_i = 0$.
    \end{enumerate} 
    Fix a point $t$ in the interior of $\tau$. The first equality can be ensured at $t$ by repeatedly subtracting $\varphi(1)$ from the largest representative. Once true at $t$ it is true for all points in the interior of $\tau$ because the original sections $s_i$ were assumed not to meet over the interior of $|\tau|$. Once condition (1) is achieved, condition (2) can be made to occur at the same time: the sum of lifts satisfying the first condition can be written $m\varphi$ for some $m$ because the $s_i$ sum to zero in the group law of $\mathpzc{A}$. If $m>0$ then choose $j$ such that $\hat{s}_j(t)$ is maximal, and replace $\hat{s}_j$ by $\hat{s}_j-\varphi(1)$. This decreases $m$ by $1$: repeat until $m=0$. If $m<0$ then add $m\varphi(1)$ to each $\hat{s}_i$ and we are in the case $m>0$. In the sequel we consider $\hat{s}_i$, and thus $s_i$ to be ordered by their value on $t$, and note this ordering is independent of the choice of $t$. Relabelling, we assume the ordering is $s_1\leq s_2\leq ...\leq s_{n+1}$
    
    We now define elements of $\operatorname{Hom}(\tau,\mathbb{N})$, $$g_i = \hat{s}_{i+1}-\hat{s}_i\text{ for }1\leq i \leq n \text{ and } g_{n+1} = \varphi(1) + \hat{s}_1-\hat{s}_{n+1}.$$ The $g_i$ together specify a map $\tau \rightarrow \mathbb{N}^{n+1}$. Combining with the map from $\tau$ to $\rho$ specifies a map $$\tau\rightarrow \mathbb{N}^{n+2}.$$
    Note that summing the $g_i$ gives $\varphi(1)$, which ensures that the image satisfies the first equality in the definition of $A_\alpha$. Also we can write $$\hat{s}_i = \hat{s}_1 + \sum_{j<i} g_j \text{ for }i<n+1$$ and thus the summing to zero condition ensures that $$(n+1)\hat{s}_1 = \sum_i (i-n-1)g_i.$$ Taken modulo $n+1$ this implies the second equality. Together the last two displayed equations demonstrate that $\hat{s}_i$, and thus the map from $\tau$ to $\operatorname{Kum}_{2n}(\mathpzc{A})$, can be recovered from the map from $\tau$ to $A$. We have thus constructed our bijection. Every step was functorial in $\tau$, and thus the result is proved. 
\end{proof}

\begin{corollary}
    \begin{enumerate}[leftmargin = *]
        \item The map $\operatorname{Kum}_{2n}(\mathpzc{A})\rightarrow \rho $ is represented by an integral morphism of cone complexes whose link is, as a topological space, an n-simplex. The vertices of the polyhedral structure on this simplex are precisely vertices of the simplex. 
        \item The map $\operatorname{Kum}_{2n}(A)\rightarrow \Delta$ is a vertical, proper, and integral morphism of DM stacks with logarithmic structure. 
    \end{enumerate}
\end{corollary}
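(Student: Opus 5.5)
The plan is to read both claims off Lemma~\ref{lem:IAmRep}, which identifies $\operatorname{Kum}_{2n}(\mathpzc{A})$ with the single cone dual to the monoid $A_\alpha$ together with its projection to $\rho$.

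\textbf{Claim (1).} First, $A_\alpha$ is the intersection of $\mathbb{N}^{n+2}$ with a rational linear subspace and a finite-index sublattice, namely the congruence condition. It is therefore fine and saturated, and sharp because it sits inside $\mathbb{N}^{n+2}$. So $\operatorname{Kum}_{2n}(\mathpzc{A})$ is represented by a single rational polyhedral cone, and in particular by a cone complex.

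For integrality of the projection to $\rho$, I would use the criterion that a map of cones is integral (equivalently flat) when it is surjective on faces, i.e.\ every face of the source maps onto a face of the target. The faces of $A_\alpha\otimes\mathbb{R}_{\geq 0}$ are cut out by setting some of the $g_i$ to zero. Two cases arise:
\begin{itemize}
\item If the face is not the one where all $g_i$ vanish, then $d=\sum g_i/\alpha$ takes arbitrary nonnegative values on it, so the face maps onto $\rho$.
\item The face where all $g_i=0$ forces $d=0$, so it maps to $0$, which is also a face of $\rho$.
\end{itemize}
Hence the map is integral.

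The link is the slice $d=1$. This is $\{g\in\mathbb{R}^{n+1}_{\ge0}: \sum g_i=\alpha\}$, since the congruence condition only changes the lattice and not the real cone. It is a standard $n$-simplex with vertices $\alpha e_i$, and the polyhedral structure induced by the single cone is just this simplex, so its vertices are exactly the simplex vertices. Because integrality of monoid maps is a lattice-level statement, I would also check it directly rather than rely only on the geometric criterion. The direct check is that $A_\alpha$ is generated over $\mathbb{N}$ compatibly with the ray, or equivalently that the map has reduced fibres up to Kummer. This is the step where care is needed, and I expect it to be the main obstacle.

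\textbf{Claim (2).} Here I would appeal to Proposition~\ref{prop:ProperLogSmooth} with the trivial tropical model $\Lambda=\operatorname{Kum}_{2n}(\mathpzc{A})$. This is legitimate because, by claim (1), $\operatorname{Kum}_{2n}(\mathpzc{A})$ is already a cone complex, so no subdivision is needed and $\operatorname{Kum}_{2n}(A)=\operatorname{Kum}^\Lambda_{2n}(A)$ is a DM stack with logarithmic structure. The proposition then gives a morphism that is vertical, proper and logarithmically smooth.

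Integrality of $\operatorname{Kum}_{2n}(A)\rightarrow\Delta$ follows because the map is strict over the relative Artin fan $a^\ast\operatorname{Kum}_{2n}(\mathpzc{A})\rightarrow a^\ast\rho$. Integrality is detected on the characteristic monoids, which are faces of $A_\alpha$ mapping to $\mathbb{N}$, and these are integral by claim (1).
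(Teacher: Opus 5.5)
Your outline matches the paper's proof. You read everything off Lemma~\ref{lem:IAmRep}, get integrality from an equidimensionality/flatness argument, and obtain (2) from Proposition~\ref{prop:ProperLogSmooth}. Two differences work in your favour. First, you compute the link directly as the slice $d=1$, i.e.\ the simplex $\{g\in\mathbb{R}^{n+1}_{\geq 0}:\sum_i g_i=\alpha\}$ with vertices $\alpha e_i$. This proves both the homeomorphism type and the vertex statement at once, whereas the paper's proof leaves them implicit (relying on Proposition~\ref{prop:TypeIHomeotype}). Second, citing Proposition~\ref{prop:ProperLogSmooth} with the identity model fits better than the paper's appeal to Theorem~\ref{thm:GettingModels}, whose hypothesis that $\pi$ be saturated fails for $A_\alpha\to\rho$. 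There are two minor slips: the cone is the one whose lattice points are $A_\alpha$, not its dual, and the characteristic monoids are the duals of its faces. Your computations nonetheless use the correct picture.

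The integrality paragraph needs correcting, however. The criterion ``surjective on faces implies integral'' is false in general: the finite toric quotient $\mathbb{A}^2\to\mathbb{A}^2/\{\pm 1\}$ maps every face onto a face but is not flat. Face-surjectivity gives only equidimensional fibres. For a ray it is automatic, since the image of any face is $0$ or $\rho$, and it is exactly the equidimensionality the paper invokes. It becomes flatness only because the target $\mathbb{A}^1$ is regular and toric varieties are Cohen--Macaulay, which is the paper's miracle-flatness argument.

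More importantly, the ``direct check'' you flag as the main obstacle targets the wrong property. Reduced fibres is saturation, not integrality, and saturation genuinely fails here. For $n=2$ and $\alpha=8$, the ray through $(d,g)=(1,8,0,0)$ has primitive generator $(3,24,0,0)$, so the corresponding fibre component has multiplicity $3$. This is consistent with the paper's remark that semistable reduction is needed before Theorem~\ref{mainthm:Explicit} applies.

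No lattice-level work is needed for integrality. The cone map to $\rho$ corresponds to a homomorphism $h\colon\mathbb{N}\to Q$ into the dual monoid, and any such $h$ into an integral monoid is integral directly from Kato's definition. Indeed, if $h(a_1)+b_1=h(a_2)+b_2$ with $a_1\leq a_2$, cancellation gives $b_1=h(a_2-a_1)+b_2$, so one can take $b=b_2$, $a_3=a_2-a_1$ and $a_4=0$.
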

Note that $A_\alpha$ is not a free monoid, and thus Theorem \ref{mainthm:Explicit} does not apply without performing a Kummer logarithmic alteration of $\operatorname{Kum}_{2n}(A)$ i.e. performing toric semistable reduction to our morphism of cone complexes.
\begin{proof}
    Note (2) follows from (1) by Theorem~\ref{thm:GettingModels}. 
    For (1) we know that $\rho$ is represented by a cone complex (it is a ray).
    By Lemma \ref{lem:IAmRep}, $\operatorname{Kum}_{2n}(\mathpzc{A})$ is a cone complex. Integrality is the same as asking that the associated morphism of toric varieties is flat. Since all toric varieties are Cohen--Macaulay, and the toric variety $\mathbb{A}^1$ associated to $\rho$ is regular, this flatness property is immediate from equidimensional fibres by miracle flatness. 
\end{proof}

\begin{example}[Lattice point subdivision]\label{ex:LP}
    Let $\alpha = 8$. We now specialise to the $n=2$ (fourfold) case, where a degeneration was already constructed in \cite{halle2026degenerationsgeneralizedkummervarieties}. 
    The preimage of $1$ in $\rho$ inside the monoid $A_8$ consists of 15 points\footnote{Note not every ray contributes a point - we have not saturated and so this is not necessary.} on the link as depicted by red crosses in Figure \ref{fig:hhz}.

    The reader is entitled to choose a logarithmic alteration $\mathpzc{A}_{\operatorname{LP}}\rightarrow \mathpzc{A}$ which introduces rays which pass through every integral point in the preimage of $1$ in $\mathpzc{A}\rightarrow \rho$. The resulting logarithmic alteration $A_{\operatorname{LP}} = A\times_\mathpzc{A}\mathpzc{A}_{\operatorname{LP}}\rightarrow A$ is semistable over $\Delta$ and its central fibre is a cycle of $\alpha=8$ ruled surfaces. 

    The inclusion of the universal tropical subscheme into $\mathpzc{A}^{[3]}\times \mathpzc{A}$ together with the projection map define morphisms: $$\mathpzc{Z}_3\rightarrow \mathpzc{A}^{[3]}\times \mathpzc{A} \rightarrow \mathpzc{A}$$
    Taking the common refinement of the cone complex structure pulled back from $\mathpzc{A}_{\operatorname{LP}}\rightarrow \mathpzc{A}$ with the locally closed stratification of $\mathpzc{A}^{[3]}\times \mathpzc{A}$ induced by $\mathpzc{Z}_3$ defines a subdivision of piecewise linear spaces  $\Sigma'\rightarrow \mathpzc{A}^{[3]}\times \mathpzc{A}$. Pushing forward the induced locally closed stratification of $\mathpzc{A}^{[3]}\times \mathpzc{A}$ induced by $\Sigma'$ to $\mathpzc{A}^{[3]}$ specifies a tropical alteration $\Sigma\rightarrow \mathpzc{A}^{[3]}$, and thus a subdivision of $\operatorname{Kum}_{2n}(\mathpzc{A})$. The link of this subdivision is depicted in Figure \ref{fig:hhz}.
\end{example}

\begin{example}[{Recovering combinatorics of \cite{halle2026degenerationsgeneralizedkummervarieties}}]\label{ex:HHZ}
    The dual intersection complex of the Type~II degenerations constructed in \cite{halle2026degenerationsgeneralizedkummervarieties} can naturally be considered a refinement of the subdivision described in Example \ref{ex:LP}. This refinement is shown as light grey with lines dashed in Figure \ref{fig:hhz}.

    Rational points of the link in Figure \ref{fig:hhz} biject with unordered tuples of three points $k_1,k_2,k_3\in \mathbb{Q}/8\mathbb{Z}$ which sum to $0$ mod $8$. Each point $k_i$ may be written in the form $$2a_i + \epsilon_i d_i \text{ where }a_i\in\{0,1,2,3\}, \quad \epsilon_i\in\{\pm 1\}, \text{ and } d_i \in [0,1].$$ The grey lines subdivide the simplex such that on the interior of each face the order of $d_i$ is constant.
\end{example}

\begin{figure}[p]
\centering

\definecolor{hhzBlack}{HTML}{20242A}
\definecolor{hhzGrey}{HTML}{85909B}
\definecolor{hhzRed}{HTML}{C82D3D}
\definecolor{hhzGold}{HTML}{D99427}
\definecolor{hhzBlue}{HTML}{244BC5}
\definecolor{hhzGreen}{HTML}{32814B}
\definecolor{hhzPurple}{HTML}{8A40B5}

\tikzset{
  hhz wall/.style={draw=hhzGreen,line width=.85pt},
  hhz expansion/.style={
    draw=black!25,line width=.8pt,
    dash pattern=on 3pt off 2pt
  },
  hhz narrow/.style={draw=hhzGreen,line width=.85pt},
  hhz boundary/.style={draw=hhzBlack,line width=.7pt},
  hhz arrow/.style={
    draw=hhzGreen,line width=.575pt,
    -{Stealth[length=3.8pt,width=2.7pt]}
  },
  hhz label/.style={
    font=\fontsize{7}{8}\selectfont,text=hhzBlack
  },
  hhz small/.style={
    font=\fontsize{6}{7}\selectfont,text=hhzGrey
  }
}

% Red lattice cross at a named coordinate.
\def\LPcross#1{%
  \draw[hhzRed,line width=1pt]
    ([xshift=-2.2pt,yshift=-2.2pt]#1)
      --([xshift=2.2pt,yshift=2.2pt]#1)
    ([xshift=-2.2pt,yshift=2.2pt]#1)
      --([xshift=2.2pt,yshift=-2.2pt]#1);
}

% Circle configuration:
% #1,#2 = centre;
% #3 = position/multiplicity pairs;
% #4 = tuple; #5 = description.
% Positions lie in R/8Z, clockwise from 0 at the top.
\def\LPcircle#1#2#3#4#5{%
  \begin{scope}[shift={(#1,#2)}]
    \draw[hhzBlack,line width=.45pt]
      (0,0) circle[radius=16pt];

    \foreach \k in {0,...,7}{
      \fill[hhzRed]
        ({90-45*\k}:16pt) circle[radius=1.2pt];
    }

    \foreach \position/\multiplicity in {#3}{
      \fill[hhzBlue]
        ({90-45*(\position)}:16pt) circle[radius=2pt];
      \node[
        font=\fontsize{7}{8}\selectfont,
        text=hhzBlue,inner sep=0pt
      ] at ({90-45*(\position)}:23pt)
        {$\multiplicity$};
    }

    \node[hhz label,anchor=base]
      at (0,-59) {$#4$};
    \node[hhz small,anchor=base]
      at (0,-77) {#5};
  \end{scope}
}

\resizebox{\linewidth}{!}{%
\begin{tikzpicture}[
  x=.5pt,y=.5pt,line cap=round,line join=round
]

% Vertices in gap coordinates (g_1,g_2), with g_3=8-g_1-g_2.
\foreach \u/\v [count=\j from 0] in {
  0/0,0/1.5,0/2,0/3,0/4,0/4.5,0/6,0/7.5,0/8,
  .5/2,.5/3.5,
  1/1,1/4,1/7,
  1.5/0,1.5/4.5,1.5/6,
  2/0,2/.5,2/2,2/3.5,2/4,2/5,2/6,
  2.5/4,2.5/5.5,
  3/0,3/3,
  3.5/.5,3.5/2,
  4/0,4/1,4/2,4/2.5,4/4,
  4.5/0,4.5/1.5,
  5/2,5.5/2.5,
  6/0,6/1.5,6/2,
  7/1,7.5/0,8/0%
}{
  \coordinate (v\j) at
    ({500+270*(\v-\u)/8},{670-468*(\u+\v)/8});
}

% Solid subdivision edges: extra expansion edges are excluded.
\foreach \a/\b in {
  0/1,0/14,1/2,1/11,2/3,3/4,3/9,3/10,
  4/5,5/6,5/12,6/7,7/8,7/13,8/13,9/11,
  10/12,11/14,11/18,12/15,12/20,13/16,13/23,14/17,
  15/22,16/22,17/26,18/26,20/27,22/24,22/25,23/25,
  24/27,25/34,26/28,26/30,27/29,27/33,28/31,29/31,
  30/35,31/35,31/36,33/37,34/38,35/39,36/37,37/38,
  37/40,38/41,39/43,40/42,41/42,42/43,42/44,43/44%
}{
  \draw[hhz wall] (v\a)--(v\b);
}

% Additional equal-expansion-level walls.
\foreach \a/\b in {
  2/9,2/11,4/10,4/12,6/15,6/16,9/19,10/19,
  11/17,12/21,15/21,16/23,17/18,18/19,19/20,19/28,
  19/29,20/21,21/22,21/24,21/27,22/23,24/34,27/32,
  28/30,29/32,30/31,31/32,32/33,32/36,32/37,33/34,
  36/39,37/41,39/40,40/41%
}{
  \draw[hhz expansion] (v\a)--(v\b);
}

% Edges corresponding to narrow strata.
\foreach \a/\b in {
  0/11,3/19,6/12,6/13,6/22,11/19,12/19,19/26,
  19/27,19/31,22/34,27/34,31/39,34/37,37/39,39/42%
}{
  \draw[hhz narrow] (v\a)--(v\b);
}

% Original boundary and vertices.
\draw[hhz boundary] (v0)--(v44)--(v8)--cycle;

\foreach \i in {0,...,44}{
  \fill[hhzGold] (v\i) circle[radius=1.7pt];
}
\foreach \i in {0,44,8}{
  \fill[hhzBlack] (v\i) circle[radius=1.7pt];
}

% Vertices introduced by the additional expansion walls.
\foreach \i in {
  2,4,9,10,15,16,17,18,20,21,
  23,24,28,29,30,32,33,36,40,41%
}{
  \fill[black!25] (v\i) circle[radius=1.9pt];
}

% The 15 height-one lattice points of A_8.
\foreach \i in {
  0,3,6,11,12,13,19,22,26,27,31,34,37,39,42%
}{
  \LPcross{v\i}
}

% Circle configurations and arrows.
\LPcircle{500}{809}
  {0/3}{(0,0,0)}{}
\draw[hhz arrow] (500,715)--(500,680);

\LPcircle{155}{634}
  {6/1,1/2}{(6,1,1)}{}
\draw[hhz arrow] (198,610)--(310,535)--(v26);

\LPcircle{845}{634}
  {7.5/2,1/1}{(15/2,15/2,1)}{}
\draw[hhz arrow] (802,624)--(687,613)--(v1);

\LPcircle{112}{275}
  {{8/3}/3}{(8/3,8/3,8/3)}{}
\draw[hhz arrow] (162,248)--(221,207);

\LPcircle{888}{275}
  {{16/3}/3}{(16/3,16/3,16/3)}{}
\draw[hhz arrow] (838,248)--(779,207);

\LPcircle{500}{86}
  {4/2,0/1}{(4,0,4)}{}
\draw[hhz arrow] (500,138)--(500,191);

\LPcircle{838}{811}
  {7.5/1,0/1,.5/1}{(15/2,0,1/2)}
  {interior of an edge}
\draw[hhz arrow] (787,796)--(637,748)--(500,640.75);

\draw[hhz narrow] (54,817)--(80,817); \node[hhz label,anchor=base west] at (91,812) {Edge added in the LP subdivision (Example \ref{ex:LP})}; \fill[hhzGold] (67,778) circle[radius=1.7pt];
\node[hhz label,anchor=base west]
  at (91,773)
  {Vertex added in the LP subdivision (Example \ref{ex:LP})};

\coordinate (keycross) at (67,752);
\LPcross{keycross}
\node[hhz label,anchor=base west]
  at (91,747) {Point of $A_8$ which meets link};

\draw[hhz expansion] (54,726)--(80,726);
\fill[black!25] (67,726) circle[radius=1.9pt];
\node[hhz label,anchor=base west]
  at (91,721) {Additional subdivision of \cite[Figure 8.11]{halle2026degenerationsgeneralizedkummervarieties}};

\node[hhz small,anchor=base west]
  at (55,691) {red = integral points in link of $\mathpzc{A}$;};
\node[hhz small,anchor=base west]
  at (55,674) {blue = image of a section, label = multiplicity.};

\end{tikzpicture}%
}

\caption{Let $\alpha=8$. Ignoring grey dots and dashed grey lines, the diagram shows the type II n=2 (fourfold) dual intersection complex with the LP subdivision described in Example \ref{ex:LP}. 
The gold dots indicate vertices which the construction in Example \ref{ex:LP} introduces. 
A red cross superimposed on a gold vertex indicates a height-one lattice point of $A_8$. 
Compare to \cite[Figure 8.11]{halle2026degenerationsgeneralizedkummervarieties}.
Diagrams around the outside show how the combinatorial type of three points with sum zero (in blue) on the link of $\mathpzc{A}$ gives rise to different strata. 
In terms of these surrounding pictures, the gold vertices arise and solid edges between them occur where a blue point meets one of the red points in the link of $\mathpzc{A}/\rho$ associated to the tropical model $\mathpzc{A}_{\operatorname{LP}}\rightarrow \mathpzc{A}$;
the blue labels indicate multiplicities. 
The red crosses (integral points) occur where all blue points lie on a red dot (i.e. when introducing a primitive ray in this direction on the tropicalisation gives rise to a saturated map from the introduced ray to $\rho$).}
\label{fig:hhz}
\end{figure}
\subsubsection{Polyhedral structures for type~III degeneration} In the type~III situation $\operatorname{Kum}_{2n}(\mathpzc{A})$ is not represented by a monoid. We now develop the language required to name a choice of tropical model. The first step is constructing a subdivision of $\mathpzc{A}\times \mathpzc{A}^{[n]}$. We will push-forward this cone complex structure to define a tropical model of $\mathpzc{A}^{[n+1]}$, and thus of $\operatorname{Kum}_{2n}(\mathpzc{A})$. The polyhedral structure we name adapts the approaches of \cite{Nabijou2024ToricConfigurationSpaces} and \cite[Section~2.3]{mok2025logarithmicfultonmacphersonconfigurationspaces}. 

\begin{definition}\label{defn:Aug}
    Fix points $u,v\in \mathbb{Z}^2$ which generate a lattice $\Lambda$. Given a configuration of points $(p_1,...,p_k)$ upon $\mathbb{R}^2/\Lambda$, we define the \textit{augmented subdivision} of $\mathbb{R}^2/\Lambda$ as follows.
    \begin{enumerate}[leftmargin = *]
        \item First define a subdivision $\Sigma^\text{pre}$ of $\mathbb{R}^2/\Lambda$ which is a polyhedral complex\footnote{So the cone over this structure is a subdivision of the integral affine manifold $\mathpzc{A}$ by a cone complex.} as follows. First define a $\Lambda$ equivariant subdivision of $\mathbb{R}^2$ by placing a vertex at each point of $\Lambda$, and an edge between any two vertices $n_1,n_2$ such that $n_1-n_2 \in \{\pm u,\pm v\}$. The resulting subdivision of tropical abelian varieties specifies a logarithmic alteration of the logarithmic abelian variety $A\times_S\mathbb{A}^1$.
        \item Next for each element $p_1,...,p_k$ of our configuration we impose a vertex at $p_i+\lambda$ for all $\lambda \in \Lambda$, and further pass to the minimal subdivision such that the straight line between $p_i +\lambda_1$ and $p_i + \lambda_2$ is a union of cones whenever $\lambda_1-\lambda_2$ is either a multiple of $u$ or $v$. 
        \item The above subdivision is $\Lambda$ equivariant and thus descends to the quotient.
    \end{enumerate}
\end{definition}

Next we define a subdivision of the tropical Hilbert scheme $\mathpzc{A}^{[n+1]}$, and thus a logarithmic alteration to the logarithmic Hilbert scheme of $A\times_S\mathbb{A}^1.$

\begin{definition}\label{defn:SquareSubdiv}
    The square subdivision of $\mathpzc{A}^{[n+1]}$ is obtained by pushing forward the cone complex structure on its universal family described in Definition~\ref{defn:Aug}. The square subdivision of $\operatorname{Kum}_{2n}(\mathpzc{A})$ is the subdivision obtained by pulling back the square subdivision of $\mathpzc{A}^{[n+1]}$ along the tautological inclusion.
\end{definition}

\begin{remark}
    The procedure of Definition~\ref{defn:SquareSubdiv} defines the trivial polyhedral subdivision if performed upon the integral affine manifold associated to a type~II degeneration.
\end{remark}

\begin{example}
    In the $n=1$ case the resulting polyhedral subdivision of a sphere (and thus dual intersection complex) is obtained by connecting the red dots of the central sphere of Figure \ref{fig:TropGenKummerFibres} to form a square. 
\end{example}

\begin{lemma}\label{lem:tidyup}
    In the $n+1=s$ case, the square subdivision has $$n(n+1)+1 +\frac{n(n+1)(n^3+6n^2+3n+2)}{24}$$ vertices, and thus we have specified a type III degeneration whose special fibre this many components.
\end{lemma}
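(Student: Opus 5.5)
The plan is to reduce the count to a combinatorial enumeration on the link. By Corollary~\ref{corr:DualIntCx}, irreducible components of the special fibre biject with vertices of the polyhedral structure $B_\Lambda$. So it suffices to count the vertices of the square subdivision of $B_K^{2n}(\mathpzc{A})$ (Definition~\ref{defn:SquareSubdiv}). We read the hypothesis $n+1=s$ as saying that the lattice is $\Lambda=\langle (s,0),(0,s)\rangle$, so the link is the space of unordered $(n+1)$-tuples on $\mathbb{R}^2/s\mathbb{Z}^2$ summing to zero.

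\textbf{Step 1: characterise vertices.} A point of the link is a vertex exactly when its combinatorial type is locally rigid. This means no nonzero deformation of the tuple $(p_1,\dots,p_{n+1})$ keeps the following fixed: the sum, the collision type, the incidences of each $p_i$ with the grid lines of $\Sigma^{\mathrm{pre}}$, and the incidences of each $p_i$ with the horizontal and vertical lines through the other $p_j+\lambda$ imposed in Definition~\ref{defn:Aug}.

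Because the grid and the augmenting lines are all axis-parallel, the rigidity conditions split into an $x$-part and a $y$-part. Call two points "$x$-linked" if their $x$-coordinates agree mod $s$, and "$x$-anchored" if their $x$-coordinate is an integer. A configuration is rigid in the $x$-direction if and only if every $x$-linkage class either contains an anchored point or is pinned by the single summation constraint. At most one class can be pinned by the summation constraint, since the sum gives only one linear condition in each direction. The same description holds in the $y$-direction.

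\textbf{Step 2: enumerate by cases.} I would split vertices according to how the two summation constraints are used.

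First, the fully anchored configurations: every point lies on $\mathbb{Z}^2/s\mathbb{Z}^2$, and the tuple sums to zero. I expect these, together with the totally collided configurations $(p,\dots,p)$ with $(n+1)p=0$, to account for the term $n(n+1)+1$. It is worth checking this against the $(n+1)$-torsion count and the $n=1$ picture.

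Second, the configurations in which exactly one linkage class in the $x$- or $y$-direction is free but pinned by the sum. Here the coordinate of that class is forced to be a rational number with denominator dividing its size. I would count these class by class, using partitions of $n+1$ into linkage classes and weighting each by the number of admissible integer residues.

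The resulting sums are polynomial in $n$ of degree $5$. I would evaluate them with the standard identities for $\sum k$, $\sum k^2$, $\sum k^3$, and check the outcome against the small cases $n=1,2$, computed by hand from Figure~\ref{figure:fundamentaldomain}.

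\textbf{Main obstacle.} The delicate step is Step~2: counting unordered tuples correctly. Collisions and coincidences of $x$-linkage with $y$-linkage classes cause overcounting. Moreover, a configuration that is already rigid must not be counted again under the pinned case. I would first try to organise the count via ordered tuples modulo $\operatorname{Sym}(n+1)$, using Burnside's lemma with stabilisers determined by collision type, and then compare with a direct partition-based count.
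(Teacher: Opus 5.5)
There is a genuine gap, and it begins with the set-up.

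In Definition~\ref{defn:Aug} the pre-subdivision $\Sigma^{\mathrm{pre}}$ has vertices only at points of $\Lambda$, with edges in the directions $u,v$. On $\mathbb{R}^2/\Lambda$ this is a single vertex and the two loops through it. Nothing places grid lines at integer points, and the particular lattice plays no role. Write $p_i=t_iu+t_i'v$ with $t_i,t_i'\in[0,1)$. Then the correct notion of ``anchored'' is $t_i=0$ (resp.\ $t_i'=0$).

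With your integer grid on $\mathbb{R}^2/(n+1)\mathbb{Z}^2$ the statement would be false. Every multiset of $n+1$ points of $\mathbb{Z}^2/(n+1)\mathbb{Z}^2$ summing to zero is rigid by your own Step~1 criterion. There are super-polynomially many such multisets; already for $n=2$ there are $21$, exceeding the claimed total $17$.

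Your guess for the source of $n(n+1)+1$ is also wrong in the correct set-up. The only fully anchored tuple is $(0,\dots,0)$. The tuples $(p,\dots,p)$ with $(n+1)p=0$ number $(n+1)^2$, and $n^2$ of them belong to the quintic term.

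Your Step~1 criterion is the right one and agrees with the paper's. A tuple is a vertex if and only if the nonzero $t_i$ take at most one value $a$ and the nonzero $t_i'$ take at most one value $b$; otherwise one moves two classes by $+\epsilon n_b$ and $-\epsilon n_a$.

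What you miss is the consequence that makes Step~2 immediate. Every vertex is supported on the four points $\{0,a\}\times\{0,b\}$. So an unordered tuple is just a $2\times 2$ table of multiplicities, and there is no overcounting for Burnside to fix. The count then goes as follows:
\begin{itemize}
\item If $n_a$ points have $t_i=a\neq 0$, summing to zero forces $n_a\geq 2$ and $a\in\{1/n_a,\dots,(n_a-1)/n_a\}$.
\item The origin contributes $1$.
\item Configurations with all $t_i'=0$ (resp.\ all $t_i=0$) contribute $\sum_{w=2}^{n+1}(w-1)=n(n+1)/2$ each. Together with the origin, this is the term $n(n+1)+1$.
\item When both directions are nonzero there are $(n_a-1)(n_b-1)$ choices of $(a,b)$. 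There are $1+\min\{n_a,n_b,n+1-n_a,n+1-n_b\}$ tables with margins $n_a,n_b$.
\item Summing the last item over $2\leq n_a,n_b\leq n+1$ gives the quintic term.
\end{itemize}
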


\begin{proof}
    Note that points in $\operatorname{Kum}_{2n}(\mathpzc{A})$ biject with tuples of $n+1$ points on $\mathbb{R}^2/\Lambda$ which sum to zero. We first characterise when a point in $\operatorname{Sym}^{n+1}(\mathbb{R}^2/\Lambda)$ gives rise to a vertex in the square subdivision. Let $(p_1,...,p_{n+1})\in \operatorname{Sym}^{n+1}(\mathbb{R}^2/\Lambda)$ and let $S_u$ (resp $S_v$) be the set of points $t$ in $[0,1)$ such that there exists some integer $i\in \{1,...,n+1\}$ and real number $t'$ such that $p_i$ can be represented by $tu + t'v$ (resp. $t'u + tv$) in $\mathbb{R}^2$. 
    
    We claim that if $(p_1,..,p_{n+1})$ is a vertex then both $S_u$ and $S_v$ contain at most one real number which is not $0$. Assume for contradiction that $S_u$ contains two non-zero real numbers $a,b$, and suppose that $n_a$ of the $p_j$ can be written $au + xv$ for some real $x$ and $n_b$ of the $p_j$ can be written $bu + xv$.  To reach a contradiction we will construct a path starting at $(p_1,...,p_{n+1})$ on which the combinatorial type is constant. This is done by changing points as follows:
    \begin{enumerate} 
        \item Whenever $p_j$ can be written $au + x_jv$ for some real number $x$, replace $p_j$ with $(a + \epsilon n_b)u + x_j v$. 
        \item Whenever $p_j$ can be written $bu + x_jv$ for some real number $x$, replace $p_j$ with $(b - \epsilon n_a)u + x_j v$.
    \end{enumerate}
    The condition that points sum to zero is preserved in the $u$ direction by construction, and in the $v$ direction because those coordinates were not changed. Varying $\epsilon \geq 0$ moves our point without changing its combinatorial type. Thus $(p_1,...,p_j)$ is not a vertex and we have a contradiction.

    Conversely, if $S_u$ and $S_v$ contain at most one non-zero point, then the sum zero condition means that $(p_1,...,p_j)$ cannot be deformed without changing its combinatorial type - it is a vertex.
    We now count cases.
    \begin{enumerate}[leftmargin=*]
        \item If $S_u= S_v = \{0\}$ then we know that $(p_1,...p_{n+1})$ is the origin - we have precisely one vertex.
        \item If $S_v = \{0\}$ but we are not in case 1, then suppose there are $w$ points of the form $au + xv$ with $a\ne 0$. We know that $wa=0$ mod $1$ and $wa\ne 0$, so there are $w-1$ possibilities for the value of $a$. Summing over all possible values of $w$ gives $$\sum_{w=2}^{n+1}(w-1)=\frac{(n+1)n}{2}$$ vertices.
        \item If $S_u = \{0\}$ but we are not in case (1), then as in case (2) we have ${n(n+1)}/{2}$ vertices.
        \item If neither $S_u$ nor $S_v$ is $\{0\}$ then we proceed as follows. The number of tuples $(p_1,..,p_{n+1})$ with $n_a\geq 1$ vertices with $u$ coordinate $a$, and $n_b\geq 1$ vertices with $v$ coodinate $b$ is the number of $2x2$ non-negative integer matrices whose second row sums to $n_a$, second column sums to $n_b$, and whose entries sum to ${n+1}$. Indeed, we are looking for ways to allocate ${n+1}$ vertices over the four possible positions $\{(a,0),(0,0), (0,b), (a,b)\}.$ There are $$1 + \operatorname{min}\{n_a,n_b,{n+1}-n_a,{n+1}-n_b\}$$ such matrices.\footnote{Four dimensions of 2x2 matrix, three linear relations gives one degree of freedom which is adding a multiple of $(1,-1;-1,1)$. The formula is then a direct computation.} Once again there are $(n_a-1)$ (resp. $(n_b-1)$) possible values of $a$ (resp. $b$). The total contribution is thus $$\sum_{n_a = 2}^{n+1}\sum_{n_b = 2}^{n+1} (1 + \operatorname{min}\{n_a,n_b,{n+1}-n_a,{n+1}-n_b\})(n_a-1) (n_b-1)
=
\frac{n(n+1)(n^3+6n^2+3n+2)}{24}.
$$
    \end{enumerate}
    The final result is obtained by summing the contribution from each of our four cases.
\end{proof}

\bibliographystyle{alpha}
\bibliography{bibliography}
\end{document}